\newtheorem{theorem}{Theorem}[section]
\newtheorem{prop}[theorem]{Proposition}
\newtheorem{conjecture}[theorem]{Conjecture}
\newtheorem{definition}[theorem]{Definition}
\newtheorem{example}[theorem]{Example}
\newtheorem{lemma}[theorem]{Lemma}
\newtheorem{remark}[theorem]{Remark}
\newtheorem{assum}[theorem]{Assumption}
\newtheorem{condition}[theorem]{Condition}
\newtheorem{notation}[theorem]{Notation}
\numberwithin{equation}{section}
\DeclareMathOperator{\Spec}{Spec}
\newcommand{\real}{\mathbb{R}}
\newcommand{\comp}{\mathbb{C}}
\newcommand{\inte}{\mathbb{Z}}
\newcommand{\bb}[1]{\mathbb{#1}}
\newcommand{\cu}[1]{\mathcal{#1}}
\newcommand{\msc}[1]{\mathscr{#1}}
\newcommand{\pdb}{\bar{\partial}}
\newcommand{\dd}[1]{\frac{\partial}{\partial #1}}
\newcommand{\half}{\frac{1}{2}}
\newcommand{\ddd}[2]{\frac{\partial#1}{\partial{#2}}}
\newcommand{\tanpoly}{\Lambda}
\newcommand{\reint}{\mathrm{int}_\mathrm{re}}
\newcommand{\pcate}{\underline{\mathrm{LPoly}}}
\newcommand{\pdecomp}{\mathscr{P}}
\newcommand{\norpoly}{\mathscr{Q}}
\newcommand{\centerfiber}[1]{\prescript{#1}{}{X}}
\newcommand{\spec}{\mathrm{Spec}_{\mathrm{an}}}
\newcommand{\localmod}[1]{\prescript{#1}{}{\mathbb{V}}}
\newcommand{\moment}{\mu}
\newcommand{\amoeba}{\cu{A}}
\newcommand{\tsing}{\mathscr{S}}
\newcommand{\modmap}{\nu}
\newcommand{\tinclude}{\iota}
\newcommand{\nsf}{\hat{\mathscr{S}}}
\newcommand{\blat}{\mathbf{K}}
\newcommand{\cfr}{R}
\newcommand{\cfrk}[1]{\prescript{#1}{}{\cfr}}
\newcommand{\logs}{S^{\dagger}}
\newcommand{\logsk}[1]{\prescript{#1}{}{S}^{\dagger}}
\newcommand{\logsf}{\hat{S}^{\dagger}}
\newcommand{\logsdrk}[2]{\prescript{#1}{}{\Omega}^{#2}_{S^{\dagger}}}
\newcommand{\logsvfk}[1]{\prescript{#1}{}{\Theta}_{S^{\dagger}}}
\newcommand{\logsdrf}[1]{\hat{\Omega}^{#1}_{S^{\dagger}}}
\newcommand{\logsvff}{\hat{\Theta}_{S^{\dagger}}}
\newcommand{\bva}[1]{\prescript{#1}{}{\mathcal{G}}}
\newcommand{\tbva}[2]{\prescript{#1}{#2}{\mathcal{K}}}
\newcommand{\gmiso}[2]{\prescript{#1}{#2}{\sigma}}
\newcommand{\dpartial}[1]{\prescript{#1}{}{\partial}}
\newcommand{\volf}[1]{\prescript{#1}{}{\omega}}
\newcommand{\bvd}[1]{\prescript{#1}{}{\Delta}}
\newcommand{\gmc}[1]{\prescript{#1}{}{\nabla}}
\newcommand{\rest}[1]{\prescript{#1}{}{\flat}}
\newcommand{\patch}[1]{\prescript{#1}{}{\psi}}
\newcommand{\cocyobs}[1]{\prescript{#1}{}{\mathfrak{o}}}
\newcommand{\bvdobs}[1]{\prescript{#1}{}{\mathfrak{w}}}
\newcommand{\polyv}[1]{\prescript{#1}{}{PV}}
\newcommand{\totaldr}[2]{\prescript{#1}{#2}{\mathcal{A}}}
\newcommand{\glue}[1]{\prescript{#1}{}{g}}
\newcommand{\mdga}{\Omega}
\newcommand{\md}{d}
\newcommand{\hp}{\hslash}
\newcommand{\tform}{\mathscr{T}}
\newcommand{\sfbva}[1]{\prescript{#1}{}{\mathsf{G}}}
\newcommand{\sftvbva}[1]{\prescript{#1}{}{\mathfrak{h}}}
\newcommand{\sftbva}[2]{\prescript{#1}{#2}{\mathsf{K}}}
\newcommand{\sflocmod}[1]{\prescript{#1}{}{\mathsf{V}}}
\newcommand{\sfpolyv}[1]{\prescript{#1}{}{\mathsf{PV}}}
\newcommand{\sftropv}[1]{\prescript{#1}{}{\mathsf{TL}}}
\newcommand{\sftotaldr}[2]{\prescript{#1}{#2}{\mathsf{A}}}
\newcommand{\tvbva}[1]{\prescript{#1}{}{\mathscr{H}}}
\newcommand{\mpatch}[1]{\prescript{#1}{}{\tilde{\psi}}}
\newcommand{\tropv}[1]{\prescript{#1}{}{TL}} 
\newcommand{\wcs}[1]{\prescript{#1}{}{\mathscr{O}}}
\begin{document}
\title[Smoothing, scattering, and a conjecture of Fukaya]{Smoothing, scattering, and a conjecture of Fukaya}
	
\author[Chan]{Kwokwai Chan}
\address{Department of Mathematics\\ The Chinese University of Hong Kong\\ Shatin\\ Hong Kong}
\email{kwchan@math.cuhk.edu.hk}	

\author[Leung]{Naichung Conan Leung}
\address{The Institute of Mathematical Sciences and Department of Mathematics\\ The Chinese University of Hong Kong\\ Shatin\\ Hong Kong}
\email{leung@math.cuhk.edu.hk}
	
\author[Ma]{Ziming Nikolas Ma}
\address{Department of Mathematics\\ Southern University of Science and Technology\\ Nanshan District \\ Shenzhen\\ China}
\email{mazm@sustech.edu.cn}

\begin{abstract}
In 2002, Fukaya \cite{fukaya05} proposed a remarkable explanation of mirror symmetry detailing the SYZ conjecture \cite{syz96} by introducing two correspondences: 
one between the theory of pseudo-holomorphic curves on a Calabi-Yau manifold $\check{X}$ and the multi-valued Morse theory on the base $\check{B}$ of an SYZ fibration $\check{p}\colon \check{X}\to \check{B}$,
and the other between deformation theory of the mirror $X$ and the same multi-valued Morse theory on $\check{B}$. 
In this paper, we prove a reformulation of the main conjecture in Fukaya's second correspondence, where multi-valued Morse theory on the base $\check{B}$ is replaced by tropical geometry on the Legendre dual $B$. 
In the proof, we apply techniques of asymptotic analysis developed in \cite{kwchan-leung-ma, kwchan-ma-p2} to tropicalize the pre-dgBV algebra which governs smoothing of a maximally degenerate Calabi-Yau log variety $\centerfiber{0}^{\dagger}$ introduced in \cite{chan2019geometry}. Then a comparison between this tropicalized algebra with the dgBV algebra associated to the deformation theory of the semi-flat part $X_{\mathrm{sf}} \subseteq X$ allows us to extract consistent scattering diagrams from appropriate Maurer-Cartan solutions.
\end{abstract}

\maketitle


\section{Introduction}


Two decades ago, in an attempt to understand mirror symmetry using the SYZ conjecture \cite{syz96}, Fukaya \cite{fukaya05} proposed two correspondences:
\begin{itemize}
	\item Correspondence I: between the theory of pseudo-holomorphic curves (instanton corrections) on a Calabi--Yau manifold $\check{X}$ and the multi-valued Morse theory on the base $\check{B}$ of an SYZ fibration $\check{p}\colon \check{X}\to \check{B}$, and
	\item Correspondence II: between deformation theory of the mirror $X$ and the same multi-valued Morse theory on the base $\check{B}$.
\end{itemize}
In this paper, we prove a reformulation of the main conjecture \cite[Conj 5.3]{fukaya05} in Fukaya's Correspondence II, where multi-valued Morse theory on the SYZ base $\check{B}$ is replaced by tropical geometry on the Legendre dual $B$. Such a reformulation of Fukaya's conjecture was proposed and proved in \cite{kwchan-leung-ma} in a local setting; the main result of the current paper is a global version of the main result in {\it loc. cit}. A crucial ingredient in the proof is a precise link between tropical geometry on an integral affine manifold with singularities and smoothing of maximally degenerate Calabi--Yau varieties.

The main conjecture \cite[Conj. 5.3]{fukaya05} in Fukaya's Correspondence II asserts that 
there exists a Maurer--Cartan element of the Kodaira--Spencer dgLa associated to deformations of the semi-flat part $X_{\mathrm{sf}}$ of $X$ that is asymptotically close to a Fourier expansion (\cite[Eq. (42)]{fukaya05}), whose Fourier modes are given by smoothings of distribution-valued 1-forms defined by moduli spaces of gradient Morse flow trees which are expected to encode counting of non-trivial (Maslov index 0) holomorphic disks bounded by Lagrangian torus fibers (see \cite[Rem. 5.4]{fukaya05}). Also, the complex structure defined by this Maurer--Cartan element can be compactified to give a complex structure on $X$.
At the same time, Fukaya's Correspondence I suggests that these gradient Morse flow trees arise as adiabatic limits of loci of those Lagrangian torus fibers which bound non-trivial (Maslov index 0) holomorphic disks. This can be reformulated as a holomorphic/tropical correspondence, and much evidence has been found \cite{Floer88, fukaya-oh, Mikhalkin05, Nishinou-Siebert06, Cho-Hong-Lau17, Cho-Hong-Lau18, Lin21, Cheung-Lin21, BE-C-H-Lin21}.

The tropical counterpart of such gradient Morse flow trees are given by consistent scattering diagrams, which were invented by Kontsevich--Soibelman \cite{kontsevich-soibelman04} and extensively used in the Gross--Siebert program \cite{gross2011real} to solve the reconstruction problem in mirror symmetry, namely, the construction of the mirror $X$ from smoothing of a maximally degenerate Calabi--Yau variety $\centerfiber{0}$. It is therefore natural to replace the distribution-valued 1-form in each Fourier mode in the Fourier expansion \cite[Eq. (42)]{fukaya05} by a distribution-valued 1-form associated to a wall-crossing factor of a consistent scattering diagram. This was exactly how Fukaya's conjecture \cite[Conj. 5.3]{fukaya05} was reformulated and proved in the local case in \cite{kwchan-leung-ma}.

In order to reformulate the global version of Fukaya's conjecture, however, we must also relate deformations of the semi-flat part $X_{\mathrm{sf}}$ with smoothings of the maximally degenerate Calabi--Yau variety $\centerfiber{0}$. This is because consistent scattering diagrams were used by Gross--Siebert \cite{Gross-Siebert-logII} to study the deformation theory of the compact log variety $\centerfiber{0}^{\dagger}$ (whose log structure is specified by \emph{slab functions}), instead of $X_{\mathrm{sf}}$.
For this purpose, we consider the open dense part
$$\centerfiber{0}_{\mathrm{sf}} := \moment^{-1}(W_0) \subset \centerfiber{0},$$
where $\moment\colon \centerfiber{0} \rightarrow B$ is the \emph{generalized moment map} in \cite{ruddat2019period} and $W_0 \subseteq B$ is an open dense subset such that $B\setminus W_0$ contains the tropical singular locus and all codimension $2$ cells of $B$.

Equipping $\centerfiber{0}_{\mathrm{sf}}$ with the \emph{trivial} log structure, there is a \emph{semi-flat dgBV algebra} $\sfpolyv{}^{*,*}$ governing its smoothings, and the general fiber of a smoothing is given by the semi-flat Calabi--Yau $X_{\mathrm{sf}}$ that appeared in Fukaya's original conjecture \cite[Conj. 5.3]{fukaya05}.
However, the Maurer--Cartan elements of $\sfpolyv{}^{*,*}$ cannot be compactified to give complex structures on $X$. 
On the other hand, in our previous work \cite{chan2019geometry} we constructed a \emph{Kodaira--Spencer--type pre-dgBV algebra} $\polyv{}^{*,*}$ which controls the smoothing of $\centerfiber{0}$.
A key observation is that a \emph{twisting} of $\sfpolyv{}^{*,*}$ by slab functions is isomorphic to the restriction of $\polyv{}^{*,*}$ to $\centerfiber{0}_{\mathrm{sf}}$ (Lemma \ref{lem:comparing_sheaf_of_dgbv}).

Our reformulation of the global Fukaya conjecture now claims the existence of a Maurer--Cartan element $\phi$ of this twisted semi-flat dgBV algebra that is asymptotically close to a Fourier expansion whose Fourier modes give rise to the wall-crossing factors of a consistent scattering diagram. This conjecture follows from (the proof of) our main result, stated as Theorem \ref{thm:introduction_theorem} below, which is a combination of Theorem \ref{prop:Maurer_cartan_equation_unobstructed}, the construction in \S \ref{subsubsec:consistent_diagram_from_solution} and Theorem \ref{thm:consistency_of_diagram_from_mc}:
\begin{theorem}\label{thm:introduction_theorem}
	There exists a solution $\phi$ to the classical Maurer--Cartan equation \eqref{eqn:classical_maurer_cartan_equation} giving rise to a smoothing of the maximally degenerate Calabi--Yau log variety $\centerfiber{0}^{\dagger}$ over $\comp[[q]]$, from which a consistent scattering diagram $\mathscr{D}(\phi)$ can be extracted by taking asymptotic expansions.  
\end{theorem}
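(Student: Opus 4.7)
The plan is to construct the Maurer-Cartan element $\phi$ inductively in powers of the formal parameter $q$, working inside the pre-dgBV algebra $\polyv{}^{*,*}$ of \cite{chan2019geometry}, and then to read off $\mathscr{D}(\phi)$ from the asymptotic expansion of $\phi$ via the comparison with the twisted semi-flat algebra $\sfpolyv{}^{*,*}_{\text{sf}}$ afforded by Lemma \ref{lem:comparing_sheaf_of_dgbv}. The construction will alternate between two tasks: at each order, extending $\phi_{k-1}$ to $\phi_k$ so that the classical Maurer-Cartan equation \eqref{eqn:classical_maurer_cartan_equation} is satisfied modulo $q^{k+1}$, and simultaneously recording the new \emph{walls} created, so that the resulting limit $\phi$ is expressible as a Fourier sum over wall-crossings on $B$.

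The first step is the order-by-order obstruction analysis. Assuming a solution $\phi_{k-1}$ mod $q^k$, the obstruction to its extension is a degree-$2$ cocycle in the dgBV complex at order $q^k$; using a homotopy/Hodge-theoretic operator on $\polyv{}^{*,*}$, I would produce an explicit $\delta\phi_k$ trivializing it. This is the substance of Theorem \ref{prop:Maurer_cartan_equation_unobstructed}. Because $\polyv{}^{*,*}$ was designed to govern smoothings of $\centerfiber{0}^{\dagger}$, the resulting $\phi$ automatically yields the claimed smoothing over $\comp[[q]]$.

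The second step is to ensure that $\phi$ has a tropical/Fourier asymptotic form. Via the comparison isomorphism, I transfer the problem onto $\centerfiber{0}_{\text{sf}}$, where the heat-kernel and gradient-flow asymptotics of \cite{kwchan-leung-ma, kwchan-ma-p2} apply. The goal is to arrange the inductive gauge so that each $\delta\phi_k$ is, to leading order as $q\to 0$, a finite sum of distribution-valued $1$-forms concentrated in tubular neighborhoods of codimension-$1$ polyhedra in $B$, each multiplied by a log-derivation on the formal torus (the wall-crossing factor). The bookkeeping of these contributions across orders is precisely the extraction procedure in \S \ref{subsubsec:consistent_diagram_from_solution}, which then produces the scattering diagram $\mathscr{D}(\phi)$.

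The third step is consistency. I would localize the Maurer-Cartan equation near a codimension-$2$ joint $\mathfrak{j} \subset B$ and pass to leading asymptotics; the equation then reduces to the vanishing of an alternating sum of Lie brackets of the incoming and outgoing wall-crossings (up to coboundaries absorbed by gauge), which is exactly the path-ordered product identity defining consistency of a scattering diagram around $\mathfrak{j}$. This is the content of Theorem \ref{thm:consistency_of_diagram_from_mc}. The hardest point, and the main obstacle, is the compatibility between the gauge freedom in the inductive solution and the requirement that $\phi$ remain inside the tropical subcomplex of Fourier-type forms: one must exhibit a \emph{tropical homotopy operator} whose action on brackets of existing wall contributions produces only new wall contributions supported along the rays predicted by the scattering combinatorics, rather than generic $1$-forms on $B$. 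This is precisely where the asymptotic machinery of \cite{kwchan-leung-ma, kwchan-ma-p2} is indispensable, and where the global argument genuinely extends the local result of \cite{kwchan-leung-ma}.
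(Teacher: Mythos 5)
Your overall architecture matches the paper's: unobstructedness of the Maurer--Cartan equation (Theorem \ref{prop:Maurer_cartan_equation_unobstructed}), transfer to the twisted semi-flat algebra via Lemma \ref{lem:comparing_sheaf_of_dgbv}, extraction of wall-crossing factors from jumps of step-function-like gauges, and then consistency. Two of your emphases, however, are misplaced. First, the ``tropical homotopy operator'' you flag as the main obstacle is not how the paper keeps $\phi$ in Fourier/wall form: the resolution itself is built from the sheaf $\tform^*$ of tropical differential forms (\S \ref{sec:asymptotic_support}), so \emph{every} element of $\polyv{}^{*,*}$, including any Maurer--Cartan solution, is automatically a locally finite sum of terms with asymptotic support on polyhedral subsets --- no special choice of homotopy in the induction is needed. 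What \emph{is} needed, and what your proposal omits entirely, is the normalization of the holomorphic volume form (Definition \ref{def:normalized_volume_form} and Lemma \ref{lem:vector field}): this is what forces $\bvd{}(\phi_{\text{s}})=0$, hence places the wall-crossing factors in the tropical vertex group $\exp(q\cdot\sftvbva{k})$ rather than in the full gauge group. Without it you get jumps across walls, but not a scattering diagram in the required sense.

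The genuine gap is in your consistency step. Localizing the Maurer--Cartan equation near a joint and reducing to a bracket/path-ordered-product identity can only work at joints lying in the smooth, semi-flat part of $B$, where all the walls meeting the joint are visible and the local gauge comparison (the C\v{e}ch-type cocycle in the Remark after Theorem \ref{thm:consistency_of_diagram_from_mc}) makes sense. But the diagram $\mathscr{D}(\phi)$ is only defined on $W_0$, while consistency must hold over all of $B$ --- in particular around the codimension-$2$ strata of $\pdecomp$ and around $\tsing_e$, where there is no local path-ordered product to compute. The paper handles this by reformulating consistency sheaf-theoretically (Definition \ref{def:consistency_of_scattering_diagram}): one identifies the wall-crossing sheaf $\wcs{k}_{\mathscr{D}}$ with $\ker(\pdb_\phi)\subset\polyv{k}^{0,0}$ on $W_0(\mathscr{D})$, and then uses the Hartogs extension property (Lemma \ref{lem:hartogs_extension_2} and Appendix \ref{sec:hartogs}) to push forward across joints and across $\nsf$, concluding that $\mathfrak{i}_*(\wcs{k}_{\mathscr{D}})$ is locally isomorphic to $\bva{k}^0_\alpha$ everywhere. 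The global existence of $\phi$ on $B$ (not just on $W_0$) plus Hartogs is what makes the diagram consistent ``over the whole $B$''; a leading-order asymptotic analysis at generic joints cannot substitute for this, and your proposal provides no mechanism to control the behaviour at the singular locus.
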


A brief outline of the proof of Theorem \ref{thm:introduction_theorem} is now in order.
First, recall that the pre-dgBV algebra $\polyv{}^{*,*}$ which governs smoothing of the maximally degenerate Calabi--Yau variety $\centerfiber{0}$ was constructed in \cite[Thm. 1.1 \& \S 3.5]{chan2019geometry}, and we also proved a Bogomolov--Tian--Todorov--type theorem \cite[Thm. 1.2 \& \S 5]{chan2019geometry} showing unobstructedness of the extended Maurer--Cartan equation \eqref{eqn:extended_maurer_cartan_equation}, under the Hodge-to-de Rham degeneracy Condition \ref{cond:Hodge-to-deRham} and a holomorphic Poincar\'{e} Lemma Condition \ref{cond:holomorphic_poincare_lemma} (both proven in \cite{Gross-Siebert-logII, Felten-Filip-Ruddat}). 
In Theorem \ref{prop:Maurer_cartan_equation_unobstructed}, we will further show how one can extract from the extended Maurer--Cartan equation \eqref{eqn:extended_maurer_cartan_equation} a smoothing of $\centerfiber{0}$, described as a solution $\phi \in \polyv{}^{-1,1}(B)$ to the \emph{classical Maurer--Cartan equation} \eqref{eqn:classical_maurer_cartan_equation}
$$
\pdb \phi + \half[\phi,\phi] + \mathfrak{l} = 0,
$$
together with a holomorphic volume form $e^{f} \volf{}$ which satisfies the \emph{normalization condition} 
\begin{equation}\label{eqn:introduction_normalization_of_volume_form}
	\int_T e^f \volf{} = 1,
\end{equation}
where $T$ is a nearby vanishing torus in the smoothing. 

Next, we need to tropicalize the pre-dgBV algebra $\polyv{}^{*,*}$.
However, the original construction of $\polyv{}^{*,*}$ in \cite{chan2019geometry} using the Thom--Whitney resolution \cite{whitney2012geometric, dupont1976simplicial} is too algebraic in nature. 
Here, we construct a geometric resolution exploiting the affine manifold structure on $B$.
Using the generalized moment map $\moment \colon \centerfiber{0} \rightarrow B$ \cite{ruddat2019period} and applying the techniques of asymptotic analysis (in particular the notion of \emph{asymptotic support}) in \cite{kwchan-leung-ma},
we define the sheaf $\tform^*$ of \emph{monodromy invariant tropical differential forms} on $B$ in \S \ref{sec:asymptotic_support}. 
According to Definition \ref{def:sheaf_of_tropical_dga}, a tropical differential form can be regarded as a distribution-valued form supported on polyhedral subsets of $B$. Using the sheaf $\tform^*$, we can take asymptotic expansions of elements in $\polyv{}^{*,*}$, and hence connect differential geometric operations in dgBV/dgLa with tropical geometry. In this manner, we can extract \emph{local} scattering diagrams from Maurer--Cartan solutions as we did in \cite{kwchan-leung-ma}, but we need to glue them together to get a global object.

To achieve this, we need 
the aforementioned comparison between $\polyv{}^{*,*}$ and the semi-flat dgBV algebra $\sfpolyv{}^{*,*}_{\mathrm{sf}}$ which governs smoothing of the semi-flat part $\centerfiber{0}_{\mathrm{sf}} := \moment^{-1}(W_0) \subset \centerfiber{0}$ equipped with the trivial log structure.
The key Lemma \ref{lem:comparing_sheaf_of_dgbv} says that the restriction of $\polyv{}^{*,*}$ to the semi-flat part is isomorphic to $\sfpolyv{}^{*,*}_{\mathrm{sf}}$ precisely after we \emph{twist} the semi-flat operator $\pdb_{\circ}$ by elements corresponding to the \emph{slab functions} associated to the \emph{initial walls} of the form:
$$
\phi_{\mathrm{in}} = - \sum_{v \in \rho} \delta_{v,\rho} \otimes \log(f_{v,\rho}) \partial_{\check{d}_{\rho}};
$$
here the sum is over vertices in codimension one cells $\rho$'s which intersect with the \emph{essential singular locus} $\tsing_e$ (defined in \S \ref{subsec:tropical_singular_locus}), $\delta_{v,\rho}$ is a distribution-valued $1$-form supported on a component of $\rho\setminus \tsing_e$ containing $v$, $\partial_{\check{d}_{\rho}}$ is a holomorphic vector field and $f_{v,\rho}$'s are the slab functions associated to the initial walls. We remark that slab functions were used to specify the log structure on $\centerfiber{0}$ as well as the local models for smoothing $\centerfiber{0}$ in the Gross--Siebert program; see \S \ref{sec:gross_siebert} for a review.

Now, the Maurer--Cartan solution $\phi \in \polyv{}^{-1,1}(B)$ obtained in Theorem \ref{prop:Maurer_cartan_equation_unobstructed} defines a new operator $\pdb_{\phi}$ on $\polyv{}^{*,*}$ which squares to zero. Applying the above comparison of dgBV algebras (Lemma \ref{lem:comparing_sheaf_of_dgbv}) and the gauge transformation from Lemma \ref{lem:vector field}, we show that, after restricting to $W_0$, there is an isomorphism
$$\left(\polyv{}^{-1,1}(W_0), \pdb_{\phi}\right) \cong \left(\sfpolyv{}^{-1,1}_{\mathrm{sf}}(W_0),\pdb_{\circ} + [\phi_{\mathrm{in}}+\phi_{\mathrm{s}},\cdot] \right)$$
for some element $\phi_{\mathrm{s}}$, where `s' stands for {\it scattering terms}. 
From the description of $\tform^*$, the element $\phi_{\mathrm{s}}$, to any fixed order $k$, is written locally as a finite sum of terms supported on codimension one walls/slabs (Definitions \ref{def:walls} and \ref{def:slabs}. 
For the purpose of a brief discussion in this introduction, we will restrict ourselves to a wall $\mathbf{w}$ below, though the same argument applies to a slab; see \S \ref{subsubsec:consistent_diagram_from_solution} for the details.
In a neighborhood $U_{\mathbf{w}}$ of each wall $\mathbf{w}$, the operator $\pdb_{\circ}+[\phi_{\mathrm{in}}+\phi_{\mathrm{s}},\cdot]$ is gauge equivalent to $\pdb_{\circ}$ via some vector field $\theta_{\mathbf{w}} \in \sfpolyv{}^{-1,0}_{\mathrm{sf}}(W_0)$, i.e.
$$
e^{[\theta_{\mathbf{w}},\cdot]}\circ \pdb_{\circ} \circ e^{-[\theta_{\mathbf{w}},\cdot]} = \pdb_{\circ}+ [\phi_{\mathrm{in}}+\phi_{\mathrm{s}},\cdot].
$$
Employing the techniques for analyzing the gauge which we developed in \cite{kwchan-leung-ma, kwchan-ma-p2, matt-leung-ma}, we see that the gauge will jump across the wall, resulting in a wall-crossing factor $\varTheta_{\mathbf{w}}$ satisfying
$$
	e^{[\theta_{\mathbf{w}},\cdot]}|_{ \cu{C}_{\pm}}	= \begin{dcases}
		\varTheta_{\mathbf{w}}|_{ \cu{C}_+}  & \text{on $U_{\mathbf{w}} \cap \cu{C}_+$,}\\
		\mathrm{id}  & \text{on $U_{\mathbf{w}} \cap \cu{C}_-$,}
	\end{dcases}
$$
where $\cu{C}_{\pm}$ are the two chambers separated by $\mathbf{w}$. Then from the fact that the volume form $e^{f}\volf{}$ is normalized as in \eqref{eqn:introduction_normalization_of_volume_form}, it follows that $\phi_{\mathrm{s}}$ is closed under the semi-flat BV operator $\bvd{}$, and hence we deduce that the wall-crossing factor $\varTheta_{\mathbf{w}}$ lies in the {\it tropical vertex group}. This defines a scattering diagram $\mathscr{D}(\phi)$ on the semi-flat part $W_0$ associated to $\phi$. Finally, we prove consistency of the scattering diagram $\mathscr{D}(\phi)$ in Theorem \ref{thm:consistency_of_diagram_from_mc}. We emphasize that the consistency is over the {\it whole} $B$ even though the diagram is only defined on $W_0$, because the Maurer--Cartan solution $\phi$ is globally defined on $B$. 

\begin{remark}\label{rmk:relaxed_scattering_diagram}
	Our notion of scattering diagrams (Definition \ref{def:scattering_diagram}) is a little bit more relaxed than the usual notion defined in \cite{kontsevich-soibelman04, gross2011real} in two aspects: One is that we do not require the generator of the exponents of the wall-crossing factor to be orthogonal to the wall.\footnote{It seems reasonable to relax this orthogonality condition because one cannot require such a condition in more general settings \cite{bridgeland2016scattering, matt-leung-ma}.}
    The other is that we allow possibly infinite number of walls/slabs approaching strata of the tropical singular locus. See the paragraph after Definition \ref{def:scattering_diagram} for more details.
    In practice, this simply means that we are considering a larger gauge equivalence class (or equivalently, a weaker gauge equivalence), which is natural from the point of view of both the Bogomolov--Tian--Todorov Theorem and mirror symmetry (in the A-side, this amounts to flexibility in the choice of the almost complex structure).
	We also have a different, but more or less equivalent, formulation of the consistency of a scattering diagram; see Definition \ref{def:consistency_of_scattering_diagram} and \S \ref{subsubsec:scattering_diagram}.
\end{remark}

Along the way of proving Fukaya's conjecture, besides figuring out the precise relation between the semi-flat part $X_{\mathrm{sf}}$ and the maximally degenerate Calabi--Yau log variety $\centerfiber{0}^{\dagger}$, we also find the correct description of the Maurer--Cartan solutions near the singular locus, namely, they should be extendable to the local models prescribed by the log structure (or slab functions), as was hinted by the Gross--Siebert program. This is related to a remark by Fukaya \cite[Pt. (2) after Conj. 5.3]{fukaya05}. 

Another important point is that we have established in the global setting an interplay between the differential-geometric properties of the tropical dgBV algebra and the scattering (and other combinatorial) properties of tropical disks, which was speculated by Fukaya as well (\cite[Pt. (1) after Conj. 5.3]{fukaya05}) although he considered holomorphic disks instead of tropical ones. 

Furthermore, by providing a direct linkage between Fukaya's conjecture with the Gross--Siebert program \cite{Gross-Siebert-logI, Gross-Siebert-logII, gross2011real} and Katzarkov--Kontsevich--Pantev's Hodge theoretic viewpoint \cite{KKP08} through $\polyv{}^{*,*}$ (recall from \cite{chan2019geometry} that a semi-infinite variation of Hodge structures can be constructed from $\polyv{}^{*,*}$, using the techniques of Barannikov--Kontsevich \cite{Barannikov-Kontsevich98, Barannikov99} and Katzarkov--Kontsevich--Pantev \cite{KKP08}), we obtain a more transparent understanding of mirror symmetry through the SYZ framework. 

\begin{remark}
	A future direction is to apply the framework in this paper and the works \cite{kwchan-leung-ma, chan2019geometry} to develop a local-to-global approach to understand genus $0$ mirror symmetry.
	In view of the ideas of Seidel \cite{Seidel-ICM} and Kontsevich \cite{Kontsevich-sheaf}, and also recent breakthroughs by Ganatra--Pardon--Shende \cite{Ganatra-Pardon-Shende20, Ganatra-Pardon-Shende18a, Ganatra-Pardon-Shende18b} and Gammage--Shende \cite{Gammage-Shende17, gammage2021homological}, we expect that there is a sheaf of $L_\infty$ algebras on the A-side mirror to (the $L_\infty$ enhancement of) $\polyv{}^{*,*}$ that can be constructed by gluing local models. 
	More precisely, a large volume limit of a Calabi--Yau manifold $\check{X}$ can be specified by removing from it a normal crossing divisor $\check{D}$ which represents the K\"ahler class of $\check{X}$. This gives rise to a Weinstein manifold $\check{X} \setminus \check{D}$, and produces a mirror pair $\check{X} \setminus \check{D} \leftrightarrow \centerfiber{0}$ at the large volume/complex structure limits.
	In \cite{gammage2021homological}, Gammage--Shende constructed a Lagrangian skeleton $\Lambda(\Phi) \subset \check{X} \setminus \check{D}$ from a combinatorial structure $\Phi$ called \emph{fanifold}, which can be extracted from the integral tropical manifold $B$ equipped with a polyhedral decomposition $\pdecomp$ (here we assume that the gluing data $s$ is trivial). They also proved an HMS statement at the large limits.
	We expect that an A-side analogue of $\polyv{}^{*,*}$ can be constructed from the Lagrangian skeleton $\Lambda(\Phi)$ in $\check{X} \setminus \check{D}$, possibly together with a nice and compatible SYZ fibration on $\check{X} \setminus \check{D}$, via gluing of local models.
	A local-to-global comparsion on the A-side and isomorphisms between the local models on the two sides should then yield an isomorphism of Frobenius manifolds. 
\end{remark}

\section*{Acknowledgement}
We thank Kenji Fukaya, Mark Gross and Richard Thomas for their interest and encouragement, and also Helge Ruddat for useful comments on an earlier draft of this paper. 
We are very grateful to the anonymous referees for numerous constructive and extremely detailed comments/suggestions which have helped to greatly enhanced the exposition of the whole paper.

K. Chan was supported by grants of the Hong Kong Research Grants Council (Project No. CUHK14301420 \& CUHK14301621) and direct grants from CUHK.
N. C. Leung was supported by grants of the Hong Kong Research Grants Council (Project No. CUHK14301619 \& CUHK14306720) and a direct grant (Project No. 4053400) from CUHK.
Z. N. Ma was supported by National Science Fund for Excellent Young Scholars (Overseas).
These authors contributed equally to this work.

\section*{List of notations}

\begin{center}
\begin{tabular}{l l l}
	$M$, $M_{A}$ & \S \ref{subsec:integral_affine_manifolds} & lattice, $M_{A} := M \otimes_{\inte} A$ for any $\inte$-module $A$\\
	$N$, $N_{A}$ & \S \ref{subsec:integral_affine_manifolds} & dual lattice of $M$, $N_{A} := N \otimes_{\inte} A$ for any $\inte$-module $A$\\
	$(B,\pdecomp)$ & Def. \ref{def:integral_tropical_manifold} & integral tropical manifold equipped with a polyhedral\\
    & & decomposition\\
	$\tanpoly_{\sigma}$ & \S \ref{subsec:integral_affine_manifolds} & lattice generated by integral tangent vectors along $\sigma$\\
	$\reint(\tau)$ & \S \ref{subsec:integral_affine_manifolds} & relative interior of a polyhedron $\tau$\\
	$U_\tau$ & \S \ref{subsec:integral_affine_manifolds} & open neighborhood of $\reint(\tau)$\\
	$\norpoly_{\tau}$ & \S \ref{subsec:integral_affine_manifolds} & lattice generated by normal vectors to $\tau$\\
	$S_\tau \colon U_\tau \rightarrow \norpoly_{\tau,\real}$ & \S \ref{subsec:integral_affine_manifolds} & fan structure along $\tau$\\
	$\Sigma_{\tau}$ & \S \ref{subsec:integral_affine_manifolds} & complete fan in $\norpoly_{\tau,\real}$ constructed from $S_{\tau}$\\
	$K_{\tau}\sigma$ & \S \ref{subsec:integral_affine_manifolds} & $K_{\tau}\sigma=\real_{\geq 0} S_{\tau}(\sigma \cap U_{\tau})$ is a cone in $\Sigma_{\tau}$ corresponding to $\sigma$\\
	$T_{x}$ & \S \ref{subsec:monodromy_data} & lattice of integral tangent vectors of $B$ at $x$\\
	$\Delta_i(\tau)$, $\check{\Delta}_i(\tau)$ & Def. \ref{def:simplicity} & monodromy polytope of $\tau$, dual monodromy polytope of $\tau$\\
	$\cu{A}\mathit{ff}$ & Def. \ref{def:piecewise_linear} & sheaf of affine functions on $B$\\
	$\cu{PL}_{\pdecomp}$ & Def. \ref{def:piecewise_linear} & sheaf of piecewise affine functions on $B$ with respect to $\pdecomp$\\
	$\cu{MPL}_{\pdecomp}$ & Def. \ref{def:strictly_convex_piecewise_affine} & sheaf of multi-valued piecewise affine functions on $B$\\
    & & with respect to $\pdecomp$\\
	$\varphi$ & Def. \ref{def:strictly_convex_multi_valued_function} & strictly convex multi-valued piecewise linear function\\
	$\tau^{-1}\Sigma_v$ & \S \ref{subsec:open_construction} & localization of the fan $\Sigma_v$ at $\tau$\\
	$V(\tau)$ & \S \ref{subsec:open_construction} & local affine scheme associated to $\tau$ used for open gluing\\
	$\mathrm{PM}(\tau)$ & \S \ref{subsec:open_construction} & group of piecewise multiplicative maps on $\tau^{-1}\Sigma_v$\\
	$D(\mu,\rho,v)$ & Def. \ref{def:alternative_description_open_gluing_data} & number encoding the change of $\mu \in \mathrm{PM}(\tau)$ across $\rho$ through $v$\\ 
	$\centerfiber{0}_\tau$ & \S \ref{subsec:open_construction} & closed stratum of $\centerfiber{0}$ associated to $\tau$\\
	$C_{\tau}$ & \S \ref{subsec:log_structure_and_slab_function} & cone defined by the strictly convex function $\bar{\varphi}_{\tau}\colon \Sigma_{\tau} \rightarrow \real$\\
    & & representing $\varphi$\\
	$\bar{P}_{\tau}$ & \S \ref{subsec:log_structure_and_slab_function} & monoid of integral points in $C_{\tau}$\\
	$q = z^{\varrho}$ & \S \ref{subsec:log_structure_and_slab_function} & parameter for a toric degeneration\\
	$\cu{N}_{\rho}$ & \S \ref{subsec:log_structure_and_slab_function} & line bundle on $\centerfiber{0}_{\rho}$ having slab functions $f_{\rho}$ as sections\\
	$f_{v\rho}$ & \S \ref{subsec:log_structure_and_slab_function} & local slab function associate to $\rho$ in the chart $V(v)$\\
	$\varkappa_{\tau,i} \colon \centerfiber{0}_{\tau} \rightarrow \bb{P}^{r_{\tau,i}}$ & \S \ref{subsec:log_structure_and_slab_function} & toric morphism induced from the monodromy polytope $\Delta_i(\tau)$ \\
	$ P_{\tau,x}$ & \S \ref{subsec:log_structure_and_slab_function} & toric monoid describing the local model of toric degeneration\\
    & & near $x \in \centerfiber{0}_{\tau}$ \\
	$Q_{\tau,x}$ & \S \ref{subsec:log_structure_and_slab_function} & toric monoid isomorphic to $P_{\tau,x} /( \varrho + P_{\tau,x} )$ \\
	$\mathscr{N}_{\tau}$ & \S \ref{subsec:log_structure_and_slab_function} & normal fan of a polytope $\tau$\\
	$\moment\colon \centerfiber{0}\rightarrow B$ & \S \ref{subsec:moment_map} & generalized moment map\\
\end{tabular}
\end{center}

\begin{center}
\begin{tabular}{lll}
    $\Upsilon_{\tau}$  & \S \ref{subsubsec:charts_on_B} & coordinate chart on $W(\tau) \subset B$\\
    $\tsing$ (resp. $\tsing_e$) & \S \ref{subsec:tropical_singular_locus} & (resp. essential) tropical singular locus in $B$\\
    $\modmap \colon \centerfiber{0} \rightarrow B $ &Def. \ref{def:modified_moment_map} & surjective map with $\modmap(Z) \subset \tsing_{e}$\\
    $\mathcal{W} = \{W_{\alpha}\}_{\alpha}$ & \S \ref{sec:deformation_via_dgBV} & good cover (Condition \ref{cond:good_cover_of_B}) of $B$ with $V_{\alpha}:= \modmap^{-1}(W_{\alpha})$ being Stein\\
    $\localmod{k}_{\alpha}^{\dagger}$ & \S \ref{sec:deformation_via_dgBV} & $k^{\text{th}}$-order local smoothing model of $V_{\alpha}$\\
	$\bva{k}_{\alpha}^*$ & Def. \ref{def:higher_order_thickening_data_from_gross_siebert} & sheaf of $k^{\text{th}}$-order holomorphic relative log polyvector fields on $\localmod{k}_{\alpha}^{\dagger}$\\
    $\tbva{k}{}_{\alpha}^*$ & Def. \ref{def:higher_order_thickening_data_from_gross_siebert} & sheaf of $k^{\text{th}}$-order holomorphic log de Rham differentials on $\localmod{k}_{\alpha}^{\dagger}$\\
    $\tbva{k}{\parallel}_{\alpha}^*$ & \S \ref{subsubsec:local_deformation_data} & sheaf of $k^{\text{th}}$-order holomorphic relative log de Rham differentials on $\localmod{k}_{\alpha}^{\dagger}$\\
    $\volf{k}_{\alpha}$ & Def. \ref{def:higher_order_thickening_data_from_gross_siebert} & $k^{\text{th}}$-order relative log volume form on $\localmod{k}_{\alpha}^{\dagger}$\\
	$\bvd{k}_{\alpha}$ & \S \ref{subsubsec:local_deformation_data} & BV operator on $\bva{k}_{\alpha}$\\
	$\polyv{k}^{*,*}_{\alpha}$ & Def. \ref{def:local_dgBV_from_resolution} & local sheaf of $k^{\text{th}}$-order polyvector fields\\
	$\totaldr{k}{}^{*,*}_{\alpha}$ & Def. \ref{def:local_dga_from_resolution} & local sheaf of $k^{\text{th}}$-order de Rham forms\\
    $\polyv{k}^{*,*}$ & Def. \ref{def:global_polyvector_and_de_rham} & global sheaf of $k^{\text{th}}$-order polyvector fields from gluing of $\polyv{k}^{*,*}_{\alpha}$'s \\
	$\totaldr{k}{}^{*,*}$ & Def. \ref{def:global_polyvector_and_de_rham} & global sheaf of $k^{\text{th}}$-order de Rham forms from gluing of $\totaldr{k}{}^{*,*}_{\alpha}$'s\\
    $\tform^*$ & Def. \ref{def:global_sheaf_of_monodromy_invariant_tropical_forms} & global sheaf of tropical differential forms on $B$\\
    $W_0$ & \S \ref{subsubsec:semi-flat} & semi-flat locus\\
    $\sfbva{k}^*_{\mathrm{sf}}$ & \S \ref{subsubsec:semi-flat} & sheaf of $k^{\text{th}}$-order semi-flat holomorphic relative vector fields \\
    $\sftbva{k}{}^*_{\mathrm{sf}}$ & \S \ref{subsubsec:semi-flat} & sheaf of $k^{\text{th}}$-order semi-flat holomorphic log de Rham forms \\
	$\sftvbva{k}$ & eqt. \eqref{eqn:tropical_vertex_lie_algebra} & sheaf of $k^{\text{th}}$-order semi-flat holomorphic tropical vertex Lie algebras \\
    $\sfpolyv{k}^{*,*}_{\mathrm{sf}}$ & Def. \ref{def:sheaf_of_sf_polyvector} & sheaf of $k^{\text{th}}$-order semi-flat polyvector fields\\
    $\sftotaldr{k}{}^{*,*}_{\mathrm{sf}}$ & Def. \ref{def:sheaf_of_sf_polyvector} & sheaf of $k^{\text{th}}$-order semi-flat log de Rham forms \\
    $\sftropv{k}^*_{\mathrm{sf}}$ & Def. \ref{def:semi_flat_tropical_vertex_lie_algebra} & sheaf of $k^{\text{th}}$-order semi-flat tropical vertex Lie algebras \\
    $(\mathbf{w},\Theta_{\mathbf{w}})$ & Def. \ref{def:walls} & wall equipped with a wall-crossing factor \\
    $(\mathbf{b},\Theta_{\mathbf{b}})$ & Def. \ref{def:slabs} & slab equipped with a wall-crossing factor \\
    $\mathscr{D}$ & Def. \ref{def:scattering_diagram} & scattering diagram \\
    $W_0(\mathscr{D})$ & \S \ref{subsubsec:scattering_diagram} & complement of joints in the semi-flat locus \\
    $\mathfrak{i}$ & \S \ref{subsubsec:scattering_diagram} & the embedding $\mathfrak{i}\colon W_0(\mathscr{D}) \rightarrow B$ \\
    $\wcs{k}_{\mathscr{D}}$ & \S \ref{subsubsec:scattering_diagram} & $k^{\text{th}}$-order wall-crossing sheaf associated to $\mathscr{D}$ \\
\end{tabular}
\end{center}

\vspace{3mm}

\begin{notation}\label{not:universal_monoid}
	We usually fix a rank $s$ lattice $\blat$ together with a strictly convex $s$-dimensional rational polyhedral cone $Q_\real \subset \blat_\real = \blat\otimes_\inte \real$. We call $Q := Q_\real \cap \blat$ the {\em universal monoid}.
	We consider the ring $\cfr:=\comp[Q]$, a monomial element of which is written as $q^m \in \cfr$ for $m \in Q$, and the maximal ideal $\mathbf{m}:= \comp[Q\setminus \{0\}]$. 
	Then $\cfrk{k}:= \cfr / \mathbf{m}^{k+1}$ is an Artinian ring, and we denote by $\hat{\cfr}:= \varprojlim_{k} \cfrk{k}$ the completion of $\cfr$. We further equip $\cfr$, $\cfrk{k}$ and $\hat{\cfr}$ with the natural monoid homomorphism $Q \rightarrow \cfr$, $m \mapsto q^m$, which gives them the structure of a {\em log ring} (see \cite[Definition 2.11]{gross2011real}); the corresponding log analytic spaces are denoted as $\logs$, $\logsk{k}$ and $\logsf$ respectively.
	
	Furthermore, we let $\logsdrk{}{*} := \cfr \otimes_{\comp} \bigwedge^*\blat_{\comp}$, $\logsdrk{k}{*}:=   \cfrk{k} \otimes_{\comp} \bigwedge^*\blat_{\comp}$ and $\logsdrf{*} := \hat{\cfr} \otimes_{\comp} \bigwedge^*\blat_{\comp}$ (here $\blat_\comp = \blat \otimes_\inte \comp$) be the spaces of log de Rham differentials on $\logs$, $\logsk{k}$ and $\logsf$ respectively, where we write $1 \otimes m = d \log q^m$ for $m \in \blat$; these are equipped with the de Rham differential $\partial$ satisfying $\partial(q^m) = q^m d\log q^m$. We also denote by $\logsvfk{}:= \cfr \otimes_{\comp} \blat_{\comp}^{\vee}$, $\logsvfk{}$ and $\logsvff$, respectively, the spaces of log derivations, which are equipped with a natural Lie bracket $[\cdot,\cdot]$. We write $\partial_n$ for the element $1\otimes n$ with action $\partial_n (q^m) = (m,n) q^m$, where $(m,n)$ is the natural pairing between $\blat_\comp$ and $\blat^{\vee}_\comp$.
\end{notation}
\section{Gross--Siebert's cone construction of maximally degenerate Calabi--Yau varieties}\label{sec:gross_siebert}

This section is a brief review of Gross--Siebert's construction of the maximally degenerate Calabi--Yau variety $\centerfiber{0}$ from the affine manifold $B$ and its log structures from slab functions \cite{Gross-Siebert-logI, Gross-Siebert-logII, gross2011real}. 

\subsection{Integral tropical manifolds}\label{subsec:integral_affine_manifolds}

We first recall the notion of integral tropical manifolds from \cite[\S 1.1]{gross2011real}.
Given a lattice $M$ of rank $n$, a \textit{rational convex polyhedron} $\sigma$ is a convex subset in $M_{\real}$ given by a finite intersection of rational (i.e. defined over $M_{\bb{Q}}$) affine half-spaces.  We usually drop the attributes ``rational'' and ``convex'' for polyhedra. A polyhedron $\sigma$ is said to be \textit{integral} if all its vertices lie in $M$; a \textit{polytope} is a compact polyhedron. The group $\mathbf{Aff}(M):= M \rtimes \mathrm{GL}(M)$ of integral affine transformations acts on the set of polyhedra in $M_{\real}$.
	Given a polyhedron $\sigma \subset M_\real$, let $\tanpoly_{\sigma,\real} \subset M_\real$ be the smallest affine subspace containing $\sigma$, and denote by $\tanpoly_{\sigma} := \tanpoly_{\sigma,\real} \cap M$ the corresponding lattice. The \textit{relative interior} $\reint(\sigma)$ refers to taking the interior of $\sigma$ in $\tanpoly_{\sigma,\real}$. There is an identification $T_{\sigma,x} \cong \tanpoly_{\sigma,\real}$ for the tangent space at $x \in \reint(\sigma)$. Write $\partial \sigma = \sigma \setminus \reint(\sigma)$. Then a \textit{face} of $\sigma$ is the intersection of $\partial \sigma$ with a supporting hyperplane. Codimension one faces are called \textit{facets}.
	
	Let $\pcate$ be the category whose objects are integral polyhedra and morphisms consist of the identity and integral affine isomorphisms onto faces (i.e. an integral affine morphism $\tau \rightarrow \sigma$ which is an isomorphism onto its image and identifies $\tau$ with a face of $\sigma$).  
	An \textit{integral polyhedral complex} is a functor $\mathtt{F}\colon \pdecomp \rightarrow \pcate$ from a finite category $\pdecomp$ to $\pcate$ such that every face of $\mathtt{F}(\sigma)$ still lies in the image of $\mathtt{F}$, and there is at most one arrow $\tau \rightarrow \sigma$ for every pair $\tau ,\sigma \in \pdecomp$. By abuse of notation, we usually drop the notation $\mathtt{F}$ and write $\sigma \in \pdecomp$ to represent an integral polyhedron in the image of the functor. From an integral polyhedral complex, we obtain a topological space $B := \varinjlim_{\sigma \in \pdecomp} \sigma$ via gluing of the polyhedra along faces. We further assume that:
\begin{enumerate}
	\item the natural map $\sigma \rightarrow B$ is injective for each $\sigma \in \pdecomp$, so that $\sigma$ can be identified with a closed subset of $B$ called a \textit{cell}, and a morphism $\tau \rightarrow \sigma$ can be identified with an inclusion of subsets;
	
	\item a finite intersection of cells is a cell; and
	
	\item $B$ is an orientable connected topological manifold of dimension $n$ without boundary which in addition satisfies the condition that $H^1(B,\mathbb{Q}) = 0$. 
	\end{enumerate}

\begin{remark}
	The condition $H^1(B,\mathbb{Q}) = 0$ will be used only in Theorem \ref{prop:Maurer_cartan_equation_unobstructed} to ensure that $H^{1}(\centerfiber{0},\mathcal{O}) = H^1(B,\comp) = 0$, where $\centerfiber{0}$ is the degenerate Calabi--Yau variety that we are going to construct.\footnote{In his recent work \cite{Felten23}, Felten was able to prove Theorem \ref{prop:Maurer_cartan_equation_unobstructed} without assuming that $H^1(B,\mathbb{Q}) = 0$.} This corresponds to the condition that $b_1=0$ for smooth Calabi--Yau manifolds. 
	\end{remark}

The set of $k$-dimensional cells is denoted by $\pdecomp^{[k]}$, and the $k$-skeleton by $\pdecomp^{[\leq k]}$. For every $\tau \in \pdecomp$, we define its \textit{open star} by
$$
U_{\tau}:= \bigcup_{\sigma \supset \tau} \reint(\sigma),
$$
which is an open subset of $B$ containing $\reint(\tau)$. 
A \textit{fan structure along $\tau \in \pdecomp^{[n-k]}$} is a continuous map $S_{\tau} \colon U_{\tau} \rightarrow \real^{k}$ such that
\begin{itemize}
	\item $S^{-1}_{\tau}(0) = \reint(\tau)$,
	\item for every $\sigma \supset \tau$, the restriction $S_{\tau}|_{\reint(\sigma)}$ is an integral affine submersion onto its image (meaning that it is induced by some epimorphism $\tanpoly_{\sigma} \rightarrow W \cap \inte^k$ for some vector subspace $W\subset \real^k$), and
	\item the collection of cones $\{ K_{\tau}\sigma:= \real_{\geq 0} S_{\tau}(\sigma \cap U_{\tau}) \}_{\sigma \supset \tau}$ forms a complete finite fan $\Sigma_{\tau}$.
\end{itemize}
Two fan structures along $\tau$ are \emph{equivalent} if they differ by composition with an integral affine transformation of $\real^{k}$.
If $S_\tau$ is a fan structure along $\tau$ and $\sigma \supset \tau$, then $U_{\sigma} \subset U_{\tau}$ and there is a fan structure along $\sigma$ induced from $S_{\tau}$ via the composition:
$$
U_{\sigma} \hookrightarrow U_{\tau} \rightarrow \bb{R}^{k} \twoheadrightarrow \bb{R}^{l},
$$
where $\bb{R}^{k} \rightarrow \bb{R}^{k}/ \real S_{\tau}(\sigma \cap U_{\tau}) \cong \bb{R}^{l}$ is the quotient map. 

\begin{definition}[\cite{gross2011real}, Def. 1.2]\label{def:integral_tropical_manifold}
	An \emph{integral tropical manifold} is an integral polyhedral complex $(B,\pdecomp)$ together with a fan structure $S_{\tau}$ along each $\tau \in \pdecomp$ such that whenever $\tau \subset \sigma$, the fan structure induced from $S_{\tau}$ is equivalent to $S_{\sigma}$. 
\end{definition}

Taking sufficiently small and mutually disjoint open subsets $W_{v} \subset U_{v}$ for $v \in \pdecomp^{[0]}$ and $\reint(\sigma)$ for $\sigma \in \pdecomp^{[n]}$, there is an integral affine structure on $\bigcup_{v \in \pdecomp^{[0]}} W_v \cup \bigcup_{\sigma \in \pdecomp^{[n]}} \reint(\sigma)$. 
We will further choose the open subsets $W_v$'s and $\reint(\sigma)$'s so that the affine structure is defined outside a closed subset $\Gamma$ of codimension two in $B$, as in \cite[\S 1.3]{Gross-Siebert-logI}. This affine structure allows us to use parallel transport to identify the tangent spaces $T_x B$ for different points $x$ outside the closed subset. For every $\tau$ we choose a maximal cell $\sigma\supset \tau$ and consider the lattice of normal vectors $\norpoly_{\tau}=\tanpoly_{\sigma}/\tanpoly_{\tau}$ (we suppress the dependence on $\sigma$ because we will see that $\tanpoly_{\tau}$ is monodromy invariant under the monodromy transformation given by any two vertices of $\tau$ and any two maximal cells containing $\tau$). We can identify $\norpoly_{\tau}$ with $\inte^{k}$ via $S_{\tau}$, and write the fan structure as $S_{\tau} \colon U_{\tau} \rightarrow \norpoly_{\tau,\real}$. 

    \begin{example}\label{eg:K3_example}
	We take a $2$-dimensional example from \cite[Ex. 6.74]{dbrane} to illustrate the above definitions. Let $\Xi$ be the convex hull of the points 
	$$
	p_0 = \begin{bmatrix} -1 \\ -1 \\ -1  \end{bmatrix}, \ p_1 = \begin{bmatrix} 3\\-1\\-1\end{bmatrix}, \ p_2 = \begin{bmatrix} -1 \\ 3 \\ -1 \end{bmatrix}, \ p_3 = \begin{bmatrix} -1 \\ -1 \\3 \end{bmatrix}, 
	$$
	so $\Xi$ is a $3$-simplex. Take $B$ (as a topological space) to be the boundary of $\Xi$. 
    The polyhedral decomposition $\pdecomp$ is defined so that the integral points are vertices as shown in Figure \ref{fig:k3_polytope}.
	\begin{figure}[h!]
		\includegraphics[scale=0.3]{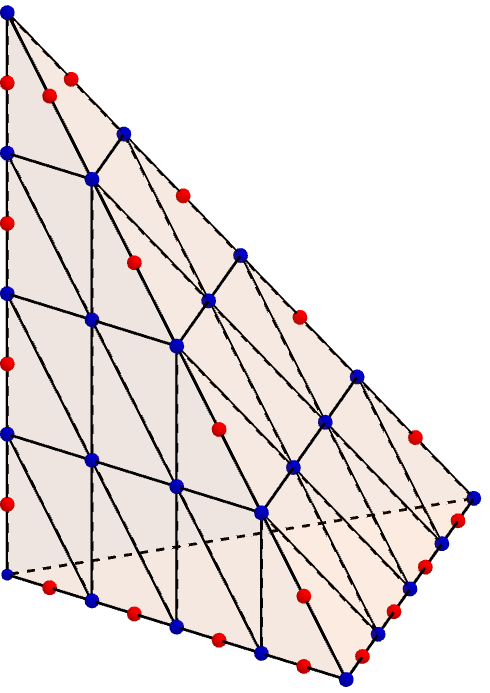}
		\caption{The polyhedral decomposition}\label{fig:k3_polytope}
	\end{figure}
	
	Then we define affine coordinate charts on $\bigcup_{\sigma \in \pdecomp^{[n]}} \reint(\sigma) \cup \bigcup_{v\in \pdecomp^{[0]}} W_v$ as follows. On $\reint(\sigma)$, we take $\psi_{\sigma} \colon \reint(\sigma) \rightarrow \tanpoly_{\sigma,\real}$ which maps homeomorphically onto its image. At a vertex $v$ treated as a vector in $\real^3$, we let $\psi_v\colon W_v \subset \real^3 \rightarrow \real^3 / \real v$, where $\real^3 \rightarrow \real^3/\real v$ is the natural projection onto the quotient. By \cite[Prop. 6.81]{dbrane}, this gives an integral affine manifold with singularities. The affine structure can be extended to the complement of a subset $\Gamma$ consisting of $24$ points lying on the six edges of $\Xi$, with each edge containing $4$ points (colored in red in Figure \ref{fig:k3_polytope}). The fan structure $S_\tau$ can be defined similarly. 
	
	Locally near each singular point $p \in \Gamma$ contained in an edge $\rho$, the affine structure is described as a gluing of two affine charts $U_{\mathrm{I}}\subset \real^2 \setminus \{0\} \times \real_{\geq 0}$ and $U_{\mathrm{II}}\subset \real^2 \setminus 0 \times \real_{\leq 0}$  as in \cite[\S 3.2]{gross2011invitation}. The change of coordinates from $U_{\mathrm{I}}$ to $U_{\mathrm{II}}$ is given by the restriction of the map $\Upsilon$ from $(\real \setminus \{0\}) \times \real$ to itself defined by 
	$$
	(x,y) \mapsto \begin{cases}
		(x,y), & x<0\\
		(x,x+y), & x>0. 
		\end{cases}
	$$
	The fan structure $S_\rho\colon U_{\rho} \rightarrow \real$ is given as $S_{\rho}(x,y) = x$ and the fan $\Sigma_{\rho}$ is the toric fan for $\mathbb{P}^1$. Figure \ref{fig:affine_chart} below illustrates the situation.
	
	\begin{figure}[h!]
	\includegraphics[scale=0.5]{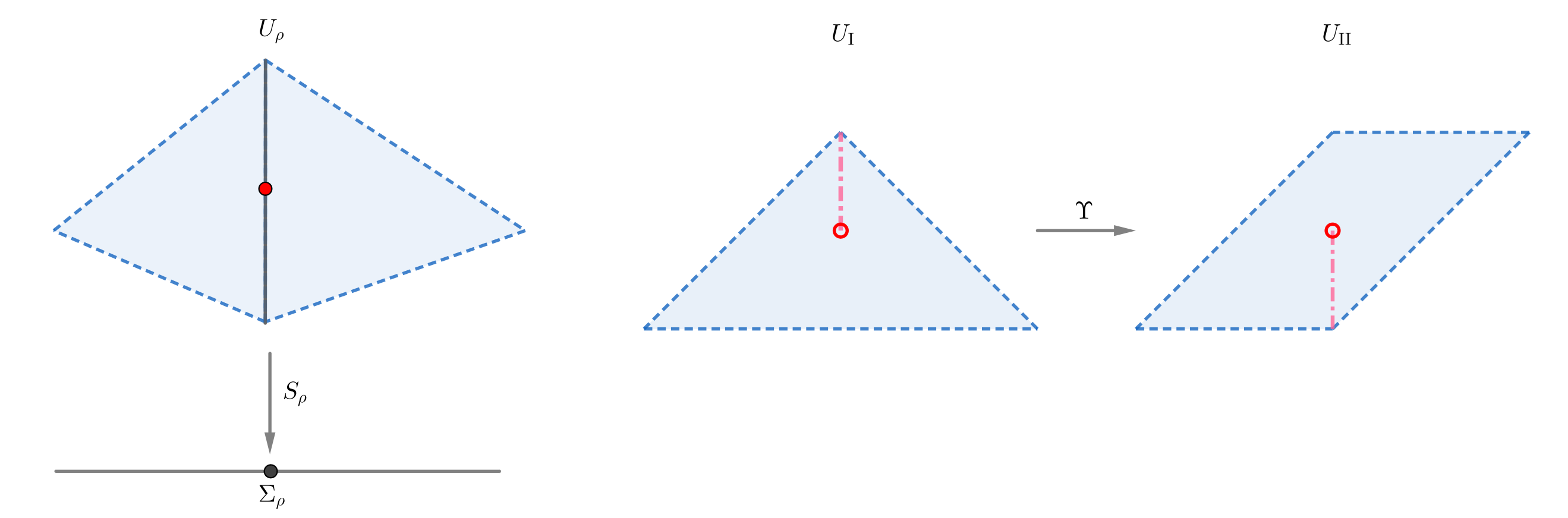}
	\caption{Affine coordinate charts}\label{fig:affine_chart}
	\end{figure}
	
	With the structure of an integral tropical manifold, the corners and edges in Figure \ref{fig:k3_polytope} are flattened via the affine coordinate charts, and we can view $(B,\pdecomp)$ as the 2-sphere equipped with a polyhedral decomposition and with $24$ affine singularities. Such an affine structure with singularities also appears in the base $B$ of an SYZ fibration of a K3 surface.
	\end{example}
	
	\begin{example}\label{eg:3d_example}
		A $3$-dimensional example can be constructed as in \cite[Ex. 6.74]{dbrane}. Take $\Xi$ to be the convex hull of the points 
		$$
		p_0 = \begin{bmatrix} -1 \\ -1 \\ -1 \\-1  \end{bmatrix}, \ p_1 = \begin{bmatrix} 4 \\-1\\-1 \\ -1 \end{bmatrix}, \ p_2 = \begin{bmatrix} -1 \\ 4 \\ -1 \\ -1 \end{bmatrix}, \ p_3 = \begin{bmatrix} -1 \\ -1 \\4 \\ -1 \end{bmatrix}, \ p_4 = \begin{bmatrix} -1 \\ -1 \\-1 \\ 4 \end{bmatrix},
		$$
		which gives a $4$-simplex. Take $B$ (as a topological space) to be the boundary of $\Xi$. There are five $3$-dimensional maximal cells intersecting along ten $2$-dimensional facets. The polyhedral decomposition $\pdecomp$ on each facet is as in Figure \ref{fig:three_d_polyhedral_decomposition}. 
		
		\begin{figure}[h!]
		\includegraphics[scale=0.8]{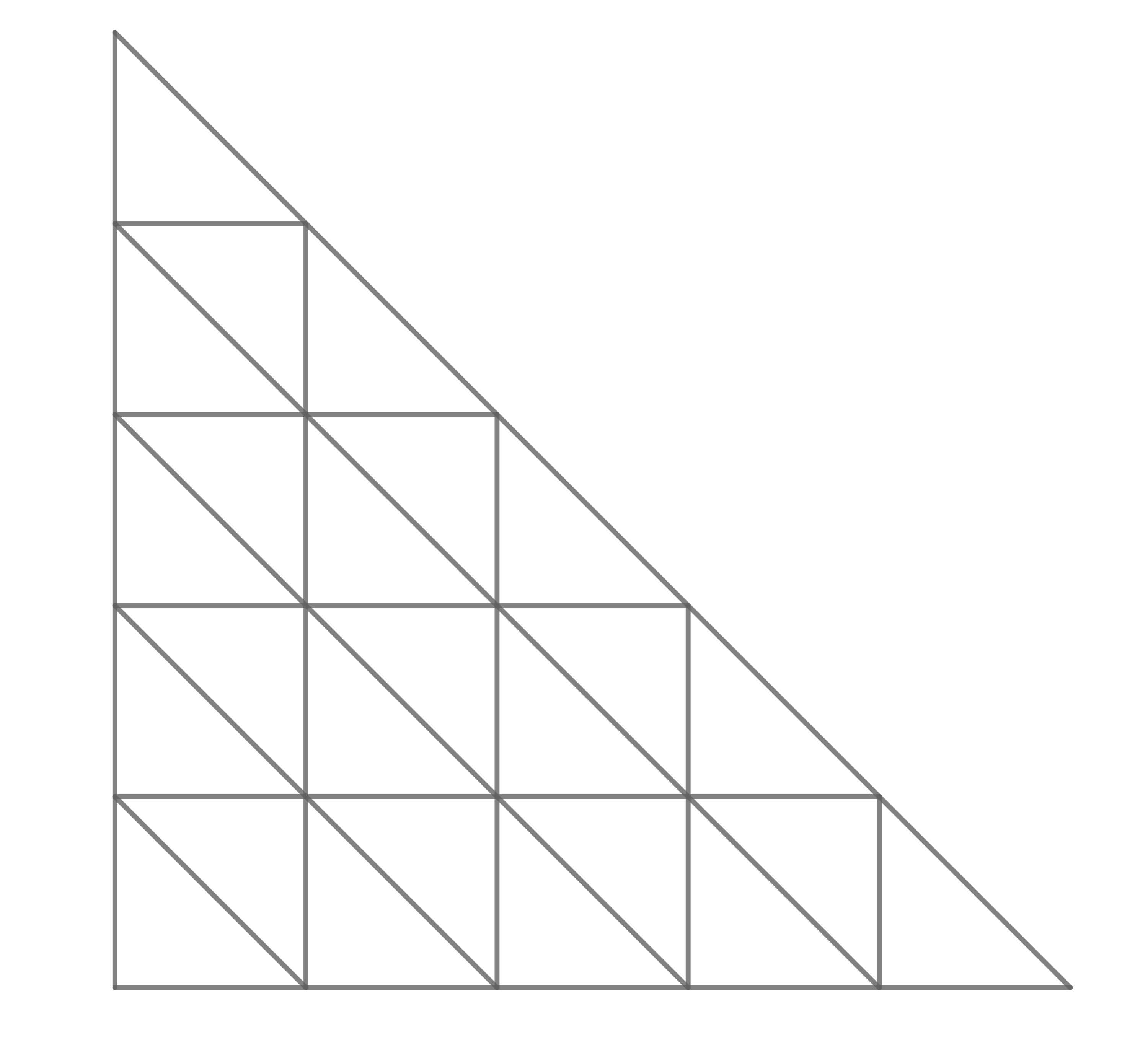}
		\caption{The polyhedral decomposition on a facet}\label{fig:three_d_polyhedral_decomposition}
		\end{figure}
		The affine structure can be extended to the complement of codimension 2 closed subset $\Gamma$ whose intersection with a triangle in Figure \ref{fig:three_d_polyhedral_decomposition} is a $Y$-shaped locus. Locally near each of these triangles, it looks like Figure \ref{fig:3_d_singular_locus_1}.
		\begin{figure}[h!] 
			\centering
			\subfloat[$Y$-vertex of type I]{%
				\includegraphics[width=0.35\textwidth]{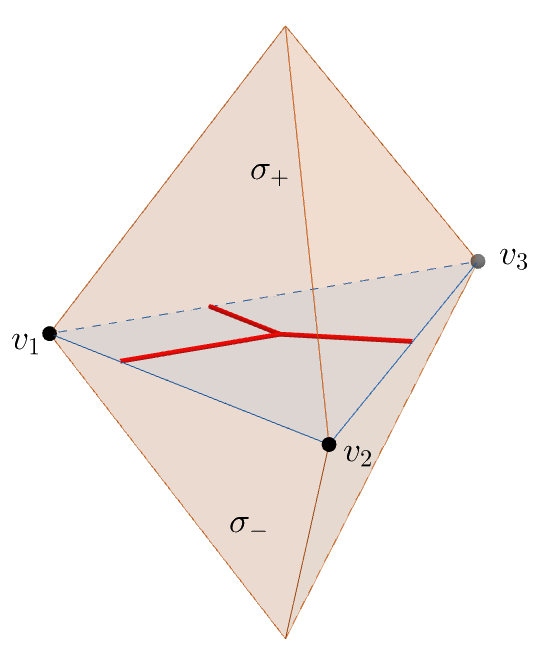}%
				\label{fig:3_d_singular_locus_1}%
			}%
			\hspace{12mm}%
			\subfloat[$Y$-vertex of type II]{%
				\includegraphics[width=0.45\textwidth]{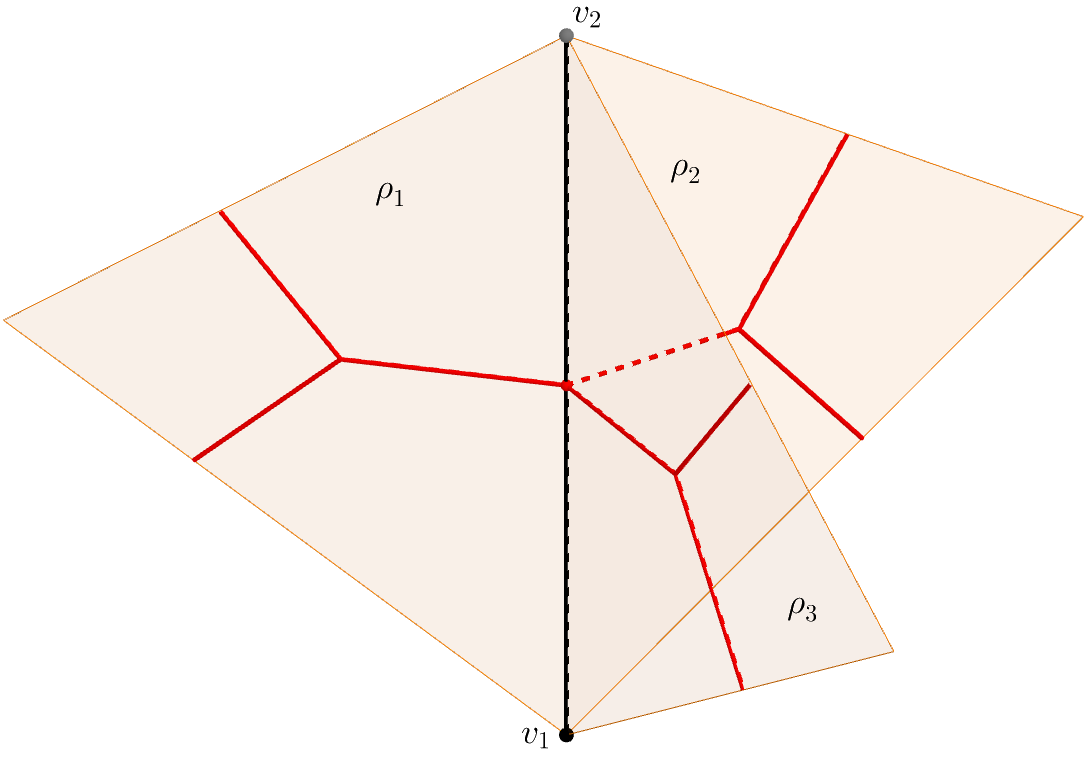}%
				\label{fig:3_d_singular_locus_2}%
			}%
		\end{figure}
		$\Xi$ has ten $1$-dimensional faces, each of which is an edge with affine length $5$. The polyhedral decomposition $\pdecomp$ divides each edge into $5$ intervals as we can see in Figure \ref{fig:three_d_polyhedral_decomposition}. Locally near each of these length $1$ intervals, there are three $2$-cells of $\pdecomp$ intersecting along it. The locus $\Gamma$ on each $2$-cell intersects on the interval as shown in Figure \ref{fig:3_d_singular_locus_2}.
		\end{example}

    \begin{definition}[\cite{Gross-Siebert-logI}, Def. 1.43]\label{def:piecewise_linear}
	An \emph{integral affine function} on an open subset $U \subset B$ is a continuous function $\varphi$ on $U$ which is integral affine on $U \cap \reint(\sigma)$ for $\sigma \in \pdecomp^{[n]}$ and on $U \cap W_v$ for $v \in \pdecomp^{[0]}$. We denote by $\cu{A}\mathit{ff}_{B}$ (or simply $\cu{A}\mathit{ff}$) \emph{the sheaf of integral affine functions on $B$}.
	
	A \emph{piecewise integral affine function} (abbrev.\ as \emph{PA-function}) on $U$ is a continuous function $\varphi$ on $U$ which can be written as $\varphi = \psi + S_{\tau}^*(\bar{\varphi})$ on $U \cap U_{\tau}$ for every $\tau \in \pdecomp$, where $\psi \in \cu{A}\mathit{ff}(U \cap U_\tau)$ and $\bar{\varphi}$ is a piecewise linear function on $\norpoly_{\tau,\real}$ with respect to the fan $\Sigma_{\tau}$. \emph{The sheaf of PA-functions on $B$} is denoted by $\cu{PL}_{\pdecomp}$.
\end{definition}

There is a natural inclusion $\cu{A}\mathit{ff}\hookrightarrow\cu{PL}_{\pdecomp}$, and we let $\cu{MPL}_{\pdecomp}$ be the quotient:
$$0\to \cu{A}\mathit{ff}\to\cu{PL}_{\pdecomp}\to\cu{MPL}_{\pdecomp}\to 0.$$
Locally, an element $\varphi\in\Gamma(B,\cu{MPL}_{\pdecomp})$ is a collection of piecewise affine functions $\{\varphi_U\}$ such that on each overlap $U\cap V$, the difference
$\varphi_U|_{V}-\varphi_V|_{U}$ is an integral affine function on $U\cap V$.

\begin{definition}[\cite{Gross-Siebert-logI}, Def. 1.45 and 1.47]\label{def:strictly_convex_piecewise_affine}
	The sheaf $\cu{MPL}_{\pdecomp}$ is called \emph{the sheaf of multi-valued piecewise affine functions (abbrev.\ as MPA-funtions) of the pair $(B,\pdecomp)$}.
	A section $\varphi\in H^0(B,\cu{MPL}_{\pdecomp})$ is said to be \emph{convex} (resp. \emph{strictly convex}) if for any vertex $\{v\}\in\pdecomp$, there is a convex (resp. strictly convex) representative $\varphi_v$ on $U_v$. (Here, convexity (resp. strict convexity) means if we take any maximal cone $\sigma \subset U_v$ with the affine function $l_{\sigma}\colon U_v\rightarrow \real$ defined by requiring $\varphi_v|_{\sigma} = l_{\sigma}$, we always have $\varphi_v(y)\geq  l_{\sigma}(y)$ (resp. $\varphi_v(y)> l_{\sigma}(y)$) for $y\in U_v \setminus \sigma$). 
\end{definition}
The set of all convex multi-valued piecewise affine functions gives a sub-monoid of $H^0(B,\cu{MPL}_{\pdecomp})$ under addition, denoted as $H^0(B,\cu{MPL}_{\pdecomp},\bb{N})$; we let $Q$ be the dual monoid.

\begin{definition}[\cite{Gross-Siebert-logI}, Def. 1.48]\label{def:strictly_convex_multi_valued_function}
	The polyhedral decomposition $\pdecomp$ is said to be \emph{regular} if there exists a strictly convex multi-valued piecewise linear function $\varphi\in H^0(B,\cu{MPL}_{\pdecomp})$. 
\end{definition}

We always assume that $\pdecomp$ is regular with a fixed strictly convex $\varphi \in H^0(B,\cu{MPL}_{\pdecomp})$.

\subsection{Monodromy, positivity and simplicity}\label{subsec:monodromy_data}

To describe monodromy, we consider two maximal cells $\sigma_{\pm}$ and two of their common vertices $v_{\pm}$. Taking a path $\gamma$ going from $v_+$ to $v_-$ through $\sigma_+$, and then from $v_-$ back to $v_+$ through $\sigma_-$, we obtain a monodromy transformation $T_{\gamma}$. 
As in \cite[\S 1.5]{Gross-Siebert-logI}, we are interested in two cases. The first case is when $v_+$ is connected to $v_-$ via a bounded edge $\omega \in \pdecomp^{[1]}$. Let $d_{\omega} \in \tanpoly_{\omega}$ be the unique primitive vector pointing to $v_-$ along $\omega$. For an integral tangent vector $m \in T_{v_+} := T_{v_+,\inte}B$, the monodromy transformation $T_{\gamma}$ is given by 
\begin{equation}\label{eqn:monodromy_transformation_edge_fixed}
T_{\gamma}(m) = m +  \langle m , n^{\sigma_+ \sigma_-}_{\omega} \rangle d_{\omega}
\end{equation}
for some $n^{\sigma_+ \sigma_-}_{\omega} \in \norpoly_{\sigma_+ \cap \sigma_-}^*\subset T_{v_+}^* $, where $\langle \cdot, \cdot \rangle$ is the natural pairing between $T_{v_+}$ and $T_{v_+}^*$. 
The second case is when $\sigma_+$ and $\sigma_-$ are separated by a codimension one cell $\rho \in \pdecomp^{[n-1]}$. Let $\check{d}_{\rho}\in \norpoly_{\rho}^*$ be the unique primitive covector which is positive on $\sigma_+$. The monodromy transformation is given by 
\begin{equation}\label{eqn:monodromy_transformation_rho_fixed}
T_{\gamma}(m) = m +  \langle m , \check{d}_{\rho} \rangle m^{\rho}_{v_+v_-}
\end{equation}
for some $m^{\rho}_{v_+v_-} \in \tanpoly_{\tau}$, where $\tau \subset \rho$ is the smallest face of $\rho$ containing $v_{\pm}$. 
In particular, if we fix both $v_{\pm} \in \omega \subset \rho \subset\sigma_{\pm}$, one obtains the formula
\begin{equation}\label{eqn:monodromy_transformation_both_fixed}
 T_{\gamma}(m) = m +  \kappa_{\omega\rho}\langle m , \check{d}_{\rho} \rangle d_{\omega}
\end{equation}
for some integer $\kappa_{\omega\rho}$. 
 \begin{definition}[\cite{Gross-Siebert-logI}, Def. 1.54]\label{def:positivity_assumption}
 	We say that $(B,\pdecomp)$ is \emph{positive} if $\kappa_{\omega\rho} \geq 0$ for all $\omega \in \pdecomp^{[1]}$ and $\rho \in \pdecomp^{[n-1]}$ with $\omega \subset \rho$. 
 \end{definition}
 

Following \cite[Definition 1.58]{Gross-Siebert-logI}, we package the monodromy data into polytopes associated to $\tau \in \pdecomp^{[k]}$ for $1\leq k \leq n-1$.
The simplest case is when $\rho \in \pdecomp^{[n-1]}$, whose \emph{monodromy polytope} is defined by fixing a vertex $v_0 \in \rho$ and setting 
\begin{equation}\label{eqn:monodromy_polytope_for_rho}
 \Delta(\rho):= \mathrm{Conv}\{ m^{\rho}_{v_0 v} \ | \ v \in \rho, \ v \in \pdecomp^{[0]} \} \subset \tanpoly_{\rho,\real},
\end{equation}
where $\mathrm{Conv}$ refers to taking the convex hull. It is well-defined up to translation and independent of the choice of $v_0$. The normal fan of $\rho$ in $\tanpoly_{\rho,\real}^*$ is a refinement of the normal fan of $\Delta(\rho)$. 
Similarly, when $\omega \in \pdecomp^{[1]}$, one defines the \emph{dual monodromy polytope} by fixing $\sigma_0 \supset \omega$ and setting
\begin{equation}\label{eqn:dual_monodromy_polytope_for_omega}
 \check{\Delta}(\omega):= \mathrm{Conv}\{ n^{\sigma_0 \sigma}_{\omega} \ | \ \sigma\supset \omega, \ \sigma \in \pdecomp^{[n-1]} \} \subset \norpoly_{\omega,\real}^*.
\end{equation}
Again, this is well-defined up to translation and independent of the choice of $\sigma_0$. The fan $\Sigma_{\omega}$ in $\norpoly_{\omega,\real}$ is a refinement of the normal fan of $\check{\Delta}(\omega)$.
For $1< \dim_{\real}(\tau) <n-1$, a combination of monodromy and dual monodromy polytopes is needed. We let $\pdecomp_1(\tau) = \{ \omega \ | \ \omega \in \pdecomp^{[1]}, \ \omega \subset \tau \}$ and $\pdecomp_{n-1}(\tau) = \{ \rho \ | \ \rho \in \pdecomp^{[n-1]}, \ \rho \supset \tau \}$.
For each $\rho \in \pdecomp_{n-1}(\tau)$, we choose a vertex $v_0 \in \rho$ and let
$$\Delta_{\rho}(\tau):= \mathrm{Conv}\{ m^{\rho}_{v_0 v} \ | \ v \in \tau, \ v \in \pdecomp^{[0]} \} \subset \tanpoly_{\tau,\real}.$$ Similarly, for each $\omega \in \pdecomp_1(\tau)$, we choose $\sigma_0 \supset \tau$ and let 
$$\check{\Delta}_{\omega}(\tau):= \mathrm{Conv} \{ n^{\sigma_0 \sigma}_{\omega} \ | \ \sigma\supset \tau, \ \sigma \in \pdecomp^{[n-1]} \} \subset \norpoly_{\tau,\real}^*.$$
These are well-defined up to translation and independent of the choices of $v_0$ and $\sigma_0$ respectively. 
 
\begin{definition}[\cite{Gross-Siebert-logI}, Def. 1.60]\label{def:simplicity}
 	We say $(B,\pdecomp)$ is \emph{simple} if, for every $\tau \in \pdecomp$, there are disjoint non-empty subsets
 	$$
 	\Omega_1,\dots,\Omega_p \subset \pdecomp_1(\tau), \quad R_1,\dots, R_p \subset \pdecomp_{n-1}(\tau)
 	$$
 	(where $p$ depends on $\tau$) such that 
 	\begin{enumerate}
 		\item for $\omega \in \pdecomp_1(\tau)$ and $\rho \in \pdecomp_{n-1}(\tau)$, $\kappa_{\omega\rho} \neq 0$ if and only if $\omega \in \Omega_i$ and $\rho \in R_i$ for some $1 \leq i \leq p$;
 		
 		\item $\Delta_{\rho}(\tau)$ is independent (up to translation) of $\rho \in R_i$ and will be denoted by $\Delta_i(\tau)$; similarly, $\check{\Delta}_{\omega}(\tau)$ is independent (up to translation) of $\omega \in \Omega_i$ and will be denoted by $\check{\Delta}_i(\tau)$;
 		
 		\item if $\{e_1,\dots,e_p\}$ is the standard basis in $\inte^{p}$, then
 		$$
 		\Delta(\tau):= \mathrm{Conv} \left\{ \bigcup_{i=1}^{p} \Delta_i(\tau) \times \{e_i\} \right\}, \quad \check{\Delta}(\tau):= \mathrm{Conv} \left\{ \bigcup_{i=1}^{p} \check{\Delta}_i(\tau) \times \{e_i\} \right\}
 		$$
 		are elementary simplices (i.e. a simplex whose only integral points are its vertices) in $\left(\tanpoly_{\tau} \oplus \inte^{p}\right)_{\real}$ and $\left( \norpoly_{\tau}^*\oplus \inte^{p} \right)_{\real}$ respectively. 
   \end{enumerate}
\end{definition}
 
We need the following stronger condition in order to apply \cite[Thm. 3.21]{Gross-Siebert-logII} in a later stage:
\begin{definition}\label{def:strongly simple}
We say $(B,\pdecomp)$ is \emph{strongly simple} if it is simple, and for every $\tau \in \pdecomp$, both $\Delta(\tau)$ and $\check{\Delta}(\tau)$ are standard simplices. 
\end{definition}

\begin{example}\label{eg:2d_monodromy}
	Consider the $2$-dimensional example in Example \ref{eg:K3_example}. Following \cite[Ex. 6.82(1)]{dbrane}, we may choose the two adjacent vertices in Figure \ref{fig:k3_polytope} to be $v_1 = \begin{bmatrix} -1 & -1 & -1 \end{bmatrix}^T$ and $v_2 = \begin{bmatrix} 0 & -1 & -1 \end{bmatrix}^T$ which bound a $1$-cell $\rho$. The two adjacent maximal cells are given by
    $\sigma_+ \subset \{ b \ | \ \langle w_+,b\rangle =1\}$ where $w_+ = \begin{bmatrix} 0 & 0 & -1 \end{bmatrix}^T$ and $\sigma_- \subset \{ b \ | \ \langle w_-,b\rangle =1\}$ where $w_- = \begin{bmatrix} 0 & -1 & 0 \end{bmatrix}^T$. The tangent lattice $T_{v_1}$ can be identified with $\inte^3/\inte \cdot v_1$ equipped with the basis $e_1 =  \begin{bmatrix} 1 & 0 &0 \end{bmatrix}^T$, $e_2 =  \begin{bmatrix} 0 & 1& 0 \end{bmatrix}^T$. If we let $\gamma$ be a loop going from $v_1$ to $v_2$ through $\sigma_+$ and going back to $v_1$ through $\sigma_-$, we have
	$$
	T_{\gamma}(m) = m + \langle \begin{bmatrix} 0 &  1 &  -1 \end{bmatrix}^T, m \rangle e_1
	$$
	for $m \in T_{v_1}$. Therefore, we have $p=1$, $\Delta_1(\rho) = \mathrm{Conv} \{0,e_1\}$ and $\check{\Delta}_{1}(\rho) = \mathrm{Conv} \{ 0 , w_+ - w_-\}$. This is an example of a positive and strongly simple $(B,\pdecomp)$ (Definitions \ref{def:positivity_assumption} and \ref{def:strongly simple}).
	\end{example}

\begin{example}\label{eg:3d_monodromy}
	Next we consider the two types of $Y$-vertex in Example \ref{eg:3d_example}. 
	
	We begin with $Y$-vertex of type $I$ in Figure \ref{fig:3_d_singular_locus_1}. Following \cite[Ex. 6.82(2)]{dbrane}, the three vertices $v_1,v_2,v_3$ can be chosen to be 
	$$
	v_1 = \begin{bmatrix} -1 &  -1&  -1 & -1 \end{bmatrix}^T, \ v_2 = \begin{bmatrix} 0 &  -1 &  -1 &  -1 \end{bmatrix}^T, \ v_3 = \begin{bmatrix} -1 &  0 &  -1 & -1 \end{bmatrix}^T,
	$$
	and $\sigma_+ \subset \{ b \in \real^4 \ | \ \langle w_+ , b \rangle  = 1 \}$, $\sigma_-  \subset \{ b \in \real^4 \ | \ \langle w_- , b \rangle  = 1 \}$ are $3$-cells of $B$ lying in the affine hyperplanes with dual vector $w_+ = \begin{bmatrix} 0 & 0 & -1 & 0 \end{bmatrix}^T$ and $w_- = \begin{bmatrix} 0 & 0 & 0 & -1 \end{bmatrix}^T$ respectively. If we identify $T_v$ with $\tanpoly_{\sigma_+}$ via parallel transport and choose the basis of $\tanpoly_{\sigma_+}$ as
	$$
	e_1 = \begin{bmatrix} 1 & 0 & 0 & 0 \end{bmatrix}^T, \ e_2 = \begin{bmatrix} 0 & -1 & 0 & 0 \end{bmatrix}^T, \ e_3 = \begin{bmatrix} 0 & 0 & 0 & 1 \end{bmatrix}^T,
	$$
	then the monodromy transformations are given by
	$$
	T_{\gamma_1}= \begin{bmatrix} 
		1 & 0 & 1 \\
		0 & 1 & 0 \\
		0 & 0 & 1
		\end{bmatrix}, \ 
		T_{\gamma_2} = \begin{bmatrix} 
			1 & 0 & -1 \\
			0 & 1 & -1 \\
			0 & 0 & 1
		\end{bmatrix}, \
		T_{\gamma_3} = \begin{bmatrix} 
			1 & 0 & 0 \\
			0 & 1 & 1 \\
			0 & 0 & 1
		\end{bmatrix},
	$$
    where $\gamma_i$ is the loop going from $v_i$ to $v_{i+1}$ through $\sigma_+$ and going back to $v_i$ through $\sigma_-$, with indices of $v_i$'s taken modulo $3$.
	In this case, we have $p=1$, $\Delta_1(\rho) = \mathrm{Conv}\{0, e_1,-e_2\}$ is a $2$-simplex and $\check{\Delta}_{1}(\rho) = \mathrm{Conv}\{0, w_+ - w_-\}$ is a $1$-simplex.
	
	For the $Y$-vertex of type II in Figure \ref{fig:3_d_singular_locus_2}, we can choose 
    $$v_1 = \begin{bmatrix} -1 & -1 & -1 & -1 \end{bmatrix}^T,\ v_2 = \begin{bmatrix} 0 & -1 & -1 & -1 \end{bmatrix}^T,$$
    which are the end-points of a $1$-cell $\tau$. We choose the three maximal cells $\sigma_1$, $\sigma_2$ and $\sigma_3$ intersecting at $\tau$ to be the $3$-cells lying in affine hyperplanes defined by $\{b \ | \ \langle w_i, b \rangle = 1\}$, where 
	$$
	w_1 = \begin{bmatrix} 0 & 0&  -1 & 0 \end{bmatrix}^T, \ w_2 = \begin{bmatrix} 0 &  0 &  0 &  -1 \end{bmatrix}^T, \ w_3 = \begin{bmatrix} 0 &  -1 &  0 & 0 \end{bmatrix}^T.
	$$
	Let $\tilde{\gamma}_i$ be the loop going from $v_1$ to $v_2$ through $w_i$ and then going back to $v_1$ through $w_{i+1}$, with indices taken to be modulo $3$. Then the corresponding monodromy transformations are given by
	$$
T_{\gamma_1}= \begin{bmatrix} 
	1 & 0 & 1 \\
	0 & 1 & 0 \\
	0 & 0 & 1
\end{bmatrix}, \ 
T_{\gamma_2} = \begin{bmatrix} 
	1 & 1& 0 \\
	0 & 1 & 0 \\
	0 & 0 & 1
\end{bmatrix}, \
T_{\gamma_3} = \begin{bmatrix} 
	1 & -1& -1 \\
	0 & 1 & 0 \\
	0 & 0 & 1
\end{bmatrix},
	$$
	with respect to the basis 
	$$
	e_1 = \begin{bmatrix} 1 & 0 & 0 & 0 \end{bmatrix}^T, \ e_2 = \begin{bmatrix} 0 & 1 & 0 & 0 \end{bmatrix}^T, \ e_3 = \begin{bmatrix} 0 & 0 & -1 & 0 \end{bmatrix}^T.
	$$
	In this case, $p=1$, $\Delta_1(\tau) = \mathrm{Conv}\{0, v_2 - v_1\}$ is a $1$-simplex and $\check{\Delta}_1(\tau)= \mathrm{Conv}\{0, w_1 - w_2, w_1-w_3\}$ is a $2$-simplex. 
	
	Both examples are positive and strongly simple. 
	\end{example}
 
 
Throughout this paper, we always assume that $(B,\pdecomp)$ is positive and strongly simple. 
In particular, 
both $\Delta_i(\tau)$ and $\check{\Delta}_i(\tau)$ are standard simplices of positive dimensions, and $\tanpoly_{\Delta_1(\tau)}\oplus \cdots \oplus \tanpoly_{\Delta_p(\tau)}$ (resp. $\tanpoly_{\check{\Delta}_1(\tau)} \oplus \cdots \oplus \tanpoly_{\check{\Delta}_p(\tau)}$) is an internal direct summand of $\tanpoly_{\tau}$ (resp. $\norpoly_{\tau}^*$).  


\subsection{Cone construction by gluing open affine charts}\label{subsec:open_construction}

In this subsection, we recall the cone construction of the maximally degenerate Calabi--Yau $\centerfiber{0} = \centerfiber{0}(B,\pdecomp,s)$, 
following \cite{Gross-Siebert-logI} and \cite[\S 1.2]{gross2011real}. For this purpose, we take $\blat = \inte$ and $Q$ to be the positive real axis in Notation \ref{not:universal_monoid}. 
Throughout this paper, we will work in the category of analytic schemes.

We will construct $\centerfiber{0}$ as a gluing of affine analytic schemes $V(v)$ parametrized by the vertices of $\pdecomp$. For each vertex $v$, we consider the fan $\Sigma_v$ and take the analytic affine toric variety
$$V(v):=\spec(\bb{C}[\Sigma_v]),$$
where $\spec$ means analytification of the algebraic affine scheme given by $\mathrm{Spec}$. 
Here, the monoid structure for a general fan $\Sigma \subset M_{\real}$ is given by
$$p+q=\begin{cases}
	p+q &\text{ if }p,q \in M \text{ are in a common cone of } \Sigma,
	\\\infty & \text{ otherwise},
\end{cases}$$
and we set $z^{\infty} =0$ in taking $\mathrm{Spec}(\bb{C}[\Sigma])$ (by abuse of notation, we use $\Sigma$ to stand for both the fan and the monoid associated to a fan if there is no confusion);
in other words, the ring $\comp[\Sigma]$ is defined explicitly as 
$$\comp[\Sigma]:= \bigoplus_{p \in |\Sigma| \cap M} \comp \cdot z^p, \quad z^p\cdot z^q=\begin{cases}
z^{p+q} &\text{ if }p,q \in M \text{ are in a common cone of } \Sigma,
\\0 & \text{ otherwise},
\end{cases}$$
where $|\Sigma|$ denotes the support of the fan $\Sigma$.

To glue these affine analytic schemes together, we need affine subschemes $\{V(\tau)\}$
associated to $\tau\in \pdecomp$ with $v\in\tau$
and natural open embeddings $V(\tau)\hookrightarrow V(\omega)$ for $v \in \omega\subset\tau$.
First, for $\tau\in \pdecomp$ such that $v\in\tau$, we consider the \emph{localization of $\Sigma_v$ at $\tau$} defined by
$$\tau^{-1}\Sigma_v:=\{K_v\sigma + \tanpoly_{\tau,\bb{R}}\,|\,K_v\sigma\text{ is a cone in } \Sigma_v \text{ such that }\sigma \supset \tau\};$$
here recall that $K_v\sigma = \real_{\geq 0} S_{v}(\sigma \cap U_v)$ is the cone in $\Sigma_v$ (see the definition of a fan structure before Definition \ref{def:integral_tropical_manifold}).
This defines a new complete fan in $T_{v,\real}$ consisting of convex, but not necessarily strictly convex, cones. 
If $\tau$ contains another vertex $v'$, we can identify the fans $\tau^{-1}\Sigma_v$ and $\tau^{-1}\Sigma_{v'}$ as follows: for each maximal $\sigma \supset \tau$, we identify the maximal cones $K_{v}\sigma + \tanpoly_{\tau,\bb{R}}$ and $K_{v'}\sigma + \tanpoly_{\tau,\bb{R}}$ by identifying the tangent spaces $T_v\cong T_{v'}$ using parallel transport through $\sigma\supset\tau$.
Patching these identifications for all $\sigma \supset \tau$ together, we get a piecewise linear transformation from $T_{v}$ to $T_{v'}$, identifying the fans $\tau^{-1}\Sigma_v$ and $\tau^{-1}\Sigma_{v'}$ and hence the corresponding monoids. 
This defines the affine analytic scheme
$$V(\tau):=\spec(\bb{C}[\tau^{-1}\Sigma_v]),$$
up to a unique isomorphism.  Notice that $\tau^{-1}\Sigma_v$ can be identified (non-canonically) with the fan $\Sigma_{\tau} \times \tanpoly_{\tau,\real}$ in $\norpoly_{\tau,\real}\times \tanpoly_{\tau,\real}$, so actually 
$$V(\tau) \cong \spec(\comp[\tanpoly_{\tau}]) \times \spec(\comp[\Sigma_{\tau}]),$$
where $\spec(\comp[\tanpoly_{\tau}]) \cong \tanpoly_{\tau}^* \otimes_\inte \comp^* \cong (\comp^{*})^l$ is a complex torus.

For any $v\in \omega\subset\tau$, there is a map of monoids $ \omega^{-1}\Sigma_v \to\tau^{-1}\Sigma_v$ given by
$$p\mapsto\begin{cases}
	p & \text{ if }p\in K_{v}\sigma +\Lambda_{\omega,\bb{R}}\text{ for some }\sigma\supset\tau,
	\\\infty & \text{ otherwise}
\end{cases}$$
(though there is no fan map from $\omega^{-1}\Sigma_v$ to $\tau^{-1}\Sigma_v$ in general),
and hence a ring map 
$$\iota_{\omega\tau}^*\colon \bb{C}[\omega^{-1}\Sigma_v]\to\bb{C}[\tau^{-1}\Sigma_v].$$ 
This gives an open inclusion of affine schemes
$$\iota_{\omega\tau}\colon V(\tau)\hookrightarrow V(\omega),$$
and hence a functor $F\colon \pdecomp\to{\bf{Sch}}_{\mathrm{an}}$ defined by
$$
F(\tau):= V(\tau), \quad 
F(e):= \iota_{\omega\tau} \colon V(\tau)\to V(\omega)
$$
for $\omega \subset \tau$.

We can further introduce twistings of the gluing of the affine analytic schemes $\{V(\tau)\}_{\tau\in\pdecomp}$. Toric automorphisms $\mu$ of $V(\tau)$ are in bijection with the set of $\bb{C}^{*}$-valued piecewise multiplicative maps on $T_v\cap|\tau^{-1}\Sigma_v|$ with respect to the fan $\tau^{-1}\Sigma_v$. Explicitly, for each maximal cone $\sigma\in \pdecomp^{[n]}$ with $\tau\subset\sigma$, there is a monoid homomorphism $\mu_{\sigma}\colon \tanpoly_{\sigma}\to\bb{C}^{*}$ such that if $\sigma'\in \pdecomp^{[n]}$ also contains $\tau$, then $\mu_{\sigma}|_{\tanpoly_{\sigma\cap\sigma'}}=\mu_{\sigma'}|_{\tanpoly_{\sigma\cap\sigma'}}$. Denote by $\mathrm{PM}(\tau)$ the multiplicative group of $\bb{C}^{*}$-valued piecewise multiplicative maps on $T_v\cap|\tau^{-1}\Sigma_v|$. The group $\mathrm{PM}(\tau)$ a priori depends on the choice of $v \in \tau$; however, for different choices, say $v$ and $v'$, the groups can be identified via the identification $\tau^{-1}\Sigma_v \cong \tau^{-1}\Sigma_{v'}$.  For $\omega \subset \tau$, there is a natural restriction map $|_{\tau} \colon  \mathrm{PM}(\omega) \rightarrow \mathrm{PM}(\tau)$ given by restricting to those maximal cells $\sigma \supset \omega$ with $\sigma \supset \tau$. 
 
\begin{definition}[\cite{gross2011real}, Def. 1.18]\label{def:open_gluing_data}
	A choice of \emph{open gluing data} (for the cone construction) for $(B,\pdecomp)$ is a set $s=(s_{\omega\tau})_{\omega \subset \tau}$ of elements $s_{\omega \tau}\in \mathrm{PM}(\tau)$ such that
	\begin{enumerate}
		\item $s_{\tau\tau}=1$ for all $\tau\in\pdecomp$, and
		\item if $\omega \subset \tau \subset \rho$, then
		$$s_{\omega\rho}=s_{\tau\rho}\cdot s_{\omega\tau}|_{\rho}.$$
	\end{enumerate}
	Two choices of open gluing data $s,s'$ are said to be \emph{cohomologous} if there exists a system $\{t_{\tau}\}_{\tau \in \pdecomp}$, with $t_{\tau}\in \mathrm{PM}(\tau)$ for each $\tau\in\pdecomp$,  such that $s_{\omega\tau}=t_{\tau}(t_{\omega}|_{\tau})^{-1}s_{\omega\tau}'$ whenever $\omega\subset \tau$. 
\end{definition}

The set of cohomology classes of choices of open gluing data is a group under multiplication, denoted as $H^1(\msc{P},\msc{Q}_{\msc{P}}\otimes\bb{C}^{\times})$. For $s\in \mathrm{PM}(\tau)$, we will denote also by $s$ the corresponding toric automorphism on $V(\tau)$ which is explicitly given by $s^*(z^m) = s_{\sigma}(m) z^m$ for $m \in \sigma \supset \tau$. 
If $s$ is a choice of open gluing data, then we can define an \emph{$s$-twisted functor} $F_s \colon \pdecomp \to{\bf{Sch}}_{\mathrm{an}}$ by setting $F_s(\tau):=F(\tau)=V(\tau)$ on objects and $F_s(\omega \subset \tau):=F(\omega \subset \tau)\circ s_{\omega\tau}^{-1} \colon V(\tau) \to V(\omega)$ on morphisms.
This defines the analytic scheme
$$\centerfiber{0}=\centerfiber{0}(B,\pdecomp,s):=\lim_{\longrightarrow}F_s.$$
Gross--Siebert \cite{Gross-Siebert-logI} showed that $\centerfiber{0}(B,\pdecomp,s)\cong\centerfiber{0}(B,\pdecomp,s')$ as schemes when $s,s'$ are cohomologous.

\begin{remark}\label{rem:closed_Construction}
Given $\tau \in \pdecomp^{[k]}$, one can define a closed stratum $\iota_{\tau} \colon \centerfiber{0}_{\tau} \rightarrow \centerfiber{0}$ of dimension $k$ by gluing together the $k$-dimensional toric strata $V_\tau(\omega) \subset V(\omega) = \spec(\comp[\omega^{-1}\Sigma_v])$ corresponding to the cones $K_v\tau + \tanpoly_{\omega,\bb{R}}$ in $\omega^{-1}\Sigma_v$, for all $\omega \subset \tau$. Abstractly, it is isomorphic to the toric variety associated to the polyhedron $\tau \subset \tanpoly_{\tau,\real}$. Also, for every pair $\omega \subset \tau$, there is a natural inclusion $\iota_{\omega \tau}\colon \centerfiber{0}_{\omega}\rightarrow \centerfiber{0}_{\tau} $. One can alternatively construct $\centerfiber{0}$ by gluing along the closed strata $\centerfiber{0}_\tau$'s according to the polyhedral decomposition; see \cite[\S 2.2]{Gross-Siebert-logI}.
\end{remark}

We recall the following definition from \cite{Gross-Siebert-logI}, which serves as an alternative set of combinatorial data for encoding $\mu\in \mathrm{PM}(\tau)$.

\begin{definition}[\cite{Gross-Siebert-logI}, Def. 3.25 and \cite{gross2011real}, Def. 1.20]\label{def:alternative_description_open_gluing_data}
	Let $\mu\in \mathrm{PM}(\tau)$ and $\rho\in\pdecomp^{[n-1]}$ with $\tau\subset\rho$. For a vertex $v\in\tau$, we define
	$$D(\mu,\rho,v):=\frac{\mu_{\sigma}(m)}{\mu_{\sigma'}(m')}\in\bb{C}^{\times},$$
	where $\sigma,\sigma'$ are the two unique maximal cells such that $\sigma\cap\sigma'=\rho$, $m \in \tanpoly_{\sigma}$ is an element projecting to the generator in $\norpoly_{\rho} \cong \tanpoly_{\sigma}/\tanpoly_{\rho}\cong\bb{Z}$ pointing to $\sigma'$, and $m'$ is the parallel transport of $m\in\tanpoly_{\sigma}$ to $\tanpoly_{\sigma'}$ through $v$. $D(\mu,\rho,v)$ is independent of the choice of $m$.
\end{definition}

Let $\rho\in\msc{P}^{[d-1]}$ and $\sigma_+,\sigma_-$ be the two unique maximal cells such that $\sigma_+\cap\sigma_-=\rho$. Let $\check{d}_{\rho}\in\norpoly_{\rho}^*$ be the unique primitive generator pointing to $\sigma_+$. For any two vertices $v,v'\in\tau$, we have the formula
\begin{equation}\label{eqn:gluing_data_ratio_under_monodromy}
	D(\mu,\rho,v)=\mu(m_{vv'}^{\rho})^{-1}\cdot D(\mu,\rho,v')
\end{equation}
relating monodromy data to the open gluing data, where $m_{vv'}^{\rho}\in\tanpoly_{\rho}$ is 
as discussed in \eqref{eqn:monodromy_transformation_rho_fixed}. 
The formula \eqref{eqn:gluing_data_ratio_under_monodromy} describes the interaction between monodromy and a fixed $\mu \in \mathrm{PM}(\tau)$. We shall further impose the following lifting condition from \cite[Prop. 4.25]{Gross-Siebert-logI} relating $s_{v\tau}, s_{v'\tau} \in \mathrm{PM}(\tau)$ and monodromy data:
\begin{condition}\label{cond:open_gluing_data_lifting_condition}
	We say a choice of open gluing data $s$ satisfies the \emph{lifting condition} if for any two vertices $v, v'\in \tau \subset \rho$ with $\rho \in \pdecomp^{[n-1]}$, we have 
	$D(s_{v\tau},\rho,v) = D(s_{v'\tau},\rho,v')$ whenever $m^{\rho}_{vv'} = 0$. 
\end{condition}

\subsection{Log structures}\label{subsec:log_structure_and_slab_function}
We need to equip the analytic scheme $\centerfiber{0}=\centerfiber{0}(B,\pdecomp,s)$ with log structures. The main reference is \cite[\S 3 - 5]{Gross-Siebert-logI}.

\begin{definition}\label{def:log_structure}
Let $X$ be an analytic space, a \emph{log structure} on $X$ is a sheaf of monoids $\cu{M}_X$ together with a homomorphism $\alpha_X\colon \cu{M}_X \rightarrow \cu{O}_X$ of sheaves of (multiplicative) monoids such that $\alpha_X \colon \alpha^{-1}(\cu{O}_X^*) \rightarrow \cu{O}_X^*$ is an isomorphism. 
The \emph{ghost sheaf} $\overline{\cu{M}}_X$ of a log structure is defined as the quotient sheaf $\cu{M}_X/\alpha^{-1}(\cu{O}_X^*)$, whose monoid structure is written additively.
\end{definition}

\begin{example}\label{example:divisor_log_structure}
Let $X$ be an analytic space and $D\subset X$ be a closed analytic subspace of pure codimension one. We denote by $j \colon X\setminus D \hookrightarrow X $ the inclusion. Then the sheaf of monoids 
$$
\cu{M}_{X}:=j_*(\cu{O}_{X\setminus D}^*) \cap \cu{O}_X,
$$ 
together with the natural inclusion $\alpha_X \colon \cu{M}_{X} \rightarrow \cu{O}_X$ defines a log structure on $X$.
\end{example}

We write $X^{\dagger}$ if we want to emphasize the log structure on $X$. A general way to define a log structure is to take an arbitary homomorphism of sheaves of monoids 
$$
\tilde{\alpha} \colon \cu{P} \rightarrow \cu{O}_X,
$$
and then define the associated log structure by
\begin{equation*}
\cu{M}_X := (\cu{P}\oplus \cu{O}_X^*)/\{(p,\tilde{\alpha}(p)^{-1}) \ | \ p \in \tilde{\alpha}^{-1}(\cu{O}_X^*) \}.
\end{equation*}
In particular, this allows us to define log structures on an analytic space $Y$ by pulling back those on another analytic space $X$ via a morphism $f \colon Y \rightarrow X$. More precisely, given a log structure on $X$, the \emph{pullback log structure} on $Y$ is defined to be the log structure associated to the composition $\tilde{\alpha}_Y \colon f^{-1}(\cu{M}_X) \rightarrow f^{-1}(\cu{O}_X) \rightarrow \cu{O}_Y$. 
For more details of the theory of log structures, readers are referred to, e.g., \cite[\S 3]{Gross-Siebert-logI}.

\begin{example}\label{example:toric_log_structure}
Taking a toric monoid $P$ (i.e. $P= C \cap M$ for a cone $C \subset M_\real$), we can define $\tilde{\alpha} \colon \underline{P} \rightarrow \cu{O}_{\Spec(\comp[P])}$ by sending $m \mapsto z^{m}$, where $\underline{P}$ is the constant sheaf with stalk $P$. From this we obtain a log structure on the analytic toric variety $\spec(\comp[P])$. Note that this is a special case of Example \ref{example:divisor_log_structure}, where we take $X = \spec(\comp[P])$ and $D$ to be the toric boundary divisor.
\end{example}

Before we describe the log structures on $\centerfiber{0}=\centerfiber{0}(B,\pdecomp,s)$, let us first specify a ghost sheaf $\overline{\cu{M}}$ over $\centerfiber{0}$. 
Recall that the polyhedral decomposition $\pdecomp$ is assumed to be regular, namely, there exists a strictly convex multi-valued piecewise linear function $\varphi\in H^0(B,\cu{MPL}_{\pdecomp})$. 
For any $\tau \in \pdecomp$, we take a strictly convex representative $\bar{\varphi}_{\tau}$ of $\varphi$ on $\norpoly_{\tau,\real}$, and define 
$$\Gamma(V(\tau),\overline{\cu{M}}) := \bar{P}_{\tau} = C_{\tau} \cap (\norpoly_{\tau} \oplus \bb{Z}),$$
where $C_\tau:=\{(m,h)\in \norpoly_{\tau,\real} \oplus\bb{R} \,|\, h\geq \bar{\varphi}_\tau(m)\}$. For any $\omega \subset \tau$, we take an integral affine function $\psi_{\omega\tau}$ on $U_{\omega}$ such that $\psi_{\omega\tau}+ S_{\omega}^*( \bar{\varphi}_{\omega})$ vanishes on $K_{\omega}\tau$, and agrees with $S_{\tau}^*(\bar{\varphi}_{\tau})$ on all of $\sigma \cap U_{\tau}$ for any $\sigma \supset \tau$. This induces a map $C_{\omega} \rightarrow C_{\omega \tau} :=\{(m,h)\in \norpoly_{\omega,\real} \oplus\bb{R} \,|\, h\geq \psi_{\omega\tau}(m)+\bar{\varphi}_\omega(m)\}$ by sending $(m,h)\mapsto (m,h+\psi_{\omega\tau}(m))$, whose composition with the quotient map $\norpoly_{\omega,\real}\oplus \real \rightarrow \norpoly_{\tau,\real}\oplus \real$ gives a map $C_{\omega} \rightarrow C_{\tau}$ of cones that corresponds to the monoid homomorphism $\bar{P}_{\omega}\rightarrow \bar{P}_{\tau}$. The $\bar{P}_{\tau}$'s glue together to give the ghost sheaf $\overline{\cu{M}}$ over $\centerfiber{0}$. There is a well-defined section $\bar{\varrho} \in \Gamma(\centerfiber{0},\overline{\cu{M}})$ given by gluing $(0,1) \in C_{\tau}$ for each $\tau$. 

One may then hope to find a log structure on $\centerfiber{0}$ which is log smooth and with ghost sheaf given by $\overline{\cu{M}}$. However, due to the presence of non-trivial monodromies of the affine structure, this can only be done away from a complex codimension $2$ subset $Z \subset \centerfiber{0}$ not containing any toric strata. Such log structures can be described by sections of a coherent sheaf $\cu{LS}^+_{\mathrm{pre}}$ supported on the scheme-theoretic singular locus $\centerfiber{0}_{\mathrm{sing}} \subset \centerfiber{0}$. We now describe the sheaf $\cu{LS}^+_{\mathrm{pre}}$ and some of its sections called \emph{slab functions}; readers are referred to \cite[\S 3 and 4]{Gross-Siebert-logI} for more details. 

For every $\rho \in \pdecomp^{[n-1]}$, we consider $\iota_{\rho} \colon \centerfiber{0}_{\rho} \rightarrow \centerfiber{0}$, where $\centerfiber{0}_{\rho}$ is the toric variety associated to the polytope $\rho \subset \tanpoly_{\rho,\real}$. From the fact that the normal fan $\mathscr{N}_{\rho} \subset \tanpoly_{\rho,\real}^*$ of $\rho$ is a refinement of the normal fan $\mathscr{N}_{\Delta(\rho)}\subset \tanpoly_{\rho,\real}^*$ of the $r_{\rho}$-dimensional simplex $\Delta(\rho)$ (as in \S \ref{subsec:monodromy_data}), we have a toric morphism 
\begin{equation}\label{eqn:toric_map_to_monodromy_projective_spaces}
	\varkappa_{\rho}\colon \centerfiber{0}_{\rho} \rightarrow \bb{P}^{r_\rho}.
\end{equation}
Now, $\Delta(\rho)$ corresponds to $\cu{O}(1)$ on $\bb{P}^{r_\rho}$. We let $\cu{N}_{\rho}:= \varkappa_{\rho}^*(\cu{O}(1))$ on $\centerfiber{0}_{\rho}$, and define
\begin{equation}\label{eqn:line_bundle_parametrizing_log_structure}
	\cu{LS}^+_{\mathrm{pre}} := \bigoplus_{\rho \in \pdecomp^{[n-1]}} \iota_{\rho,*} (\cu{N}_{\rho}). 
\end{equation}

Sections of $\cu{LS}^+_{\mathrm{pre}}$ can be described explicitly. For each $v \in \pdecomp^{[0]}$, we consider the open subscheme $V(v)$ of $\centerfiber{0}$ and the local trivialization
$$
\cu{LS}^+_{\mathrm{pre}}|_{V(v)} = \bigoplus_{\rho: v \in \rho} \cu{O}_{V_{\rho}(v)},
$$
whose sections over $V(v)$ are given by $(f_{v\rho})_{v \in \rho}$. Given $v ,v' \in \tau$ where $\tau$ corresponding to $V(\tau)$, these local sections obey the change of coordinates given by
\begin{equation}\label{eqn:change_of_coordinates_of_line_bundle_for_log_structures}
	D(s_{v'\tau},\rho, v')^{-1} s_{v'\tau}^{-1} (f_{v'\rho}) = z^{-m^{\rho}_{vv'}} D(s_{v\tau},\rho, v)^{-1} s_{v\tau}^{-1} (f_{v\rho}),
\end{equation}
where $\rho \supset \tau$ and $s_{v\tau}, s_{v'\tau}$ are part of the open gluing data $s$. The section $f := (f_{v\rho})_{v \in \rho}$ is said to be \emph{normalized} if $f_{v\rho}$ takes the value $1$ at the $0$-dimensional toric stratum corresponding to a vertex $v$, for all $\rho$. We will restrict ourselves to normalized sections $f$ of $\cu{LS}^+_{\mathrm{pre}}$. The complex codimension $2$ subset $Z \subset \centerfiber{0}$ is taken to be the zero locus of $f$ on $\centerfiber{0}_{\mathrm{sing}}$. 

Only a subset of normalized sections of $\cu{LS}^+_{\mathrm{pre}}$ corresponds to log structures. For every vertex $v \in \pdecomp^{[0]}$ and $\tau \in \pdecomp^{[n-2]}$ containing $v$, we choose a cyclic ordering $\rho_1,\dots,\rho_l$ of codimension one cells containing $\tau$ according to an orientation of $\norpoly_{\tau,\real}$. Let $\check{d}_{\rho_i} \in \norpoly_v^*$ be the positively oriented normal to $\rho_i$.  Then the condition for $f = (f_{v\rho})_{v\in \rho}\in \cu{LS}^+_{\mathrm{pre}}|_{V(v)}$ to define a log structure is given by
\begin{equation}\label{eqn:condition_for_sections_to_define_log_structure}
	\prod_{i=1}^l \check{d}_{\rho_i}  \otimes f_{v\rho_i}|_{V_{\tau}(v)} = 0 \otimes 1,\quad  \text{in } \norpoly_v^* \otimes \Gamma(V_{\tau}(v)\setminus Z, \cu{O}_{V_{\tau}(v)}^*),
\end{equation}
where the group structure on $\norpoly_v^*$ is additive and that on $\Gamma(V_{\tau}(v)\setminus Z, \cu{O}_{V_{\tau}(v)}^*)$ is multiplicative. If $f = (f_{v\rho})_{v\in \rho}$ is a normalized section satisfying this condition, we call the $f_{v\rho}$'s \emph{slab functions}.

\begin{theorem}[\cite{Gross-Siebert-logI}, Thm. 5.2]\label{thm:log_structures}
	Suppose that $B$ is compact and the pair $(B,\pdecomp)$ is simple and positive. Let $s$ be a choice of open gluing data satisfying the lifting condition (Condition \ref{cond:open_gluing_data_lifting_condition}). Then there exists a unique normalized section $f \in \Gamma(\centerfiber{0},\cu{LS}^+_{\mathrm{pre}})$ which 
	defines a log structure on $\centerfiber{0}$ (i.e. satisfying the condition \eqref{eqn:condition_for_sections_to_define_log_structure}).
\end{theorem}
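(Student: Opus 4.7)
The plan is to construct the slab functions $f_{v\rho}$ pair-by-pair for $v \in \rho \in \pdecomp^{[n-1]}$, and then verify the two consistency constraints that promote such a collection to a global section of $\cu{LS}^+_{\text{pre}}$ defining a log structure: the change-of-coordinate formula \eqref{eqn:change_of_coordinates_of_line_bundle_for_log_structures} relating $f_{v\rho}$ and $f_{v'\rho}$ when $v, v' \in \tau \subset \rho$, and the log-structure equation \eqref{eqn:condition_for_sections_to_define_log_structure} at every codimension $2$ stratum $\tau$. Since $\cu{N}_{\rho} = \varkappa_{\rho}^{*}\cu{O}(1)$ is pulled back from $\bb{P}^{r_{\rho}}$, its sections over $V_{\rho}(v)$ are spanned by monomials $z^{m}$ with $m$ running over the vertices of the monodromy polytope $\Delta(\rho) \subset \tanpoly_{\rho,\real}$; after translating $\Delta(\rho)$ so that its distinguished $v$-vertex sits at $0$, my candidate is
\begin{equation*}
	f_{v\rho} \ = \ 1 \ + \sum_{m \in \mathrm{Vert}(\Delta(\rho))\setminus\{0\}} c_{v\rho}(m)\, z^{m},
\end{equation*}
where each coefficient $c_{v\rho}(m)\in\comp^{\times}$ is extracted from the open gluing data via a formula of the form $c_{v\rho}(m) = D(s_{v\tau(m)},\rho,v)$, with $\tau(m)\subset\rho$ the smallest face containing the edge of $\Delta(\rho)$ from $0$ to $m$. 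Normalization at $v$ is then immediate from the constant term being $1$.

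For the inter-vertex compatibility I would compute $D(s_{v'\tau},\rho,v')^{-1}s_{v'\tau}^{-1}(f_{v'\rho})$ and match it, term by term, with $z^{-m^{\rho}_{vv'}} D(s_{v\tau},\rho,v)^{-1}s_{v\tau}^{-1}(f_{v\rho})$ as required by \eqref{eqn:change_of_coordinates_of_line_bundle_for_log_structures}. Three ingredients conspire: (i) the parallel transport rule \eqref{eqn:monodromy_transformation_rho_fixed} converts exponents written in the $v$-chart into those in the $v'$-chart, introducing a correction $\langle m,\check{d}_{\rho}\rangle m^{\rho}_{vv'}$; (ii) the identity \eqref{eqn:gluing_data_ratio_under_monodromy} rewrites the ratio of the two $D$-values as $s_{v\tau}(m^{\rho}_{vv'})^{-1}$; and (iii) the lifting condition (Condition \ref{cond:open_gluing_data_lifting_condition}) takes care of the degenerate case $m^{\rho}_{vv'} = 0$, where monodromy is trivial but the $D$-values could otherwise disagree. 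Together they produce exactly the factor $z^{-m^{\rho}_{vv'}}$ on the nose.

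The main obstacle is verifying the log-structure condition $\prod_{i=1}^{\ell} \check{d}_{\rho_i}\otimes f_{v\rho_i}|_{V_{\tau}(v)} = 0\otimes 1$ in $\tanpoly_{v}^{*}\otimes\Gamma(V_{\tau}(v)\setminus Z,\cu{O}^{\times})$ at every $\tau \in \pdecomp^{[n-2]}$ with cyclically ordered $\rho_1,\ldots,\rho_{\ell}\supset\tau$. Simplicity (Definition \ref{def:simplicity}) partitions $\{\rho_i\}$ into blocks $R_1,\ldots,R_p$, and for $\rho\in R_j$ the restriction $f_{v\rho}|_{V_{\tau}(v)}$ involves only monomials in $\tanpoly_{\Delta_j(\tau)}\subset\tanpoly_{\tau}$, so the sum splits block-by-block and each block may be analyzed independently. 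Strong simplicity (Definition \ref{def:strongly simple}) reduces each block to the case of a standard simplex $\Delta_j(\tau)$, and the cyclic sum over $R_j$ then telescopes: each edge of $\Delta_j(\tau)$ contributes a pair $\check{d}_{\rho}\otimes z^{m}$ and $\check{d}_{\rho'}\otimes z^{-m}$ that cancels under additivity in $\tanpoly_{v}^{*}$ and multiplicativity in $\cu{O}^{\times}$, with positivity (Definition \ref{def:positivity_assumption}) ensuring the correct signs. Uniqueness is then automatic: normalization pins the constant term, the compatibility of the previous paragraph rigidifies the remaining coefficients of each $f_{v\rho}$ in terms of the open gluing data, and \eqref{eqn:condition_for_sections_to_define_log_structure} leaves no residual freedom. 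The genuine difficulty is not any single calculation but orchestrating all three steps simultaneously for every $\tau$ — which is precisely the raison d'être of positivity, strong simplicity, and the lifting condition on $s$.
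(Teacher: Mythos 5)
First, a point of order: the paper does not prove this statement at all — it is imported verbatim as \cite[Thm.~5.2]{Gross-Siebert-logI}, so there is no in-paper proof to compare against. Your sketch does follow the general strategy of the original source (build $f_{v\rho}$ explicitly from the open gluing data via the quantities $D(\mu,\rho,v)$, check \eqref{eqn:change_of_coordinates_of_line_bundle_for_log_structures} using \eqref{eqn:gluing_data_ratio_under_monodromy} together with the lifting condition, then verify \eqref{eqn:condition_for_sections_to_define_log_structure} stratum by stratum using simplicity), and your second paragraph is correct in outline.

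The genuine gap is in your treatment of the codimension-$2$ condition. The target group $\tanpoly_v^*\otimes\Gamma(V_{\tau}(v)\setminus Z,\cu{O}_{V_\tau(v)}^*)$ is a tensor product of an additive group with a \emph{multiplicative} one: bilinearity gives $\check{d}_{\rho}\otimes(fg)=\check{d}_{\rho}\otimes f+\check{d}_{\rho}\otimes g$, \emph{not} additivity over a sum $f+g$. Consequently $\check{d}_{\rho}\otimes f_{v\rho}$ does not split as a sum of terms $\check{d}_{\rho}\otimes z^{m}$ over the monomials of $f_{v\rho}=1+\sum_m c_m z^m$, and the proposed ``telescoping cancellation of pairs $\check{d}_{\rho}\otimes z^{m}$ and $\check{d}_{\rho'}\otimes z^{-m}$ over the edges of $\Delta_j(\tau)$'' is not an available operation; it cannot be the mechanism. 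What actually has to be shown (and is the content of the Gross--Siebert argument, cf.\ the discussion after \eqref{eqn:i_th_map_to_toric_variety_of_monodromy}) is that the restrictions $f_{v\rho_i}|_{V_{\tau}(v)}$ \emph{coincide as functions} for all $\rho_i$ in a common block $R_j$ — this is precisely where your coefficient formula must be derived rather than postulated, and where the lifting condition and \eqref{eqn:gluing_data_ratio_under_monodromy} enter — that they are $\equiv 1$ for $\rho_i\notin\bigcup_j R_j$, and that the resulting relation $\sum_j\bigl(\sum_{\rho_i\in R_j}\check{d}_{\rho_i}\bigr)\otimes f_j=0$ holds for the covectors determined by the cyclic ordering; none of these three steps is carried out in your sketch. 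Two smaller issues: you invoke strong simplicity (standard simplices) whereas the theorem assumes only simplicity (elementary polytopes), and uniqueness is asserted (``leaves no residual freedom'') rather than argued — one must rule out rescaling the nonconstant coefficients while preserving \eqref{eqn:condition_for_sections_to_define_log_structure}, which again requires simplicity.
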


From now on, we always assume that $B$ is compact. To describe the log structure in Theorem \ref{thm:log_structures}, we first construct some local smoothing models: For each vertex $v \in \pdecomp^{[0]}$, we represent the strictly convex piecewise linear function $\varphi$ in a small neighborhood $U$ of $v$ by a strictly convex piecewise linear $\varphi_v\colon \norpoly_{v,\real}\to\bb{R}$ (so that $\varphi = S_v^*(\varphi_v)$) and set
\begin{align*}
	C_v := \{(m,h)\in \norpoly_{v,\real} \oplus\bb{R} \,|\, h\geq\varphi_v(m)\},\quad P_v := C_v\cap(\norpoly_{v}\oplus\bb{Z}).
\end{align*}
The element $\varrho = (0,1)\in\norpoly_v\oplus\bb{Z}$ gives rise to a regular function $q:=z^{\varrho}$ on $\spec(\bb{C}[P_v])$. We have a natural identification
$$V(v):=\spec(\bb{C}[\Sigma_v]) \cong \spec(\bb{C}[P_v]/q),$$
through which we view $V(v)$ as the toric boundary divisor in $\spec(\bb{C}[P_v])$ that corresponds to the holomorphic function $q$, and
$\pi_v \colon \spec(\bb{C}[P_v]) \rightarrow \spec(\comp[q])$
as a local model for smoothing $V(v)$. 


Using these local models, we can now describe the log structure around a point $x \in \centerfiber{0} \setminus Z$. 
On a neighborhood $V \subset V(v)\setminus Z$ of $x$, the local smoothing model is given by composing the two inclusions $\rest{} \colon V \hookrightarrow V(v)$ and $V(v) \hookrightarrow \spec(\comp[P_{v}])$. 
The natural monoid homomorphism $P_v \rightarrow \comp[P_v]$ defined by sending $m \mapsto z^m$ determines a log structure on $\spec(\comp[P_v])$ which restricts to one on the toric boundary divisor $V(v) = \spec(\comp[\Sigma_v])$. We further twist the inclusion $\rest{} \colon V \hookrightarrow V(v)$ as
\begin{equation}\label{eqn:zero_order_embedding_encode_log_structure}
	z^{m} \mapsto h_m\cdot z^{m}\text{ for $m \in \Sigma_v$;}
\end{equation}
here, for each $m \in \Sigma_v$, $h_m$ is chosen as an invertible holomorphic function on $V \cap \mathrm{Zero}(z^m;v)$, where we denote $\mathrm{Zero}(z^m;v):=\overline{ \{x \in V(v) \ | \ z^{m} \in \cu{O}_{x}^* \}}$, and such that they satisfy the relations 
\begin{equation}\label{eqn:local_log_structure_from_monoid_homomorphism}
	h_m \cdot h_{m'} = h_{m+m'},\quad \text{on } V \cap \mathrm{Zero}(z^{m+m'};v).
\end{equation}
Then pulling back the log structure on $V(v)$ via $\rest{} \colon V \hookrightarrow V(v)$ produces a log structure on $V$ which is log smooth. 

These local choices of $h_m$'s are also required to 
be determined by the slab functions $f_{v\rho}$'s, up to equivalences. Here, we shall just give the formula relating them; see \cite[Thm. 3.22]{Gross-Siebert-logI} for details. For any $\rho \in \pdecomp^{[n-1]}$ containing $v$ and two maximal cells $\sigma_{\pm}$ such that $\sigma_{+} \cap \sigma_- = \rho$, we take $m_+ \in \norpoly_{v} \cap K_{v} \sigma_+$ generating $\norpoly_{\rho}$ with some $m_0 \in \norpoly_v\cap K_v \rho$ such that $m_0 - m_+ \in \norpoly_v \cap K_v \sigma_-$. Then the required  relation is given by
\begin{equation}\label{eqn:relation_between_log_smooth_structure_and_slab_functions}
	f_{v\rho} = \frac{h_{m_0}^2}{h_{m_0-m_+} \cdot h_{m_0 + m_+}}\Big|_{V_\rho(v)\cap V} \in \cu{O}^*_{V_\rho(v)}( V_\rho(v) \cap V),
\end{equation}  
which is independent of the choices of $m_0$ and $m_+$.

By abuse of notation, we also let $\rest{} \colon V \rightarrow \localmod{k}$ be the $k$-th order thickening of $V$ over $\comp[q]/q^{k+1}$ in the model $\spec(\comp[P_v])$ under the above embedding.
Then there is a natural divisorial log structure on $\localmod{k}^{\dagger}$ over $\logsk{k} $ coming from restriction of the log structure on $\spec(\comp[P_{v}])^{\dagger}$ over $\logs$ (i.e. Example \ref{example:divisor_log_structure}, which is the same as the one given by Example \ref{example:toric_log_structure} in this case). Restricting to $V$ reproduces the log structure we constructed above, which is the log structure of $\centerfiber{0}^{\dagger}$ over the log point $\logsk{0}$ locally around $x$. We have a Cartesian diagram of log spaces
\begin{equation}\label{eqn:cartesian_diagram_of_log_spaces}
	\xymatrix@1{
		V^{\dagger}\ \ar@{^{(}->}[r] \ar[d] & \localmod{k}^{\dagger} \ar[d]\\
		\logsk{0}\ \ar@{^{(}->}[r] & \logsk{k}
	}
\end{equation}


Next we describe the log structure around a singular point $x \in Z \cap \left( \centerfiber{0}_{\tau} \setminus \bigcup_{\omega \subset\tau} \centerfiber{0}_{\omega}\right)$ for some $\tau$. Viewing $f = \sum_{\rho \in \pdecomp^{[n-1]}} f_{\rho}$ where $f_{\rho}$ is a section of $\cu{N}_{\rho}$, we let $Z_{\rho} = Z(f_{\rho}) \subset \centerfiber{0}_{\rho} \subset \centerfiber{0}$ and write $Z = \bigcup_{\rho} Z_{\rho}$. For every $\tau \in \pdecomp$, we have the data $\Omega_i$'s, $R_i$'s, $\Delta_i(\tau)$ and $\check{\Delta}_i(\tau)$ described in Definition \ref{def:simplicity} because $(B,\pdecomp)$ is simple. Since the normal fan $\mathscr{N}_\tau \subset \tanpoly_{\tau,\real}^*$ of $\tau$ is a refinement of $\mathscr{N}_{\Delta_i(\tau)} \subset \tanpoly_{\tau,\real}^*$, we have a natural toric morphism 
\begin{equation}\label{eqn:i_th_map_to_toric_variety_of_monodromy}
	\varkappa_{\tau,i} \colon \centerfiber{0}_{\tau} \rightarrow \bb{P}^{r_{\tau,i}},
\end{equation}
and the identification $\iota_{\tau\rho}^*(\cu{N}_{\rho}) \cong \varkappa_{\tau,i}^*(\cu{O}(1))$.  By the proof of \cite[Thm. 5.2]{Gross-Siebert-logI}, $\iota_{\tau\rho}^*(f_{\rho})$ is completely determined by the gluing data $s$ and the associated monodromy polytope $\Delta_i(\tau)$ where $\rho \in R_i$. In particular, we have $\iota_{\tau\rho}^*(f_{\rho}) = \iota_{\tau \rho'}^*(f_{\rho'})$ and $Z_\rho \cap \centerfiber{0}_{\tau} = Z_{\rho'}\cap \centerfiber{0}_{\tau} =: Z_{i}^{\tau}$ for $\rho,\rho' \in R_i$. Locally, if we write $V(\tau) = \spec(\comp[\tau^{-1}\Sigma_v])$ by choosing some $v \in \tau$, then, for each $ 1 \leq i \leq p$, there exists an analytic function $f_{v,i}$ on $V(\tau)$ such that $f_{v,i}|_{V_{\rho}(\tau)} =s_{v\tau}^{-1}( f_{v\rho})$ for $\rho \in R_i$. 

According to \cite[\S 2.1]{Gross-Siebert-logII}, for each $ 1 \leq i \leq p$, we have $\check{\Delta}_i(\tau) \subset \norpoly_{\tau,\real}^*$, which gives
\begin{equation}\label{eqn:piecewise_linear_function_associated_to_dual_monodromy_polytope}
	\psi_i(m) = - \inf \{ \langle m,n \rangle \ |  \ n \in \check{\Delta}_i(\tau)\}. 
\end{equation}
By convention, we write $\psi_0:= \bar{\varphi}_{\tau}$. By rearranging the indices $i$'s, we can assume that $x \in  Z^\tau_{1} \cap \cdots \cap Z^\tau_{r}$ and $x \notin Z^{\tau}_{r+1} \cup \cdots \cup Z^{\tau}_{p} $. We introduce the convention that $\psi_{x,i} = \psi_{i}$ for $0\leq i \leq r$ and $\psi_{x,i} \equiv 0$ for $r<i\leq \dim_{\real}(\tau)$. 
Then the local smoothing model near $x$ is constructed as $\spec(\comp[P_{\tau,x}])$, where 
\begin{equation}\label{eqn:local_monoid_near_singularity}
	P_{\tau,x}:= \{ (m,a_0,\dots,a_{l})\in \norpoly_{\tau} \times \inte^{l+1} \ | \ a_i \geq \psi_{x,i}(m)  \},
\end{equation}
$l = \dim_{\real}(\tau)$, and the distinguished element $\varrho = (0,1,0,\dots,0)$ defines a family $$\spec(\comp[P_{\tau,x}]) \rightarrow \spec(\comp[q])$$
by sending $q \mapsto z^{\varrho}$. The central fiber is given by $\spec(\comp[Q_{\tau,x}])$, where 
\begin{equation}\label{eqn:central_fiber_local_model}
Q_{\tau,x} = \{ (m,a_0,\dots,a_l)  \ | \ a_0 = \psi_{x,0}(m) \} \cong P_{\tau,x}/(\varrho+P_{\tau,x})
\end{equation}
is equipped with the monoid structure 
$$m + m' = 
\begin{cases} m+m' & \text{if } m+m' \in Q_{\tau,x},\\
	\infty & \text{otherwise.}
\end{cases}
$$
We have the ring isomorphism $\comp[Q_{\tau,x}] \cong\comp[\Sigma_{\tau} \oplus \bb{N}^{l}]$ induced by the monoid isomorphism defined by sending $(m,a_0,a_1,\dots,a_l) \mapsto (m,a_1 - \psi_1(m),\dots,a_l - \psi_l(m))$. 

We also fix some isomorphism $\comp[\tau^{-1}\Sigma_v] \cong \comp[\Sigma_{\tau}\oplus \bb{Z}^{l}]$ coming from the identification of $\tau^{-1}\Sigma_v $ with the fan $\Sigma_{\tau} \oplus \real^{l} = \{ \omega \oplus \real^{l} \ | \ \omega \text{ is a cone of } \tau \} $ in $\norpoly_{\tau,\real} \oplus \real^{l}$. Taking a sufficiently small neighborhood $V$ of $x$ such that $Z_\rho \cap V = \emptyset$ if $x \notin Z_{\rho}$, 
we define a map $V \rightarrow \spec(\comp[Q_{\tau,x}])$ by composing $V \hookrightarrow \spec(\comp[\tau^{-1}\Sigma_v]) \cong \spec(\comp[\Sigma_{\tau}\oplus \bb{Z}^{l}])$ with the map $\spec(\comp[\Sigma_{\tau}\oplus \bb{Z}^{l}]) \rightarrow \spec(\comp[\Sigma_{\tau}\oplus \bb{N}^{l}])$ described on generators by 
\begin{equation}\label{eqn:singular_locus_local_model}
	\begin{cases}
		z^m \mapsto  h_m \cdot z^m & \text{if } m\in \Sigma_{\tau} ;\\
		u_i \mapsto  f_{v,i} & \text{if } 1\leq i \leq r;\\
		u_i \mapsto  z_i-z_i(x) & \text{if } r<i\leq l;
	\end{cases}
\end{equation}
here $u_i$ is the $i$-th coordinate function of $\comp[\bb{N}^{l}]$, $z_i$ is the $i$-th coordinate function of $\comp[\bb{Z}^{l}]$ chosen so that $\left(\ddd{f_{v,i}}{z_j}\right)_{1\leq i \leq r, 1\leq j \leq r}$ is non-degenerate on $V$; also, each $h_m$ is an invertible holomorphic functions on $V \cap \mathrm{Zero}(z^m;v)$, and they satisfy the equations \eqref{eqn:local_log_structure_from_monoid_homomorphism} and \eqref{eqn:relation_between_log_smooth_structure_and_slab_functions} where we replace $f_{v\rho}$ by
$$\tilde{f}_{v\rho} = 
\begin{cases}
	s_{v\tau}^{-1}( f_{v\rho}) & \text{if } x \notin Z_{\rho},\\
	1 & \text{if } x \in Z_{\rho}.
\end{cases}
$$

Letting $\rest{} \colon V \rightarrow \localmod{k}$ be the $k$-th order thickening of $V$ over $\comp[q]/q^{k+1}$ in the model $\spec(\comp[P_{\tau,x}])$ under the above embedding,
we have a natural divisorial log structure on $\localmod{k}^{\dagger}$ over $\logsk{k}$ induced from the inclusion $\spec(\comp[Q_{\tau,x}]) \hookrightarrow \spec(\comp[P_{\tau,x}])$ (i.e. Example \ref{example:divisor_log_structure}). Restricting it to $V$ gives the log structure of $\centerfiber{0}^{\dagger}$ over the log point $\logsk{0}$ locally around $x$.

\section{A generalized moment map and the tropical singular locus on $B$}\label{sec:modified_moment_map}

In this section, we recall the construction of a generalized moment map $\moment \colon \centerfiber{0} \rightarrow B$ from \cite[Prop. 2.1]{ruddat2019period}. Then we construct some convenient charts on the base tropical manifold $B$ and study its singular locus.

\subsection{A generalized moment map}\label{subsec:moment_map}

From this point onward, we will assume the vanishing of an obstruction class associated to the open gluing data $s$, namely, $o(s) = 1$, where the obstruction class $o(s)$ is written multiplicatively (see \cite[Thm. 2.34]{Gross-Siebert-logI}). Under this assumption, one can construct an ample line bundle $\cu{L}$ on $\centerfiber{0}$ as follows: For each polytope $\tau \subset \tanpoly_{\tau,\real}$, by identifying $\centerfiber{0}_{\tau}$ (a closed stratum of $\centerfiber{0}$ described in Remark \ref{rem:closed_Construction}) with the projective toric variety associated to $\tau$, we obtain an ample line bundle $\cu{L}_\tau$ on $\centerfiber{0}_{\tau}$. When the assumption holds, then there exists an isomorphism $\mathbf{h}_{\omega\tau}\colon \iota_{\omega\tau}^*(\cu{L}_{\tau}) \cong \cu{L}_{\omega}$, for every pair $\omega \subset \tau$, such that the isomorphisms $\mathbf{h}_{\omega\tau}$'s satisfy the \emph{cocycle condition}, i.e. $\mathbf{h}_{\omega\tau}\circ\iota_{\omega\tau}^*(\mathbf{h}_{\tau\sigma}) = \mathbf{h}_{\omega\sigma}$ for every triple $\omega\subset\tau\subset\sigma$.\footnote{In fact, the vanishing of the obstruction class corresponds exactly to the validity of the cocycle condition.}
In particular, the degenerate Calabi--Yau $\centerfiber{0} = \centerfiber{0}(B,\pdecomp,s)$ is projective.  

Sections of $\cu{L}$ correspond to the lattice points $B_\inte \subset B$. More precisely, given $m\in B_\inte$, there is a unique $\tau \in \pdecomp$ such that $m \in \reint(\tau)$, and this determines a section $\vartheta_{m,\tau}$ of $\cu{L}_{\tau}$ by toric geometry. This section extends uniquely as $\vartheta_{m}$ to $\sigma \supset \tau$ such that $\mathbf{h}_{\tau\sigma}(\vartheta_m) = \vartheta_{m,\tau}$. Further extending $\vartheta_m$ by $0$ to other cells gives a section of $\cu{L}$ corresponding to $m$, called a \emph{($0^{\text{th}}$-order) theta function}. 
Now for a vertex $v \in \pdecomp^{[0]}$, we can trivialize $\cu{L}$ over $V(v)$ using $\vartheta_{v}$ as the holomorphic frame. Then, for $m$ lying in a cell $\sigma$ that contains $v$, $\vartheta_{m}$ is of the form $g \vartheta_v$, where $g$ is a constant multiple of $z^{m}$. 

Under the above projectivity assumption, one can define a \emph{generalized moment map} 
\begin{equation}
\moment \colon \centerfiber{0} \rightarrow B
\end{equation}
following \cite[Prop. 2.1]{ruddat2019period}: First of all, the theta functions $\{\vartheta_m\}_{m \in B_\inte}$ defines an embedding of $\Phi \colon \centerfiber{0}\hookrightarrow \bb{P}^{N}$. 
Restricting to each closed toric stratum $\centerfiber{0}_{\tau} \subset \centerfiber{0}$, the only non-zero theta functions are those corresponding to $m \in B_{\inte} \cap \tau$. Also, there is an embedding $\mathfrak{j}_{\tau}\colon\mathtt{T}_{\tau} := \tanpoly_{\tau,\real}^*/\tanpoly_{\tau,\inte}^* \hookrightarrow \mathtt{U}(1)^{N}$ of real tori such that the composition $\Phi_{\tau}\colon\centerfiber{0}_{\tau} \rightarrow \bb{P}^N$ of $\Phi$ with the inclusion $\centerfiber{0}_{\tau} \hookrightarrow \centerfiber{0}$ is equivariant. The map $\moment$ is then defined by setting
\begin{equation}\label{eqn:moment_map_equation_on_maximal_cell}
\moment|_{\centerfiber{0}_{\tau}} (z):=  \frac{1}{\sum_{m \in B_{\inte} \cap \tau} |\vartheta_m(z) |^2} \sum_{m \in B_{\inte} \cap \tau} |\vartheta_m(z)|^2 \cdot m,
\end{equation}
which can be understood as a composition of maps 
$$
\xymatrix{
\centerfiber{0}_{\tau} \ar[r]^{\Phi_{\tau}}& \bb{P}^N \ar[r]^{\moment_{\bb{P}}} & (\real^{N})^*  \ar[r]^{d \mathfrak{j}^*_{\tau}} & \tanpoly_{\tau,\real},  
}
$$
where $\moment_{\bb{P}}$ is the standard moment map for $\bb{P}^{N}$ and $d\mathfrak{j}_{\tau}\colon \tanpoly_{\tau,\real}^* \rightarrow \real^{N}$ is the Lie algebra homomorphism induced by $\mathfrak{j}_{\tau}\colon\mathtt{T}_{\tau} \rightarrow \mathtt{U}(1)^N$. 

Fixing a vertex $v \in \pdecomp^{[0]}$, we can naturally embed $\tanpoly_{\tau,\real} \hookrightarrow T_{v,\real}$ for all $\tau$ containing $v$. Furthermore, we can patch the $d\mathfrak{j}^*_{\tau}$'s into a linear map $d\mathfrak{j}^* \colon (\real^{N})^* \rightarrow T_{v,\real}$ so that $\moment_{\tau} = d\mathfrak{j}^* \circ \moment_{\bb{P}} \circ \Phi_{\tau}$ for each $\tau$ which contains $v$. In particular, on the local chart $V(\tau)=\spec(\comp[\tau^{-1}\Sigma_v])$ associated with $v \in \tau$, we have the local description $\moment|_{V(\tau)} = d \mathfrak{j}^*\circ \moment_{\bb{P}}\circ \Phi|_{V(\tau)}$ of the generalized moment map $\moment$. 

We consider the \emph{amoeba} $\amoeba:= \moment(Z)$.
As $\centerfiber{0}_{\tau} \cap Z = \bigcup_{i=1}^p Z^{\tau}_i$, where $Z^{\tau}_i$ is the zero set of a section of $\varkappa_{\tau,i}^*(\cu{O}(1))$ (see the discussion right after equation \eqref{eqn:i_th_map_to_toric_variety_of_monodromy}), we can see that $\amoeba \cap \tau = \bigcup_{i=1}^{p} \moment_{\tau}(Z^{\tau}_i)$ is a union of amoebas $\amoeba^{\tau}_i:= \moment_{\tau}(Z^{\tau}_i)$. It was shown in \cite{ruddat2019period} that the affine structure defined right after Definition \ref{def:integral_tropical_manifold} extends to $B \setminus \amoeba$.

\subsection{Construction of charts on $B$}\label{subsubsec:charts_on_B}
For any $\tau \in \pdecomp$, we have 
$$\moment(V(\tau)) =  \bigcup_{\tau \subset \omega } \reint(\omega) =: W(\tau).$$
For later purposes, we would like to relate sufficiently small open convex subsets $W \subset W(\tau)$ with \emph{Stein} (or \emph{strongly $1$-completed}, as defined in \cite{demailly1997complex}) open subsets $U \subset V(\tau)$. To do so, we need to introduce a specific collection of (non-affine) charts on $B$. 

Recall that there are natural maps $\tanpoly_{\tau} \hookrightarrow \tau^{-1} \Sigma_v $ and $\tau^{-1} \Sigma_v \twoheadrightarrow \Sigma_{\tau}$. By choosing a piecewise linear splitting $\mathsf{split}_\tau\colon \Sigma_{\tau} \rightarrow \tau^{-1} \Sigma_v$, we have an identification of monoids $\tau^{-1} \Sigma_v \cong \Sigma_{\tau} \times \tanpoly_{\tau}$, which induces the biholomorphism 
$$V(\tau) = \spec(\comp[\tau^{-1} \Sigma_v]) \cong \spec(\comp[\tanpoly_{\tau}]) \times \spec(\comp[\Sigma_{\tau}]),$$
where $\tanpoly_{\tau,\comp^*}^*:=\spec(\comp[\tanpoly_{\tau}]) \cong \tanpoly_{\tau}^* \otimes_\inte \comp^* \cong (\comp^{*})^l$ is a complex torus.
Fixing a set of generators $\{m_i \}_{i \in \mathtt{B}_{\tau}}$ of the monoid $\Sigma_{\tau}$, which is not necessarily a minimal set, we can define an embedding $\spec(\comp[\Sigma_{\tau}])\hookrightarrow \comp^{|\mathtt{B}_{\tau}|}$ as an analytic subset using the functions $z^{m_i}$'s. 
We consider the real torus $\mathtt{T}_{\tau,\perp} := \norpoly_{\tau,\real}^*/\norpoly_{\tau}^* \cong \mathtt{U}(1)^{n - l}$ and its action on $\spec(\comp[\Sigma_{\tau}])$ defined by $t\cdot z^{m} = e^{2\pi i (t,m)} z^m$, together with an embedding $\mathtt{T}_{\tau,\perp} \hookrightarrow \mathtt{U}(1)^{|\mathtt{B}_{\tau}|}$ of real tori via $t\mapsto (e^{2\pi i (t,m_i)})_{i \in \mathtt{B}_\tau}$, so that $\spec(\comp[\Sigma_{\tau}])\hookrightarrow \comp^{|\mathtt{B}_{\tau}|}$ is $\mathtt{T}_{\tau,\perp}$-equivariant.

We consider the moment map $\hat{\moment}_{\tau} \colon \spec(\comp[\Sigma_{\tau}]) \rightarrow \norpoly_{\tau,\real}$ defined by 
\begin{equation}\label{eqn:local_moment_map}
\hat{\moment}_{\tau} := \sum_{i \in \mathtt{B}_{\tau}} \half |z^{m_i}|^2  \cdot m_i,
\end{equation} 
which is obtained by composing the standard moment map $\comp^{|\mathtt{B}_{\tau}|} \rightarrow \real^{|\mathtt{B}_{\tau}|}_{\geq 0}$, $(z_i)_{i\in \mathtt{B}_{\tau}} \mapsto (\half |z_i|^2)_{i\in \mathtt{B}_{\tau}}$ with the projection $\real^{|\mathtt{B}_{\tau}|} \rightarrow \norpoly_{\tau,\real}$, $e_i \mapsto m_i$. By \cite[\S 4.2]{Fulton_toric_book}, $\hat{\mu}_{\tau}$ induces a homeomorphism between the quotient $\spec(\comp[\Sigma_{\tau}])/\mathtt{T}_{\tau,\perp}$ and $\norpoly_{\tau,\real}$.
Taking product with the log map $\log\colon \tanpoly_{\tau,\comp^*}^*  \rightarrow \tanpoly_{\tau,\real}^*$ (which is induced from the standard log map $\log\colon \comp^* \rightarrow \real$ defined by $\log(e^{2\pi (x+i\theta)}) = x$), we obtain a map $\moment_{\tau} := (\log, \hat{\moment}_{\tau}) \colon V(\tau) \rightarrow \tanpoly_{\tau,\real}^* \times \norpoly_{\tau,\real}$,\footnote{It depends on the choices of the splitting $\mathsf{split}_{\tau}\colon \Sigma_{\tau} \rightarrow \tau^{-1} \Sigma_v$ and the generators $\{m_i \}_i$, but we omit these dependencies from our notations.} and the following diagram 
\begin{equation}\label{eqn:local_chart_for_stein_subsets}
\xymatrix@1{ & V(\tau) \ar[d]^{\moment} \ar[dl]_{\moment_{\tau}}\\
	\tanpoly_{\tau,\real}^* \times \norpoly_{\tau,\real} \ar[r]^{\Upsilon_{\tau}} & W(\tau),
}
\end{equation}
where $\Upsilon_{\tau}$ is a homeomorphism which serves as a chart. 

The homeomorphism $\Upsilon_{\tau}$ exists because if we fix a vertex $v \in \tau$, then we can equip $V(\tau)$ with an action by the real torus $\mathtt{T}^{n} := T^*_{v, \mathbb{R}} / T^*_v$ such that both $\moment$ and $\moment_\tau$ induce homeomorphisms from the quotient $V(\tau)/\mathtt{T}^{n}$ onto the images. 
The restriction of $\Upsilon_{\tau}$ to $\tanpoly_{\tau,\real}^* \times\{o\}$, where $\{o\}$ is the zero cone, is a homeomorphism onto $\reint(\tau) \subset W(\tau)$, which is nothing but (a generalized version of) the Legendre transform (see \cite[\S 4.2]{Fulton_toric_book} for the explicit formula); also, this homeomorphism is independent of the choices of the splitting $\mathsf{split}_{\tau}$ and the generators $\{m_i \}_{i \in \mathtt{B}_{\tau}}$. 


The dependences of the chart $\Upsilon_{\tau}$ on the choices of the splitting $\mathsf{split}_{\tau}\colon \Sigma_{\tau} \rightarrow \tau^{-1} \Sigma_v$ and the generators $\{m_i \}_i$ can be described as follows.
First, if we choose another piecewise linear splitting $\widetilde{\mathsf{split}}_{\tau}\colon \Sigma_{\tau} \rightarrow \tau^{-1} \Sigma_v$, then there is a piecewise linear map $b \colon \Sigma_{\tau} \rightarrow \tanpoly_{\tau,\real}$ recording the difference between $\mathsf{split}_\tau$ and $\widetilde{\mathsf{split}}_{\tau}$. The two corresponding coordinate charts $\Upsilon_{\tau}$ and $\tilde{\Upsilon}_{\tau}$ are then related by a homeomorphism $\gimel$ 
such that
$$
\gimel\left(x,\sum_{i} y_i m_i\right) = \left(x , \sum_{i}y_i e^{4\pi \langle b(m_i),x\rangle } m_i \right),
$$
where $y_i= \half |z^{m_i}|^2$ for some point $z\in  \spec(\comp[\Sigma_{\tau}])$ and $i$ runs through $m_i \in \sigma$,
via the formula $\tilde{\Upsilon}_{\tau} = \Upsilon_{\tau} \circ \gimel$.
Second, if we choose another set of generators $\tilde{m}_j$'s, then the corresponding maps $\hat{\moment}_{\tau}, \tilde{\moment}_{\tau} \colon \spec(\comp[\Sigma_{\tau}]) \rightarrow \norpoly_{\tau,\real}$ are related by a continuous map $\hat{\gimel} \colon \norpoly_{\tau,\real} \rightarrow \norpoly_{\tau,\real}$ which maps each cone $\sigma \in \Sigma_{\tau}$ back to itself. This is because both $\hat{\moment}_{\tau}, \tilde{\moment}_{\tau} $ induce a homeomorphism between $\spec(\comp[\Sigma_{\tau}])/\mathtt{T}_{\tau,\perp}$ and $\norpoly_{\tau,\real}$.

Now suppose that $\omega \subset \tau$. We want to see how the charts $\Upsilon_{\omega}$, $\Upsilon_{\tau}$ can be glued together in a compatible manner. We first make a compatible choice of splittings. So we fix a vertex $v \in \omega$ and a piecewise linear splitting $\mathsf{split}_{\omega}\colon \Sigma_{\omega} \rightarrow \omega^{-1} \Sigma_{v}$. 
We then choose a piecewise linear splitting $\mathsf{split}_{\omega\tau}\colon \Sigma_{\tau} \rightarrow \Sigma_{\omega}$ such that $K_\tau \sigma$ is mapped into $K_\omega \sigma$ for any $\sigma \supset \tau$. 
Together with the natural maps $\tanpoly_{\tau}/\tanpoly_{\omega} \hookrightarrow \tau^{-1}\Sigma_{\omega}$ and $\tau^{-1}\Sigma_{\omega} \twoheadrightarrow \Sigma_{\tau}$, we obtain an isomorphism $\tau^{-1}\Sigma_{\omega} \cong (\tanpoly_{\tau}/\tanpoly_{\omega}) \times \Sigma_{\tau}$.
By composing together $\mathsf{split}_{\omega\tau}\colon \Sigma_{\tau} \rightarrow \Sigma_{\omega}$, $\mathsf{split}_{\omega}\colon \Sigma_{\omega} \rightarrow \omega^{-1} \Sigma_{v}$ and the natural monoid homomorphism $ \omega^{-1} \Sigma_{v} \rightarrow \tau^{-1}\Sigma_{v}$, we get a splitting $\mathsf{split}_{\tau}\colon \Sigma_{\tau} \rightarrow \tau^{-1} \Sigma_{v}$. 

Using these choices of splittings, we have a biholomorphism 
$$\spec(\comp[\tau^{-1}\Sigma_{\omega}]) \cong (\tanpoly_{\tau}/\tanpoly_{\omega})^*\otimes_{\inte}\comp^* \times \spec(\comp[\Sigma_{\tau}])$$ which fits into the following diagram
\begin{equation}\label{eqn:change_of_chart_omega_tau}
\xymatrix@1{ \tanpoly_{\omega,\comp^*}^* \times \spec(\comp[\Sigma_{\omega}]) \ar[r]^{\cong} & \spec(\comp[\omega^{-1}\Sigma_v])&  \\
	 \tanpoly_{\omega,\comp^*}^* \times \spec(\comp[\tau^{-1}\Sigma_{\omega}]) \ar@{^{(}->}[u] \ar[d]^{\cong}  &\spec(\comp[\tau^{-1}\Sigma_v]) \ar[l]^{\cong} \ar[d]^{\cong} \ar@{^{(}->}[u]&  \ar[l]^{s_{\omega\tau}^{-1}}  \spec(\comp[\tau^{-1}\Sigma_v]) \ar[d]^{\cong} \ar@{_{(}->}[ul]_{F_s(\omega\subset \tau)}\\
(\tanpoly_{\omega}\oplus \tanpoly_{\tau}/\tanpoly_{\omega})^*\otimes_{\inte}\comp^* \times \spec(\comp[\Sigma_{\tau}])& \ar[l] \tanpoly_{\tau,\comp^*}^*\times \spec(\comp[\Sigma_{\tau}])& \ar[l]^{s_{\omega\tau}^{-1}} \tanpoly_{\tau,\comp^*}^* \times \spec(\comp[\Sigma_{\tau}]).
}
\end{equation}
Here, the bottom left horizontal map is induced from a splitting $(\tanpoly_{\tau}/\tanpoly_{\omega}) \rightarrow \tanpoly_{\tau}$ obtained by composing $\tanpoly_{\tau}/\tanpoly_{\omega} \rightarrow \tau^{-1}\Sigma_{\omega}$ with the splitting $\tau^{-1}\Sigma_{\omega} \rightarrow \tau^{-1}(\omega^{-1} \Sigma_{v})$, and then identifying with the image lattice $\tanpoly_{\tau}$. The appearance of $s_{\omega\tau}$ in the diagram is due to the twisting of $V(\tau)$ by the open gluing data $(s_{\omega\tau})_{\omega\subset\tau}$ when it is glued to $V(\omega)$. 

We also have to make a compatible choice of the generators $\{m_i\}_{i \in \mathtt{B}_{\omega}}$ and $\{m_i\}_{i \in \mathtt{B}_{\tau}}$. 
First note that the restriction of $\hat{\moment}_{\omega}$ to the open subset $ \spec(\comp[\tau^{-1}\Sigma_{\omega}]) \subset \spec(\comp[\Sigma_{\omega}])$ depends only on the subcollection $\{m_i\}_{i \in \mathtt{B}_{\omega\subset \tau}}$ of $\{m_i\}_{i \in \mathtt{B}_{\omega}}$ which contains those $m_i$'s that belong to some cone $\sigma \supset \tau$. 
We choose the set of generators $\{\tilde{m}_i\}_{i \in \mathtt{B}_{\tau}}$ for $\Sigma_{\tau}$, with $\mathtt{B}_{\tau}=\mathtt{B}_{\omega\subset \tau}$, to be the projection of $\{m_i\}_{i \in \mathtt{B}_{\omega\subset \tau}}$ through the natural map $\tau^{-1} \Sigma_{\omega} \rightarrow \Sigma_{\tau}$. Each $m_i$ can be expressed as $m_i =  \mathsf{split}_{\omega\tau}(\tilde{m}_i) + b_i$ for some $b_i \in \tanpoly_{\tau}/\tanpoly_{\omega}$, through the splitting $\mathsf{split}_{\omega\tau}\colon \Sigma_{\tau} \rightarrow \Sigma_{\omega}$. Notice that if $m_i \in K_{\omega} \tau$, then we have $\tilde{m}_i= o$ and hence $b_i \in K_{\omega}\tau $. 
By tracing through the biholomorphism in \eqref{eqn:change_of_chart_omega_tau} and taking either the modulus or the log map, we have a map 
$$\gimel \colon \tanpoly_{\omega,\real}^* \times (\tanpoly_{\tau,\real}/\tanpoly_{\omega,\real})^* \times \norpoly_{\tau,\real} \rightarrow \tanpoly_{\omega,\real}^* \times \norpoly_{\omega,\real}$$
satisfying
\begin{equation}\label{eqn:affine_local_chart_change_of_strata}
\gimel\left(x_1 - c_{\omega\tau,1},x_2-c_{\omega\tau,2},\sum_i y_i |s_{\omega\tau}(\mathsf{split}_{\omega\tau}(\tilde{m}_i))|^{-2} \tilde{m}_i\right) = \left(x_1,\sum_{i} y_i e^{4\pi \langle b_i,x_2 \rangle} m_i\right),
\end{equation}
where $y_i = \half | z^{\tilde{m}_i}|^2$. 
Here, $s_{\omega\tau} \in \mathrm{PM}(\tau)$ is the part of the open gluing data associated to $\omega \subset \tau$, 
and $c_{\omega\tau}=c_{\omega\tau,1}+c_{\omega\tau,2} \in \tanpoly_{\tau,\real}^*$ is the unique element representing the linear map $\log|s_{\omega\tau}| \colon \tanpoly_{\tau,\real} \rightarrow \real$ defined by $\log|s_{\omega\tau}|(b) = \log|s_{\omega\tau}(b)|$.
For instance, the holomorphic function $z^{m_i} \in \comp[\tau^{-1}\Sigma_{\omega}]$ is identified with $z^{b_i}\cdot z^{\tilde{m}_i}$ in $(\tanpoly_{\tau}/\tanpoly_{\omega})^*\otimes_{\inte}\comp^* \times \spec(\comp[\Sigma_{\tau}])$, resulting in the expression $\sum_{i} y_i e^{4\pi \langle b_i,x_2 \rangle} m_i$ on the right hand side. 
We have $\Upsilon_{\tau} = \Upsilon_{\omega} \circ \gimel$, where we use the splitting $(\tanpoly_{\tau}/\tanpoly_{\omega}) \rightarrow \tanpoly_{\tau}$ to obtain an isomorphism $\tanpoly_{\omega,\real}^* \times (\tanpoly_{\tau,\real}/\tanpoly_{\omega,\real})^* \cong \tanpoly_{\tau,\real}^*$ and an identification of the domains of the two maps $\Upsilon_{\tau}$ and $\Upsilon_{\omega} \circ \gimel$.

\begin{lemma}\label{lem:stein_open_subset_lemma}
	There is a base $\mathscr{B}$ of open subsets of $B$ such that the preimage $\moment^{-1}(W)$ is Stein for any $W \in \mathscr{B}$. 
\end{lemma}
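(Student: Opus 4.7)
The plan is to construct, for each $b \in B$, a cofinal system of open neighborhoods $W \ni b$ with $\moment^{-1}(W)$ Stein; collecting such $W$'s as $b$ ranges over $B$ gives the required base $\mathscr{B}$.

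Let $\tau \in \pdecomp$ be the smallest cell with $b \in \reint(\tau)$, so $b \in W(\tau)$ and $\moment^{-1}(W(\tau)) = V(\tau) = \spec(\comp[\tau^{-1}\Sigma_v])$ is Stein (as an affine analytic space). Via the biholomorphism $V(\tau) \cong (\comp^*)^l \times \spec(\comp[\Sigma_\tau])$ with $l = \dim\tau$ coming from the piecewise linear splitting discussed before \eqref{eqn:local_moment_map}, and the chart \eqref{eqn:local_chart_for_stein_subsets}, the generalized moment map restricted to $V(\tau)$ factors as $\Upsilon_\tau \circ (\log, \hat{\moment}_\tau)$ into $\tanpoly_{\tau,\real}^* \times \norpoly_{\tau,\real}$.

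Take $W := \Upsilon_\tau(U_1 \times U_2)$ with $U_1 \subset \tanpoly_{\tau,\real}^*$ a small open convex ball around the $\log$-image of $b$, and $U_2 \subset \norpoly_{\tau,\real}$ a small open convex neighborhood of $0$. Then $\moment^{-1}(W) \cong \log^{-1}(U_1) \times \hat{\moment}_\tau^{-1}(U_2)$ by the product structure. The first factor is a tube Reinhardt domain over a convex base, hence Stein by the classical theorem on logarithmically convex Reinhardt domains. For the second factor, I choose $U_2$ so that $\hat{\moment}_\tau^{-1}(U_2)$ is cut out from the affine Stein variety $\spec(\comp[\Sigma_\tau])$ by plurisubharmonic inequalities in the $|z^{m_i}|^2$, for instance as a sublevel set of
$$\hat{\rho}(z) := \sum_{i \in \mathtt{B}_\tau} |z^{m_i}|^2,$$
which is a plurisubharmonic proper exhaustion function on $\spec(\comp[\Sigma_\tau])$ since the $z^{m_i}$'s give a closed embedding into $\comp^{|\mathtt{B}_\tau|}$ under which $\hat{\rho}$ pulls back the standard squared norm. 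A product of Stein spaces is Stein, so $\moment^{-1}(W)$ is Stein. Shrinking $U_1$ and $U_2$ makes $W$ arbitrarily small around $b$, giving a basis at $b$.

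The main technical hurdle is the Stein property of $\hat{\moment}_\tau^{-1}(U_2)$ for the right choice of small convex $U_2$, which is an instance of the general principle that moment-map preimages of convex open sets in Kähler toric varieties are Stein. Because $\Sigma_\tau$ is a complete fan, plurisubharmonic descriptions must be pieced together across different maximal cones, but near the zero-dimensional stratum of $\spec(\comp[\Sigma_\tau])$ lying over $b$, it suffices to take $U_2 = \hat{\moment}_\tau(\{\hat{\rho} < R\})$ for small $R$, since the compact-torus invariance of $\hat{\rho}$ combined with the fact that $\hat{\moment}_\tau$ separates compact-torus orbits on each toric stratum forces $\hat{\moment}_\tau^{-1}(U_2) = \{\hat{\rho} < R\}$, which is Stein. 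One then has to check that the resulting $W$'s genuinely form a basis and are compatible with the chart transitions \eqref{eqn:affine_local_chart_change_of_strata} between adjacent strata $\omega \subset \tau$, which follows because the transition maps are smooth and send convex products of small neighborhoods to convex products of small neighborhoods (up to a change of the convex open $U_1, U_2$).
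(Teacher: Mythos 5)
Your proof is correct and follows essentially the same route as the paper's: factor $\moment|_{V(\tau)}$ through $(\comp^*)^l \times \spec(\comp[\Sigma_{\tau}])$ via the chart \eqref{eqn:local_chart_for_stein_subsets}, take the log-preimage of a convex set for the first factor and a Stein neighborhood basis of the origin for the toric factor, then use that a product of Stein spaces is Stein. The only difference is cosmetic: the paper takes $\bigcap_j \{|z^{m_j}| < \epsilon\}$ for the toric factor (Stein by \cite[Ch.~1, Ex.~7.4]{demailly1997complex}) where you take a sublevel set of the plurisubharmonic exhaustion $\sum_i |z^{m_i}|^2$, and you additionally make explicit the saturation of these sets under $\hat{\moment}_{\tau}$ (via separation of compact-torus orbits) that the paper leaves implicit.
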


\begin{proof}
	First of all, it is well-known that analytic spaces associated to affine varieties are Stein. So $V(\tau)$ is Stein for any $\tau$. 
	Now we fix a point $x \in \reint(\tau) \subset B$. It suffices to show that there is a local base $\mathscr{B}_x$ of $x$ such that the preimage $\moment^{-1}(W)$ is Stein for each $W \in \mathscr{B}_x$. 
	We work locally on $\moment|_{V(\tau)} \colon V(\tau) \rightarrow W(\tau)$. 
	Consider the diagram \eqref{eqn:local_chart_for_stein_subsets} and write $\Upsilon^{-1}(x) = (\underline{x},o)$, where $o \in \norpoly_{\tau,\real}$ is the origin. By \cite[Ch. 1, Ex. 7.4]{demailly1997complex}, the preimage $\log^{-1}(W)$ under the log map $\log \colon (\comp^*)^l \rightarrow \tanpoly_{\tau,\real}^*$ is Stein for any convex $W \subset \tanpoly_{\tau,\real}^*$ which contains $\underline{x}$. 
	Again by \cite[Ch. 1, Ex. 7.4]{demailly1997complex}, any subset 
    $$\bigcap_{j=1}^N\{z \in \spec(\comp[\Sigma_{\tau}]) \ | \ |f_j(z)|<\epsilon \},$$
    where $f_j$'s are holomorphic functions, is Stein.
	By taking $f_j$'s to be the functions $z^{m_j}$'s associated to the set of all non-zero generators in $\{m_j\}_{j\in \mathtt{B}_{\tau}}$ and $\epsilon$ sufficiently small, we have a subset 
    $$W  = \left\{  y \ \Big| \ y = \sum_{j} y_j m_j \text{ with } |y_j|<\frac{\epsilon^2}{2}, \ \text{where $y_j = \half |z^{m_j}|^2$ at some point $z \in \spec(\comp[\Sigma_{\tau}])$}\right\}$$
    of $\norpoly_{\tau,\real}$ such that the preimage $\hat{\moment}_{\tau}^{-1}(W)$ is Stein. Therefore, we can construct a local base $\mathscr{B}_{o}$ of $o$ such that the preimage $\hat{\moment}_{\tau}^{-1}(W)$ is Stein for any $W \in \mathscr{B}_o$. Finally, since a product of Stein open subsets is Stein, we obtain our desired local base $\mathscr{B}_x$ by taking the products of these subsets. 
\end{proof}
 
\subsection{The tropical singular locus $\tsing$ of $B$}\label{subsec:tropical_singular_locus}
We now specify a codimension $2$ singular locus $\tsing \subset B$ of the affine structure using the charts $\Upsilon_{\tau}$ introduced in \eqref{eqn:local_chart_for_stein_subsets} for $\tau$ such that $\dim_{\real}(\tau)<n$. Given the chart $\Upsilon_{\tau}$ that maps $\tanpoly_{\tau,\real}^*$ to $\reint(\tau)$, we define the \emph{tropical singular locus} $\tsing$ by requiring that
\begin{equation}\label{eqn:definition_of_tropical_singular_locus}
 \Upsilon_{\tau}^{-1}(\tsing \cap \reint(\tau)) = \bigcup_{\substack{\rho \in  \mathscr{N}_{\tau};\\ \dim_{\real}(\rho) < \dim_{\real}(\tau)}} \big( (\reint(\rho) +c_{\tau}) \times \{o\} \big) ,
 \end{equation}
where $\mathscr{N}_{\tau} \subset \tanpoly_{\tau,\real}^*$ is the normal fan of the polytope $\tau$, and $\{o\}$ is the zero cone in $\Sigma_{\tau} \subset \norpoly_{\tau,\real}$; here, $c_{\tau} = \log|s_{v\tau}|$ is the element in $\tanpoly_{\tau,\real}^*$ representing the linear map $\log|s_{v\tau}|\colon \tanpoly_{\tau,\real} \rightarrow \real$, which is independent of the vertex $v\in \tau$. A subset of the form $\tsing_{\tau,\rho} := (\reint(\rho)+c_{\tau}) \times \{o\}$ in \eqref{eqn:definition_of_tropical_singular_locus} is called a \emph{stratum} of $\tsing$ in $\reint(\tau)$. 
The locus $\tsing$ is independent of the choices of the splittings $\mathsf{split}_{\tau}$'s and generators $\{m_i\}_{i \in \mathtt{B}_{\tau}}$ used to construct the charts $\Upsilon_{\tau}$'s. 

\begin{remark}
    Our definition of the singular locus is similar to those in \cite{Gross-Siebert-logI, gross2011real}; the only difference is that our locus is a collection of polyhedra in $\tanpoly_{\tau,\real}^*$, instead of $\reint(\tau)$. Note that $\tanpoly_{\tau,\real}^*$ is homeomorphic to $\reint(\tau)$ by the Legendre transform. This modification is needed for our construction of the contraction map $\mathscr{C}$ below, where we need to consider the convex open subsets in $\tanpoly_{\tau,\real}^*$, instead of those in $\reint(\tau)$.
\end{remark}

\begin{lemma}\label{lem:stratification_structure_on_singular_locus}
	For $\omega \subset \tau$ and a stratum $\tsing_{\tau,\rho}$ in $\reint(\tau)$, the intersection of the closure $\overline{\tsing_{\tau,\rho}}$ in $B$ with $\reint(\omega)$ is a union of strata of $\tsing$ in $\reint(\omega)$.
\end{lemma}

\begin{proof}
We consider the map $\gimel$ described in equation \eqref{eqn:affine_local_chart_change_of_strata} and take a neighborhood $W = W_1 \times \norpoly_{\omega,\real}$ of a point $(\underline{x},o)$ in $ \tanpoly_{\omega,\real}^* \times \norpoly_{\omega,\real}$, where $W_1$ is some sufficiently small neighborhood of $\underline{x}$ in $\tanpoly_{\omega,\real}^*$. By shrinking $W$ if necessary, we may assume that $\gimel^{-1}(W) = W_1 \times (a -\reint(K_{\omega}\tau^{\vee})) \times \norpoly_{\tau,\real}$, where $a$ is some element in $-\reint(K_{\omega}\tau^{\vee}) \subset (\tanpoly_{\tau,\real}/\tanpoly_{\omega,\real})^*$. 
Writing $c_{\tau} = c_{\tau,1}+c_{\tau,2}$, where $c_{\tau,1},c_{\tau,2}$ are the components of $c_{\tau}$ according to the chosen decomposition $\tanpoly_{\tau,\real}^* \cong \tanpoly_{\omega,\real}^* \times (\tanpoly_{\tau,\real}/\tanpoly_{\omega,\real})^*$, the equality $c_{\tau,1} + c_{\omega\tau,1} = c_{\omega}$ follows from the compatibility of the open gluing data in Definition \ref{def:open_gluing_data}. 
If $\tsing_{\tau,\rho}$ intersects the open subset $\gimel^{-1}(W)$, then $\rho \subset \tanpoly_{\tau,\real}^*$ must be the dual cone of some face $\rho^{\vee} \subset \omega \subset \tau$ in $\tanpoly_{\tau,\real}^*$. The intersection is of the form 
$$(\reint(\underline{\rho})+c_{\tau,1}) \times (a-\reint(K_{\omega}\tau^{\vee})) \times \{o\}$$
for some $\underline{\rho} \in \mathscr{N}_{\omega}$ ($c_{\tau,2}$ is absorbed by $a$), where $\underline{\rho} \subset \tanpoly_{\omega,\real}^*$ is the dual cone of $\rho^{\vee}$ in $\tanpoly_{\omega,\real}^*$, and hence we have $W \cap \tsing_{\tau,\rho} =\gimel((\reint(\underline{\rho})+c_{\tau,1}) \times (a-\reint(K_{\omega}\tau^{\vee})) \times \{o\})$. Therefore, the intersection of $\overline{\tsing_{\tau,\rho}}$ with $\tanpoly_{\omega,\real}^*$ in the open subset $W \subset \tanpoly_{\omega,\real}^* \times \norpoly_{\omega,\real}$ is given by $(\underline{\rho}+c_{\omega}) \times \{o\}$, which is a union of strata.
\end{proof}
 
The tropical singular locus $\tsing$ is naturally equipped with a stratification, where a stratum is given by $\tsing_{\tau,\rho}$ for some cone $\rho \subset \mathscr{N}_{\tau}$ of $\dim_{\real}(\rho) < \dim_{\real}(\tau)$ for some $\tau \in \pdecomp^{[<n]}$. We use the notation $\tsing^{[k]}$ to denote the set of $k$-dimensional strata of $\tsing$. The affine structure on $\bigcup_{v \in \pdecomp^{[0]}} W_v \cup \bigcup_{\sigma \in \pdecomp^{[n]}} \reint(\sigma)$ introduced right after Definition \ref{def:integral_tropical_manifold} in \S \ref{subsec:integral_affine_manifolds} can be naturally extended to $B \setminus \tsing$ as in \cite{gross2011real}.

If we consider $\omega \subset \tau \subset \rho$ for some $\omega \in \pdecomp^{[1]}$ and $\rho \in \pdecomp^{[n-1]}$, the corresponding monodromy transformation $T_{\gamma}$ is non-trivial if and only if $\omega \in \Omega_p$ and $\rho \in R_p$, where $p$ is as in Definition \ref{def:simplicity}. Therefore, the part of the singular locus $\tsing$ lying in $\Upsilon_{\tau}^{-1}(\reint(\tau)) = \tanpoly_{\tau,\real}^* \times \{o\}$ is determined by the subsets $\Omega_p$'s. We may further define the \emph{essential singular locus} $\tsing_e$ to include only those strata contained in $\tsing^{[n-2]}$ with non-trivial monodromy around them. We observe that the affine structure can be further extended to $B \setminus \tsing_e$.  

More explicitly, we have a projection 
$$\mathtt{i}_{\tau} = \mathtt{i}_{\tau,1} \oplus \cdots \oplus \mathtt{i}_{\tau,p} \colon \tanpoly_{\tau}^* \rightarrow \tanpoly_{\Delta_1(\tau)}^* \oplus \cdots \oplus \tanpoly_{\Delta_p(\tau)}^*,$$
in which $\tanpoly_{\Delta_1(\tau)}^* \oplus \cdots \oplus \tanpoly_{\Delta_p(\tau)}^*$ can be treated as a direct summand as in \S \ref{subsec:monodromy_data}. So we can consider the pullback of the fan $\mathscr{N}_{\Delta_1(\tau)} \times \cdots \times \mathscr{N}_{\Delta_p(\tau)}$ via the map $\mathtt{i}_{\tau}$, and realize $\mathscr{N}_{\tau} \subset \tanpoly_{\tau,\real}^*$ as a refinement of this fan. Similarly, we have $\check{\mathtt{i}}_{\tau} =\check{\mathtt{i}}_{\tau,1} \oplus \cdots \oplus \check{\mathtt{i}}_{\tau,p} \colon \norpoly_{\tau}^* \rightarrow \tanpoly^*_{\check{\Delta}_1(\tau)} \oplus \cdots \oplus \tanpoly^*_{\check{\Delta}_p(\tau)}$ and the fan $\mathscr{N}_{\check{\Delta}_1(\tau)} \times \cdots \times \mathscr{N}_{\check{\Delta}_p(\tau)}$ in $\norpoly_{\tau,\real}^*$ under pullback via $\check{\mathtt{i}}_{\tau}$. The intersection $\tsing_e \cap \reint(\tau) $ can be described by replacing $\rho \in \mathscr{N}_{\tau}$ with the condition $\rho \in \mathtt{i}_{\tau}^{-1}(\mathscr{N}_{\Delta_1(\tau)} \times \cdots \times \mathscr{N}_{\Delta_p(\tau)})$, with a stratum denoted by $\tsing_{e,\tau,\rho}$. This gives a stratification on $\tsing_e$.
 
 \begin{lemma}\label{lem:stratification_structure_on_essential_singular_locus}
    For $\omega \subset \tau$ and a stratum $\tsing_{e,\tau,\rho}$ in $\reint(\tau)$, the intersection of the closure $\overline{\tsing_{e,\tau,\rho}}$ in $B$ with $\reint(\omega)$ is a union of strata of $\tsing_e$ in $\reint(\omega)$.
 \end{lemma}

\begin{proof}
Given $\omega \subset \tau$, we take a change of coordinate map $\gimel$ together with a neighborhood $W$ as in the proof of Lemma \ref{lem:stratification_structure_on_singular_locus}. We need to show that $W \cap \tsing_{\tau,\rho} =\gimel((\reint(\rho)+c_{\tau,1}) \times (a-\reint(K_{\omega}\tau^{\vee})) \times \{o\})$ for some cone $\rho \in \mathtt{i}_{\tau}^{-1}(\prod_{i=1}^p \mathscr{N}_{\Delta_i(\tau)})$.
Let $\Delta_1(\tau),\dots,\Delta_r(\tau),\dots,\Delta_p(\tau)$ be the monodromy polytopes of $\tau$, and $\Delta_1(\omega), \dots, \Delta_r(\omega),\dots,\Delta_{p'}(\omega)$ be those of $\omega$ such that $\Delta_j(\omega)$ is the face of $\Delta_j(\tau)$ parallel to $\tanpoly_{\omega}$ for $j=1,\dots,r$. 
Then we have direct sum decompositions $\tanpoly_{\Delta_{1}(\tau)}\oplus \cdots \oplus \tanpoly_{\Delta_{p}(\tau)} \oplus A_{\tau} = \tanpoly_{\tau}$ and $ \tanpoly_{\Delta_{1}(\omega)}\oplus \cdots \oplus \tanpoly_{\Delta_{p'}(\omega)} \oplus A_{\omega} = \tanpoly_{\omega}$. We can further choose an inclusion
$$
\mathsf{a}_{\omega\tau}\colon \tanpoly_{\Delta_{r+1}(\omega)}\oplus \cdots \oplus \tanpoly_{\Delta_{p'}(\omega)} \oplus A_{\omega} \hookrightarrow A_{\tau};
$$
in other words, for every $j=r+1,\dots,p'$, any $f \in R_j \subset \pdecomp_{n-1}(\omega)$ in Definition \ref{def:simplicity} is not containing $\tau$. For every $j=r+1,\dots,p$ and any $f \in R_j \subset \pdecomp_{n-1}(\tau)$, the element $m^{f}_{v_1v_2}$ is zero for any two vertices $v_1,v_2$ of $\omega$. We have the identification
$$
\tanpoly_{\tau}/\tanpoly_{\omega} = \bigoplus_{j=1}^r (\tanpoly_{\Delta_j(\tau)}/\tanpoly_{\Delta_j(\omega)}) \oplus \bigoplus_{l=r+1}^{p} \tanpoly_{\Delta_l(\tau)} \oplus \mathrm{coker}(\mathsf{a}_{\omega\tau}).
$$

As a result, any cone $\mathtt{i}^{-1}_{\tau}(\prod_{j=1}^{p} \rho_j) \in \mathtt{i}^{-1}_{\tau}\big(\prod_{i=1}^p \mathscr{N}_{\Delta_i(\tau)} \big)$ of codimension greater than $0$ intersecting $\gimel^{-1}(W)$ will be a pullback of a cone under the projection to $\tanpoly_{\Delta_1(\tau),\real}^* \oplus \cdots \oplus \tanpoly_{\Delta_r(\tau),\real}^*$. Consider the commutative diagram of projection maps
\begin{equation}\label{eqn:compatibility_of_essential_singular_locus}
\xymatrix@1{
\tanpoly_{\omega,\real}^* \ar[d]^{\mathtt{p}_{\omega}} & & \ar[ll]^{\mathtt{p}_{\omega\subset \tau}} \tanpoly_{\tau,\real}^* \ar[d]^{\mathtt{p}_{\tau}}\\
\prod_{j=1}^r \tanpoly_{\Delta_j(\omega),\real}^*& & \ar[ll]^{\Pi_{\omega\subset \tau}} \prod_{j=1}^r \tanpoly_{\Delta_j(\tau),\real}^*,
}
\end{equation}
we see that, in the open subset $\gimel^{-1}(W)$, every cone of codimension greater than $0$ coming from pullback via $\mathtt{p}_\tau$ is a further pullback via $\Pi_{\omega\subset\tau} \circ \mathtt{p}_{\tau}$. As a consequence, it must be of the form $\gimel((\reint(\rho)+c_{\tau,1}) \times (a-\reint(K_{\omega}\tau^{\vee})) \times \{o\})$ in $W$. 
\end{proof}
 
\subsubsection{Contraction of $\amoeba$ to $\tsing$}
We would like to relate the amoeba $\amoeba = \moment(Z)$ with the tropical singular locus $\tsing$ introduced above. 

\begin{assum}\label{assum:existence_of_contraction}
We assume the existence of a surjective \emph{contraction map} $\mathscr{C} \colon B \rightarrow B$ which is isotopic to the identity and satisfies the following conditions:
\begin{enumerate}
    \item $\mathscr{C}^{-1}(B \setminus \tsing) \subset (B \setminus \tsing)$ and the restriction $\mathscr{C}|_{\mathscr{C}^{-1}(B \setminus \tsing)}\colon \mathscr{C}^{-1}(B \setminus \tsing) \to B \setminus \tsing$ is a homeomorphism.
    \item $\mathscr{C}$ maps $\amoeba$ into the essential singular locus $\tsing_e$.
    \item For each $\tau \in \pdecomp$, we have $\mathscr{C}^{-1}(\reint(\tau)) \subset \reint(\tau)$. 
    \item For each $\tau \in \pdecomp$ with $0<\dim_{\real}(\tau)<n$, we have a decomposition $$\tau \cap \mathscr{C}^{-1}(B\setminus \tsing) = \bigcup_{v \in \tau^{[0]}} \tau_{v}$$
    of the intersection $\tau \cap \mathscr{C}^{-1}(B\setminus \tsing)$ into connected components $\tau_{v}$'s, where each $\tau_{v}$ is contractible and is the unique component containing the vertex $v \in \tau$.
    \item For each $\tau \in \pdecomp$ and each point $x \in \reint(\tau) \cap \tsing$, $\mathscr{C}^{-1}(x) \subset \reint(\tau)$ is a connected compact subset.
    \item For each $\tau \in \pdecomp$ and each point $x \in \reint(\tau) \cap \tsing$, there exists a local base $\mathscr{B}_x$ around $x$ such that $(\mathscr{C} \circ \moment)^{-1}(W) \subset V(\tau)$ is Stein for every $W \in \mathscr{B}_x$, and for any $U \supset \mathscr{C}^{-1}(x)$, we have $\mathscr{C}^{-1}(W) \subset U$ for sufficiently small $W \in \mathscr{B}_x$.
\end{enumerate}
\end{assum}

Similar contraction maps appear in \cite[Rem. 2.4]{ruddat2019period} (see also \cite{Ruddat-Zharkov21, Ruddat-Zharkov22}). 

When $\dim_{\real}(B) = 2$, we can take $\mathscr{C} = \mathrm{id}$ because from \cite[Ex. 1.62]{Gross-Siebert-logI}, we see that $Z$ is a finite collection of points, with at most one point lying in each closed stratum $\centerfiber{0}_{\tau}$, and the amoeba $\amoeba$ is exactly the image of $Z$ under the generalized moment map $\mu$.

When $\dim_{\real}(B) = 3$, the amoeba $\amoeba$ can possibly be of codimension one and we need to construct a contraction map as shown in Figure \ref{fig:contraction map}.
\begin{center}
	\begin{figure}[h]
	\includegraphics[scale=0.16]{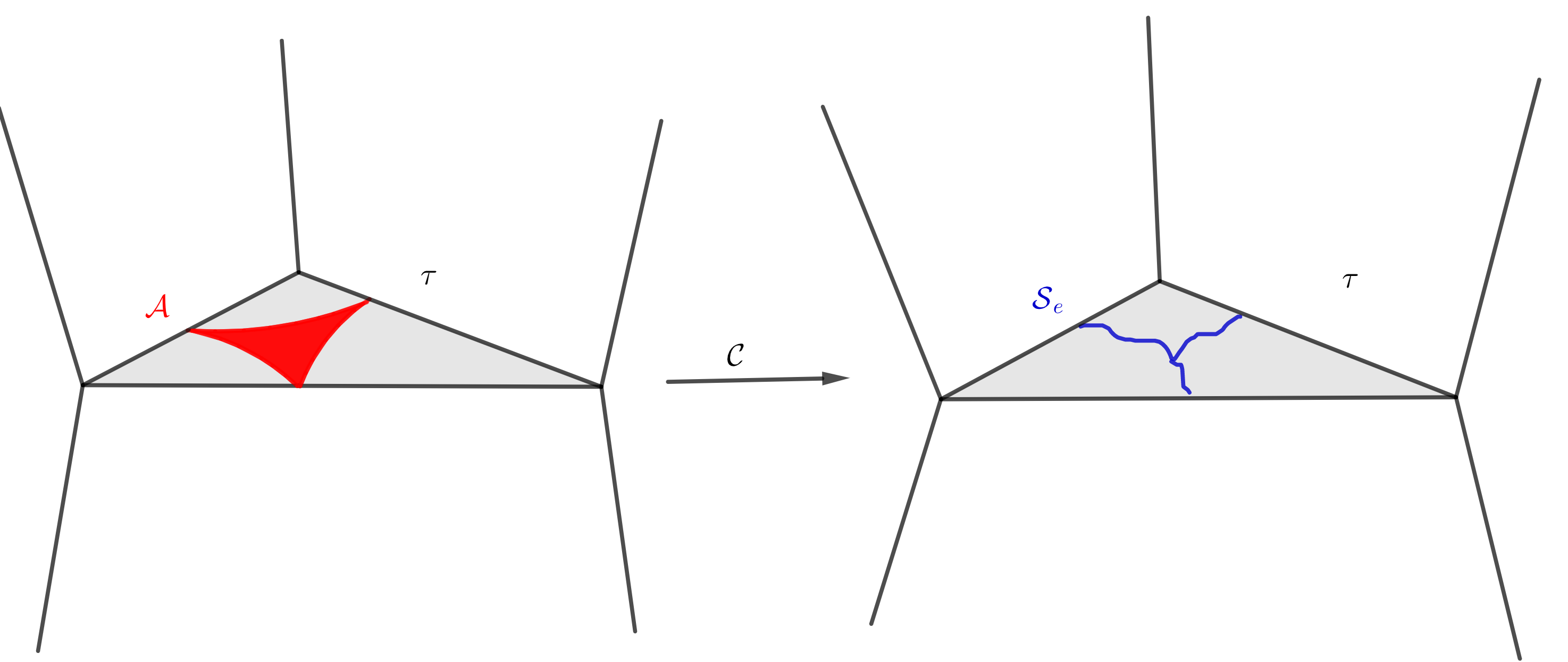}
	\caption{Contraction map $\mathscr{C}$ when $\dim_{\real}(B) = 3$}\label{fig:contraction map}
	\end{figure}
\end{center}
For $\dim_{\real}(\tau) = 1$, again from \cite[Ex. 1.62]{Gross-Siebert-logI}, we see that if $\amoeba \cap \reint(\tau) \neq \emptyset$, then there is exactly one $\Omega_1$ and $R_1$, and $\Delta_{1}(\tau)$ is a line segment of affine length $1$. In this case, $Z \cap \centerfiber{0}_{\tau}$ consists of only one point, given by the intersection of the zero locus $s_{v\tau}^{-1}(f_{v\rho})$ with $\comp^* \cong V_{\tau}(\tau) \subset V(\tau)$. Taking $m$ to be the primitive vector in $\tanpoly_{\tau}$ starting at $v$ that points into $\tau$, we can write $s_{v\tau}^{-1}(f_{v\rho}) = 1 + s_{v\tau}^{-1}(m) z^{m}$. Applying the log map $\log \colon \comp^* \rightarrow \real$, we see that $\amoeba \cap \reint(\tau) = c_{\tau}$. Therefore, for an edge $\tau \in \pdecomp^{[1]}$, we can define $\mathscr{C}$ to be the identity on $\tau$. 
\begin{center}
	\begin{figure}[h]
		\includegraphics[scale=0.6]{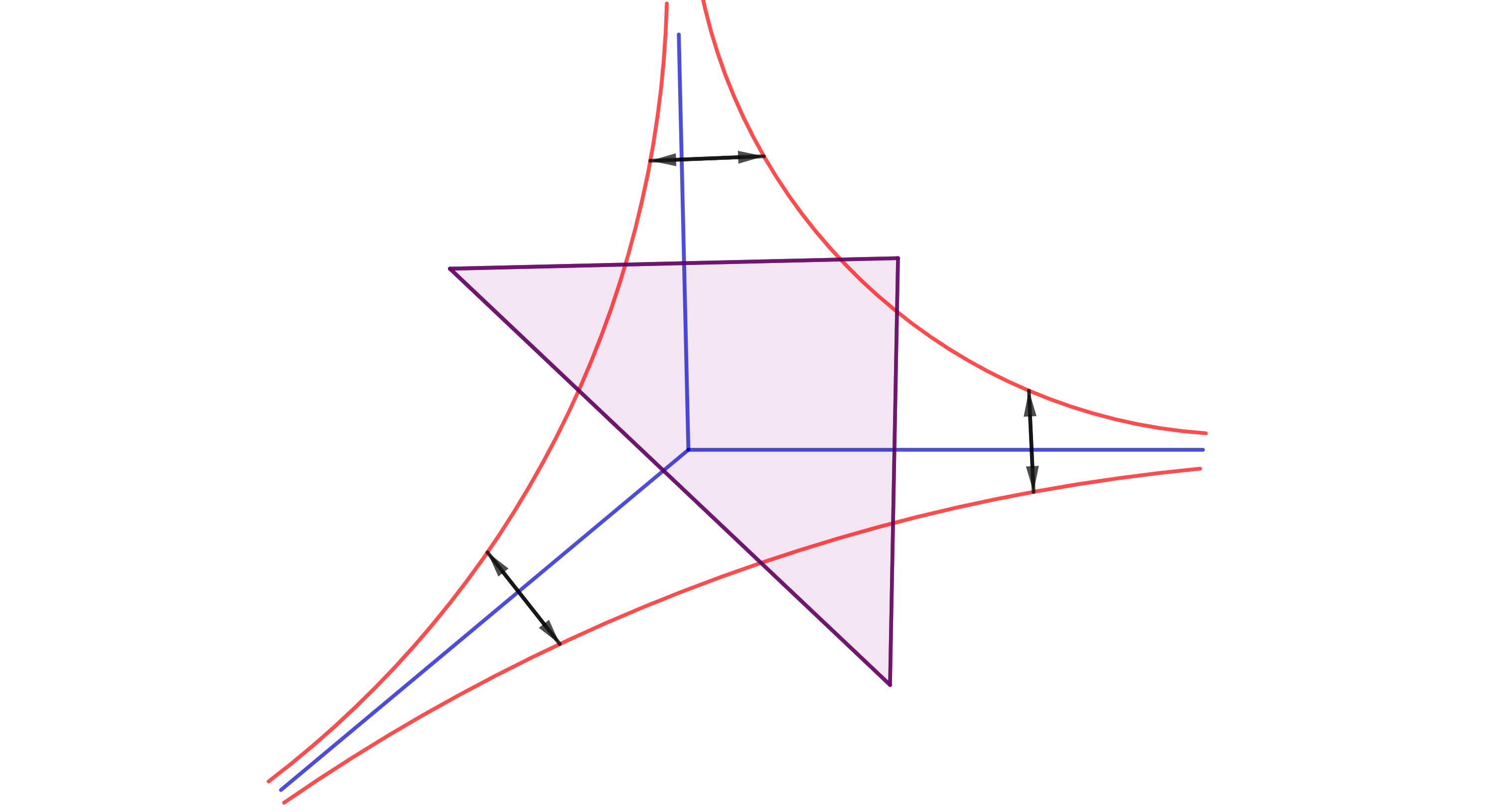}
		\caption{Contraction at $\rho$}\label{fig:contraction_at_rho}
	\end{figure}
\end{center}
On a codimension one cell $\rho$ such that $\reint(\rho) \cap \amoeba \neq \emptyset$ (see Figure \ref{fig:contraction_at_rho}), we consider the log map $\log \colon \spec(\comp[\tanpoly_{\rho}]) \cong (\comp^*)^2 \rightarrow \tanpoly_{\rho,\real}^* \cong \real^2$, and take a sufficiently large polytope $\mathtt{P}$ (colored purple in Figure \ref{fig:contraction_at_rho}) so that $\amoeba \setminus \reint(\mathtt{P})$ is a disjoint union of legs. We first contract each leg to the tropical singular locus (colored blue in Figure \ref{fig:contraction_at_rho}) along the normal direction to the tropical singular locus. Next, we contract the polytope $\mathtt{P}$ to the $0$-dimensional stratum of $\tsing_e$. Notice that the restriction of $\mathscr{C}$ to the tropical singular locus $\tsing$ is not the identity but rather a contraction onto itself. Once the contraction map is constructed for all codimension one cells $\rho$, we can then extend it continuously to the whole of $B$ so that it is a diffeomorphism on $\reint(\sigma)$ for every maximal cell $\sigma$.
The map is chosen such that the preimage $\mathscr{C}^{-1}(x)$ for every point $x \in \reint(\rho)$ is a convex polytope in $\real^2$. Therefore, given any open subset $U\subset \real^2$ which contains $\mathscr{C}^{-1}(x)$, we can find some convex open neighborhood $W_1 \subset U$ of $\mathscr{C}^{-1}(x)$ giving the Stein open subset $\log^{-1}(W_1) \subset (\comp^*)^2$. By taking $W = W_1 \times W_2$ in the chart $\tanpoly_{\rho,\real}^* \times \norpoly_{\rho,\real}$ as in the proof of Lemma \ref{lem:stein_open_subset_lemma}, we have the open subset $W$ that satisfies condition (5) in Assumption \ref{assum:existence_of_contraction}. 

In general, we need to construct $\mathscr{C}|_{\reint(\tau)}$ inductively for each $\tau \in \pdecomp$, so that the preimage $\mathscr{C}^{-1}(x) \subset \reint(\tau)$ is convex in the chart $\tanpoly_{\tau,\real}^*\cong \reint(\tau)$ and the codimension one amoeba $\amoeba$ is contracted to the codimension 2 tropical singular locus $\tsing_e$. The reason for introducing such a contraction map is that we can modify the generalized moment map $\moment$ to one which is more closely related with tropical geometry:
\begin{definition}\label{def:modified_moment_map}
	We call the composition $\modmap :=  \mathscr{C} \circ \moment \colon \centerfiber{0} \rightarrow B$ the \emph{modified moment map}. 
\end{definition}

One immediate consequence of property $(6)$ in Assumption \ref{assum:existence_of_contraction} is that we have $R\modmap_* (\mathcal{F}) = \modmap_*(\mathcal{F})$ for any coherent sheaf $\mathcal{F}$ on $\centerfiber{0}$, thanks to Lemma \ref{lem:stein_open_subset_lemma} and Cartan's Theorem B: 
\begin{theorem}[Cartan's Theorem B \cite{cartan1957varietes}; see e.g. Ch. IX, Cor. 4.11 in \cite{demailly1997complex}]\label{thm:cartan_theorem_B}
	For any coherent sheaf $\mathcal{F}$ over a Stein space $U$, we have
	$
	H^{>0}(U,\mathcal{F}) = 0.
	$
\end{theorem}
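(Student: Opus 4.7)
The plan is to prove this classical statement (Cartan's Theorem B) along the lines of the standard sheaf-theoretic argument. First, I would reduce to a problem on an ambient complex number space: by the Remmert--Bishop--Narasimhan embedding theorem, the Stein space $U$ admits a proper holomorphic embedding $\iota : U \hookrightarrow \comp^N$ as a closed analytic subspace, and the pushforward $\iota_* \mathcal{F}$ is a coherent sheaf on $\comp^N$ whose cohomology agrees with that of $\mathcal{F}$ on $U$. So it suffices to prove the vanishing for coherent sheaves on Stein open subsets of $\comp^N$, and in particular one can reduce further to the case of open polydiscs after localization.

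Next, I would introduce a strictly plurisubharmonic exhaustion $\psi : U \to \real$, which exists by definition of a Stein space, and use the sublevel sets $U_c = \{\psi < c\}$ to write $U$ as an increasing union of relatively compact Stein open subsets $U_1 \Subset U_2 \Subset \cdots$ with $U = \bigcup_k U_k$. On each $U_k$ the vanishing of $H^{>0}(U_k, \mathcal{F})$ can be obtained via one of two routes: either the analytic route, using H\"ormander's $L^2$ estimates for the $\bar\partial$-operator with weights $e^{-\varphi}$ for suitable plurisubharmonic $\varphi$ (applied to the Dolbeault resolution of local free resolutions of $\mathcal{F}$), or the classical route, by combining Oka's coherence theorem with Cartan's Theorem A on finite generation of stalks by global sections, reducing inductively on the length of a local syzygy resolution to the case of the structure sheaf on a polydisc, where $H^{>0}$ vanishes by the Dolbeault isomorphism and the standard $\bar\partial$-Poincar\'e lemma.

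The main obstacle, and therefore the step I would devote the most care to, is passing from the compact-exhaustion vanishing to the global vanishing on $U$ itself. The tool here is the Runge-type approximation property for Stein spaces: sections of $\mathcal{F}$ on $U_{k+1}$ are dense (in a natural topology) in sections on $U_k$. Combined with a Mittag-Leffler argument for the inverse system $\{H^0(U_k, \mathcal{F})\}_k$, this ensures that the restriction maps are surjective in the derived sense, so that $\varprojlim^1$ vanishes and the cohomology of $U$ may be computed as the inverse limit of the cohomologies of the $U_k$, giving $H^{>0}(U, \mathcal{F}) = 0$. The Runge approximation itself rests on Theorem A applied to suitable relative situations, closing the loop. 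Since the statement in the paper is invoked as a black box and plays only the auxiliary role of ensuring higher direct images $R^{>0}\modmap_*(\mathcal{F})$ vanish on the Stein base $\modmap^{-1}(W)$, nothing beyond this classical outline is needed here.
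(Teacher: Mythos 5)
The paper does not actually prove this statement: it is quoted verbatim as a classical theorem with citations to Cartan and to Demailly's book, and is used purely as a black box (to conclude $R^{>0}\modmap_*\mathcal{F}=0$ over Stein preimages). So there is no internal proof to compare against; what you have written is the standard textbook argument, and as an outline it is essentially correct. Your architecture — proper embedding $U\hookrightarrow\comp^N$ to reduce to coherent sheaves on domains in affine space, exhaustion $U=\bigcup_k U_k$ by relatively compact Stein sublevel sets of a strictly plurisubharmonic exhaustion, vanishing on each $U_k$ via either H\"ormander's $L^2$ estimates or the Oka--Cartan syzygy induction, and finally the Runge/Mittag--Leffler argument killing $\varprojlim^1 H^0(U_k,\mathcal{F})$ so that $H^{q}(U,\mathcal{F})=\varprojlim H^q(U_k,\mathcal{F})=0$ — is exactly how the result is proved in Demailly Ch.~IX and in Grauert--Remmert. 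One phrase to be careful with: ``reduce further to the case of open polydiscs after localization'' is not literally a reduction, since higher sheaf cohomology is not a local invariant; the actual mechanism is the Leray/\v{C}ech computation with respect to a cover by polydiscs (or the exhaustion by compact cubes with Cousin-type attaching lemmas), which you do in effect supply in the next paragraph. You also correctly flag the apparent circularity between Theorem A, the approximation theorem, and Theorem B; in the standard development these are proved by a joint induction, so the ``loop'' is closed by the induction scheme rather than by citing one finished theorem inside the proof of the other. Given that the paper only needs the statement as an external input, your outline is more than adequate, but each of its three main ingredients (the embedding theorem, Theorem A with the syzygy resolution, and the Runge density of $H^0(U_{k+1},\mathcal{F})$ in $H^0(U_k,\mathcal{F})$) is itself a substantial theorem whose proof you are implicitly assuming.
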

 
 \subsubsection{Monodromy invariant differential forms on $B$}\label{subsec:derham_for_B}
 
 Outside of the essential singular locus $\tsing_e$, we have a nice integral affine manifold $B \setminus \tsing_e$, on which we can talk about the sheaf $\mdga^*$ of ($\real$-valued) de Rham differential forms. But in fact, we can extend its definition to $\tsing_e$ as well using \emph{monodromy invariant differential forms}.
 
 We consider the inclusion $\tinclude  \colon B_0 :=B \setminus \tsing_e \rightarrow B$ and the natural exact sequence 
 \begin{equation}\label{eqn:affine_function_cotangent_lattice_relation}
 0 \rightarrow \underline{\inte} \rightarrow \cu{A}\mathit{ff} \rightarrow \tinclude_* \tanpoly_{B_0}^* \rightarrow 0,
 \end{equation}
 where $\tanpoly_{B_0}^*$ denotes the sheaf of integral cotangent vectors on $B_0$. For any $\tau \in \pdecomp$, the stalk $(\tinclude_*\tanpoly_{B_0}^*)_x$ at a point $x \in \reint(\tau) \cap \tsing_e$ can be described using the chart $\Upsilon_{\tau}$ in \eqref{eqn:local_chart_for_stein_subsets}. 
 Using the description in \S \ref{subsec:tropical_singular_locus}, we have $x \in \tsing_{e,\tau,\rho} = \reint(\rho) \times \{o\}$ for some $\rho \in \mathtt{i}_{\tau}^{-1}(\mathscr{N}_{\Delta_1(\tau)} \times \cdots \times \mathscr{N}_{\Delta_p(\tau)})$. Taking a vertex $v \in \tau$, we can consider the monodromy transformations $T_{\gamma}$'s around the strata $\tsing_{e,\eta,\rho}$'s that contain $x$ in their closures. We can identify the stalk $\tinclude_*(\tanpoly_{B_0}^*)_{x}$ as the subset of invariant elements of $T_{v}^*$ under all such monodromy transformations. Since $\rho \subset \tanpoly_{\tau,\real}^*$ is a cone, we have $\tanpoly_{\rho} \subset \tanpoly_{\tau}^*$. Using the natural projection map $\pi_{v\tau}\colon T_{v}^* \rightarrow \tanpoly_{\tau}^*$, we have the identification $\tinclude_*(\tanpoly_{B_0}^*)_{x} \cong \pi_{v\tau}^{-1}(\tanpoly_{\rho})$. There is a direct sum decomposition $\tinclude_*(\tanpoly_{B_0}^*)_{x} = \tanpoly_{\rho} \oplus \norpoly_{\tau}^*$, depending on a decomposition $T_{v} = \tanpoly_{\tau} \oplus \norpoly_{\tau}$. 
 This gives the map
 \begin{equation}\label{eqn:monodromy_invariant_affine_functions_near_x_o}
 \mathtt{x} \colon U_{x} \rightarrow \pi_{v\tau}^{-1}(\tanpoly_{\rho})_{\real}^*
 \end{equation} 
 in a sufficiently small neighborhood $U_{x}$, locally defined up to a translation in $\pi_{v\tau}^{-1}(\tanpoly_{\rho})^*_{\real}$. We need to describe the compatibility between the map associated to a point $x \in \tsing_{e,\omega,\rho}$ and that to a point $\tilde{x} \in \tsing_{e,\tau,\tilde{\rho}}$ such that $\tsing_{e,\omega,\rho} \subset \overline{\tsing_{e,\tau,\tilde{\rho}}}$.  
 
 
The first case is when $\omega =\tau$. We let $\tilde{x} \in \reint(\tilde{\rho}) \times \{o\} \cap U_{x}$ for some $\rho \subset \tilde{\rho}$. Then, after choosing suitable translations in $\pi_{v\tau}^{-1}(\tanpoly_{\rho})^*_{\real}$ for the maps $\mathtt{x}$ and $\tilde{\mathtt{x}}$, we have the following commutative diagram:
\begin{equation}\label{eqn:monodromy_invariant_differential_form_change_of_chart}
 \xymatrix@1{ U_{\tilde{x}} \cap U_{x} \ar[d] \ar[rr]^{\tilde{\mathtt{x}}} &  & \pi_{v\tau}^{-1}(\tanpoly_{\tilde{\rho}})^*_{\real} \ar[d]^{\mathtt{p}}\\ 
 	U_{x} \ar[rr]^{\mathtt{x}}  & & \pi_{v\tau}^{-1}(\tanpoly_{\rho})^*_{\real}.}
\end{equation}
The second case is when $\omega \subsetneq \tau$. Making use of the change of charts $\gimel$ in equation \eqref{eqn:affine_local_chart_change_of_strata}, and the description in the proof of Lemma \ref{lem:stratification_structure_on_essential_singular_locus}, we write 
$$\tilde{x} \in \reint(\tilde{\rho}) \times \{o\}$$
for some cone $\tilde{\rho} = \mathtt{i}_{\tau}^{-1}(\prod_{j=1}^p \tilde{\rho}_j) \in \mathtt{i}_{\tau}^{-1}\big(\prod_{j=1}^p \tanpoly_{\Delta_{j}(\tau)}^* \big)$ of positive codimension. In $\gimel^{-1}(W)$, we may assume $\tilde{\rho}$ is the pullback of a cone $\breve{\rho}$ via $\Pi_{\omega\subset\tau} \circ \mathtt{p}_{\tau}$ as in equation \eqref{eqn:compatibility_of_essential_singular_locus}. Since $\tsing_{e,\omega,\rho} \subset \overline{\tsing_{e,\tau,\tilde{\rho}}}$, we have $\rho \subset \mathtt{p}_{\omega}^{-1}(\breve{\rho})$ and hence $\mathtt{p}_{\omega\subset \tau}^{-1}(\tanpoly_{\rho}) \subset \tanpoly_{\tilde{\rho}}$. Therefore, from $\mathtt{p}_{\omega\subset \tau} \circ \pi_{v\tau} = \pi_{v\omega}$, we obtain $\pi_{v\omega}^{-1}(\tanpoly_{\rho}) \subset \pi_{v\tau}^{-1}(\tanpoly_{\tilde{\rho}})$, inducing the map $\mathtt{p}\colon \pi_{v\tau}^{-1}(\tanpoly_{\tilde{\rho}})_{\real}^* \rightarrow  \pi_{v\omega}^{-1}(\tanpoly_{\rho})_{\real}^*$. As a result, we still have the commutative diagram \eqref{eqn:monodromy_invariant_differential_form_change_of_chart} for a point $\tilde{x}$ sufficiently close to $x$. 
 
\begin{definition}\label{def:monodromy_invariant_differential_form_near_x_o}
    Given $x \in \tsing_e$ as above, the stalk of $\mdga^*$ at $x$ is defined as the stalk $\mdga^*_{x}:= (\mathtt{x}^{-1}\mdga^*)_{x}$ of the pullback of the sheaf of smooth de Rham forms on $\pi_{v\tau}^{-1}(\tanpoly_{\rho})^*_{\real}$, which is equipped with the de Rham differential $\md$. This defines the \emph{complex $(\mdga^*, \md)$ of monodromy invariant smooth differential forms} on $B$. A section $\alpha \in \mdga^*(W)$ is a collection of elements $\alpha_{x} \in \mdga^*_{x}$, $x \in W$ such that each $\alpha_{x}$ can be represented by $\mathtt{x}^{-1}\beta_{x}$ in a small neighborhood $U_{x} \subset \mathtt{p}^{-1}(\mathtt{U}_{x})$ for some smooth form $\beta_{x}$ on $\mathtt{U}_{x}$, and satisfies the relation $\alpha_{\tilde{x}} = \tilde{\mathtt{x}}^{-1}(\mathtt{p}^* \beta_{x})$ in $\mdga^*_{\tilde{x}}$ for every $\tilde{x} \in U_{x}$. 
\end{definition}
 
\begin{example}
 	In the $2$-dimensional case in Example \ref{eg:2d_monodromy}, we consider a singular point 
        $$\{ x \} = \tsing_e \cap \reint(\tau)$$ 
        for some $\tau \in \pdecomp^{[1]}$. In this case, we can take $\rho$ to be the $0$-dimenisonal stratum in $\mathscr{N}_{\tau}=\mathtt{i}_{\tau}^{-1}(\mathscr{N}_{\Delta_1(\tau)})$ and we have $\tinclude_*(\tanpoly_{B_0}^*)_{x} = \norpoly_{\tau}^*$. Taking a generator of $\norpoly_{\tau}^*$, we get an invariant affine coordinate $\mathtt{x}\colon U_{x} \rightarrow \real$ which is the normal affine coordinate of $\tau$. The stalk $\mdga^*_{x}$ is then identified with the pullback of the space of germs of smooth differential forms from $(\real,0)$ via $\mathtt{x}$. In particular, $\mdga^2_{x} =0$. 
 	
 	For the $Y$-vertex of type II in Example \ref{eg:3d_monodromy}, the situation is similar to the $2$-dimensional case. For $\{ x \} = \tsing_e \cap \reint(\tau)$, we still have $\tinclude_*(\tanpoly_{B_0}^*)_{x} = \norpoly_{\tau}^*$, and in this case, $\mathtt{x} \colon U_{x} \rightarrow \real^2$ are the two invariant affine coordinates. We can identify $\mdga^*_{x}$ as the pullback of the space of germs of smooth differential forms from $(\real^2,0)$ via $\mathtt{x}$. 
 	
 	For the $Y$-vertex of type I in Example \ref{eg:3d_monodromy}, we use the identification $ \tanpoly_{\tau,\real}^* \cong \reint(\tau)$ via $\Upsilon_{\tau}$ for the $2$-dimensional cell $\tau$ separating two maximal cells $\sigma_+$ and $\sigma_-$. 
    In this case, $\tsing_e$ is as shown (in blue color) in Figure \ref{fig:contraction_at_rho} and $\mathscr{N}=\mathtt{i}_{\tau}^{-1}(\mathscr{N}_{\Delta_1(\tau)})$ is the fan of $\mathbb{P}^2$. 
    If $x$ is the $0$-dimensional stratum of $\tsing_e \cap \reint(\tau)$, we have $\tinclude_*(\tanpoly_{B_0}^*)_{x} = \norpoly_{\tau}^*$ and $\mathtt{x}\colon U_{x} \rightarrow \real$ as an invariant affine coordinate. If $x$ is a point on a leg of the $Y$-vertex, we have $\mathtt{x} =(\mathtt{x}_1,\mathtt{x}_2)\colon U_{x} \rightarrow \real^2$ with $\mathtt{x}_1$ coming from a generator of $\tanpoly_{\rho}$ and $\mathtt{x}_2$ coming from a generator of $\norpoly_{\tau}^*$.
\end{example}
 
 It follows from the definition that $\underline{\real} \rightarrow \mdga^*$ is a resolution. We shall also prove the existence of a partition of unity.
 
 \begin{lemma}\label{lem:for_contruction_of_partition_of_unity}
 	Given any $x \in B$ and a sufficiently small neighborhood $U$, there exists $\varrho \in \mdga^{0}(U)$ with compact support in $U$ such that $0 \leq \varrho \leq 1$ and $\varrho \equiv 1$ near $x$. (Since $\mdga^0$ is a subsheaf of the sheaf $\cu{C}^{0}$ of continuous functions on $B$, we can talk about the value $f(x)$ for $f \in \mdga^{0}(W)$ and $x \in W$.)
 \end{lemma}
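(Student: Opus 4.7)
We plan to split into two cases according to whether $x \in \tsing_e$.

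\textbf{Case 1:} If $x \in B \setminus \tsing_e$, then on a small neighborhood $U$ of $x$ lying entirely in $B \setminus \tsing_e$, the sheaf $\mdga^0$ coincides with the sheaf of smooth functions on the integral affine manifold $B \setminus \tsing_e$. A standard Euclidean bump function supported in an affine coordinate ball around $x$ supplies the required $\varrho$.

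\textbf{Case 2:} Suppose $x \in \tsing_e$, with $x \in \reint(\tau)$ lying in the smallest stratum $\tsing_{e,\tau,\rho}$ through $x$. We plan to construct $\varrho$ as a pullback $\varrho := \mathtt{x}^{-1}\beta$ for a compactly supported bump $\beta \in C_c^\infty(\mathtt{U}_x)$ with $\beta \equiv 1$ near $\mathtt{x}(x)$, using the local map $\mathtt{x}\colon U_x \to \mathtt{U}_x = \pi_{v\tau}^{-1}(\tanpoly_{\rho})^*_{\real}$ from Definition \ref{def:monodromy_invariant_differential_form_near_x_o}. By construction $\varrho(x) = 1$ and $0 \le \varrho \le 1$. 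Membership in $\mdga^0$ follows from the compatibility diagram \eqref{eqn:monodromy_invariant_differential_form_change_of_chart}: at any nearby $\tilde{x} \in \tsing_e$ one has a factorization $\mathtt{x} = \mathtt{p}\circ\tilde{\mathtt{x}}$, so $\varrho = \mathtt{x}^{-1}\beta = \tilde{\mathtt{x}}^{-1}(\mathtt{p}^{*}\beta)$ is automatically a pullback via $\tilde{\mathtt{x}}$ as required by the definition of a section.

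The main technical obstacle is achieving compact support in $U$. Since $\mathtt{x}$ is locally a linear projection with linear (hence non-compact) fibers, a naive pullback $\mathtt{x}^{-1}\beta$ has support $\mathtt{x}^{-1}(\supp\beta)$ that generally fails to be compactly contained in any small Euclidean ball around $x$. The plan to overcome this is to exploit the fact that the fiber of $\mathtt{x}$ through $x$ is directed along $\tanpoly_\tau \cap \tanpoly_\rho^{\perp} \subset \tanpoly_\tau$, so that, viewed as a subset of $B$, this fiber is contained in the bounded closed cell $\tau$ and hence has compact closure. Choosing $U$ to be a sufficiently thin tubular neighborhood of the closure of this fiber in $B$ (rather than just a small metric ball around $x$), the set $\mathtt{x}^{-1}(\supp\beta) \cap U$ becomes compact in $U$, and the collection of such $U$ forms a neighborhood basis at $x$ sufficient for the subsequent applications (most importantly the construction of a partition of unity for $\mdga^*$). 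The most delicate part will be to verify that $\mathtt{x}$ can indeed be arranged on all of $U$ and that the compatibility \eqref{eqn:monodromy_invariant_differential_form_change_of_chart} is respected throughout, which uses the local product structure of $B$ around the bounded fiber together with Lemma \ref{lem:stratification_structure_on_essential_singular_locus}.
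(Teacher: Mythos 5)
Your Case 1 matches the paper, but Case 2 has a genuine gap, and it stems from an over-reading of Definition \ref{def:monodromy_invariant_differential_form_near_x_o}. A section of $\mdga^0$ is \emph{not} required to be globally of the form $\mathtt{x}^{-1}\beta$ on all of $U$ for the single chart $\mathtt{x}$ based at $x$; the pullback condition is a germ-wise condition imposed separately at each point $y \in U$, using the chart $\mathtt{y}$ attached to the stratum through $y$, and it imposes nothing at points off $\tsing_e$. By insisting on a global pullback you manufacture the non-compact-support problem, and your fix --- enlarging $U$ to a tubular neighborhood of the whole fiber closure --- both weakens the statement below what is needed and produces a $\varrho$ that is not actually a section of $\mdga^0$. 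Concretely: (i) the lemma is used to build partitions of unity subordinate to \emph{arbitrary} open covers (Condition \ref{cond:requirement_of_the_de_rham_dga}), in particular the good covers $\{W_\alpha\}$ by small sets on which the \v{C}ech vanishing $\check{H}^{>0}(\cu{W},\polyv{0}^{-1,0})=0$ rests; a bump whose support must contain a neighborhood of an entire $\mathtt{x}$-fiber, which stretches across $\reint(\tau)$ to $\partial\tau$, cannot be inserted into a small member of such a cover, so the statement you prove is too weak for its intended application. (ii) The fiber $\mathtt{x}^{-1}(\mathtt{x}(x))$ is an affine subspace of $\tanpoly_{\tau,\real}^*$ complementary to the span of $\rho$, based at a point of $\reint(\rho)+c_\tau$; since $\mathscr{N}_\tau$ is a complete fan, this affine subspace in general crosses other cones $\rho'$ of the fan, i.e.\ other strata $\tsing_{e,\tau,\rho'}$, and its closure meets strata in the faces $\omega\subsetneq\tau$. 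At such a point $y$ the germ must be a pullback under $\mathtt{y}$, which kills a \emph{different} (and, for deeper strata, larger) set of directions; the compatibility diagram \eqref{eqn:monodromy_invariant_differential_form_change_of_chart} you invoke only factorizes $\mathtt{x}=\mathtt{p}\circ\tilde{\mathtt{x}}$ when $\tilde{x}$ lies in a \emph{higher-dimensional} cone $\tilde\rho\supset\rho$ (fewer constraints), so it does not rescue $\mathtt{x}^{-1}\beta$ at the deeper or transverse strata, where a generic $\beta$ fails the condition.

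The paper's proof avoids all of this by a different mechanism: after reducing to a single complete fan $\mathscr{N}_\tau$ and to the deepest cone $\rho=\{o\}$, it inducts on $\dim\mathscr{N}_\tau$, builds a partition of unity $\{\varrho_i\}$ \emph{away from} the deepest stratum using bumps supplied by the inductive hypothesis (each governed only by a lower-dimensional fan), and sets $\varrho=1-\sum_i\varrho_i$. The point is that near the deepest stratum $\varrho$ is \emph{constant}, hence trivially a pullback under $\mathtt{x}$, and all of the cutting-off to achieve compact support happens in the region where only the weaker, lower-dimensional monodromy constraints apply. If you want to salvage your approach, this is the idea you are missing: do not try to make the bump a single global pullback; make it locally constant near the worst stratum and push the decay into the locus where the constraints are strictly weaker.
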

 
 \begin{proof}
 	If $x \notin \tsing_e$, the statement is a standard fact. So we assume that $x \in \reint(\tau) \cap \tsing_e$ for some $\tau \in \pdecomp$. As above, we can write $x \in \reint(\rho) \times \{o\}$. Furthermore, since $\rho$ is a cone in the fan $\mathtt{i}_{\tau}^{-1}(\mathscr{N}_{\Delta_1(\tau)} \times \cdots \times \mathscr{N}_{\Delta_p(\tau)})$, $\tanpoly_{\tau}^*$ has $\tanpoly_{\Delta_1(\tau)}^* \oplus \cdots \oplus \tanpoly_{\Delta_p(\tau)}^*$ as a direct summand, and the description of $\tinclude_*(\tanpoly_{B_0}^*)_x$ is compatible with the direct sum decomposition of $\tanpoly_{\tau}^*$. We may further assume that $p=1$ and $\tau = \Delta_1(\tau)$ is a simplex. 
 	
 	If $\rho$ is not the smallest cone (i.e. the one consisting of just the origin in $\mathscr{N}_{\tau}$), we have a decomposition $\tanpoly_{\tau}^* = \tanpoly_{\rho} \oplus \norpoly_{\rho}$ and the natural projection $\mathtt{p}\colon \tanpoly_{\tau}^* \rightarrow \norpoly_{\rho}$. Then, locally near $x_0$, we can write the normal fan $\mathscr{N}_{\tau}$ as $\mathtt{p}^{-1}(\Sigma_{\rho})$ for some normal fan $\Sigma_{\rho} \subset \norpoly_{\rho}$ of a lower dimensional simplex. For any vector $v$ tangent to $\rho$ at $x_0$ and the corresponding affine function $l_{v}$ locally near $x_0$, we always have $\ddd{l_{v}}{v}>0$. This allows us to construct a bump function $\varrho = \sum_{v_i} (l_{v_i}(x) - l_{v_i}(x_0))^2$ along the $\tanpoly_{\rho,\real}$-direction. So we are reduced to the case when $\rho = \{o\}$ is the smallest cone in the fan $\mathscr{N}_{\tau}$. 
 	
 	Now we construct the function $\varrho$ near the origin $o \in \mathscr{N}_{\tau}$ by induction on the dimension of the fan $\mathscr{N}_{\tau}$.
    When $\dim_{\real}(\mathscr{N}_{\tau}) = 1$, it is the fan of $\mathbb{P}^1$ consisting of three cones $\real_-$, $\{o\}$ and $\real_+$. One can construct the bump function which is equal to $1$ near $o$ and supported in a sufficiently small neighborhood of $o$. 
 	For the induction step, we consider an $n$-dimensional fan $\mathscr{N}_\tau$. For any point $x$ near but not equal to $o$, we have $x \in \reint(\rho)$ for some $\rho \neq \{o\}$. Then we can decompose $\mathscr{N}_\tau$ locally as $\tanpoly_{\rho} \oplus \norpoly_{\rho}$. Applying the induction hypothesis to $\norpoly_{\rho}$ gives a bump function $\varrho_x$ compactly supported in any sufficiently small neighborhood of $x$ (for the $\tanpoly_{\rho}$ directions, we do not need the induction hypothesis to get the bump function). This produces a partition of unity $\{\varrho_i\}$ \emph{outside} $o$. Finally, letting $\varrho := 1 - \sum_i \varrho_i$ and extending it continuously to the origin $o$ gives the desired function.  
 \end{proof}
 
 Lemma \ref{lem:for_contruction_of_partition_of_unity} produces a partition of unity for the complex $(\mdga^*, \md)$ of monodromy invariant differential forms on $B$, which satisfies the requirement in Condition \ref{cond:requirement_of_the_de_rham_dga} below. In particular, the cohomology of $(\mdga^*(B),\md)$ computes $R\Gamma(B,\underline{\real})$. 
 Given a point $x \in B \setminus \tsing_e$, we can take an element $\varrho_x \in \mdga^n(B)$, compactly supported in an arbitrarily small neighborhood $U_x \subset B \setminus \tsing_e$, to represent a non-zero element in the cohomology $H^n(\mdga^*,\md) = H^n(B,\comp) \cong \comp$. 
 
\section{Smoothing of maximally degenerate Calabi--Yau varieties via dgBV algebras}\label{sec:deformation_via_dgBV}


In this section, we review and refine the results in \cite{chan2019geometry} concerning smoothing of the maximally degenerate Calabi--Yau log variety $\centerfiber{0}^{\dagger}$ over $\logsf = \spec(\hat{\cfr})^{\dagger} = \spec(\comp[[q]])^{\dagger}$ using the local smoothing models $V^{\dagger} \rightarrow \localmod{k}^{\dagger}$'s specified in \S \ref{subsec:log_structure_and_slab_function}. In order to relate with tropical geometry on $B$, we will choose $V$ so that it is the pre-image $\modmap^{-1}(W)$ of an open subset $W$ in $B$. 

\subsection{Good covers and local smoothing data}\label{subsubsec:local_deformation_data}

Given $\tau \in \pdecomp$ and a point $x \in \reint(\tau) \subset B$, we take a sufficiently small open subset $W \in \mathscr{B}_x$. We need to construct a local smoothing model on the Stein open subset $V = \modmap^{-1}(W)$. 
\begin{itemize}
    \item
    If $x \notin \tsing_e$, then we can simply take the local smoothing $\localmod{}^{\dagger}$ introduced in \eqref{eqn:cartesian_diagram_of_log_spaces} in \S \ref{subsec:log_structure_and_slab_function}.

    \item 
    If $x \in \tsing_e \cap \reint(\tau)$, we assume that $\mathscr{C}^{-1}(W) \cap \amoeba^{\tau}_i \neq \emptyset$ for $i = 1,\dots,r$ and $\mathscr{C}^{-1}(W) \cap \amoeba^{\tau}_i = \emptyset $ for other $i$'s. Note that $\mathscr{C}^{-1}(W) \cap \reint(\tau)$ may not be a small open subset in $\reint(\tau)$ as we may contract a polytope $\mathtt{P}$ via $\mathscr{C}$ (Figure \ref{fig:contraction_at_rho}). If we write $\tanpoly_{\Delta_{1}(\tau)}\oplus \cdots \oplus \tanpoly_{\Delta_{p}(\tau)} \oplus A_{\tau} = \tanpoly_{\tau}$ as lattices, then for each direct summand $\tanpoly_{\Delta_{i}(\tau)}$, we have a commutative diagram
    $$
    \xymatrix@1{  \tanpoly_{\tau,\comp^*}^*\ar[rr]^{\mathtt{i}_{\tau,i,\comp^*}} \ar[d]^{\log}  & & \tanpoly_{\Delta_{i}(\tau),\comp^*}^* \ar[d]^{\log}\\
	\tanpoly_{\tau,\real}^* \ar[rr]^{\mathtt{i}_{\tau,i,\real}} & & \tanpoly_{\Delta_{i}(\tau),\real}^*,}
    $$
    so that both $Z^{\tau}_i$ and $\amoeba^{\tau}_i$ are coming from pullbacks of some subsets  under the projection maps $\mathtt{i}_{\tau,i,\comp^*}$ and $\mathtt{i}_{\tau,i,\real}$ respectively. 
    From this, we see that $\mathscr{C}^{-1}(W) \cap \amoeba^{\tau}_1 \cap \cdots\cap \amoeba^{\tau}_r \neq \emptyset$ and $\modmap^{-1}(W) \cap Z^{\tau}_1 \cap \cdots \cap Z^{\tau}_r \neq \emptyset$ while $\modmap^{-1}(W) \cap Z^{\tau}_i = \emptyset $ for other $i$'s. Now we take $\psi_{x,i} = \psi_i$ for $1\leq i\leq r$ and $\psi_{x,i} =0$ otherwise accordingly. Then we can take $P_{\tau,x}$ introduced in \eqref{eqn:local_monoid_near_singularity} and the map $V = \modmap^{-1}(W) \rightarrow \spec(\comp[\Sigma_{\tau}\oplus \mathbb{N}^{l}])$ defined by
    \begin{equation}\label{eqn:modified_local_mod_for_singularity}
		\begin{cases}
		z^m \mapsto  h_m \cdot z^m & \text{if } m\in \Sigma_{\tau} ;\\
		u_i \mapsto  f_{v,i} & \text{if } 1\leq i \leq r;\\
		u_i \mapsto  z_i & \text{if } r<i\leq l.
	   \end{cases}
    \end{equation} 
    Note that the third line of this formula is different from that of equation \eqref{eqn:singular_locus_local_model} because we do not specify a point $x \in  Z^{\tau}_1 \cap \cdots \cap Z^{\tau}_r$. By shrinking $W$ if necessary, one can show that it is an embedding using an argument similar to \cite[Thm. 2.6]{Gross-Siebert-logII}. This is possible because we can check that the Jacobian appearing in the proof of \cite[Thm. 2.6]{Gross-Siebert-logII} is invertible for all point in $\modmap^{-1}(x) = \moment^{-1}(\mathscr{C}^{-1}(x))$, which is a connected compact subset by property $(5)$ in Assumption \ref{assum:existence_of_contraction}.
\end{itemize}
 
\begin{condition}\label{cond:good_cover_of_B}
An open cover $\{ W_{\alpha} \}_{\alpha}$ of $B$ is said to be \emph{good} if
\begin{enumerate}
	\item for each $W_{\alpha}$, there exists a unique $\tau_{\alpha} \in \pdecomp$ such that $W_{\alpha} \in \mathscr{B}_x$ for some $x \in \reint(\tau)$;
	\item $W_{\alpha\beta}=W_{\alpha} \cap W_{\beta} \neq \emptyset$ only when $\tau_{\alpha} \subset \tau_{\beta}$ or $\tau_{\beta} \subset \tau_{\alpha}$, and if this is the case, we have either $\reint(\alpha) \cap W_{\alpha\beta} \neq \emptyset$ or $\reint(\beta) \cap W_{\alpha\beta} \neq \emptyset$.

\end{enumerate}
\end{condition}
Given a good cover $\{ W_{\alpha} \}_{\alpha}$ of $B$, we have the corresponding Stein open cover $\cu{V} := \{V_\alpha\}_\alpha$ of $\centerfiber{0}$ given by $V_{\alpha} := \modmap^{-1}(W_{\alpha})$ for each $\alpha$. For each $V_{\alpha}^{\dagger}$, the infinitesimal local smoothing model is given as a log space $\localmod{}_{\alpha}^{\dagger}$ over $\logsf$ (see \eqref{eqn:cartesian_diagram_of_log_spaces}). Let $\localmod{k}_{\alpha}$ be the $k^{\text{th}}$-order thickening over $\logsk{k}= \spec(\cfr/\mathbf{m}^{k+1})^{\dagger}$ and $j \colon V_{\alpha} \setminus Z \hookrightarrow V_{\alpha}$ be the open inclusion. 
As in \cite[\S 8]{chan2019geometry}, we obtain coherent sheaves of BV algebras (and modules) over $V_\alpha$ from these local smoothing models. But for the purpose of this paper, we would like to push forward these coherent sheaves to $B$ and work with the open subsets $W_{\alpha}$'s. 
This leads to the following modification of \cite[Def. 7.6]{chan2019geometry} (see also \cite[Def. 2.14 and 2.20]{chan2019geometry}):

\begin{definition}\label{def:higher_order_thickening_data_from_gross_siebert}
	For each $k \in \inte_{\geq 0}$, we define
	\begin{itemize}
		\item
		the \emph{sheaf of $k^{\text{th}}$-order polyvector fields} to be $\bva{k}_{\alpha}^* := \modmap_* j_* ( \bigwedge^{-*} \Theta_{\localmod{k}_{\alpha}^{\dagger}/\logsk{k}})$ (i.e. push-forward of relative log polyvector fields on $\localmod{k}_\alpha^{\dagger}$);
		
		\item
		the \emph{$k^{\text{th}}$-order log de Rham complex} to be $\tbva{k}{}^*_\alpha :=\modmap_* j_* (\Omega^*_{\localmod{k}_{\alpha}^{\dagger}/\comp})$ (i.e. push-forward of log de Rham differentials) equipped with the de Rham differential $\dpartial{k}_{\alpha} = \dpartial{}$ which is naturally a dg module over $\logsdrk{k}{*}$; 
		
		\item
		the \emph{local log volume form} $\volf{}_\alpha$ as a nowhere vanishing element in $\modmap_* j_*(\Omega^n_{\localmod{}_{\alpha}^{\dagger}/\logsf})$ and the \emph{$k^{\text{th}}$-order volume form} to be $\volf{k}_{\alpha} = \volf{}_{\alpha} \ (\text{mod $\mathbf{m}^{k+1}$})$.
	\end{itemize}
\end{definition}

Given $k > l$, there are natural maps $\rest{k,l}\colon j_* ( \bigwedge^{-*} \Theta_{\localmod{k}_{\alpha}^{\dagger}/\logsk{k}}) \rightarrow  j_* ( \bigwedge^{-*} \Theta_{\localmod{l}_{\alpha}^{\dagger}/\logsk{l}})$ which induce the maps $\rest{k,l}\colon \bva{k}^*_{\alpha} \rightarrow \bva{l}^*_{\alpha}$. Before taking the push-forward $\moment_*$, each $j_* ( \bigwedge^{r} \Theta_{\localmod{k}_{\alpha}^{\dagger}/\logsk{k}})$ is a sheaf of flat $\cfrk{k}$-modules with the property that $j_* ( \bigwedge^{r} \Theta_{\localmod{k}_{\alpha}^{\dagger}/\logsk{k}}) \cong j_* ( \bigwedge^{r} \Theta_{\localmod{k+1}_{\alpha}^{\dagger}/\logsk{k+1}}) \otimes_{\cfrk{k+1}} \cfrk{k}$ by \cite[Cor. 7.4 and 7.9]{Felten-Filip-Ruddat}. In other words, we have a short exact sequence of coherent sheaves
$$
\xymatrix@1{
0 \ar[r] & j_* ( \bigwedge^{r} \Theta_{\localmod{0}_{\alpha}^{\dagger}/\logsk{0}}) \ar[r]^{\cdot q^{k+1}} \ar[r] & j_* ( \bigwedge^{r} \Theta_{\localmod{k+1}_{\alpha}^{\dagger}/\logsk{k+1}})  \ar[r] & j_* ( \bigwedge^{r} \Theta_{\localmod{k}_{\alpha}^{\dagger}/\logsk{k}}) \ar[r] & 0.
}
$$
Applying $\moment_*$, which is exact, we get
$$
\xymatrix@1{
	0 \ar[r] & \bva{0}_{\alpha}^{-r} \ar[r]^{\cdot q^{k+1}} \ar[r] & \bva{k+1}_{\alpha}^{-r}  \ar[r] & \bva{k}_{\alpha}^{-r} \ar[r] & 0.
}
$$
As a result, we see that $\bva{k}_{\alpha}^{-r}$ is a sheaf of flat $\cfrk{k}$-modules on $W_{\alpha}$, so we have $\bva{k+1}_{\alpha}^{-r} \otimes_{\cfrk{k+1}} \cfrk{k} \cong \bva{k}^{-r}_{\alpha}$ for each $r$; a similar statement holds for $\tbva{k}{}^{r}_{\alpha}$. 

A natural filtration $\tbva{k}{\bullet}^*_{\alpha}$ is given by $\tbva{k}{s}^*_{\alpha}:= \logsdrk{k}{\geq s} \wedge \tbva{k}{}^*_{\alpha}[s]$ and taking wedge product defines the natural sheaf isomorphism $\gmiso{k}{r}^{-1} \colon \logsdrk{k}{r} \otimes_{\cfrk{k}} (\tbva{k}{0}^*_\alpha/ \tbva{k}{1}^*_\alpha[-r]) \rightarrow \tbva{k}{r}^*_\alpha/ \tbva{k}{r+1}^*_\alpha$. We have the space $\tbva{k}{\parallel}^*_{\alpha} :=\tbva{k}{0}^*_{\alpha}/\tbva{k}{1}^*_{\alpha} \cong \modmap_* j_* (\Omega^*_{\prescript{k}{}{\mathbf{V}}_{\alpha}^{\dagger}/\logsk{k}})$ of \emph{relative log de Rham differentials}. 

There is a natural action $v \mathbin{\lrcorner} \varphi$ for $v \in \bva{k}_{\alpha}^*$ and $\varphi \in \tbva{k}{}^*$ given by contracting a logarithmic holomorphic vector field $v$ with a logarithmic holomorphic form $\varphi$. 
To simplify notations, for $v \in \bva{k}_{\alpha}^0$, we often simply write $v\varphi$, suppressing the contraction $\mathbin{\lrcorner}$.
We define the \emph{Lie derivative} via the formula $\cu{L}_{v} := (-1)^{|v|} \partial \circ (v\mathbin{\lrcorner}) - (v\mathbin{\lrcorner}) \circ \partial$ (or equivalently, $(-1)^{|v|}\cu{L}_{v}:= [\dpartial{},v\mathbin{\lrcorner}]$). By contracting with $\volf{k}_{\alpha}$, we get a sheaf isomorphism $\mathbin{\lrcorner} \volf{k}_{\alpha} \colon \bva{k}_{\alpha}^* \rightarrow \tbva{k}{\parallel}^*_{\alpha}$, which defines the \emph{BV operator} $\bvd{k}_{\alpha}$ by $\bvd{k}_{\alpha}(\varphi) \mathbin{\lrcorner} \volf{k} := \dpartial{k}_{\alpha} (\varphi \mathbin{\lrcorner} \volf{k})$. We call it the BV operator because the BV identity:
\begin{equation}\label{eqn:BV_identity}
(-1)^{|v|}[v,w] : = \Delta(v\wedge w ) - \Delta(v) \wedge w -(-1)^{|v|} v\wedge \Delta(w)
\end{equation}
for $v, w \in \bva{k}_{\alpha}^*$, where we put $\Delta = \bvd{k}_{\alpha}$, defines a graded Lie bracket. This gives $\bva{k}^*_{\alpha}$ the structure of a sheaf of BV algebras.

\subsection{An explicit description of the sheaf of log de Rham forms}\label{subsubsec:local_description_for_log_de_rham_forms}

Here we apply the calculations in \cite{Gross-Siebert-logII,Felten-Filip-Ruddat} to give an explicit description of 
the stalk $\tbva{k}{}_{\alpha,x}^*$.

Let us consider $K=\modmap^{-1}(x)$ and the local model near $K$ described in \S \ref{subsubsec:local_deformation_data}, with $P_{\tau,x}$ and $Q_{\tau,x}$ as in \eqref{eqn:local_monoid_near_singularity}, \eqref{eqn:central_fiber_local_model} and an embedding $V \rightarrow \spec(\comp[Q_{\tau,x}])$. We may treat $K \subset V$ as a compact subset of $\comp^{l} = \spec(\comp[\bb{N}^l]) \hookrightarrow \spec(\comp[Q_{\tau,x}])$ via the identification $\spec(\comp[\Sigma_{\tau}\oplus\bb{N}^l]) \cong \spec(\comp[Q_{\tau,x}])$. For each $m \in \Sigma_{\tau}$, we denote the corresponding element $(m,\psi_{x,0}(m),\dots,\psi_{x,l}(m)) \in P_{\tau,x}$ by $\hat{m}$ 
and the corresponding function by $z^{\hat{m}} \in \comp[P_{\tau,x}]$. Similar to \cite[Lem. 7.14]{Felten-Filip-Ruddat}, the germs of holomorphic functions $\cu{O}_{\localmod{k},K}$ near $K$ in the space $\localmod{k} = \spec(\comp[P_{\tau,x}/q^{k+1}])$ can be written as 
\begin{equation}\label{eqn:local_germ_of_holomorphic_functions_near_K}
\cu{O}_{\localmod{k},K} = \Bigg\{ \sum_{m \in \Sigma_{\tau},\ 0\leq i \leq k }  \alpha_{m,i} q^{i} z^{\hat{m}} \,\Big|\, \alpha_{m,i} \in \cu{O}_{\comp^{l}}(U)\ \text{for some neigh. $U\supset K$}, \ \sup_{m \in \Sigma_{\tau} \setminus \{0\}} \frac{\log|\alpha_{m,i}|}{\mathtt{d}(m)} < \infty \Bigg\}, 
\end{equation}
where $\mathtt{d}\colon \Sigma_{\tau} \rightarrow \bb{N}$ is a monoid morphism such that $\mathtt{d}^{-1}(0) = 0$, and it is equipped with the product $z^{\hat{m}_1} \cdot z^{\hat{m}_2} := z^{\hat{m}_1+\hat{m}_2}$ (but note that $\widehat{m_1+m_2}\neq \hat{m}_2 + \hat{m}_2$ in general). Thus we have $\tbva{k}{}^0_{\alpha,x} \cong \bva{k}_{\alpha,x}^0 \cong \cu{O}_{\localmod{k},K}$. 

To describe the differential forms, we consider the vector space $\mathscr{E} = P_{\tau,x,\comp}$, regarded as the space of $1$-forms on $\spec(\comp[P_{\tau,x}^{\mathrm{gp}}]) \cong (\comp^*)^{n+1}$. Write $d\log z^{p}$ for $p \in P_{\tau,x,\comp}$ and set $\mathscr{E}_1 := \comp \langle d\log u_i \rangle_{i=1}^l$, as a subset of $\mathscr{E}$. For an element $m\in \norpoly_{\tau,\comp}$, we have the corresponding $1$-form $d\log z^{\hat{m}} \in P_{\tau,x,\comp}$ under the association between $m$ and $z^{\hat{m}}$. Let $\mathtt{P}$ be the power set of $\{1,\dots,l\}$ and write $u^{I} = \prod_{i\in I} u_i$ for $I \in \mathtt{P}$. A computation for sections of the sheaf $j_*(\Omega^r_{\localmod{k}^{\dagger}/\comp})$ from \cite[Prop. 1.12]{Gross-Siebert-logII} and \cite[Lem. 7.14]{Felten-Filip-Ruddat} can then be rephrased as the following lemma.

\begin{lemma}[\cite{Gross-Siebert-logII,Felten-Filip-Ruddat}]\label{lem:local_computation_of_log_de_rham_forms}
    The space of germs of sections of $j_*(\Omega^*_{\localmod{k}^{\dagger}/\comp})_K$ near $K$ is a subspace of $\cu{O}_{\localmod{k},K} \otimes \bigwedge^* \mathscr{E}$ given by elements of the form
	$$\displaystyle
	\alpha = \sum_{\substack{m \in \Sigma_{\tau}\\ 0\leq i \leq k }} \sum_{I} \alpha_{m,i,I} q^i z^{\hat{m}} u^I \otimes \beta_{m,I}, \quad \beta_{m,I} \in \bigwedge\nolimits^* \mathscr{E}_{m,I} = \bigwedge\nolimits^*(\mathscr{E}_{1,m,I}\oplus \mathscr{E}_{2,m,I} \oplus \langle d\log q\rangle),
	$$
	where $\mathscr{E}_{1,m,I} = \langle d\log u_i \rangle_{i\in I} \subset \mathscr{E}_1$ and the subspace $\mathscr{E}_{2,m,I} \subset \mathscr{E}$ is given as follows: we consider the pullback of the product of normal fans $\prod_{i \notin I}\mathscr{N}_{\check{\Delta}_i(\tau)}$ to $ \norpoly_{\tau,\real}$ and take $\mathscr{E}_{2,m,I} = \langle d\log z^{\hat{m}'} \rangle$ for $m' \in \sigma_{m,I}$, where $\sigma_{m,I}$ is the smallest cone in $\prod_{i \notin I}\mathscr{N}_{\check{\Delta}_i(\tau)} \subset \norpoly_{\tau,\real}$ containing $m$.
\end{lemma}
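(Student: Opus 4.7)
The plan is to translate the statement into the explicit local computations already carried out in \cite[Prop. 1.12]{Gross-Siebert-logII} and \cite[Lem. 7.14]{Felten-Filip-Ruddat}. Away from the singular locus $Z$, the log structure on $\localmod{k}^{\dagger}/\comp$ is log smooth, so $\Omega^r_{\localmod{k}^{\dagger}/\comp}$ is locally free on $(V_{\alpha}\setminus Z)\cap \localmod{k}$ with a basis given by wedges of $d\log z^p$ for $p \in P_{\tau,x}^{\text{gp}}$. Using the isomorphism $\comp[Q_{\tau,x}] \cong \comp[\Sigma_{\tau}\oplus \bb{N}^l]$ and the resulting coordinates $(z^{\hat{m}}, u_1,\ldots,u_l, q)$, any such log form admits, via \eqref{eqn:local_germ_of_holomorphic_functions_near_K}, an expansion $\sum \alpha_{m,i,I} q^i z^{\hat{m}} u^I \otimes \beta_{m,I}$ with $\beta_{m,I}$ a wedge of elements of $\mathscr{E}$. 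The content of the lemma is therefore to characterize which $\beta_{m,I}$'s produce forms that extend holomorphically across $Z$, i.e., define sections of $j_*\Omega^r_{\localmod{k}^{\dagger}/\comp}$ on a neighborhood of $K$.

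Next I would carry out the $d\log u_i$ analysis. Since $d\log u_i = du_i/u_i$ has a pole along $\{u_i=0\}$, multiplying by $z^{\hat{m}} u^I$ yields a form holomorphic across this divisor precisely when $i \in I$, which forces $\mathscr{E}_{1,m,I} = \langle d\log u_i\rangle_{i\in I}$. The identification of $\mathscr{E}_{2,m,I}$ is more delicate and constitutes the main technical point. For $m' \in \norpoly_{\tau,\comp}$, the form $d\log z^{\hat{m}'}$ is logarithmic along those toric strata of $\spec(\comp[P_{\tau,x}])$ on which $z^{\hat{m}'}$ is either a unit or picks up poles only from strata already belonging to the log structure. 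Tracking how $z^{\hat{m}} u^I$ localizes to the stratum indexed by $\{1,\ldots,l\}\setminus I$, one finds that $d\log z^{\hat{m}'}$ is admissible exactly when $m'$ lies in the smallest cone $\sigma_{m,I}$ of $\prod_{i\notin I} \mathscr{N}_{\check{\Delta}_i(\tau)} \subset \norpoly_{\tau,\real}$ containing $m$. This is precisely the combinatorial content that has been extracted in the cited references.

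Finally I would verify that each expansion of the stated form indeed defines a section of $j_*\Omega^r_{\localmod{k}^{\dagger}/\comp}$ on a neighborhood of $K$, which reduces to the growth bound on $\alpha_{m,i,I}$ built into \eqref{eqn:local_germ_of_holomorphic_functions_near_K}, and that conversely every such section admits an expansion of this form (this being a consequence of the fact that $P_{\tau,x}$ generates the algebra and of the toric monomial decomposition used in \cite[Lem. 7.14]{Felten-Filip-Ruddat}). The main obstacle is keeping track of the combinatorics of the cones $\sigma_{m,I}$ as $(m, I)$ varies, together with compatibility across changes of stratum; since the local analytic germ computation of \cite[Prop. 1.12]{Gross-Siebert-logII} and \cite[Lem. 7.14]{Felten-Filip-Ruddat} was performed in essentially this framework, I plan to invoke those results directly rather than reproduce the argument, once the dictionary between their coordinates and ours has been set up in Steps one and two.
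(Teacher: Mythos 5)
Your proposal matches the paper's treatment: the paper does not reprove this lemma but presents it as a direct rephrasing of \cite[Prop. 1.12]{Gross-Siebert-logII} and \cite[Lem. 7.14]{Felten-Filip-Ruddat} in the coordinates $(z^{\hat{m}}, u_1,\dots,u_l, q)$ fixed by the isomorphism $\comp[Q_{\tau,x}]\cong\comp[\Sigma_{\tau}\oplus\bb{N}^{l}]$ and the germ description \eqref{eqn:local_germ_of_holomorphic_functions_near_K}, which is exactly the dictionary you set up before invoking the same references. Your supplementary heuristics --- the cancellation $u^{I}\,d\log u_i = u^{I\setminus\{i\}}\,du_i$ forcing $i\in I$, and the determination of $\mathscr{E}_{2,m,I}$ by the cone of $\prod_{i\notin I}\mathscr{N}_{\check{\Delta}_i(\tau)}$ containing $m$ --- are consistent with the statement and with the cited computations, so the argument is sound.
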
 

Here we can treat $\prod_{i\notin I} \mathscr{N}_{\check{\Delta}_i(\tau)}\subset \norpoly_{\tau,\real}$ since $\bigoplus_{i} \tanpoly_{\check{\Delta}_i(\tau)}$ is a direct summand of $\norpoly_{\tau}^*$. A similar description for $j_*(\Omega^*_{\localmod{k}^{\dagger}/\comp^{\dagger}})_K$ is simply given by quotienting out the direct summand $\langle d\log q\rangle$ in the above formula for $\alpha$. In particular, if we restrict ourselves to the case $k=0$, a general element $\alpha$ can be written as
$$\displaystyle
\alpha = \sum_{m \in \Sigma_{\tau}} \sum_{I} \alpha_{m,I}  z^{\hat{m}} u^I \otimes \beta_{m,I}, \quad \beta_{m,I} \in \bigwedge\nolimits^* \mathscr{E}_{m,I} = \bigwedge\nolimits^*(\mathscr{E}_{1,m,I}\oplus \mathscr{E}_{2,m,I}).
$$
One can choose a nowhere vanishing element 
$$\Omega= du_1\cdots du_l \otimes \eta  \in u_1\cdots u_l \otimes \wedge^l \mathscr{E}_1 \otimes \wedge^{n-\dim_{\real}(\tau)} \mathscr{E}_2 \subset j_*(\Omega^n_{\localmod{0}^{\dagger}/\comp^{\dagger}})_K$$
for some nonzero element $\eta \in \wedge^{n-\dim_{\real}(\tau)} \mathscr{E}_2$, which is well defined up to rescaling. Any element in $j_*(\Omega^n_{\localmod{0}^{\dagger}/\comp^{\dagger}})_K$ can be written as $f \Omega$ for some $f =\sum_{m \in \Sigma_{\tau}} f_{m}z^{\hat{m}} \in \cu{O}_{\localmod{0},K}$. 

Recall that the subset $K \subset \comp^{l}$ is intersecting the singular locus $Z^{\tau}_1,\dots,Z^{\tau}_{r}$ (as in \S \ref{subsubsec:local_deformation_data}), where $u_i$ is the coordinate function of $\comp^{l}$ with simple zeros along $Z_i^{\tau}$ for $i=1,\dots,r$. There is a change of coordinates between a neighborhood of $K$ in $\comp^{l}$ and that of $K$ in $(\comp^*)^l$ given by 
\begin{equation*}
\begin{cases}
u_i  \mapsto  f_{v,i}|_{(\comp^{*})^l} & \text{if } 1\leq i \leq r;\\
u_i  \mapsto  z_i &\text{if } r<i\leq l.
\end{cases}
\end{equation*}
Under the map $\log \colon (\comp^*)^l \rightarrow \real^l$, we have $K = \log^{-1}(\mathscr{C})$ for some connected compact subset $\mathscr{C}\subset \real^l$. In the coordinates $z_1,\dots,z_l$, we find that $d\log z_1 \cdots d\log z_l\otimes \eta$ can be written as $f \Omega$ near $K$ for some nowhere vanishing function $f \in \cu{O}_{\localmod{0},K}$. 

\begin{lemma}\label{lem:local_computation_for_top_cohomology}
	When $K \cap Z = \emptyset$ (i.e. $r=0$ in the above discussion), the top cohomology group $\cu{H}^n(j_*(\Omega^n_{\localmod{0}^{\dagger}/\comp^{\dagger}})_K,\dpartial{}) :=j_*(\Omega^n_{\localmod{0}^{\dagger}/\comp^{\dagger}})_K/\mathrm{Im}(\dpartial{})$ is isomorphic to $\comp$, which is generated by the element $d\log z_1 \cdots d\log z_l\otimes \eta$. 
\end{lemma}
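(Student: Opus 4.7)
The plan is to reduce the computation to the holomorphic Poincar\'e lemma for log smooth morphisms.

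First, I would shrink $V$. Since $K \cap Z = \emptyset$ and $K$ is compact, we may choose $V$ so that $V \cap Z = \emptyset$; then $j_*(\Omega^{\bullet}_{\localmod{0}^{\dagger}/\comp^{\dagger}})|_V = \Omega^{\bullet}_{\localmod{0}^{\dagger}/\comp^{\dagger}}|_V$. Moreover, under the hypothesis $r=0$, all $\psi_{x,i}$ with $i \geq 1$ vanish, so $P_{\tau,x} = \{(m,a_0,\ldots,a_l) \,|\, a_0 \geq \bar{\varphi}_{\tau}(m),\ a_i \geq 0\}$ and $Q_{\tau,x} \cong \Sigma_{\tau} \oplus \bb{N}^l$. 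Consequently $\localmod{0}$ splits locally as a product $\spec(\comp[\Sigma_{\tau}]) \times \comp^l$ with its product log structure, and the morphism $\localmod{0}^{\dagger} \to \comp^{\dagger}$ is log smooth on a neighborhood of $K$.

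Next, I would invoke the holomorphic Poincar\'e lemma for log smooth morphisms (Condition \ref{cond:holomorphic_poincare_lemma}, proven in \cite{Gross-Siebert-logII, Felten-Filip-Ruddat}). On the log smooth locus, the cohomology sheaves $\mathcal{H}^q(\Omega^{\bullet}_{\localmod{0}^{\dagger}/\comp^{\dagger}})$ are locally constant sheaves on the underlying topological space of $\localmod{0}$, and together they form the exterior algebra of the rank $n$ local system $\mathcal{H}^1$. In particular $\mathcal{H}^n$ is a rank 1 local system. By condition (4) of Assumption \ref{assum:existence_of_contraction}, $\mathscr{C}^{-1}(x)$ is connected, and since $\moment$ restricts to a principal torus bundle on $\reint(\tau)$, the preimage $K = \modmap^{-1}(x) = \moment^{-1}(\mathscr{C}^{-1}(x))$ is connected as well. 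Hence $\cu{H}^n(j_*\Omega^{\bullet}_{\localmod{0}^{\dagger}/\comp^{\dagger}})_K = \mathcal{H}^n_K \cong \comp$.

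Finally, I would verify that $d\log z_1 \wedge \cdots \wedge d\log z_l \otimes \eta$ generates this stalk. Each $d\log z_i$ is a closed log $1$-form (since $d(dz_i/z_i) = 0$) and $\eta$ is a wedge of closed forms $d\log z^{\hat{m}'}$, so their wedge is $\dpartial{}$-closed. By the discussion right before the lemma, $d\log z_1 \wedge \cdots \wedge d\log z_l \otimes \eta = (z_1 \cdots z_l)^{-1}\Omega$, and since $K \subset (\comp^*)^l$, the function $(z_1\cdots z_l)^{-1}$ is holomorphic and nowhere vanishing on a neighborhood of $K$. Together with the fact that $\Omega$ is a nowhere vanishing trivialization of the relative log canonical line bundle, this means $d\log z_1 \wedge \cdots \wedge d\log z_l \otimes \eta$ is a nowhere vanishing local section of the line bundle, hence defines a nowhere vanishing local section of the rank 1 local system $\mathcal{H}^n$; it therefore generates the stalk $\mathcal{H}^n_K \cong \comp$.

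The main technical input is the holomorphic Poincar\'e lemma in the log smooth setting, which is precisely the content of \cite[Thm.~4.1]{Gross-Siebert-logII} and its strengthening in \cite{Felten-Filip-Ruddat}; once this is taken as input, the remainder of the argument is the elementary observation that the explicit form $d\log z_1 \wedge \cdots \wedge d\log z_l \otimes \eta$ provides a nowhere vanishing trivialization on a neighborhood of $K$.
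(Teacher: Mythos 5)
Your proposal has a genuine gap at its central step. The group in the lemma is the top cohomology of the complex of \emph{germs along the compact set} $K$, i.e.\ of $\varinjlim_{U\supset K}\Gamma(U,\Omega^{\bullet}_{\localmod{0}^{\dagger}/\comp^{\dagger}})$. Because $K$ admits a fundamental system of Stein neighborhoods and the sheaves are coherent, this computes the hypercohomology $\mathbb{H}^n(K,\Omega^{\bullet}_{\localmod{0}^{\dagger}/\comp^{\dagger}}|_K)$, which receives contributions from $H^p(K,\mathcal{H}^{n-p})$ for \emph{all} $p$ --- not just from $H^0(K,\mathcal{H}^n)$ as you assert when you write $\cu{H}^n(j_*\Omega^{\bullet})_K=\mathcal{H}^n_K$. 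Since $K$ is an $l$-dimensional real torus, its topology matters. The simplest case already refutes the identification: for $\tau$ a maximal cell, $K\cong (S^1)^n\subset(\comp^*)^n$ and the relative log de Rham complex is the ordinary holomorphic de Rham complex near $K$, so $\mathcal{H}^n=0$ in a neighborhood of $K$ by the holomorphic Poincar\'e lemma; your argument would then give $0$, whereas the lemma's answer $\comp$ is exactly $H^n(K,\mathcal{H}^0)=H^n(T^n,\comp)$. Relatedly, the claim that the $\mathcal{H}^q(\Omega^{\bullet}_{\localmod{0}^{\dagger}/\comp^{\dagger}})$ form the exterior algebra of a rank-$n$ local system $\mathcal{H}^1$ on $\localmod{0}$ confuses the cohomology sheaves on the total space (which vanish in positive degrees on the open stratum) with the derived pushforward to $B$, whose cohomology sheaves are $\bigwedge^q\tinclude_*\tanpoly\otimes\comp$ in the Gross--Siebert picture. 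Condition \ref{cond:holomorphic_poincare_lemma} is also not the statement you need here: it concerns acyclicity of the kernel complex $\prescript{k}{}{\mathfrak{K}}^*_{\alpha,x}$ in \eqref{eqn:equation_for_holomorphic_poincare_lemma}, not local constancy of the $\mathcal{H}^q$ on $\localmod{0}$.

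There is a secondary gap in your last step: being a nowhere vanishing section of the line bundle $\Omega^n_{\localmod{0}^{\dagger}/\comp^{\dagger}}$ does not imply being nonzero in the quotient $\Omega^n/\mathrm{Im}(\dpartial{})$ (on $\comp^*$ the form $z^{-2}dz=-d(z^{-1})$ is nowhere vanishing yet exact); this is why the paper proves non-vanishing of the class of the volume form separately in Lemma \ref{lem:holomorphic_volume_form_non_vanishing_in_cohomology}. The paper's actual argument is an explicit computation with the Laurent-type description of the germs from Lemma \ref{lem:local_computation_of_log_de_rham_forms}: one kills the graded pieces $f_m z^{\hat m}\Omega$ with $m\in\Sigma_{\tau}\setminus\{0\}$ by exhibiting explicit $\dpartial{}$-primitives built from $\iota_{e_j}\eta$, reducing to forms pulled back from $\comp^l$, and then computes the cokernel of $\dpartial{}$ on the multicircular germ $\cu{O}_{(\comp^*)^l,K}$ by a direct Laurent-series argument. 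If you want to avoid that computation, you would instead need to run the hypercohomology spectral sequence and show that the only surviving contribution in total degree $n$ is $H^l(K,\comp)\otimes(\text{transverse factor})\cong\comp$, which is essentially equivalent work.
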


\begin{proof}
	Given a general element $f \Omega$ as above, first observe that we can write $f = f_0 + f_{+}$, where $f_+ = \sum_{m \in \Sigma_{\tau}\setminus \{0\}} f_m z^{\hat{m}}$ and $f_0 \in \cu{O}_{\comp^l,K}$. We take a basis $e_1,\dots,e_s$ of $\norpoly_{\tau,\real}^*$, and also a partition $I_1,\dots,I_s$ of the lattice points in $\Sigma_{\tau}\setminus \{0\}$ such that $\langle e_j,m\rangle \neq 0$ for $m \in I_j$. Letting
	$$
	\alpha =(-1)^l \sum_{j} \sum_{m \in I_j} \frac{ f_m}{\langle e_j,m\rangle} z^{\hat{m}} du_1 \cdots du_l \otimes \iota_{e_j} \eta,
	$$
	we have $\dpartial{}(\alpha) = f_+ \Omega$. So we only need to consider elements of the form $f_0 \Omega$. If $f_0 \Omega = \dpartial{}(\alpha)$ for some $\alpha$, we may take $\alpha = \sum_j \alpha_j du_1\cdots \widehat{du_j} \cdots du_l \otimes \eta$ for some $\alpha_j \in \cu{O}_{\comp^{l},K}$. Now this is equivalent to $f_0 du_1 \cdots du_l = \dpartial{} \big(\sum_j \alpha_j du_1\cdots \widehat{du_j} \cdots du_l \big) $ as forms in $\Omega^l_{\comp^l,K}$. This reduces the problem to $\comp^l$.

	Working in $(\comp^*)^l$ with coordinates $z_i$'s, we can write 
	$$
	\cu{O}_{(\comp^*)^l,K} = \left\{ \sum_{m\in \inte^l} a_m z^{m} \ \Big| \ \sum_{m \in \inte^l} |a_m| e^{\langle v, m \rangle } < \infty, \ \text{for all $v \in W$, for some open $W \supset \mathscr{C}$} \right\},
	$$
	using the fact that $K$ is multi-circular. By writing $\Omega^*_{(\comp^*)^l,K} = \cu{O}_{(\comp^*)^l,K} \otimes \bigwedge^* \mathscr{F}_1$ with $\mathscr{F}_1 = \langle d\log z_i \rangle_{i=1}^l$, we can see that any element can be represented as $c d\log z_1 \cdots d\log z_l$ in the quotient $\Omega^l_{(\comp^*)^l,K}/\mathrm{Im}(\dpartial{})$, for some constant $c$. 
\end{proof}

From this lemma, we conclude that the top cohomology sheaf $\cu{H}^n(\tbva{0}{\parallel}^*,\dpartial{})$ is isomorphic to the locally constant sheaf $\underline{\comp}$ over $B \setminus \tsing_e$. 

\begin{lemma}\label{lem:holomorphic_volume_form_non_vanishing_in_cohomology}
	The volume element $\volf{0}$ is non-zero in $\cu{H}^n(\tbva{0}{\parallel}^*,\dpartial{})_x$ for every $x \in B$. 
\end{lemma}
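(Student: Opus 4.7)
The plan is to reduce to a local cohomology calculation based on Lemma~\ref{lem:local_computation_for_top_cohomology} and then propagate non-vanishing throughout $B$ by combining local constancy of the top cohomology sheaf with a restriction argument at singular points.

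First I would make the local form of $\volf{0}_{\alpha}$ explicit: by the Gross--Siebert construction recalled in \S\ref{subsec:log_structure_and_slab_function}, on a vertex chart $V(v) \cong \spec(\comp[\Sigma_v])$ the relative log canonical sheaf is canonically trivialized (up to a nonzero constant) by $\bigwedge_{i=1}^{n} d\log z^{m_i}$ for any lattice basis $\{m_i\}$ of $\tanpoly_v$, so $\volf{0}_{\alpha}$ may be normalized to agree with this form up to a nonzero scalar. Next, pick a basepoint $x_0 \in \reint(\sigma) \setminus Z$ inside a maximal cell $\sigma$. The preimage $K = \modmap^{-1}(x_0)$ is a real $n$-torus $T^n$ sitting inside $(\comp^*)^n \subset \centerfiber{0}_{\sigma}$. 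Under the coordinate change $u_i \mapsto z_i$ of the paragraph preceding Lemma~\ref{lem:local_computation_for_top_cohomology}, integration over $T^n$ is a well-defined linear functional on $\cu{H}^n(\tbva{0}{\parallel}_{\alpha}^{*}, \dpartial{})_{x_0}$ by Stokes' theorem, and evaluating it against $\volf{0}_{\alpha}$ yields $(2\pi i)^n$ times a nonzero scalar. Since Lemma~\ref{lem:local_computation_for_top_cohomology} further identifies the stalk cohomology with $\comp$ generated by $[d\log z_1 \wedge \cdots \wedge d\log z_n \otimes \eta]$, this gives $[\volf{0}_{\alpha}]_{x_0} \neq 0$.

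To propagate this to every point of $B$ I would use two arguments. First, on $B \setminus \tsing_e$ the top cohomology sheaf $\cu{H}^n(\tbva{0}{\parallel}^*, \dpartial{})$ is locally constant of rank one (as noted right after Lemma~\ref{lem:local_computation_for_top_cohomology}), and $B \setminus \tsing_e$ is connected because $\tsing_e$ has real codimension at least two; hence $[\volf{0}_{\alpha}]_x \neq 0$ at every $x \in B \setminus \tsing_e$. Second, for $x \in \tsing_e$, suppose for contradiction that $\volf{0}_{\alpha} = \dpartial{}\beta$ on some open neighborhood $U$ of $x$; restricting this identity to any $y \in U \setminus \tsing_e$ (which exists by codimension) would give $[\volf{0}_{\alpha}]_y = 0$, contradicting the previous step.

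The hard part will be verifying that, after the identifications of Lemma~\ref{lem:local_computation_of_log_de_rham_forms} and the change $u_i \mapsto z_i$, the relative log volume $\volf{0}_{\alpha}$ really does pair nontrivially with $T^n$ rather than with an exact correction whose leading Fourier coefficient vanishes. This relies on the particular wedge-of-$d\log$ shape of Gross--Siebert's normalized volume on a vertex chart, so that the unit relating $\volf{0}_{\alpha}$ to the canonical toric top form contributes a nonzero constant Fourier coefficient on $T^n$; tracking this carefully across the toric identification $\spec(\comp[\Sigma_{\tau} \oplus \bb{N}^l])$ used in Lemma~\ref{lem:local_computation_of_log_de_rham_forms} is the delicate bookkeeping step.
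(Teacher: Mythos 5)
Your proof is correct and follows essentially the same route as the paper: establish non-vanishing at a point in the interior of a maximal cell using the explicit $d\log$ form of $\volf{0}$ together with Lemma \ref{lem:local_computation_for_top_cohomology}, then propagate to all of $B$ by noting that a local primitive $\volf{0}=\dpartial{}\beta$ near any point would restrict to a nearby generic point and contradict the base case. The only cosmetic differences are that you detect the base-case non-vanishing by integrating over the torus fiber $\modmap^{-1}(x_0)$, whereas the paper cites \cite[Lem. 3.12]{Gross-Siebert-logII} to write $\volf{0}_x = c\,(\mu_\sigma)_{\modmap^{-1}(x)}$ and invokes the one-dimensionality of the stalk cohomology, and that you interpose the local constancy of $\cu{H}^n(\tbva{0}{\parallel}^*,\dpartial{})$ on the connected set $B\setminus\tsing_e$ before running the contradiction argument at points of $\tsing_e$.
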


\begin{proof}
	We first consider the case when $x \in \reint(\sigma)$ for some maximal cell $\sigma \in \pdecomp^{[n]}$. The toric stratum $\centerfiber{0}_{\sigma}$ associated to $\sigma$ is equipped with the natural divisorial log structure induced from its boundary divisor. Then the sheaf $\Omega^*_{\centerfiber{0}_{\sigma}^{\dagger}/\comp^{\dagger}}$ of log derivations for $\centerfiber{0}^{\dagger}$ is isomorphic to $\bigwedge^n \tanpoly_{\sigma} \otimes_{\inte} \cu{O}_{\centerfiber{0}_{\sigma}}$.
	By \cite[Lem. 3.12]{Gross-Siebert-logII}, we have $\volf{0}_{x} = c (\mu_{\sigma})_{\modmap^{-1}(x)}$ in $\modmap_*(\Omega^n_{\centerfiber{0}_{\sigma}^{\dagger}/\comp^{\dagger}})_x \cong \tbva{0}{\parallel}^n_x$, where $\mu_{\sigma} \in \bigwedge^n \tanpoly_{\sigma,\comp}$ is nowhere vanishing and $c$ is a non-zero constant $c$. Thus $\centerfiber{0}|_x$ is non-zero in the cohomology as the same is true for $\mu_{\sigma}\in\modmap_*(\Omega^n_{\centerfiber{0}_{\sigma}^{\dagger}/\comp^{\dagger}})_x$.
	Next we consider a general point $x \in \reint(\tau)$. If the statement is not true, we will have $\volf{0}_x = \dpartial{0} (\alpha)$ for some $\alpha \in \tbva{0}{\parallel}^{n-1}_x$. Then there is an open neighborhood $U \supset \mathscr{C}^{-1}(x)$ such that this relation continues to hold. As $U \cap \reint(\sigma) \neq \emptyset$, for those maximal cells $\sigma$ which contain the point $x$, we can take a nearby point $y \in U \cap \reint(\sigma)$ and conclude that $c \mu_{\sigma} = \dpartial{0}(\alpha) $ in $\modmap_*(\Omega^n_{\centerfiber{0}_{\sigma}^{\dagger}/\comp^{\dagger}})_y$. This contradicts the previous case.
\end{proof}

\begin{lemma}\label{lem:preserving_volume_element_by_vector_fields}
	Suppose that $x \in W_{\alpha} \setminus \tsing_e$. For an element of the form 
    $$e^{f } (\volf{k}_{\alpha}) \in \tbva{k}{\parallel}^n_{\alpha,x}$$ 
    with $f \in \bva{k}^{0}_{\alpha,x} \cong \cu{O}_{\localmod{k}_{\alpha},x}$ satisfying $f \equiv 0 \text{(mod $\mathbf{m}$)}$, there exist $h(q) \in \cfrk{k}=\comp[q]/(q^{k+1})$ and $v \in \bva{k}^{-1}_{\alpha,x}$ with $h,v \equiv 0 \text{(mod $\mathbf{m}$)}$ such that
	\begin{equation}\label{eqn:preserving_volume_element_by_vector_fields}
	e^{f} (\volf{k}_{\alpha}) = e^{h} e^{\cu{L}_{v}} (\volf{k}_{\alpha})
	\end{equation}
	in $\tbva{k}{\parallel}^n_{\alpha,x}$, where we recall that $\cu{L}_{v} := (-1)^{|v|} \partial \circ (v\mathbin{\lrcorner}) - (v\mathbin{\lrcorner}) \circ \partial$.
\end{lemma}

\begin{proof}
	To simplify notations in this proof, we will drop the subscript $\alpha$. 
	We prove the first statement by induction on $k$. The initial case is trivial. Assuming that this has been done for the $(k-1)^{\text{st}}$-order, then, by taking an arbitrary lifting $\tilde{v}$ of $v$ to the $k^{\text{th}}$-order, we have 
	$$
	e^{-h + f +q^{k}\epsilon}(\volf{k}) = e^{\cu{L}_{\tilde{v}}} (\volf{k})
	$$
	for some $\epsilon \in \cu{O}_{\localmod{0}_{x}}$. By Lemmas \ref{lem:local_computation_for_top_cohomology} and \ref{lem:holomorphic_volume_form_non_vanishing_in_cohomology}, we have $\epsilon \volf{0}= c \volf{0} + \dpartial{}(\gamma)$ for some $\gamma$ and some suitable constant $c$. Letting $\theta \mathbin{\lrcorner} (\volf{0}) = \gamma$ and $\breve{v} = \tilde{v} + q^{k} \theta$, we have 
	$$e^{\cu{L}_{\breve{v}}} (\volf{k}) = e^{\cu{L}_{v}} (\volf{k}) - q^k \dpartial{}( \theta \mathbin{\lrcorner} (\volf{0})) = e^{-h+f + c q^k } (\volf{k}).
	$$
	By defining $\tilde{h}(q) := h(q) - cq^k$ in $\comp[q]/(q^{k+1})$, we obtain the desired expression.
\end{proof}

\subsection{A global pre-dgBV algebra from gluing}\label{subsubsec:gluing_construction}

One approach for smoothing $\centerfiber{0}$ is to look for gluing morphisms $\patch{k}_{\alpha\beta}\colon \localmod{k}_{\alpha}^{\dagger}|_{V_{\alpha\beta}} \rightarrow \localmod{k}_{\beta}^{\dagger}|_{V_{\alpha\beta}}$ between the local smoothing models which satisfy the cocycle condition, from which one obtain a $k^{\text{th}}$-order thickening $\centerfiber{k}$ over $\logsk{k}$.
This was done by Kontsevich--Soibelman \cite{kontsevich-soibelman04} (in 2d) and Gross--Siebert \cite{gross2011real} (in general dimensions) using {\em consistent scattering diagrams}.
If such gluing morphisms $\patch{k}_{\alpha\beta}$'s are available, one can certainly glue the global $k^{\text{th}}$-order sheaves $\bva{k}^*$, $\tbva{k}{}^*$ and the volume form $\volf{k}$. 

In \cite{chan2019geometry}, we instead took suitable dg-resolutions $\polyv{k}^{*,*}_{\alpha}:=\mdga^*(\bva{k}_{\alpha}^*)$'s of the sheaves $\bva{k}_{\alpha}^*$'s (more precisely, we used the Thom--Whitney resolution in \cite[\S 3]{chan2019geometry}) to construct gluings 
$$\glue{k}_{\alpha\beta} \colon \mdga^*(\bva{k}_{\alpha}^*)|_{V_{\alpha\beta}} \rightarrow \mdga^*(\bva{k}_{\beta}^*)|_{V_{\alpha\beta}}$$
of sheaves which only preserve the \emph{Gerstenhaber algebra} structure but not the differential. The key discovery in \cite{chan2019geometry} was that, as the sheaves $\mdga^*(\bva{k}_{\alpha}^*)$'s are soft, such a gluing problem could be solved \emph{without} any information from the complicated scattering diagrams. What we obtained is a \emph{pre-dgBV algebra}\footnote{This was originally called an \emph{almost dgBV algebra} in \cite{chan2019geometry}, but we later found the name \emph{pre-dgBV algebra} from \cite{felten2020log} more appropriate.} $\polyv{k}^{*,*}(\centerfiber{})$, in which the differential squares to zero only modulo $\mathbf{m} = (q)$. Using well-known algebraic techniques \cite{terilla2008smoothness, KKP08}, we can solve the {\em Maurer--Cartan equation} and construct the thickening $\centerfiber{k}$. In this subsection, we will summarize the whole procedure, incorporating the nice reformulation by Felten \cite{felten2020log} in terms of deformations of Gerstenhaber algebras.

To begin with, we assume the following condition holds:
\begin{condition}\label{cond:requirement_of_the_de_rham_dga}
	There is a sheaf $(\mdga^*,\md)$ of unital differential graded algebras (abbrev.\ as dga) (over $\real$ or $\comp$) over $B$, with degrees $0\leq * \leq L$ for some $L$, such that 
	\begin{itemize}
		\item the natural inclusion $\underline{\real} \rightarrow \mdga^*$ (or $\underline{\comp} \rightarrow \mdga^*$) of the locally constant sheaf (concentrated at degree $0$) gives a resolution, and
		
		\item for any open cover $\cu{U} = \{ U_i \}_{i \in \cu{I}}$, there is a partition of unity subordinate to $\cu{U}$, i.e. we have $\{ \rho_i\}_{i\in \cu{I}}$ with $\rho_i \in \Gamma(U_i,\mdga^0)$ and $\overline{\mathrm{supp}(\rho_i)} \subset U_i$ such that $\{\overline{\mathrm{supp}(\rho_i)} \}_i$ is locally finite and $\sum_i \rho_i \equiv 1$. 
	\end{itemize}
\end{condition}


It is easy to construct such an $\mdga^*$ and there are many natural choices. For instance, if $B$ is a smooth manifold, then we can simply take the usual de Rham complex on $B$. 
In \S \ref{subsec:derham_for_B}, the sheaf of monodromy invariant differential forms we constructed using the (singular) integral affine structure on $B$ is another possible choice for $\mdga^*$ (with degrees $0 \leq * \leq n$). Yet another variant, namely the sheaf of \emph{monodromy invariant tropical differential forms}, will be constructed in \S \ref{sec:asymptotic_support}; this links tropical geometry on $B$ with the smoothing of the maximally degenerate Calabi--Yau variety $\centerfiber{0}$.

Let us recall how to obtain a gluing of the dg resolutions of the sheaves $\bva{k}_{\alpha}^*$ and $\tbva{k}{}_{\alpha}^*$ using any possible choice of such an $\mdga^*$. We fix a good cover $\cu{W} := \{W_{\alpha}\}_{\alpha}$ of $B$ and the corresponding Stein open cover $\cu{V} := \{V_\alpha\}_\alpha$ of $\centerfiber{0}$, where $V_{\alpha} = \modmap^{-1}(W_{\alpha})$ for each $\alpha$.

\begin{definition}\label{def:local_dgBV_from_resolution}
	We define $\polyv{k}_{\alpha}^{p,q}= \mdga^q(\bva{k}_{\alpha}^p):= \mdga^q|_{W_{\alpha}} \otimes_{\real} \bva{k}^p_{\alpha}$ and $\polyv{k}_{\alpha}^{*,*} = \bigoplus_{p,q} \polyv{k}_{\alpha}^{p,q}$, which gives a sheaf of dgBV algebras over $W_{\alpha}$. The dgBV structure $(\wedge,\pdb_{\alpha}, \bvd{}_{\alpha})$ is defined componentwise by
	\begin{align*}
	(\varphi \otimes v) \wedge ( \psi \otimes w) & := (-1)^{|v||\psi|} (\varphi \wedge \psi) \otimes (v \wedge w),\\
	\pdb_{\alpha} (\varphi \otimes v) := (\md\varphi) \otimes v ,&\quad 
	\bvd{}_{\alpha}(\varphi \otimes v) := (-1)^{|\varphi|} \varphi \otimes (\bvd{} v) ,
	\end{align*}
	for $\varphi, \psi \in \mdga^*(U)$ and $v, w \in \bva{k}_{\alpha}^*(U)$ for each open subset $U \subset W_{\alpha}$.
\end{definition}

\begin{definition}\label{def:local_dga_from_resolution}
	We define $\totaldr{k}{}_{\alpha}^{p,q}= \mdga^q(\tbva{k}{}_{\alpha}^p):= \mdga^q|_{W_{\alpha}} \otimes_{\real} \tbva{k}{}^p_{\alpha}$ and $\totaldr{k}{}_{\alpha}^{*,*} = \bigoplus_{p,q} \totaldr{k}{}_{\alpha}^{p,q}$, which gives a sheaf of dgas over $W_{\alpha}$ equipped with the natural filtration $\totaldr{k}{\bullet}_{\alpha}^{*,*}$ inherited from $\tbva{k}{\bullet}^*_{\alpha}$. The structures $(\wedge,\pdb_{\alpha},\dpartial{}_{\alpha} )$ are defined componentwise by 
	\begin{align*}
	(\varphi \otimes v) \wedge ( \psi \otimes w) & := (-1)^{|v||\psi|} (\varphi \wedge \psi) \otimes (v \wedge w),\\
	\pdb_{\alpha} (\varphi \otimes v) := (\md\varphi) \otimes v ,&\quad  \dpartial{}_{\alpha}(\varphi \otimes v) = (-1)^{|\varphi|} \varphi \otimes (\dpartial{}v),
	\end{align*}
	for $\varphi, \psi \in \mdga^*(U)$ and $v, w \in \tbva{k}{}_{\alpha}^*(U)$ for each open subset $U \subset W_{\alpha}$.
\end{definition}

There is an action of $\polyv{k}_{\alpha}^{*,*}$ on $\totaldr{k}{}_{\alpha}^{*,*}$ by contraction $\mathbin{\lrcorner}$ defined by the formula
$$
(\varphi \otimes v) \mathbin{\lrcorner} (\psi \otimes w):= (-1)^{|v||\psi|} (\varphi \wedge \psi) \otimes (v\mathbin{\lrcorner} w),
$$
for $\varphi, \psi \in \mdga^*(U)$, $v\in \bva{k}_{\alpha}^*(U)$ and $ w \in \tbva{k}{}_{\alpha}^*(U)$ for each open subset $U \subset W_{\alpha}$.
Note that the local holomorphic volume form $\volf{k}_{\alpha} \in \totaldr{k}{\parallel}_{\alpha}^{n,0}(W_{\alpha})$ satisfies $\pdb_{\alpha}(\volf{k}_{\alpha}) = 0 $, and we have the identity $\dpartial{k}_{\alpha} (\phi \mathbin{\lrcorner} \volf{k}_{\alpha}) = \bvd{k}_{\alpha}(\phi) \mathbin{\lrcorner} \volf{k}_{\alpha}$ of operators.

The next step is to consider gluing of the local sheaves $\polyv{k}_{\alpha}$'s for higher orders $k$. Similar constructions have been done in \cite{chan2019geometry,felten2020log} using the combinatorial Thom--Whitney resolution for the sheaves $\bva{k}_{\alpha}$'s. We make suitable modifications of those arguments to fit into our current setting. 

First, since $\localmod{k}^{\dagger}_{\alpha}|_{V_{\alpha\beta}}$ and $\localmod{k}^{\dagger}_{\beta}|_{V_{\alpha\beta}}$ are divisorial deformations (in the sense of \cite[Def. 2.7]{Gross-Siebert-logII}) of the intersection $V^{\dagger}_{\alpha\beta}:= V^{\dagger}_{\alpha} \cap V^{\dagger}_{\beta}$, we can use \cite[Thm. 2.11]{Gross-Siebert-logII} and the fact that $V_{\alpha\beta}$ is Stein to obtain an isomorphism $\patch{k}_{\alpha\beta} \colon \localmod{k}^{\dagger}_{\alpha}|_{V_{\alpha\beta}} \rightarrow \localmod{k}^{\dagger}_{\beta}|_{V_{\alpha\beta}}$ of divisorial deformations which induces the gluing morphism $\patch{k}_{\alpha\beta} \colon \bva{k}_{\alpha}^*|_{W_{\alpha\beta}} \rightarrow \bva{k}_{\beta}^*|_{W_{\alpha\beta}}$ that in turn gives $\patch{k}_{\alpha\beta} \colon \polyv{k}_{\alpha}|_{W_{\alpha\beta}} \rightarrow \polyv{k}_{\beta}|_{W_{\alpha\beta}}$.

\begin{definition}\label{def:deformation_of_Gerstenhaber_algebra}
	A \emph{$k^{\text{th}}$-order Gerstenhaber deformation} of $\polyv{0}$ is a collection of gluing morphisms $\glue{k}_{\alpha\beta} \colon \polyv{k}_{\alpha}|_{W_{\alpha\beta}} \rightarrow \polyv{k}_{\beta}|_{W_{\alpha\beta}}$ of the form 
    $$\glue{k}_{\alpha\beta} = e^{[\vartheta_{\alpha\beta},\cdot]} \circ \patch{k}_{\alpha\beta}$$ 
    for some $\theta_{\alpha\beta} \in \polyv{k}_{\beta}^{-1,0}(W_{\alpha\beta})$ with $\theta_{\alpha\beta} \equiv 0  \ (\text{mod $\mathbf{m}$})$, such that the cocycle condition 
    $$\glue{k}_{\gamma\alpha} \circ \glue{k}_{\beta\gamma} \circ \glue{k}_{\alpha\beta} = \mathrm{id}$$
    is satisfied. 
	
	An \emph{isomorphism between two $k^{\text{th}}$-order Gerstenhaber deformations} $\{\glue{k}_{\alpha\beta}\}_{\alpha\beta}$ and $\{\glue{k}_{\alpha\beta}'\}_{\alpha\beta}$ is a collection of automorphisms $\prescript{k}{}{h}_{\alpha} \colon \polyv{k}_{\alpha} \rightarrow \polyv{k}_{\alpha}$ of the form 
    $$\prescript{k}{}{h}_{\alpha} = e^{[\mathbf{b}_{\alpha},\cdot]}$$
    for some $\mathtt{b}_{\alpha} \in \polyv{k}_{\alpha}^{-1,0}(W_{\alpha})$ with $\mathtt{b}_{\alpha} \equiv 0 (\text{mod $\mathbf{m}$})$, such that $$\glue{k}_{\alpha\beta}'\circ \prescript{k}{}{h}_{\alpha} = \prescript{k}{}{h}_{\beta} \circ \glue{k}_{\alpha\beta}.$$
\end{definition}

A slight modification of \cite[Lem. 6.6]{felten2020log}, with essentially the same proof, gives the following:
\begin{prop}\label{prop:classification_of_gerstenhaber_deformation}
	Given a $k^{\text{th}}$-order Gerstenhaber deformation $\{\glue{k}_{\alpha\beta}\}_{\alpha\beta}$, the obstruction to the existence of a lifting to a $(k+1)^{\text{st}}$-order deformation $\{\glue{k+1}_{\alpha\beta}\}_{\alpha\beta}$ lies in the \v{C}ech cohomology (with respect to the cover $\cu{W} = \{W_{\alpha}\}_{\alpha}$)
	$$
	\check{H}^{2}(\cu{W},\polyv{0}^{-1,0}) \otimes_{\mathbb{C}} (\mathbf{m}^{k+1}/\mathbf{m}^{k+2}). 
	$$
	The isomorphism classes of $(k+1)^{\text{st}}$-order liftings are in
	$$
	\check{H}^{1}(\cu{W},\polyv{0}^{-1,0}) \otimes_{\mathbb{C}} (\mathbf{m}^{k+1}/\mathbf{m}^{k+2}). 
	$$
	Fixing a $(k+1)^{\text{st}}$-order lifting $\{\glue{k+1}_{\alpha\beta}\}_{\alpha\beta}$, the automorphisms fixing $\{\glue{k}_{\alpha\beta}\}_{\alpha\beta}$ are in
	$$
	\check{H}^{0}(\cu{W},\polyv{0}^{-1,0}) \otimes_{\mathbb{C}} (\mathbf{m}^{k+1}/\mathbf{m}^{k+2}). 
	$$
\end{prop}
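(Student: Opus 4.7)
\medskip

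\textbf{Proof plan.} The plan is the standard Čech-theoretic obstruction calculus for deformations of gluing morphisms, adapted to the exponentiated-Gerstenhaber setup; structurally this is the same argument as \cite[Lem.~6.6]{felten2020log}, only with the Thom-Whitney resolution replaced by our $\mdga^*$-based resolutions. The one thing to keep in mind throughout is that all liftings and modifications are made inside the Baker-Campbell-Hausdorff group generated by inner derivations $[\,\cdot\,,\cdot]$, so the leading-order contribution at each step is linear while all higher brackets drop by one power of $\mathbf{m}$.

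First I would fix a lifting on each overlap. The underlying isomorphisms $\patch{k}_{\alpha\beta}$ of divisorial deformations lift to $\patch{k+1}_{\alpha\beta}$ by \cite[Thm.~2.11]{Gross-Siebert-logII}, and since $V_{\alpha\beta}$ is Stein and $\polyv{k+1}^{-1,0}_{\beta}$ is a module over the fine sheaf $\mdga^{0}$, the section $\theta_{\alpha\beta} \in \polyv{k}^{-1,0}_\beta(W_{\alpha\beta})$ admits \emph{some} lift $\tilde\theta_{\alpha\beta} \in \polyv{k+1}^{-1,0}_\beta(W_{\alpha\beta})$. Set $\tilde\glue{k+1}_{\alpha\beta} := e^{[\tilde\theta_{\alpha\beta},\cdot]} \circ \patch{k+1}_{\alpha\beta}$. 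Over each triple overlap $W_{\alpha\beta\gamma}$, the failure of the cocycle condition takes the form
$$
\tilde\glue{k+1}_{\gamma\alpha} \circ \tilde\glue{k+1}_{\beta\gamma} \circ \tilde\glue{k+1}_{\alpha\beta} \;=\; e^{[\mathtt{o}_{\alpha\beta\gamma},\cdot]}
$$
for a unique $\mathtt{o}_{\alpha\beta\gamma} \in \polyv{0}^{-1,0}(W_{\alpha\beta\gamma}) \otimes (\mathbf{m}^{k+1}/\mathbf{m}^{k+2})$; here the order-$k$ cocycle condition forces the defect to vanish modulo $\mathbf{m}^{k+1}$, and BCH then identifies the leading obstruction as a single commutator. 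A direct computation, using again that all higher commutators drop into $\mathbf{m}^{2(k+1)} \subset \mathbf{m}^{k+2}$, shows $\check\delta \mathtt{o} = 0$, while replacing $\tilde\theta_{\alpha\beta}$ by $\tilde\theta_{\alpha\beta} + \eta_{\alpha\beta}$ with $\eta_{\alpha\beta} \in \polyv{0}^{-1,0}(W_{\alpha\beta}) \otimes (\mathbf{m}^{k+1}/\mathbf{m}^{k+2})$ modifies $\mathtt{o}$ by $\check\delta \eta$. This yields the obstruction class in $\check H^{2}(\cu W,\polyv{0}^{-1,0}) \otimes (\mathbf{m}^{k+1}/\mathbf{m}^{k+2})$, as claimed.

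When the obstruction vanishes, any two honest lifts $\{\glue{k+1}_{\alpha\beta}\}$ and $\{(\glue{k+1}_{\alpha\beta})'\}$ extending a fixed $\{\glue{k}_{\alpha\beta}\}$ differ, on the exponent level and modulo higher commutators, by a Čech 1-cocycle $\xi_{\alpha\beta} \in \polyv{0}^{-1,0}(W_{\alpha\beta}) \otimes (\mathbf{m}^{k+1}/\mathbf{m}^{k+2})$; and the intertwining condition $(\glue{k+1}_{\alpha\beta})' \circ e^{[\mathbf b_\alpha,\cdot]} = e^{[\mathbf b_\beta,\cdot]} \circ \glue{k+1}_{\alpha\beta}$ in Definition \ref{def:deformation_of_Gerstenhaber_algebra} translates, at leading order, to $\xi = \check\delta \mathbf b$. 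This gives the $\check H^1$ classification of isomorphism classes of lifts. Finally, an automorphism of a fixed $(k{+}1)^{\mathrm{st}}$-order deformation that is the identity modulo $\mathbf{m}^{k+1}$ is an exponential $e^{[\mathbf b_{\alpha},\cdot]}$ with $\mathbf b_\alpha \equiv 0 \pmod{\mathbf{m}^{k+1}}$, and the same intertwining condition collapses at leading order to $\check\delta \mathbf b = 0$, giving the $\check H^0$ classification.

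The only genuine technical point, which I expect to be the main (but mild) obstacle, is the bracket bookkeeping needed to justify the three appeals to BCH: one has to verify carefully that at each stage the nonlinear corrections to $e^{[\theta,\cdot]}$ feed into $\mathbf{m}^{k+2}$ and therefore do not perturb the linear Čech cohomology class. Modulo this, everything is formal, and no use of scattering diagrams or smoothness of the differential is made—the statement is purely about the Gerstenhaber structure.
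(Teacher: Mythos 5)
Your argument is correct and is essentially the paper's own proof: the paper simply invokes \cite[Lem.~6.6]{felten2020log} with the remark that the proof carries over verbatim, and what you have written out is exactly that standard \v{C}ech obstruction calculus (lift the patchings via \cite[Thm.~2.11]{Gross-Siebert-logII}, lift the exponents using softness, measure the cocycle defect, and track Baker--Campbell--Hausdorff corrections into $\mathbf{m}^{k+2}$). The only discrepancy is notational: you write the coefficient module as $\mathbf{m}^{k+1}/\mathbf{m}^{k+2}$, which is in fact the mathematically sensible graded piece, whereas the statement as printed has $\mathbf{m}^{k+1}/\mathbf{m}^{k}$.
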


Since $\mdga^i$ satisfies Condition \ref{cond:requirement_of_the_de_rham_dga}, we have $\check{H}^{>0}(\cu{W},\polyv{0}^{-1,0}) = 0$. In particular, we always have a set of compatible Gerstenhaber deformations $\glue{} = (\glue{k})_{k \in \mathbb{N}}$ where $\glue{k} = \{\glue{k}_{\alpha\beta} \}_{\alpha\beta}$ and any two of them are equivalent. Fixing such a set $\glue{}$, we obtain a set $\{\polyv{k}\}_{k \in \mathbb{N}}$ of Gerstenhaber algebras which is compatible, in the sense that there are natural identifications $\polyv{k+1} \otimes_{\cfrk{k+1}} \cfrk{k} = \polyv{k}$.  

We can also glue the local sheaves $\totaldr{k}{}^{*,*}_{\alpha}$'s of dgas using $\glue{} = (\glue{k})_{k \in \mathbb{N}}$. First, we can define $\patch{k}_{\alpha\beta} \colon \tbva{k}{}_{\alpha}^*|_{W_{\alpha\beta}} \rightarrow \tbva{k}{}_{\beta}^*|_{W_{\alpha\beta}}$ using $\patch{k}_{\alpha\beta} \colon \localmod{k}^{\dagger}_{\alpha}|_{V_{\alpha\beta}} \rightarrow \localmod{k}^{\dagger}_{\beta}|_{V_{\alpha\beta}}$. For each fixed $k$, we can write $\glue{k}_{\alpha\beta} = e^{[\vartheta_{\alpha\beta},\cdot]} \circ  \patch{k}_{\alpha\beta}$ as before. Then
\begin{equation}\label{eqn:gluing_formula_for_local_de_rham_dga}
	\glue{k}:=e^{\cu{L}_{\vartheta_{\alpha\beta}}} \circ \patch{k}_{\alpha\beta} \colon \totaldr{k}{}_{\alpha}^{*,*}|_{W_{\alpha\beta}}  \rightarrow \totaldr{k}{}_{\beta}^{*,*}|_{W_{\alpha\beta}},
\end{equation}
where we recall that $\cu{L}_{v} := (-1)^{|v|} \partial \circ (v\mathbin{\lrcorner}) - (v\mathbin{\lrcorner}) \circ \partial$, preserves the dga structure $(\wedge,\dpartial{}_{\alpha} )$ and the filtration on $\totaldr{k}{\bullet}_{\alpha}^{*,*}$'s. As a result, we obtain a set of compatible sheaves $\{(\totaldr{k}{}^{*,*}, \wedge,\dpartial{})\}_{k \in \mathbb{N}}$ of dgas. The contraction action $\mathbin{\lrcorner}$ is also compatible with the gluing construction, so we have a natural action $\mathbin{\lrcorner}$ of $\polyv{k}^{*,*}$ on $\totaldr{k}{}^{*,*}$.

Next, we glue the operators $\pdb_{\alpha}$'s and $\bvd{}_{\alpha}$'s.	
\begin{definition}\label{def:predifferential}
	A \emph{$k^{\text{th}}$-order pre-differential} $\pdb$ on $\polyv{k}^{*,*}$ is a degree $(0,1)$ operator obtained from gluing the operators $\pdb_{\alpha}+[\eta_{\alpha},\cdot]$ specified by a collection of elements $\eta_\alpha \in \polyv{k}^{-1,1}_{\alpha}(W_{\alpha})$ such that $\eta_{\alpha} \equiv 0 \ (\text{mod $\mathbf{m}$})$ and
	$$
	\glue{k}_{\beta\alpha }\circ (\pdb_{\beta} + [\eta_{\beta},\cdot]) \circ \glue{k}_{\alpha\beta} =  (\pdb_{\alpha} + [\eta_{\alpha},\cdot]).
	$$
	Two pre-differentials $\pdb$ and $\pdb'$ are \emph{equivalent} if there is a Gerstenhaber automorphism (for the deformation $\glue{k}$) $h \colon \polyv{k}^{*,*} \rightarrow \polyv{k}^{*,*}$ such that $h^{-1}\circ \pdb \circ h = \pdb'$. 
\end{definition}
	
Notice that we only have $\pdb^2 \equiv  0$ $(\text{mod $\mathbf{m}$})$, which is why we call it a pre-differential. Using the argument in \cite[Thm. 3.34]{chan2019geometry} or \cite[Lem. 8.1]{felten2020log}, we can always lift any $k^{\text{th}}$-order pre-differential $\prescript{k}{}{\pdb}$ to a $(k+1)^{\text{st}}$-order pre-differential. Furthermore, any two such liftings differ by a global element $\mathfrak{d} \in \polyv{0}^{-1,1} \otimes \mathbf{m}^{k+1}/\mathbf{m}^{k+2}$. We fix a set $\pdb := \{\prescript{k}{}{\pdb}\}_{k \in \mathbb{N}}$ of such compatible pre-differentials. 
For each $k$, the action of $\prescript{k}{}{\pdb}$ on $\totaldr{k}{}^{*,*}$ is given by gluing of the action of $\pdb_{\alpha} + \cu{L}_{\eta_{\alpha}}$ on $\totaldr{k}{}^{*,*}_{\alpha}$. 
On the other hand, the elements
\begin{equation}\label{eqn:initial_obstruction_for_pdb_square_to_zero}
	\mathfrak{l}_{\alpha}:= \pdb_{\alpha}(\eta_{\alpha}) + \half [\eta_{\alpha},\eta_{\alpha} ] \in \polyv{k}^{-1,2}_{\alpha}(W_{\alpha})
\end{equation}
glue to give a global element $\mathfrak{l} \in \polyv{k}^{-1,2}(B)$, and for different $k$'s, these elements are compatible. Computation shows that $\pdb^{2} = [\mathfrak{l},\cdot]$ on $\polyv{k}^{*,*}$ and $\pdb^{2} = \cu{L}_{\mathfrak{l}}$ on $\totaldr{k}{}^{*,*}$. 
	
To glue the operators $\bvd{}_{\alpha}$'s, we need to glue the local volume elements $\volf{k}_{\alpha}$'s to a global $\volf{k}$. We consider an element of the form $e^{\mathfrak{f}_{\alpha} \mathbin{\lrcorner}} \cdot \volf{k}_{\alpha}$, where $\mathfrak{f}_{\alpha} \in \polyv{k}^{0,0}(W_{\alpha})$ satisfies $\mathfrak{f}_{\alpha} \equiv 0 \ (\text{mod $\mathbf{m}$})$. Given a $k^{\text{th}}$-order global volume element $e^{\mathfrak{f}_{\alpha} \mathbin{\lrcorner}} \cdot \volf{k}_{\alpha}$, we take a lifting $e^{\tilde{\mathfrak{f}}_{\alpha} \mathbin{\lrcorner}}\cdot \volf{k+1}_{\alpha}$ such that
$$
\glue{k+1}_{\alpha\beta} \big( e^{\tilde{\mathfrak{f}}_{\alpha} \mathbin{\lrcorner}} \cdot \volf{k+1}_{\alpha} \big)= e^{
		(\tilde{\mathfrak{f}}_{\beta} - \mathfrak{o}_{\alpha\beta}) \mathbin{\lrcorner}} \cdot \volf{k+1}_{\beta},
$$
for some element $\mathfrak{o}_{\alpha\beta} \in \polyv{0}^{0,0}(W_{\beta}) \otimes \mathbf{m}^{k+1}/\mathbf{m}^{k+2}$. By construction, $\{\mathfrak{o}_{\alpha\beta}\}_{\alpha\beta}$ gives a \v{C}ech $1$-cycle in $\polyv{0}^{0,0}$ which is exact. So there exist $\mathfrak{u}_{\alpha}$'s such that $\mathfrak{u}_{\beta}|_{W_{\alpha\beta}} - \mathfrak{u}_{\alpha}|_{W_{\alpha\beta}} = \mathfrak{o}_{\alpha\beta}$, and we can modify $\tilde{\mathfrak{f}}_{\alpha}$ as $\tilde{\mathfrak{f}}_{\alpha} + \mathfrak{u}_{\alpha}$, which gives the desired $(k+1)^{\text{st}}$-order volume element. Inductively, we can construct compatible volume elements $\volf{k} \in \totaldr{k}{\parallel}^{n,0}(B)$, $k\in \mathbb{N}$. Any two such volume elements $\volf{k}$ and $\volf{k}'$ differ by $\volf{k} = e^{\mathfrak{f} \mathbin{\lrcorner}} \cdot \volf{k}'$, where $\mathfrak{f} \in \polyv{k}^{0,0}(B)$ is some global element. Notice that $\prescript{k}{}{\pdb}(\volf{k}) \neq 0$ unless $\text{mod $\mathbf{m}$}$. 
	
Using the volume element $\volf{}$ (we omit the dependence on $k$ if there is no confusion), we may now define the \emph{global BV operator} $\bvd{}$ by 
\begin{equation}\label{eqn:defining_global_BV_operator}
	(\bvd{} \varphi) \mathbin{\lrcorner} \volf{} = \dpartial{} (\varphi\mathbin{\lrcorner} \volf{}),
\end{equation}
which can locally be written as $\bvd{k}_{\alpha}  + [\mathfrak{f}_{\alpha},\cdot]$. We have $\bvd{}^2 = 0$.
The local elements
\begin{equation}\label{eqn:bv_operator_obstruction}
	\mathfrak{n}_{\alpha}:= \bvd{k}_{\alpha}(\eta_{\alpha}) + \pdb_{\alpha}(\mathfrak{f}_{\alpha}) + [\eta_{\alpha},\mathfrak{f}_{\alpha}]
\end{equation}
glue to give a global element $\mathfrak{n} \in \polyv{k}^{0,1}(B)$ which satisfies $\pdb\bvd{}+\bvd{} \pdb = [\mathfrak{n},\cdot]$. Also, the elements $\mathfrak{l}$ and $\mathfrak{n}$ satisfy the relation $\pdb(\mathfrak{n}) + \bvd{}(\mathfrak{l}) = 0$ by a local calculation.

In summary, we obtain pre-dgBV algebras $(\polyv{k},\pdb,\bvd{},\wedge)$ and pre-dgas $(\totaldr{k}{},\pdb,\dpartial{},\wedge)$ with a natural contraction action $\mathbin{\lrcorner}$ of $\prescript{k}{}{\pdb}$ on $\totaldr{k}{}^{*,*}$, and also volume elements $\volf{}$. We set 
$$\polyv{} := \varprojlim_{k} \polyv{k},\ \totaldr{}{}:= \varprojlim_{k} \totaldr{k}{},$$
and define a \emph{total de Rham operator} $\mathbf{d} \colon \totaldr{}{}^{*,*} \rightarrow \totaldr{}{}^{*,*}$ by
\begin{equation}\label{eqn:total_de_rham_operator}
	\mathbf{d}:= \pdb + \dpartial{} + \mathfrak{l}\mathbin{\lrcorner},
\end{equation}
which preserves the filtration $\totaldr{}{\bullet}^{*,*}$. Using the contraction $\volf{}\mathbin{\lrcorner} \colon \polyv{}^{*,*} \rightarrow \totaldr{}{\parallel}^{*+n,*}$ to pull back the operator, we obtain the operator
$\mathbf{d} = \pdb + \bvd{} + (\mathfrak{l} + \mathfrak{n})\wedge$ acting on $\polyv{}^{*,*}$. Direct computation shows that $\mathbf{d}^2 = 0$, and indeed it plays the role of the de Rham differential on a smooth manifold. Readers may consult \cite[\S 4.2]{chan2019geometry} for the computations and more details. 
	
\begin{definition}\label{def:global_polyvector_and_de_rham}
	We call $\polyv{}^{*,*}$ (resp. $\polyv{k}^{*,*}$) the \emph{sheaf of (resp. $k^{\text{th}}$-order) smooth relative polyvector fields over $\logs$}, and $\totaldr{}{}^{*,*}$ (resp. $\totaldr{k}{}^{*,*}$) the \emph{sheaf of (resp. $k^{\text{th}}$-order) smooth forms over $\logs$}. We denote the corresponding total complexes by $\polyv{}^{*} = \bigoplus_{p+q=*} \polyv{}^{p,q}$ (resp. $\polyv{k}^{*} $) and $\totaldr{}{}^{*} = \bigoplus_{p+q=*} \totaldr{}{}^{p,q}$ (resp. $\totaldr{k}{}^{*} $).
\end{definition}

\subsection{Smoothing by solving the Maurer--Cartan equation}\label{subsubsec:smoothing_via_maurer_cartan}
With the sheaf $\polyv{}^{*}$ of pre-dgBV algebras defined, we can now consider the \emph{extended Maurer--Cartan equation}
\begin{equation}\label{eqn:extended_maurer_cartan_equation}
(\pdb+t\bvd{})\varphi + \half [\varphi,\varphi] + \mathfrak{l} + t \mathfrak{n} = 0
\end{equation}
for $\varphi = (\prescript{k}{}{\varphi})_k$, where $\prescript{k}{}{\varphi} \in \polyv{k}^{0}(B)[[t]] := \polyv{k}^{0}(B)\otimes_{\comp} \comp[[t]]$.
Setting $t = 0$ gives the \emph{(classical) Maurer--Cartan equation} 
\begin{equation}\label{eqn:classical_maurer_cartan_equation}
\pdb \varphi + \half[\varphi,\varphi] + \mathfrak{l} = 0
\end{equation}
for $\varphi \in \polyv{}^0(B)$. To inductively solve these equations, we need two conditions, namely the \emph{holomorphic Poincar\'{e} Lemma} and the \emph{Hodge-to-de Rham degeneracy}. 
   
We begin with the holomorphic Poincar\'{e} Lemma, which is a local condition on the sheaves $j_* (\Omega^*_{\localmod{k}_{\alpha}^{\dagger}/\comp})$'s. We consider the complex $(j_* (\Omega^*_{\localmod{k}_{\alpha}^{\dagger}/\comp})[u], \widetilde{\dpartial{}_{\alpha}})$, where 
$$\widetilde{\dpartial{}_{\alpha}}\left(\sum_{s=0}^l \nu_s u^s\right) := \sum_{s} (\dpartial{}_{\alpha} \nu_s) u^s + s d\log(q) \wedge \nu_s u^{s-1}.$$
There is a natural exact sequence
\begin{equation}\label{eqn:equation_for_holomorphic_poincare_lemma}
\xymatrix@1{
0 \ar[r] & \prescript{k}{}{\bar{\mathfrak{K}}}^*_{\alpha} \ar[r] & j_* (\Omega^*_{\localmod{k}_{\alpha}^{\dagger}/\comp})[u] \ar[r]^{\widetilde{\rest{k,0}}} & j_* (\Omega^*_{\localmod{0}_{\alpha}^{\dagger}/\logsk{0}}) \ar[r] & 0,
}
\end{equation}
where $\widetilde{\rest{k,0}} (\sum_{s=0}^l \nu_s u^s) := \rest{k,0}(\nu_0)$ as elements in $j_* (\Omega^*_{\localmod{0}_{\alpha}^{\dagger}/\logsk{0}}) $.  
\begin{condition}\label{cond:holomorphic_poincare_lemma}
	We say that the {\em holomorphic Poincar\'{e} Lemma} holds if at every point $x \in \centerfiber{0}^{\dagger}$, the complex $(\prescript{k}{}{\bar{\mathfrak{K}}}^*_{\alpha,x},\widetilde{\dpartial{}_{\alpha}})$ is acyclic, where $\prescript{k}{}{\bar{\mathfrak{K}}}^*_{\alpha,x}$ denotes the stalk of $\prescript{k}{}{\bar{\mathfrak{K}}}^*_{\alpha}$ at $x$.
\end{condition}

The holomorphic Poincar\'{e} Lemma for our setting was proved in \cite[proof of Thm. 4.1]{Gross-Siebert-logII}, but a gap was subsequently pointed out by Felten--Filip--Ruddat in \cite{Felten-Filip-Ruddat}, who used a different strategy to close the gap and give a correct proof in \cite[Thm. 1.10]{Felten-Filip-Ruddat}. 
From this condition, we can see that the cohomology sheaf $\cu{H}^*(\tbva{k}{\parallel}^*_{\alpha},\dpartial{k}_{\alpha})$ is free over $\cfrk{k} = \comp[q]/(q^{k+1})$ (cf. \cite[Lem. 4.1]{Kawamata-Namikawa}).
We will need freeness of the cohomology $H^*(\totaldr{k}{\parallel}^*(B),\mathbf{d})$ over $\cfrk{k}$, which can be seen by the following lemma (see \cite{Kawamata-Namikawa} and \cite[\S 4.3.2]{chan2019geometry} for similar arguments). 

\begin{lemma}
    Under Condition \ref{cond:holomorphic_poincare_lemma} (the holomorphic Poincar\'{e} Lemma),
	the natural map 
    $$\rest{k,0}\colon H^*(\totaldr{k}{\parallel}^*(B),\mathbf{d}) \rightarrow H^*(\totaldr{0}{\parallel}^*(B),\mathbf{d})$$
    is surjective for each $k \geq 0$. 
\end{lemma}
	
\begin{proof}
First of all, applying the functor $\nu_*$ to the exact sequence
	$$
	\xymatrix@1{
		0 \ar[r] &  \prescript{k}{}{\bar{\mathfrak{K}}}^*_{\alpha} \ar[r] & j_* (\Omega^*_{\localmod{k}_{\alpha}^{\dagger}/\comp})[u] \ar[r]^{\widetilde{\rest{k,0}}} & j_* (\Omega^*_{\localmod{0}_{\alpha}^{\dagger}/\logsk{0}}) \ar[r] & 0
	}
	$$
gives the following exact sequence of sheaves on $B$:
	$$
	\xymatrix@1{
	0 \ar[r] &  \prescript{k}{}{\mathfrak{K}}^*_{\alpha} \ar[r] & \tbva{k}{}^*_{\alpha}[u] \ar[r]^{\widetilde{\rest{k,0}}} & \tbva{0}{}^*_{\alpha} \ar[r]& 0.  
	}
	$$
This is true because 
every sheaf in the first exact sequence is a direct limit of coherent analytic sheaves, 
$R\nu_{!}$ commutes with direct limits of sheaves, and 
$R\nu_{!} = R\nu_* $ as the fiber $\nu^{-1}(x)$ is a compact Hausdorff topological space; see e.g. \cite{Kashiwara-Schapira94}.
By taking a Cartan--Eilenberg resolution, we have the implication:
	$$
	(\prescript{k}{}{\bar{\mathfrak{K}}}^*_{\alpha,x},\widetilde{\dpartial{k}_{\alpha}})  \text{ is acyclic} \Longrightarrow R\Gamma_U((\prescript{k}{}{\bar{\mathfrak{K}}}^*_{\alpha},\widetilde{\dpartial{k}_{\alpha}})) = 0
	$$
for any open subset $U$, where $R\Gamma_U$ is the derived global section functor in the derived category of sheaves.
In our case, $U = \nu^{-1}(W)$ and we have $R\Gamma_{\nu^{-1}(W)} = R\Gamma_{W} \circ R\nu_*$. Furthermore, we see that 
$$R\nu_* (\prescript{k}{}{\bar{\mathfrak{K}}}^*_{\alpha},\widetilde{\partial_{\alpha}}) = (\prescript{k}{}{\mathfrak{K}}^*_{\alpha},\widetilde{\partial_{\alpha}}).$$
This can be seen by taking a double complex $C^{*,*}$ resolving $(\prescript{k}{}{\bar{\mathfrak{K}}}^*_{\alpha},\widetilde{\partial_{\alpha}})$ such that $\nu_*(C^{*,*})$ computes $R\nu_* (\prescript{k}{}{\bar{\mathfrak{K}}}^*_{\alpha},\widetilde{\partial_{\alpha}})$. 
The spectral sequence associated to the double complex has the $E_1$-page given by $R^q \nu_* (\prescript{k}{}{\bar{\mathfrak{K}}}^p_{\alpha})$, which is $0$ if $q >0$ because $\prescript{k}{}{\bar{\mathfrak{K}}}^p_{\alpha}$ is a direct limit of coherent analytic sheaves. 
Therefore, $\nu_* (\prescript{k}{}{\bar{\mathfrak{K}}}^*_{\alpha},\widetilde{\partial_{\alpha}}) \rightarrow \nu_*(C^{*,*})= R\nu_* (\prescript{k}{}{\bar{\mathfrak{K}}}^*_{\alpha},\widetilde{\partial_{\alpha}})$ is a quasi-isomorphism. Combining these, we obtain that $R\Gamma^i_W (\prescript{k}{}{\mathfrak{K}}^*_{\alpha},\widetilde{\partial_{\alpha}}) = 0$ for each $i$. 


Next, by Condition \ref{cond:requirement_of_the_de_rham_dga}, $(\Omega^*|_{W_{\alpha}} \otimes_{\real}  \prescript{k}{}{\mathfrak{K}}^*_{\alpha})$ is a resolution with a partition of unity, so the cohomology of the complex 
	$$\left(\prescript{k}{}{\mathcal{B}}^{*}_{\alpha}(W), \pdb_{\alpha}+\widetilde{\dpartial{}_{\alpha}} \right) :=\left((\Omega^*|_{W_{\alpha}} \otimes_{\real}  \prescript{k}{}{\mathfrak{K}}^*_{\alpha}) (W), \pdb_{\alpha}+\widetilde{\dpartial{}_{\alpha}} \right) $$
computes $R\Gamma_{W} (\prescript{k}{}{\mathfrak{K}}^*_{\alpha})$. Through an isomorphism $e^{\eta_{\alpha}\mathbin{\lrcorner}} \colon \prescript{k}{}{\mathcal{B}}^{*}_{\alpha} \rightarrow \prescript{k}{}{\mathcal{B}}^{*}_{\alpha}$, we can identify the operator:
	$$
	\mathbf{d}_{\alpha}:=\pdb_{\alpha} + \cu{L}_{\eta_{\alpha}} + \widetilde{\dpartial{}_{\alpha}} + \iota_{\pdb_{\alpha} (\eta_{\alpha}) + \frac{1}{2} [ \eta_{\alpha},\eta_{\alpha}]} 
	$$ 
with $\pdb_{\alpha}+\widetilde{\dpartial{}_{\alpha}}$, and hence deduce that $(\prescript{k}{}{\mathcal{B}}_{\alpha}^{*}(W),\mathbf{d}_{\alpha})$ is acyclic for any open subset $W$.

Now, we consider the global sheaf $(\prescript{k}{}{\mathcal{B}}^{*},\mathbf{d})$ of complexes on $B$ obtained by gluing the local sheaves $(\prescript{k}{}{\mathcal{B}}_{\alpha}^{*},\mathbf{d}_{\alpha})$.
We also have $(\widetilde{\prescript{k}{}{\mathcal{A}}^{*}},\mathbf{d})$ obtained by gluing $(\Omega^*|_{W_{\alpha}} \otimes \prescript{k}{}{\mathcal{K}}^*_{\alpha}[u],\mathbf{d}_{\alpha})$, and $(\prescript{0}{\parallel}{\mathcal{A}}^{*},\mathbf{d})$ obtained by gluing $(\Omega^*|_{W_{\alpha}} \otimes \prescript{0}{\parallel}{\mathcal{K}}^*_{\alpha},\mathbf{d}_{\alpha})$. 
Then there is an exact sequence of complexes of sheaves 
	$$
	\xymatrix@1{
	0 \ar[r] &  \prescript{k}{}{\mathcal{B}}^{*} \ar[r] & \widetilde{\prescript{k}{}{\mathcal{A}}^{*}} \ar[r] & \prescript{0}{\parallel}{\mathcal{A}}^{*} \ar[r]& 0.  
	}
	$$
To see that the complex $(\prescript{k}{}{\mathcal{B}}^{*}(B),\mathbf{d})$ is acyclic, we consider the total \v{C}ech complex associated to the cover $\{W_{\alpha}\}_{\alpha}$.
The associated spectral sequence has zero $E_1$ page, thus $(\prescript{k}{}{\mathcal{B}}^{*}(B),\mathbf{d})$ is indeed acyclic.
As a result, the map $H^i(\widetilde{\prescript{k}{}{\mathcal{A}}^{*}_{\alpha}}(B),\mathbf{d}_{\alpha}) \rightarrow H^i(\prescript{0}{\parallel}{\mathcal{A}}^{*}_{\alpha}(B),\mathbf{d}_{\alpha})$ is an isomorphism. Finally, surjectivity of the map $\rest{k,0}$ follows from the fact that the isomorphism $H^i(\widetilde{\prescript{k}{}{\mathcal{A}}^{*}_{\alpha}}(B),\mathbf{d}_{\alpha}) \rightarrow H^i(\prescript{0}{\parallel}{\mathcal{A}}^{*}_{\alpha}(B),\mathbf{d}_{\alpha})$ factors through $\rest{k,0}$.
\end{proof}
			
The Hodge-to-de Rham degeneracy is a global Hodge-theoretic condition on $\centerfiber{0}^{\dagger}$. We consider the Hodge filtration $F^{\geq r} j_* (\Omega^*_{\centerfiber{0}^{\dagger}/\logsk{0}}) =\bigoplus_{p\geq r} j_* (\Omega^{p}_{\centerfiber{0}^{\dagger}/\logsk{0}})$; the spectral sequence associated to it computes the hypercohomology of the complex of sheaves $(j_* (\Omega^*_{\centerfiber{0}^{\dagger}/\logsk{0}}),\dpartial{0})$

\begin{condition}\label{cond:Hodge-to-deRham}
	We say that the \emph{Hodge-to-de Rham degeneracy} holds for $\centerfiber{0}^{\dagger}$ if the spectral sequence associated to the above Hodge filtration degenerates at $E_1$.
\end{condition}

Under the assumption that $(B,\pdecomp)$ is strongly simple (Definition \ref{def:strongly simple}), the Hodge-to-de Rham degeneracy for the maximally degenerate Calabi--Yau scheme $\centerfiber{0}^{\dagger}$ was proved in \cite[Thm. 3.26]{Gross-Siebert-logII}. This was later generalized to the case when $(B,\pdecomp)$ is only simple (instead of strongly simple)\footnote{The subtle difference between the log Hodge group and the affine Hodge group when $(B,\pdecomp)$ is just simple, instead of strongly simple, was studied in details by Ruddat in his thesis \cite{Ruddat10}.} and further to log toroidal spaces in Felten--Filip--Ruddat \cite{Felten-Filip-Ruddat} using different methods.

We consider the dgBV algebra $\polyv{0}^*(B)[[t]]$ equipped with the operator $\pdb + t \bvd{}$. 

\begin{lemma}
	Under Condition \ref{cond:Hodge-to-deRham} (the Hodge-to-de Rham degeneracy), $H^*(\polyv{0}^*(B)[[t]],\pdb + t \bvd{})$ is a free $\comp[[t]]$-module. 
\end{lemma}
	
\begin{proof}
    Recall that we are working with a good cover $\cu{W} = \{ W_\alpha\}_{\alpha}$, so that the inverse image $V_{\alpha} = \modmap^{-1}(W_{\alpha})$ is Stein for each $\alpha$. 
    We have $R\Gamma_{\modmap^{-1}(W)} = R\Gamma_{W} \circ R\modmap_*$ and 
    $$R\modmap_* (j_* (\Omega^*_{\centerfiber{0}^{\dagger}/\logsk{0}}),\dpartial{})= (\tbva{0}{\parallel}^*,\dpartial{}).$$ 
    If $\modmap^{-1}(W)$ is Stein, then $R\Gamma_{\modmap^{-1}(W)}(j_* (\Omega^r_{\centerfiber{0}^{\dagger}/\logsk{0}})) = \Gamma_{\modmap^{-1}(W)} (j_* (\Omega^r_{\centerfiber{0}^{\dagger}/\logsk{0}}))$ and hence 
    $$R\Gamma_{W}(\tbva{0}{\parallel}^{r}) = \Gamma_W(\tbva{0}{\parallel}^{r}).$$
    The hypercohomology of $(j_*(\Omega^*_{\centerfiber{0}^{\dagger}/\logsk{0}}),\dpartial{})$ is computed using the \v{C}ech double complex 
    $$\check{\cu{C}}^*(\cu{V},j_* (\Omega^*_{\centerfiber{0}^{\dagger}/\logsk{0}}))$$
    with respect to the Stein open cover $\cu{V} = \{\modmap^{-1}(W_{\alpha})\}_{\alpha}$. 
    Similarly, the hypercohomology of the complex $(\tbva{0}{\parallel}^*,\dpartial{})$ is computed using the \v{C}ech double complex $\check{\cu{C}}^*(\cu{W},\tbva{0}{\parallel}^*)$ with respect to the cover $\cu{W} = \{ W_\alpha\}_{\alpha}$; here, the Hodge filtration is induced from the filtration $F^{\geq r}\tbva{0}{\parallel}^* = \bigoplus_{p\geq r} \tbva{0}{\parallel}^{\geq p}$. 
    
    These two \v{C}ech complexes, as well as their corresponding Hodge filtrations, are identified as $\tbva{0}{\parallel}^*(W) =  j_* (\Omega^r_{\centerfiber{0}^{\dagger}/\logsk{0}})(\modmap^{-1}(W))$ for each $W = W_{\alpha_1} \cap \cdots \cap W_{\alpha_k}$.
    Hence, under Condition \ref{cond:Hodge-to-deRham}, we have $E_1$ degeneracy also for $\check{\cu{C}}^*(\cu{W},\tbva{0}{\parallel}^*)$, or equivalently, that $(\check{\cu{C}}^*(\cu{W},\tbva{0}{\parallel}^*)[[t]], \delta + t \dpartial{})$ is a free $\comp[[t]]$-module. In view of the isomorphisms $( \bva{0}^*,\bvd{})\cong (\tbva{0}{\parallel},\dpartial{})$ and 
    $$H^*(\polyv{0}^*(B)[[t]],\pdb + t \bvd{}) \cong H^*(\check{\cu{C}}^*(\cu{W},\tbva{0}{\parallel}^*)[[t]], \delta + t \dpartial{}),$$
    we conclude that $H^*(\polyv{0}^*(B)[[t]],\pdb + t \bvd{}) $ is a free $\comp[[t]]$-module as well.
\end{proof}

For the purpose of this paper, we restrict ourselves to the case that 
$$\prescript{k}{}{\varphi} = \prescript{k}{}{\phi} + t (\prescript{k}{}{f}),$$
where $\prescript{k}{}{\phi} \in \polyv{k}^{-1,1}(B)$ and $\prescript{k}{}{f} \in \polyv{k}^{0,0}(B)$. The extended Maurer-Cartan equation \eqref{eqn:extended_maurer_cartan_equation} can be decomposed, according to orders in $t$, into the (classical) Maurer--Cartan equation \eqref{eqn:classical_maurer_cartan_equation} for $\prescript{k}{}{\phi}$ and the equation
\begin{equation}\label{eqn:volume_form_equation}
\pdb(\prescript{k}{}{f})  + [\prescript{k}{}{\phi},\prescript{k}{}{f} ] +  \bvd{}(\prescript{k}{}{\phi}) + \mathfrak{n} = 0.
\end{equation}

\begin{theorem}\label{prop:Maurer_cartan_equation_unobstructed}
	Suppose that both Conditions \ref{cond:holomorphic_poincare_lemma} and \ref{cond:Hodge-to-deRham} hold. Then for any $k^{\text{th}}$-order solution $\prescript{k}{}{\varphi} = \prescript{k}{}{\phi} + t (\prescript{k}{}{f}) $ to the extended Maurer--Cartan equation \eqref{eqn:extended_maurer_cartan_equation}, there exists a $(k+1)^{\text{st}}$-order solution $\prescript{k+1}{}{\varphi} = \prescript{k+1}{}{\phi} + t (\prescript{k+1}{}{f})$ to \eqref{eqn:extended_maurer_cartan_equation} lifting $\prescript{k}{}{\varphi}$. The same statement holds for the Maurer--Cartan equation \eqref{eqn:classical_maurer_cartan_equation} if we restrict to $\prescript{k}{}{\phi} \in \polyv{k}^{-1,1}(B)$.
\end{theorem}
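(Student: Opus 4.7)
The plan is to proceed by induction on $k$ via a Bogomolov-Tian-Todorov-style unobstructedness argument, closely following the strategy of \cite[\S 5]{chan2019geometry}. Given a $k$-th order solution $\prescript{k}{}\varphi = \prescript{k}{}\phi + t(\prescript{k}{}f)$, I would choose arbitrary lifts $\tilde{\phi} \in \polyv{k+1}^{-1,1}(B)$ and $\tilde{f} \in \polyv{k+1}^{0,0}(B)$ and form the obstruction components
\begin{align*}
	\mathfrak{O}_1 &:= \pdb \tilde{\phi} + \tfrac{1}{2}[\tilde{\phi},\tilde{\phi}] + \mathfrak{l}, \\
	\mathfrak{O}_2 &:= \pdb \tilde{f} + [\tilde{\phi},\tilde{f}] + \bvd{}\tilde{\phi} + \mathfrak{n},
\end{align*}
which lie in $\polyv{0}^{-1,2}(B) \otimes (\mathbf{m}^{k+1}/\mathbf{m}^{k+2})$ and $\polyv{0}^{0,1}(B) \otimes (\mathbf{m}^{k+1}/\mathbf{m}^{k+2})$ respectively, by virtue of the order-$k$ equations.

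First I would verify that the combined obstruction $\mathfrak{O} := \mathfrak{O}_1 + t \mathfrak{O}_2$ is closed under $\mathbf{L} := \pdb + t\bvd{}$ modulo $\mathbf{m}^{k+2}$. This is a direct computation using the structural identities $\pdb^2 = [\mathfrak{l},\cdot]$, $[\pdb,\bvd{}] = [\mathfrak{n},\cdot]$, $\bvd{}^2 = 0$, together with the Bianchi-type relations $\pdb\mathfrak{l} = 0$, $\pdb\mathfrak{n} + \bvd{}\mathfrak{l} = 0$, $\bvd{}\mathfrak{n} = 0$ (all consequences of $\mathbf{d}^2 = 0$; cf. \cite[\S 4]{chan2019geometry}), and the graded Jacobi identity that kills $[[\tilde{\phi},\tilde{\phi}],\tilde{\phi}]$. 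The outcome is the familiar Maurer-Cartan identity $\mathbf{L}\mathfrak{O} \equiv [\mathfrak{O},\tilde{\varphi}] \equiv 0 \pmod{\mathbf{m}^{k+2}}$.

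The central step is then to deduce $\mathbf{L}$-exactness of $\mathfrak{O}$. Here the holomorphic Poincar\'{e} Lemma (Condition \ref{cond:holomorphic_poincare_lemma}), combined with the Stein property of the cover $\{V_\alpha\}_\alpha$ and Cartan's Theorem B (Theorem \ref{thm:cartan_theorem_B}), implies that the complex $(\totaldr{k}{\parallel}^*(B), \pdb + \dpartial{})$ computes the $k$-th order log de Rham cohomology of $\centerfiber{k}^\dagger$ over $\logsk{k}$, whose $\cfrk{k}$-freeness follows from the Hodge-to-de Rham degeneracy (Condition \ref{cond:Hodge-to-deRham}). Applying the abstract algebraic lemma of \cite[\S 5]{chan2019geometry} (in the spirit of \cite{Barannikov-Kontsevich98, KKP08}) then produces $\psi = \psi_1 + t\psi_2$ with $\mathbf{L}\psi \equiv \mathfrak{O} \pmod{\mathbf{m}^{k+2}}$, and the correction $\prescript{k+1}{}\phi := \tilde{\phi} - \psi_1$, $\prescript{k+1}{}f := \tilde{f} - \psi_2$ gives the desired order-$(k+1)$ solution, since all quadratic cross terms lie in $\mathbf{m}^{k+2}$.

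The case of the classical Maurer-Cartan equation \eqref{eqn:classical_maurer_cartan_equation} alone follows by applying the same mechanism to $\mathfrak{O}_1$ only, invoking the $\pdb$-component of the abstract lemma. The main obstacle will indeed lie in this exactness step: Hodge-to-de Rham freeness only guarantees that every $\pdb$-closed class admits a quantum lift to an $\mathbf{L}$-closed formal series over $\comp[[t]]$, so extracting genuine $\pdb$-exactness of $\mathfrak{O}_1$ in the restricted bidegree $(-1,2)$ requires careful use of the degeneration argument together with the $t$-adic filtration, and cannot be separated from the auxiliary $\mathfrak{O}_2$-part. Finally, to enforce the normalization \eqref{eqn:introduction_normalization_of_volume_form}, I would apply Lemma \ref{lem:preserving_volume_element_by_vector_fields} at each inductive step to absorb the remaining ambiguity in $\prescript{k+1}{}f$ into a rescaling constant $h(q) \in \cfrk{k+1}$ and a vector field action, neither of which disturbs $\prescript{k+1}{}\phi$ or the order-$(k+1)$ Maurer-Cartan condition.
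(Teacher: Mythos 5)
Your treatment of the first statement is essentially the paper's: the paper simply cites \cite[Thm. 5.6]{chan2019geometry} for the abstract BTT-type lifting of $\prescript{k}{}{\varphi}$ and \cite[Lem. 5.12]{chan2019geometry} to arrange that the lift stays in the restricted shape $\phi + tf$ (i.e.\ that the $\polyv{k+1}^{0,0}(B)$-component of $\prescript{k+1}{}{\varphi}|_{t=0}$ vanishes), and your obstruction computation with $\mathbf{L}=\pdb+t\bvd{}$ is just an unpacking of that citation. The final paragraph on normalizing the volume form is extraneous here --- that is handled separately in \S\ref{subsec:normalization_condition} and is not part of this theorem.

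The genuine gap is in the second statement. You correctly identify that Hodge-to-de Rham freeness does not directly yield $\pdb$-exactness of $\mathfrak{O}_1$ alone in bidegree $(-1,2)$, and that the argument ``cannot be separated from the auxiliary $\mathfrak{O}_2$-part'' --- but you stop there without supplying the mechanism, so the classical case is not actually proved. The paper's resolution is not to lift $\prescript{k}{}{\phi}$ directly: instead, given the classical solution $\prescript{k}{}{\phi}$, one first constructs a companion $\prescript{k}{}{f}\in\polyv{k}^{0,0}(B)$ \emph{at the same order $k$} solving the volume-form equation \eqref{eqn:volume_form_equation}, by an inner induction on the order $j\le k$ in which the obstruction $\mathfrak{o}$ satisfies $\pdb(\mathfrak{o})=0$ and hence defines a class in $H^1(\polyv{0}^{0,*},\pdb)\cong H^1(\centerfiber{0},\cu{O})\cong H^1(B,\comp)=0$; this vanishing is the standing topological assumption on $B$ from \S\ref{subsec:integral_affine_manifolds}, an input entirely separate from Conditions \ref{cond:holomorphic_poincare_lemma} and \ref{cond:Hodge-to-deRham} and absent from your argument. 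Once $\prescript{k}{}{\phi}+t(\prescript{k}{}{f})$ is an extended solution at order $k$, the first statement applies verbatim and its $t^0$-component gives the desired lift of the classical equation. Without this completion step (or some substitute for it), your appeal to ``the $\pdb$-component of the abstract lemma'' does not close the argument.
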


\begin{proof}
		The first statement follows from \cite[Thm. 5.6]{chan2019geometry} and \cite[Lem. 5.12]{chan2019geometry}:
		Starting with a $k^{\text{th}}$-order solution $\prescript{k}{}{\varphi} = \prescript{k}{}{\phi} + t (\prescript{k}{}{f})$ for \eqref{eqn:extended_maurer_cartan_equation}, one can always use \cite[Thm. 5.6]{chan2019geometry} to lift it to a general $\prescript{k+1}{}{\varphi} \in \polyv{k+1}^0(B)[[t]]$. The argument in \cite[Lem. 5.12]{chan2019geometry} shows that we can choose $\prescript{k+1}{}{\varphi}$ such that the component of $\prescript{k+1}{}{\varphi}|_{t=0}$ in $\polyv{k+1}^{0,0}(B)$ is zero. As a result, the component of $\prescript{k+1}{}{\phi} + t (\prescript{k+1}{}{f})$ in $\polyv{k+1}^{-1,1}(B) \otimes t (\polyv{k+1}^{0,0}(B))$ is again a solution to \eqref{eqn:extended_maurer_cartan_equation}.
		
		For the second statement, we argue that, given $\prescript{k}{}{\phi}$, there always exists $\prescript{k}{}{f} \in \polyv{k}^{0,0}(B)$ such that $\prescript{k}{}{\phi}+t (\prescript{k}{}{f})$ is a solution to \eqref{eqn:extended_maurer_cartan_equation}. We need to solve the equation \eqref{eqn:volume_form_equation}
		by induction on the order $k$. The initial case is trivial by taking $\prescript{0}{}{f} = 0$. 
		Suppose the equation can be solved for $\prescript{j-1}{}{f}$. Then we take an arbitrary lifting ${\prescript{j}{}{\tilde{f}}}$ to the $j^{\text{th}}$-order. We can define an element $\mathfrak{o} \in \polyv{0}^{0,0}(B)$ by	
		$$
		q^j \mathfrak{o} = \pdb(\prescript{j}{}{\tilde{f}})  + [\prescript{j}{}{\phi},\prescript{j}{}{\tilde{f}} ] +  \bvd{}(\prescript{j}{}{\phi}) + \mathfrak{n},
		$$
		which satisfies $\pdb(\mathfrak{o}) = 0$. Therefore, the class $[\mathfrak{o}]$ lies in the cohomology 
        $$H^1(\polyv{0}^{0,*},\pdb) \cong H^1(\centerfiber{0},\cu{O}) \cong H^1(B,\comp),$$
        where the last equivalence is from \cite[Prop. 2.37]{Gross-Siebert-logI}. By our assumption in \S \ref{sec:gross_siebert}, we have $H^1(B,\comp)=0$, and hence we can find an element $\breve{f}$ such that $\pdb(\breve{f}) = \mathfrak{o}$. Letting $\prescript{j}{}{f} = \prescript{j}{}{\tilde{f}} + q^j \cdot \breve{f} \ (\text{mod $q^{j+1}$})$ proves the induction step from the $(j-1)^{\text{st}}$-order to the $j^{\text{th}}$-order. 
		Now, applying the first statement, we can lift the solution $\prescript{k}{}{\varphi}:=\prescript{k}{}{\phi}+t (\prescript{k}{}{f})$ to $\prescript{k+1}{}{\varphi} = \prescript{k+1}{}{\phi}+t (\prescript{k+1}{}{f})$ which satisfies equation \eqref{eqn:extended_maurer_cartan_equation}, and hence $\prescript{k+1}{}{\phi}$ solves \eqref{eqn:classical_maurer_cartan_equation}. 
\end{proof}

From Theorem \ref{prop:Maurer_cartan_equation_unobstructed}, we obtain a solution $\phi \in \polyv{}^{-1,1}(B)$ to the Maurer--Cartan equation \eqref{eqn:classical_maurer_cartan_equation}, from which we obtain the sheaves $\ker(\pdb+[\phi,\cdot])\subset \polyv{k}^{*,*}$ and $\ker(\pdb+\cu{L}_{\phi}) \subset \totaldr{k}{\parallel}^{*,*}$ over $B$. 
These sheaves are locally isomorphic to $\bva{k}^*_{\alpha}$ and $\tbva{k}{\parallel}^*_{\alpha}$, so we may treat them as obtained from gluing of the local sheaves $\bva{k}^*_{\alpha}$'s and $\tbva{k}{\parallel}^*_{\alpha}$'s. From these, we can extract consistent and compatible gluings $\prescript{k}{}{\varPhi}_{\alpha\beta} \colon \localmod{k}^{\dagger}_{\alpha}|_{V_{\alpha\beta}} \rightarrow \localmod{k}^{\dagger}_{\beta}|_{V_{\alpha\beta}}$ satisfying the cocycle condition, and hence obtain a $k$-th order thichening $\centerfiber{k}$ of $\centerfiber{0}$ over $\logsk{k}$; see \cite[\S 5.3]{chan2019geometry}.
Also, $e^{f} \mathbin{\lrcorner} \volf{}$, as a section of $\ker(\pdb+\cu{L}_{\phi})$ over $B$, defines a holomorphic volume form on the $k$-th order thickening $\centerfiber{k}$. 



\subsubsection{Normalized volume form}\label{subsec:normalization_condition}

For later purposes, we need to further normalize the holomorphic volume 
$$\varOmega := e^{f} \mathbin{\lrcorner} \volf{} \in \ker(\pdb+\cu{L}_{\phi})(B) \subset \totaldr{k}{\parallel}^{n,0}(B)$$
by adding a suitable power series $h(q) \in (q) \subset \comp[[q]]$ to $f$, so that the condition that $\int_{T} e^{f} \mathbin{\lrcorner} \volf{} = 1$, where $T$ is a nearby $n$-torus in the smoothing, is satisfied.

The \emph{$k^{\text{th}}$-order Hodge bundle} over $\spec(\comp[q]/q^{k+1})$ is defined as the cohomology
$$\prescript{k}{}{\cu{H}}:= H^n(\totaldr{k}{\parallel}^*,\mathbf{d}),$$
equipped with a Gauss--Manin connection $\gmc{k}$, where $\gmc{k}_{\dd{\log q}}$ is the connecting homomorphism of the long exact sequence associated to 
\begin{equation}\label{eqn:Gauss_Manin_connection_definition}
 0 \rightarrow \totaldr{k}{\parallel}^{*-1} \otimes_{\comp} \comp \langle d\log q \rangle \rightarrow \totaldr{k}{}^* \rightarrow \totaldr{k}{\parallel}^* \rightarrow 0;
\end{equation}
here $\comp \langle d\log q \rangle $ is the $1$-dimensional graded vector space spanned by the degree $1$ element $d\log q$. We denote $\widehat{\cu{H}} := \varprojlim_{k} \prescript{k}{}{\cu{H}}$. Restricting to the $0^{\text{th}}$-order, we have $N=\gmc{0}_{\dd{\log q}}$, which is a nilpotent operator acting on $\prescript{0}{}{\cu{H}} = H^{n}(\totaldr{0}{\parallel}^*) \cong \mathbb{H}^n(\centerfiber{},j_* \Omega^*_{\centerfiber{}^{\dagger}/\comp^{\dagger}})$, where $\centerfiber{} = \centerfiber{0}$. If we consider the top cohomoloy $H^{2n}(\totaldr{0}{\parallel}^*)$, which is $1$-dimensional, we see that $N=\gmc{0}_{\dd{\log q}} = 0$. So $\gmc{k}_{\dd{\log q}}$ is a flat connection without log poles at $q=0$. Hence, we can find a basis (order by order in $q$) to identify $H^{2n}(\totaldr{k}{\parallel}^*) \cong H^{2n}(\totaldr{0}{\parallel}^*) \otimes \comp[q]/q^{k+1}$, which also trivializes the flat connection $\gmc{}$ as $\dd{\log q}$.

Since $H^n(B,\comp)\cong \comp$, we can fix a non-zero generator and choose a representative $\varrho \in \mdga^n(B)$. Then the element $\varrho\otimes 1 \in \totaldr{k}{\parallel}^n(B)$ (which may simply be written as $\varrho$) represents a section $[\varrho]$ in $\widehat{\cu{H}}$. A direct computation shows that $\gmc{} [\varrho] = 0$, i.e. it is a flat section to all orders. The pairing with the $0^{\text{th}}$-order volume form $\volf{0}$ gives a non-zero element $[\volf{0}\wedge \varrho ]$ in $H^{2n}(\totaldr{0}{\parallel}^*)$. 

\begin{definition}\label{def:normalized_volume_form}
	The volume form $\varOmega=e^{f} \mathbin{\lrcorner} \volf{}$ is said to be \emph{normalized} if $[\varOmega \wedge \varrho]$ is flat under $\gmc{}$. 
\end{definition}
In other words, we can write $[\varOmega \wedge \varrho]= [\volf{0}\wedge \varrho]$ under the identification 
$$H^{2n}(\totaldr{k}{\parallel}^*) \cong H^{2n}(\totaldr{0}{\parallel}^*) \otimes \comp[q]/q^{k+1}.$$
By modifying $f$ to $f+h(q)$, this can always be achieved. Further, after the modification, $\varphi = \phi + t f$ still solves \eqref{eqn:extended_maurer_cartan_equation}.

\section{From smoothing of Calabi--Yau varieties to tropical geometry}\label{sec:tropical_geometry_and_mc_equation}

\subsection{Tropical differential forms}\label{sec:asymptotic_support}

To tropicalize the pre-dgBV algebra $\polyv{}^{*,*}$, we need to replace the Thom--Whitney resolution used in \cite{chan2019geometry} by a geometric resolution.
To do so, we first need to recall some background materials from our previous works \cite[\S 4.2.3]{kwchan-leung-ma} and \cite[\S 3.2]{kwchan-ma-p2}. 
Of crucial importance is the notion of \emph{differential forms with asymptotic support} (which  will be called \emph{tropical differential forms} in this paper) that originated from multi-valued Morse theory and Witten deformations. 
Such differential forms can be regarded as distribution-valued forms supported on tropical polyhedral subsets. This key notion allows us to develop tropical intersection theory via differential forms, and in particular, define the intersection pairing between possibly non-transversal tropical polyhedral subsets simply using the wedge product.


Let $U$ be an open subset of $M_\real$. We consider the space $\Omega^k_\hp(U) := \Gamma(U \times \mathbb{R}_{>0}, \bigwedge^{\raisebox{-0.4ex}{\scriptsize $k$}} T^{\vee} U)$, where we take $\cu{C}^{\infty}$ sections of $\bigwedge^{\raisebox{-0.4ex}{\scriptsize $k$}} T^{\vee} U$ and $\hp$ is a coordinate on $\mathbb{R}_{>0}$. 
Let $\mathcal{W}^{k}_{-\infty}(U) \subset \Omega^k_\hp(U)$ be the subset of $k$-forms $\alpha$ such that, for each $q \in U$, there exist a neighborhood $q \in V \subset U$, constants $D_{j,V}$, $c_V$ and a sufficiently small real number $\hp_0 > 0$ such that $\|\nabla^j \alpha\|_{L^\infty(V)} \leq D_{j,V} e^{-c_V/\hp}$ for all $j \geq 0$ and for $0<\hp < \hp_0$; 
here, the $L^\infty$-norm is defined by $\| \alpha \|_{L^{\infty}(V)} = \sup_{x\in V} \|\alpha(x)\|$ for any section $\alpha$ of the tensor bundle $TU^{\otimes k} \otimes T^{\vee}U^{\otimes l}$, where
we fix a constant metric on $M_{\real}$ and use the induced metric on $TU^{\otimes k} \otimes T^{\vee}U^{\otimes l}$;
$\nabla^j$ denotes an operator of the form $\nabla_{\dd{x_{l_1}}}\cdots \nabla_{\dd{x_{l_j}}}$, where $\nabla$ is a torsion-free, flat connection defining an affine structure on $U$ and $x = (x_1,\dots, x_n)$ is an affine coordinate system (note that $\nabla$ is {\it not} the Gauss--Manin connection in the previous section).
Similarly, let $\mathcal{W}^{k}_{\infty}(U) \subset \Omega^k_\hp(U)$ be the set of $k$-forms $\alpha$ such that, for each $q \in U$, there exist a neighborhood $q \in V \subset U$, a constant $D_{j,V}$, $N_{j,V} \in \inte_{>0}$ and a sufficiently small real number $\hp_0 > 0$ such that $\|\nabla^j \alpha\|_{L^\infty(V)} \leq D_{j,V} \hp^{-N_{j,V}}$ for all $j \geq 0$ and for $0<\hp < \hp_0$. 

The assignment $U \mapsto \mathcal{W}^{k}_{-\infty}(U)$ (resp. $U \mapsto \mathcal{W}^{k}_{\infty}(U)$) defines a sheaf $\mathcal{W}^{k}_{-\infty}$ (resp. $\mathcal{W}^{k}_{\infty}$) on $M_\real$ (\cite[Defs. 4.15 \& 4.16]{kwchan-leung-ma}). Note that $\mathcal{W}^{k}_{-\infty}$ and $\mathcal{W}^{k}_{\infty}$ are closed under the wedge product,  $\nabla_{\dd{x}}$ and the de Rham differential $d$. Since $\mathcal{W}^{k}_{-\infty}$ is a dg ideal of $\mathcal{W}^{k}_{\infty}$, the quotient $\mathcal{W}^{*}_{\infty}/\mathcal{W}^{*}_{-\infty}$ is a sheaf of dgas when equipped with the de Rham differential.

Now suppose $U$ is a convex open set. By a {\em tropical polyhedral subset} of $U$, we mean a connected convex subset of $U$ which is defined by finitely many affine equations or inequalities over $\mathbb{Q}$ of the form $a_1 x_1 + \cdots + a_n x_n \leq b$.

\begin{definition}[\cite{kwchan-leung-ma}, Def. 4.19]\label{def:asypmtotic_support_pre}
	A $k$-form $\alpha \in \mathcal{W}_{\infty}^k(U)$ is said to have {\em asymptotic support on a closed codimension $k$ tropical polyhedral subset $P \subset U$ with weight $s \in \inte$}, denoted as $\alpha \in \mathcal{W}_{P,s}(U)$, if the following conditions are satisfied:
	\begin{enumerate}
		\item
		For any $p \in U \setminus P$, there is a neighborhood $p \in V \subset U \setminus P$ such that $\alpha|_V \in \mathcal{W}_{-\infty}^k(V)$.
		
		\item
		There exists a neighborhood $W_P \subset U$ of $P$ such that $\alpha =  h(x,\hp) \nu_P + \eta$ on $W_P$, where $\nu_P \in \bigwedge^k N_{\real}$ is a non-zero affine $k$-form (defined up to non-zero constant) which is normal to $P$, $h(x,\hp) \in C^\infty(W_P \times \real_{>0})$ and $\eta \in \mathcal{W}_{-\infty}^k(W_P)$.
		
		\item
		For any $p \in P$, there exists a convex neighborhood $p \in V \subset U$ equipped with an affine coordinate system $x = (x_1,\dots, x_n)$ such that $x' := (x_1, \dots, x_k)$ parametrizes codimension $k$ affine linear subspaces of $V$ parallel to $P$, with $x' = 0$ corresponding to the subspace containing $P$. With the foliation $\{(P_{V, x'})\}_{x' \in N_V}$, where $P_{V,x'} = \{ (x_1,\dots,x_n) \in V  \ | \ (x_1,\dots,x_k) = x' \}$ and $N_V$ is the normal bundle of $V$, we require that, for all $j \in \inte_{\geq 0}$ and multi-indices $\beta = (\beta_1,\dots,\beta_k) \in \inte_{\geq 0}^k$, the estimate
		\[
		\int_{x'}  (x')^\beta \left(\sup_{P_{V,x'}}|\nabla^j (\iota_{\nu_P^\vee} \alpha)| \right) \nu_P \leq D_{j,V,\beta} \hp^{-\frac{j+s-|\beta|-k}{2}}
		\]
		holds for some constant $D_{j,V,\beta}$ and $s \in \inte$, where $|\beta| = \sum_l \beta_l$ and $\nu_P^\vee = \dd{x_1}\wedge\cdots \wedge\dd{x_k}$.\footnote{For $k=0$, we use the convention that $\nu_P = 1 \in \bigwedge^0 N_{\real} = \real$ and also set $\nu_P^\vee =1$.}
	\end{enumerate}
\end{definition}

Observe that $\nabla_{\dd{x_l}} \mathcal{W}_{P,s}(U) \subset \mathcal{W}_{P,s+1}(U)$ and $(x')^{\beta}\mathcal{W}_{P,s}(U) \subset \mathcal{W}_{P,s-|\beta|}(U)$. It follows that
\[
(x')^{\beta} \nabla_{\dd{x_{l_1}}}\cdots \nabla_{\dd{x_{l_j}}} \mathcal{W}_{P,s}(U) \subset \mathcal{W}_{P,s+j-|\beta|}(U).
\]
The weight $s$ defines a filtration of $\mathcal{W}^k_{\infty}$ (we drop the $U$ dependence from the notation whenever it is clear from the context):\footnote{Note that $k$ is equal to the codimension of $P \subset U$.}
\[
\mathcal{W}_{-\infty}^k \subset \cdots \subset \mathcal{W}_{P,-1}\subset \mathcal{W}_{P,0} \subset \mathcal{W}_{P,1} \subset \cdots \subset \mathcal{W}_{\infty}^k \subset \Omega^k_\hp(U).
\]
This filtration, which keeps track of the polynomial order of $\hp$ for $k$-forms with asymptotic support on $P$, provides a convenient tool to express and prove results in asymptotic analysis.

\begin{definition}[\cite{kwchan-ma-p2}, Def. 3.10]\label{def:asymptotic_support}\label{def:asy_support_algebra}
	A differential $k$-form $\alpha$ is \emph{in $\tilde{\mathcal{W}}_{s}^k(U)$} if there exist polyhedral subsets $P_1, \dots, P_l \subset U$ of codimension $k$ such that $\alpha \in \sum_{j=1}^l \mathcal{W}_{P_j,s}(U)$.
	If, moreover, $d \alpha \in \tilde{\mathcal{W}}_{s+1}^{k+1}(U)$, then we write $\alpha \in \mathcal{W}_s^k(U)$. For every $s \in \inte$, let $\mathcal{W}_s^*(U) = \bigoplus_k \mathcal{W}_{s+k}^k(U)$.
\end{definition}

\begin{example}\label{example:bump_form}
    Let $U = \real$ and $x$ be an affine coordinate on $U$. Then we consider the $\hp$-dependent $1$-form
	$$
	\delta:= \left( \frac{1}{\hp \pi} \right)^{\half} e^{-\frac{x^2}{\hp}} dx.
	$$
	Direct calculations in \cite[Lem 4.12]{kwchan-leung-ma} showed that $\delta \in \mathcal{W}_1^1(U)$ has asymptotic support on the hyperplane $P$ defined by $x=0$. 
	
    The hyperplane $P$ separates $U$ into two chambers $H_+$ and $H_-$. If we fix a base point in $H_-$ and apply the integral operator $I$ in \cite[Lem. 4.23]{kwchan-leung-ma}, we obtain $I(\delta) \in W^0_0(U)$ which has asymptotic support on $H_+ \cup P$, playing the role of a step function. 
	
	Taking finite products of elements of the above form, we obtain $\alpha \in \mathcal{W}^k_k(U)$ with asymptotic support on arbitrary tropical polyhedral subsets of $U$. Any forms obtained from a finite number of steps of applying the differential $d$, applying the integral operator $I$ and taking wedge product are in $W^*_0(U)$. 
\end{example}

We say that two closed tropical polyhedral subsets $P_1, P_2 \subset U$ of codimension $k_1, k_2$ {\em intersect transversally} if the affine subspaces of codimension $k_1$ and $k_2$ which contain $P_1$ and $P_2$, respectively, intersect transversally. This definition applies also when $P_1 \cap P_2 = \emptyset $ or $\partial P_i \neq \emptyset$.

\begin{lemma}[{\cite[Lem. 4.22]{kwchan-leung-ma}}]
	\phantomsection
	\label{lem:support_product}\label{prop:support_product}\label{prop:asy_support_algebra_ideal}
	\begin{enumerate}
		\item Let $P_1, P_2, P \subset U$ be closed tropical polyhedral subsets of codimension $k_1$, $k_2$ and $k_1+k_2$, respectively, such that $P$ contains $P_1 \cap P_2$ and is normal to $\nu_{P_1} \wedge \nu_{P_2}$. Then $\mathcal{W}_{P_1,s}(U) \wedge \mathcal{W}_{P_2,r}(U) \subset \mathcal{W}_{P,r+s}(U)$ if $P_1$ and $P_2$ intersect transversally with $P_1 \cap P_2 \neq \emptyset$, and $\mathcal{W}_{P_1,s}(U) \wedge \mathcal{W}_{P_2,r}(U) \subset \mathcal{W}_{-\infty}^{k_1 + k_2}(U)$ otherwise.
		
		\item We have $\mathcal{W}_{s_1}^{k_1}(U) \wedge \mathcal{W}_{s_2}^{k_2}(U) \subset \mathcal{W}_{s_1+s_2}^{k_1+k_2}(U)$. In particular, $\mathcal{W}_0^*(U) \subset \mathcal{W}_{\infty}^*(U)$ is a dg subalgebra and $\mathcal{W}_{-1}^*(U) \subset \mathcal{W}_0^*(U)$ is a dg ideal.
	\end{enumerate}
\end{lemma}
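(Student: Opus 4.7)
The plan is to prove assertion (1) directly from Definition \ref{def:asypmtotic_support_pre} and then bootstrap assertion (2) using bilinearity of $\wedge$ and the graded Leibniz rule for $d$. I would treat the transverse and non-transverse sub-cases of (1) separately, and in each verify the three conditions of asymptotic support for an appropriate tropical polyhedron of codimension $k_1+k_2$.

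For the transverse case, I would pick a point $p\in P_1\cap P_2$ and use the transversality of $N_{P_1}$ and $N_{P_2}$ (which forces $N_{P_1}\cap N_{P_2}=\{0\}$) to perform an integral affine change of coordinates on a small convex neighborhood $V\ni p$ so that $\nu_{P_1}=dx_1\wedge\cdots\wedge dx_{k_1}$ and $\nu_{P_2}=dx_{k_1+1}\wedge\cdots\wedge dx_{k_1+k_2}$, whence $\nu_P=\nu_{P_1}\wedge\nu_{P_2}=dx_1\wedge\cdots\wedge dx_{k_1+k_2}$. Using the decomposition $\alpha_i=h_i\,\nu_{P_i}+\eta_i$ with $\eta_i\in\mathcal{W}_{-\infty}^{k_i}$ on a neighborhood of $P_i$, one expands
\[
\alpha_1\wedge\alpha_2 \;=\; h_1h_2\,\nu_P \;+\; \eta_1\wedge h_2\,\nu_{P_2} \;\pm\; h_1\,\nu_{P_1}\wedge\eta_2 \;+\; \eta_1\wedge\eta_2.
\]
The three cross terms lie in $\mathcal{W}_{-\infty}^{k_1+k_2}$ since one factor is rapidly decaying and the other has at most polynomial growth in $\hp^{-1}$. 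Condition (1) (pointwise decay away from $P$) and condition (2) (the decomposition near $P$) then follow directly, with the candidate polyhedron taken to be $P=P_1\cap P_2$ (which is tropical, of codimension $k_1+k_2$ by transversality, and normal to $\nu_{P_1}\wedge\nu_{P_2}$). For condition (3), the coordinate choice realises the $P$-slice at $(x'_1,x'_2)$ as the intersection of the $P_1$-slice at $x'_1$ and the $P_2$-slice at $x'_2$; for $\beta=(\beta_1,\beta_2)$ and $j=j_1+j_2$ one obtains
\[
\sup_{P\text{-slice}}\bigl|\nabla^{j_1}h_1\,\nabla^{j_2}h_2\bigr| \;\leq\; \sup_{P_1\text{-slice at }x'_1}|\nabla^{j_1}h_1|\cdot\sup_{P_2\text{-slice at }x'_2}|\nabla^{j_2}h_2|,
\]
so that Fubini factorises the integral against $(x')^{\beta}$ into the product of the integrals in condition (3) for $\alpha_1$ and $\alpha_2$, producing the bound $\hp^{-(j+(r+s)-|\beta|-(k_1+k_2))/2}$. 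Summing via Leibniz over $(j_1,\beta_1)+(j_2,\beta_2)=(j,\beta)$ absorbs the multinomial coefficients and confirms the weight $r+s$.

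In the non-transverse case one has $\nu_{P_1}\wedge\nu_{P_2}=0$, so the leading term of the above expansion vanishes identically and only cross terms containing a rapidly decaying factor survive on $W_{P_1}\cap W_{P_2}$; outside $W_{P_1}$ (respectively $W_{P_2}$) the whole of $\alpha_1$ (respectively $\alpha_2$) already lies in $\mathcal{W}_{-\infty}^*$, so the wedge is rapidly decaying globally, giving $\alpha_1\wedge\alpha_2\in\mathcal{W}_{-\infty}^{k_1+k_2}$. Finally for (2), I would write $\alpha_1,\alpha_2$ as finite sums of local pieces as in Definition \ref{def:asymptotic_support} and apply (1) termwise to deduce $\alpha_1\wedge\alpha_2\in\tilde{\mathcal{W}}_{s_1+s_2}^{k_1+k_2}$; combining this with $d\alpha_i\in\tilde{\mathcal{W}}_{s_i+1}^{k_i+1}$ and the graded Leibniz rule $d(\alpha_1\wedge\alpha_2)=d\alpha_1\wedge\alpha_2+(-1)^{k_1}\alpha_1\wedge d\alpha_2$ yields $d(\alpha_1\wedge\alpha_2)\in\tilde{\mathcal{W}}_{s_1+s_2+1}^{k_1+k_2+1}$, hence $\alpha_1\wedge\alpha_2\in\mathcal{W}_{s_1+s_2}^{k_1+k_2}$. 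The subalgebra and ideal statements follow at once from the total-degree convention $\mathcal{W}_s^*=\bigoplus_k\mathcal{W}_{s+k}^k$.

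The main obstacle is the weight bookkeeping in condition (3) for the transverse case: one must ensure that transversality lets us simultaneously put $\nu_{P_1}$ and $\nu_{P_2}$ in standard coordinate form, so that the $P$-, $P_1$- and $P_2$-slices all refer to the same foliation and a clean Fubini-type estimate is available. The non-transverse case is geometrically degenerate but analytically easier, since the identity $\nu_{P_1}\wedge\nu_{P_2}=0$ forces the naive leading term to vanish outright and only rapidly decaying cross terms remain.
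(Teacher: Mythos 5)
The paper does not actually prove this lemma: it is quoted verbatim from \cite[Lem.~3.11]{kwchan-ma-p2}, so there is no in-text argument to compare against. Your direct verification from Definition \ref{def:asypmtotic_support_pre} is the natural (and, as far as the cited source goes, the intended) route, and I find it essentially correct: the expansion $\alpha_1\wedge\alpha_2=h_1h_2\,\nu_{P_1}\wedge\nu_{P_2}+(\text{cross terms})$ with the cross terms rapidly decaying, the Fubini factorization of the weight estimate in condition (3) giving exponent $-\tfrac{j+(r+s)-|\beta|-(k_1+k_2)}{2}$, the vanishing of $\nu_{P_1}\wedge\nu_{P_2}$ in the non-transverse case, and the Leibniz-rule bootstrap for part (2) are all as they should be.

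Two points deserve tightening. First, you prove the containment for $P=P_1\cap P_2$, whereas the statement allows any codimension-$(k_1+k_2)$ tropical polyhedron $P\supseteq P_1\cap P_2$ normal to $\nu_{P_1}\wedge\nu_{P_2}$; you should add the (easy) remark that all three conditions of Definition \ref{def:asypmtotic_support_pre} are weaker for a larger $P$ — near points of $P\setminus(P_1\cap P_2)$ the product is already rapidly decaying — and that the local decompositions $h_1h_2\,\nu_P+\eta$ on $W_{P_1}\cap W_{P_2}$ and $0\cdot\nu_P+\eta$ elsewhere must be patched (say by a cutoff) into a single decomposition on one neighborhood $W_P$ as condition (2) demands. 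Second, the issue you flag as ``the main obstacle'' is real but resolvable and should be argued rather than just named: condition (3) only asserts the existence of \emph{some} adapted affine chart for each of $\alpha_1$, $\alpha_2$, so you must check that the estimate is stable under integral affine changes of coordinates preserving the relevant foliation; this follows from the filtration relations $\nabla_{\partial/\partial x_l}\mathcal{W}_{P,s}\subset\mathcal{W}_{P,s+1}$ and $(x')^{\beta}\mathcal{W}_{P,s}\subset\mathcal{W}_{P,s-|\beta|}$ recorded after Definition \ref{def:asypmtotic_support_pre}, since an affine change replaces $\nabla^{j}$ and $(x')^{\beta}$ by finite linear combinations of terms of the same total filtration degree. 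With these two additions the argument is complete.
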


\begin{definition}\label{def:sheaf_of_tropical_dga}
	Let $\mathcal{W}_s^*$ be the sheafification of the presheaf defined by $U \mapsto \mathcal{W}_s^*(U)$. We call the quotient sheaf $\tform^*:=\mathcal{W}_0^*/\mathcal{W}_{-1}^*$ the \emph{sheaf of tropical differential forms}, which is a sheaf of dgas on $M_{\real}$ with structures $(\wedge,\md)$. 
\end{definition}

From \cite[Lem. 3.6]{kwchan-ma-p2}, we learn that $\underline{\real} \rightarrow \tform^*$ is a resolution. Furthermore, given any point $x \in U$ and a sufficiently small neighborhood $x \in W \subset  U$, we can show that there exists $f \in \mathcal{W}_0^0(W)$ with compact support in $W$ and satisfying $f \equiv 1$ near $x$ (using an argument similar to the proof of Lemma \ref{lem:for_contruction_of_partition_of_unity}). Therefore, $\tform^*$ has a partition of unity subordinate to a given open cover. 
Replacing  the sheaf of de Rham differential forms on $\tanpoly_{\rho_1,\real}^* \oplus \norpoly_{\tau,\real}$ by the sheaf $\tform^*$ of tropical differential forms, we can construct a particular complex on the integral tropical manifold $B$ satisfying Condition \ref{cond:requirement_of_the_de_rham_dga}, which dictates the tropical geometry of $B$. 

\begin{definition}\label{def:global_sheaf_of_monodromy_invariant_tropical_forms}
		Given a point $x$ as in \S\ref{subsec:derham_for_B} (with a chart as in equation \eqref{eqn:monodromy_invariant_affine_functions_near_x_o}), the stalk of $\tform^*$ at $x$ is defined as $\tform^*_{x}:= (\mathtt{x}^{-1}\tform^*)_{x}$. This defines the \emph{complex $(\tform^*,\md)$ (or simply $\tform^*$) of monodromy invariant tropical differential forms on $B$}. A section $\alpha \in \tform^*(W)$ is a collection of elements $\alpha_{x} \in \tform^*_{x}$, $x \in W$ such that each $\alpha_{x}$ can be represented by $\mathtt{x}^{-1}\beta_{x}$ in a small neighborhood $U_{x} \subset \mathtt{p}^{-1}(\mathtt{U}_{x})$ for some tropical differential form $\beta_{x}$ on $\mathtt{U}_{x}$, and satisfies the relation $\alpha_{\tilde{x}} = \tilde{\mathtt{x}}^{-1}(\mathtt{p}^* \beta_{x})$ in $\tform^*_{\tilde{x}}$ for every $\tilde{x} \in U_{x}$. 
	\end{definition}

Notice that the definition of $\tform^*$ requires the projection map $\mathtt{p}$ in equation \eqref{eqn:monodromy_invariant_differential_form_change_of_chart} to be affine, while that of $\mdga^*$ in \S \ref{subsec:derham_for_B} does not. But like $\mdga^*$, $\tform^*$ satisfies Condition \ref{cond:requirement_of_the_de_rham_dga} and can be used for the purpose of gluing the sheaf $\polyv{}^*$ of dgBV algebras in \S \ref{subsubsec:gluing_construction}. In the rest of this section, we shall use the notations $\polyv{}^*$ and $\totaldr{}{}^*$ to denote the complexes of sheaves constructed using $\tform^*$. 

\subsection{The semi-flat dgBV algebra and its comparison with the pre-dgBV algebra $\polyv{}^{*,*}$}\label{subsec:semi-flat_dgBV}
In this section, we define a twisting of the semi-flat dgBV algebra by the slab functions (or initial wall-crossing factors) in \S \ref{subsec:log_structure_and_slab_function}, and compare it with the dgBV algebra we constructed in \S \ref{subsubsec:gluing_construction} using gluing of local smoothing models. The key result is Lemma \ref{lem:comparing_sheaf_of_dgbv}, which is an important step in the proof of our main result.

We start by recalling some notations from \S \ref{subsec:log_structure_and_slab_function}. Recall that for each vertex $v$, we fix a representative $\varphi_v\colon U_v \rightarrow \real$ of the strictly convex multi-valued piecewise linear function $\varphi \in H^0(B,\cu{MPL}_{\pdecomp})$ to define the cone $C_v$ and the monoid $P_v$. The natural projection $T_v \oplus \inte \rightarrow T_v$ induces a surjective ring homomorphism $\comp[\rho^{-1}P_v] \rightarrow \comp[\rho^{-1}\Sigma_v]$; we denote by $\bar{m} \in \rho^{-1}\Sigma_v$ the image of $m \in \rho^{-1} P_v$ under the natural projection.
We consider $\mathbf{V}(\tau)_v := \spec(\comp[\tau^{-1}P_v])$ for some $\tau$ containing $v$, and write $z^m$ for the function corresponding to $m \in \tau^{-1} P_v$. The element $\varrho$ together with the corresponding function $z^{\varrho}$ determine a family $\spec(\comp[\tau^{-1}P_v]) \rightarrow \comp$, whose central fiber is given by $\spec(\comp[\tau^{-1}\Sigma_v])$. The variety $\mathbf{V}(\tau)_v = \spec(\comp[\tau^{-1}P_v])$ is equipped with the divisorial log structure induced by $\spec(\comp[\tau^{-1}\Sigma_v])$, which is log smooth. We write $\mathbf{V}(\tau)_v^{\dagger}$ if we need to emphasize the log structure. 

Since $B$ is orientable, we can choose a nowhere vanishing integral element $\mu \in \Gamma(B\setminus \tsing_e,\bigwedge^n T_{B,\inte})$. We fix a local representative $\mu_v \in \bigwedge^n T_{v}$ for every vertex $v$ and $\mu_{\sigma} \in \bigwedge^n \tanpoly_{\sigma}$ for every maximal cell $\sigma$. Writing $\mu_v = m_1 \wedge \cdots \wedge  m_n$, we have the corresponding relative volume form 
$$\mu_v = d\log z^{m_1} \wedge \cdots \wedge d\log z^{m_n}$$ 
in $\Omega^n_{\mathbf{V}(\tau)_v^{\dagger}/\comp^{\dagger}}$. Now the relative log polyvector fields can be written as
$$
\bigwedge\nolimits^{-l} \Theta_{\mathbf{V}(\tau)_v^{\dagger}/\comp^{\dagger}} = \bigoplus_{m \in \tau^{-1}P_v} z^m \partial_{n_1} \wedge \cdots \wedge \partial_{n_l}.
$$ 
The volume form $\mu_v$ defines a BV operator via contraction $(\bvd{}\alpha) \mathbin{\lrcorner} \mu_v := \partial(\alpha \mathbin{\lrcorner} \mu_v)$, which is given explicitly by
$$
\bvd{}(z^m \partial_{n_1} \wedge \cdots \wedge \partial_{n_l}) = \sum_{j=1}^l (-1)^{j-1} \langle m,n_j\rangle z^m  \partial_{n_1} \wedge \cdots \widehat{\partial}_{n_j}  \cdots \wedge \partial_{n_l}.
$$
A Schouten–-Nijenhuis--type bracket is given by extending the following formulae skew-symmetrically:
\begin{align*}
	[z^{m_1} \partial_{n_1},z^{m_2}\partial_{n_2}] & = z^{m_1+m_2} \partial_{\langle \bar{m}_1, n_2  \rangle n_{1} - \langle \bar{m}_2, n_1 \rangle n_{2}},\\
	[z^m , \partial_n] & = \langle \bar{m}, n \rangle z^m. 
\end{align*}
This gives $\bigwedge^{-*} \Theta_{\mathbf{V}(\tau)_v^{\dagger}/\comp^{\dagger}}$ the structure of a BV algebra.

\subsubsection{Construction of the semi-flat sheaves}\label{subsubsec:semi-flat}

For each $k \in \mathbb{N}$, we shall define a sheaf $\sfbva{k}^*_{\mathrm{sf}}$ (resp. $\sftbva{k}{}^*_{\mathrm{sf}}$) of $k^{\text{th}}$-order semi-flat log vector fields (resp. semi-flat log de Rham forms) over the open dense subset $W_0 \subset B$ defined by
$$
W_0 := \bigcup_{\sigma \in \pdecomp^{[n]}} \reint(\sigma) \cup \bigcup_{\rho \in \pdecomp^{[n-1]}_0} \reint(\rho)  \cup \bigcup_{\rho \in \pdecomp^{[n-1]}_1} \big( \reint(\rho) \setminus (\tsing \cap \reint(\rho))  \big),
$$
where $\pdecomp^{[n-1]}_0$ consists of $\rho$'s such that $\reint(\rho) \cap \tsing_e = \emptyset$ and $\pdecomp^{[n-1]}_1$ of $\rho$'s that intersect with $\tsing_e$. These sheaves use the natural divisorial log structure on $\mathbf{V}(\rho)_v^{\dagger}$ and will \emph{not} depend on the slab functions $f_{v\rho}$'s. This construction is possible because we are using the much more flexible Euclidean topology on $W_0$, instead of the Zariski topology on $\centerfiber{0}$.

For $\sigma \in \pdecomp^{[n]}$, recall that we have $V(\sigma) = \spec(\comp[\sigma^{-1}\Sigma_{v}])$ for some $v \in \sigma^{[0]}$. We also have $\spec(\comp[\sigma^{-1}\Sigma_{v}]) = \tanpoly^*_{\sigma,\comp}/\tanpoly^*_{\sigma} $, which is isomorphic to $(\comp^{*})^n$, because $\sigma^{-1}\Sigma_v = \tanpoly_{\sigma,\real} = T_{v,\real}$. The local $k^{\text{th}}$-order thickening 
$$\localmod{k}(\sigma)^{\dagger}:= \spec(\comp[\sigma^{-1}P_{v}/q^{k+1}]) \cong (\comp^{*})^n \times \spec(\comp[q]/q^{k+1})$$
is obtained by identifying $\sigma^{-1}P_v$ as $\tanpoly_{\sigma} \times \mathbb{N}$. Choosing a different vertex $v'$, we can use the parallel transport $T_{v} \cong T_{v'}$ from $v$ to $v'$ within $\reint(\sigma)$ and the difference $\varphi_v|_{\sigma} - \varphi_{v'}|_{\sigma}$ between two affine functions to identify the monoids $\sigma^{-1}P_{v}\cong \sigma^{-1}P_{v'}$. We take 
$$\sfbva{k}^*_{\mathrm{sf}}|_{\reint(\sigma)}:= \modmap_*\Big(\bigwedge\nolimits^{-*} \Theta_{\localmod{k}(\sigma)^{\dagger}/\logsk{k}}\Big) \cong \modmap_*(\mathcal{O}_{\localmod{k}(\sigma)^{\dagger}}) \otimes_{\real} \bigwedge\nolimits^{-*} \tanpoly_{\sigma,\real}^*.$$ 

Next, we need to glue the sheaves $\sfbva{k}^*_{\mathrm{sf}}|_{\reint(\sigma)}$'s along neighborhoods of codimension one cells $\rho$'s. For each codimension one cell $\rho$, we fix a primitive normal $\check{d}_{\rho}$ to $\rho$ and label the two adjacent maximal cells $\sigma_+$ and $\sigma_-$ so that $\check{d}_{\rho}$ is pointing into $\sigma_+$. There are two situations to consider. 

The simpler case is when $\tsing_e \cap \reint(\rho) = \emptyset$, where the monodromy is trivial. In this case, we have $V(\rho) = \spec(\comp[\rho^{-1}\Sigma_{v}])$, with the gluing $V(\sigma_{\pm}) \hookrightarrow V(\rho)$ as described below Definition \ref{def:open_gluing_data} using the open gluing data $s_{\rho\sigma_{\pm}}$. We take the $k^{\text{th}}$-order thickening given by
$$\localmod{k}(\rho)^{\dagger}:= \spec(\comp[\rho^{-1}P_{v}/q^{k+1}])^{\dagger},$$ 
equipped with the divisorial log structure induced by $V(\rho)$. 
We extend the open gluing data 
$$s_{\rho\sigma_{\pm}} \colon \tanpoly_{\sigma_{\pm}} \rightarrow \comp^{*}$$
to 
$$s_{\rho\sigma_{\pm}} \colon \tanpoly_{\sigma_{\pm}} \oplus \inte \rightarrow \comp^{*}$$
so that $s_{\rho\sigma_{\pm}}(0,1) = 1$, which acts as an automorphism of $\spec(\comp[\sigma^{-1}\Sigma_{v}])$. In this way we can extend the gluing $V(\sigma_{\pm}) \hookrightarrow V(\rho)$ to 
$$\spec(\comp[\sigma_{\pm}^{-1}P_{v}/q^{k+1}]) \rightarrow \spec(\comp[\rho^{-1}P_{v}/q^{k+1}])$$
by twisting with the ring homomorphism induced by $z^{m} \rightarrow s_{\rho\sigma_{\pm}}(m)^{-1}z^{m}$. On a sufficiently small neighborhood $\mathscr{W}_{\rho}$ of $\reint(\rho)$, we take $$\sfbva{k}^*_{\mathrm{sf}}|_{\mathscr{W}_{\rho}}:= \modmap_* \Big(\bigwedge\nolimits^{-*} \Theta_{\localmod{k}(\rho)^{\dagger}/\logsk{k}}\Big)\Big|_{\mathscr{W}_{\rho}}.$$
Choosing a different vertex $v'$, we may use parallel transport to identify the fans $\rho^{-1} \Sigma_{v} \cong \rho^{-1}  \Sigma_{v'}$, and further use the difference $\varphi_v|_{\mathscr{W}_{\rho}} - \varphi_{v'}|_{\mathscr{W}_{\rho}}$ to identify the monoids $\rho^{-1}P_v \cong \rho^{-1}P_{v'}$. One can check that the sheaf $\sfbva{k}^*_{\mathrm{sf}}|_{\mathscr{W}_{\rho}}$ is well-defined.  

The more complicated case is when $\tsing_e \cap \reint(\rho) \neq \emptyset$, where the monodromy is non-trivial. We write $\reint(\rho) \setminus \tsing = \bigcup_{v} \reint(\rho)_v$, where $\reint(\rho)_v$ is the unique component which contains the vertex $v$ in its closure. We fix one $v$, the corresponding $\reint(\rho)_v$, and a sufficiently small open subset $\mathscr{W}_{\rho,v}$ of $\reint(\rho)_v$. We assume that the neighborhood $\mathscr{W}_{\rho,v}$ of $\reint(\rho)_v$ intersects neither $\mathscr{W}_{v',\rho'}$ nor $\mathscr{W}_{\rho'}$ for any possible $v'$ and $\rho'$. 
Then we consider the scheme-theoretic embedding 
$$V(\rho) = \spec(\comp[\rho^{-1}\Sigma_v]) \rightarrow \spec(\comp[\rho^{-1}P_v])$$
given by 
$$
z^{m} \mapsto \begin{cases}
	z^{\bar{m}} & \text{if $m$ lies on the boundary of the cone $\rho^{-1}P_{v}$,}\\
	0 & \text{if $m$ lies in the interior of the cone $\rho^{-1}P_{v}$.}
	\end{cases}
$$
We denote by $\sflocmod{k}(\rho)_v^{\dagger}$ the $k^{\text{th}}$-order thickening of $V(\rho)|_{\modmap^{-1}(\mathscr{W}_{\rho,v})}$ in $\spec(\comp[\rho^{-1}P_{v}])$ and equip it with the divisorial log structure which is log smooth over $\logsk{k}$ (note that it is {\em different} from the local model $\localmod{k}(\rho)^{\dagger}$
introduced earlier in \S \ref{sec:deformation_via_dgBV} because the latter depends on the slab functions $f_{v,\rho}$, as we can see explicitly in \S \ref{subsubsec:explicit_gluing_away_from_codimension_2}, while the former doesn't). We take
$$\sfbva{k}^*_{\mathrm{sf}}|_{\mathscr{W}_{\rho,v}} := \bigwedge\nolimits^{-*} \Theta_{\sflocmod{k}(\rho)_v^{\dagger}/\logsk{k}}.$$

The gluing with nearby maximal cells $\sigma_{\pm}$ on the overlap $\reint(\sigma_{\pm}) \cap \mathscr{W}_{\rho,v}$ is given by parallel transporting through the vertex $v$ to relate the monoids $\sigma_{\pm}^{-1}P_v$ and $\rho^{-1}P_v$ constructed from $P_v$, and twisting the map $\spec(\comp[\sigma^{-1}_{\pm}P_v]) \rightarrow \spec(\comp[\rho^{-1}P_v])$ with the open gluing data
$$
z^{m} \mapsto s_{\rho \sigma_{\pm}}^{-1}(m) z^{m}, 
$$
using previous liftings of $s_{\rho\sigma_{\pm}}$ to $\tanpoly_{\sigma_{\pm}}\oplus \inte$. 
We obtain a commutative diagram of holomorphic maps
$$
\xymatrix@1{
	V(\sigma_{\pm})|_{\mathscr{D}} \ar[r] \ar[d] &  \localmod{k}(\sigma_{\pm})^{\dagger}|_{\mathscr{D}} \ar[d]\\
	V(\rho)|_{\mathscr{D}} \ar[r] & \sflocmod{k}(\rho)^{\dagger}|_{\mathscr{D}}
},
$$
where $\mathscr{D} =\modmap^{-1}( \mathscr{W}_{\rho,v} \cap \reint(\sigma_{\pm}))$ and the vertical arrow on the right hand side respects the log structures. The induced isomorphism
$$\modmap_* \Big(\bigwedge\nolimits^{-*} \Theta_{\sflocmod{k}(\rho)_v^{\dagger}/\logsk{k}}\Big) \cong \modmap_* \Big(\bigwedge\nolimits^{-*} \Theta_{\localmod{k}(\sigma_{\pm})_v^{\dagger}/\logsk{k}}\Big)$$
of sheaves on the overlap $\mathscr{W}_{\rho,v} \cap \reint(\sigma_{\pm})$ then gives the desired gluing for defining the sheaf $\sfbva{k}^*_{\mathrm{sf}}$ on $W_0$. Note that the cocycle condition is trivial here as there is no triple intersection of any three open subsets from $\reint(\sigma)$, $\mathscr{W}_\rho$ and $\mathscr{W}_{\rho,v}$. 

Similarly, we can define the sheaf $\sftbva{k}{}^*_{\mathrm{sf}}$ of semi-flat log de Rham forms, together with a relative volume form $\volf{k}_0 \in \sftbva{k}{\parallel}^n_{\mathrm{sf}}(W_0)$ obtained from gluing the local $\mu_v$'s specified by the element $\mu$ as described in the beginning of \S \ref{subsec:semi-flat_dgBV}. 

It would be useful to write down elements of the sheaf $\sfbva{k}^*_{\mathrm{sf}}$ more explicitly. For instance, fixing a point $x \in \reint(\rho)_v$, we may write 
\begin{equation}\label{eqn:explicit_description_of_semi_flat_bva}
	\sfbva{k}^*_{\mathrm{sf},x}= \modmap_*(\cu{O}_{\sflocmod{k}(\rho)_v})_x \otimes_{\real} \bigwedge\nolimits^{-*} T^*_{v,\real},
\end{equation}
and use $\partial_n$ to stand for the semi-flat holomorphic vector field associated to an element $n \in T^*_{v,\real}$. 

Note that analytic continuation around the singular locus $\tsing_e \cap \reint(\rho)$ acts non-trivially on the semi-flat sheaf $\sfbva{k}^*_{\mathrm{sf}}$ due to the presence of non-trivial monodromy of the affine structure. Below is a simple example.

\begin{example}\label{eg:monodromy_action_on_semi_flat_sheaf}
	We consider the local affine charts which appeared in Example \ref{eg:K3_example}, equipped with a strictly convex piecewise linear affine function $\varphi$ on $\Sigma_{\rho}$ whose change of slopes is $1$. Let us study the analytic continuation of a local section along the loop $\gamma$ which starts at a point $b_+$, as shown in Figure \ref{fig:affine_chart_2}.
	\begin{figure}[h!]
		\includegraphics[scale=0.5]{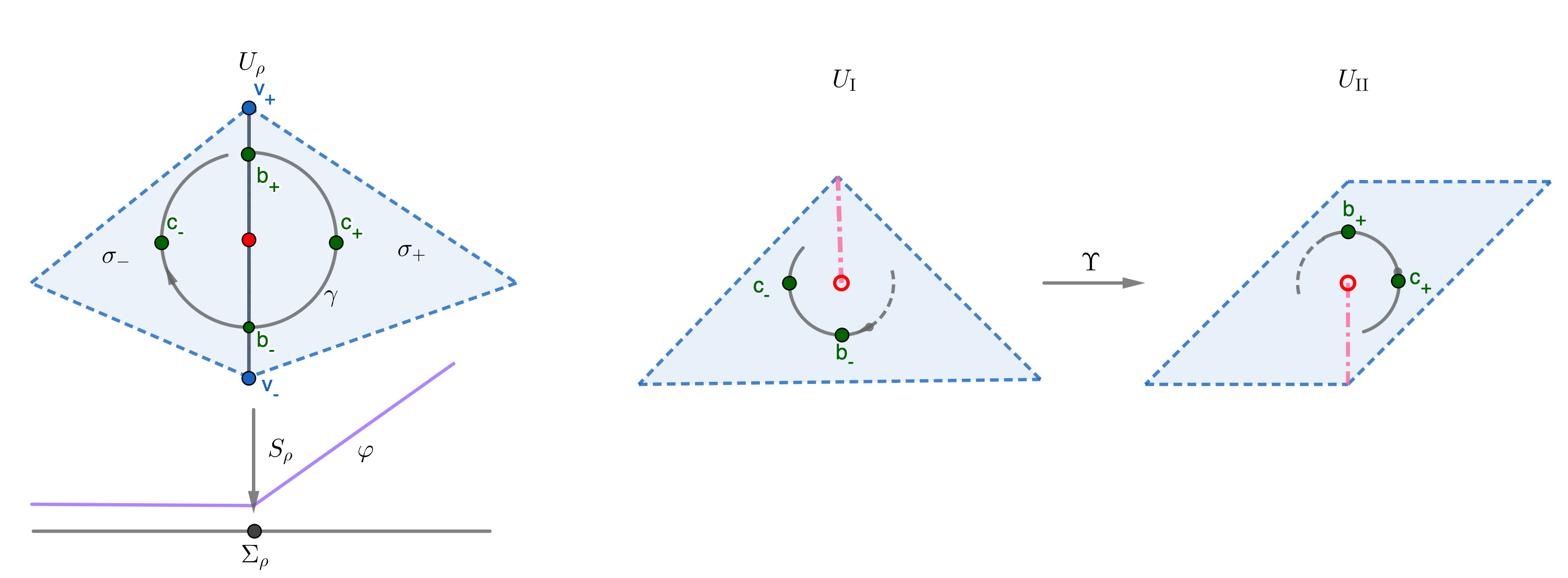}
		\caption{Analytic continuation along $\gamma$}\label{fig:affine_chart_2}
	\end{figure}
	First, we can identify both $\rho^{-1}P_{v_+}$ and $\rho^{-1}P_{v_-}$ with the monoid in the cone $P = \{ (x,y,z) \ | \ z\geq \varphi(x) \}$ via parallel transport through $\sigma_+$. Writing $u = z^{(1,0,1)}$, $v= z^{(-1,0,0)}$, $w = z^{(0,-1,0)}$ and $q = z^{(0,0,1)}$, we have $\comp[P] \cong \comp[u,v,w^{\pm}, q] / (uv-q)$ . 
    Now the analytic continuation of $u \in \modmap_*(\cu{O}_{\sflocmod{k}(\rho)_{v_+}})_{b_+}$ along $\gamma$ (going from the chart $U_{\mathrm{II}}$ to the chart $U_{\mathrm{I}}$ and then back to $U_{\mathrm{II}}$) is given by as a sequence of elements:
	$$
	\xymatrix@1{ 
		u \ar[r] & s_{\rho\sigma_+}((1,0))^{-1}u  \ar[r] & uw \ar[r] & s_{\rho\sigma_-}((1,0))^{-1} qv^{-1}w \ar[r] & wu,
	}
	$$ 
	via the following sequence of maps between the stalks over $b_+, c_+ \in U_{\mathrm{II}}$ and $b_-, c_- \in U_{\mathrm{I}}$:
	$$
	\xymatrix@1{ 
		 \modmap_*(\cu{O}_{\sflocmod{k}(\rho)_{v_+}})_{b_+} \ar[r] &  \modmap_*(\cu{O}_{\localmod{k}(\sigma_{+})^{\dagger}})_{c_+} \ar[r] &  \modmap_*(\cu{O}_{\sflocmod{k}(\rho)_{v_-}})_{b_-} \ar[r] &  \modmap_*(\cu{O}_{\localmod{k}(\sigma_{-})^{\dagger}})_{c_-} \ar[r] &  \modmap_*(\cu{O}_{\sflocmod{k}(\rho)_{v_+}})_{b_+}}.
	$$
	So we see that the analytic continuation along $\gamma$ maps $u$ to $wu$. 
\end{example}

$\sfbva{k}^*_{\mathrm{sf}}$ is equipped with the BV algebra structure inherited from $\spec(\comp[\rho^{-1}P_v])^{\dagger}$ (as described in the beginning of \S \ref{subsec:semi-flat_dgBV}), which agrees with the one induced from the volume form $\volf{k}_0$. This allows us to define the \emph{sheaf of semi-flat tropical vertex Lie algebras} as
\begin{equation}\label{eqn:tropical_vertex_lie_algebra}
\sftvbva{k}:= \mathrm{Ker}(\bvd{})|_{\sfbva{k}^{-1}_{\mathrm{sf}}}[-1]. 
\end{equation}
\begin{remark}
	The sheaf can actually be extended over the non-essential singular locus $\tsing  \setminus \tsing_e$ because the monodromy around that locus acts trivially, but this is not necessary for our later discussion.
\end{remark}

\subsubsection{Explicit gluing away from codimension $2$}\label{subsubsec:explicit_gluing_away_from_codimension_2}

When we define the sheaves $\bva{k}^{*}_{\alpha}$'s in \S \ref{subsubsec:local_deformation_data}, the open subset $W_{\alpha}$ is taken to be a sufficiently small neighborhood of $x \in \reint(\tau)$ for some $\tau \in \pdecomp$. In fact, we can choose one of these open subsets to be the large open dense subset $W_0$. In this subsection, we construct the sheaves $\bva{k}^*_0$ and $\tbva{k}{}^*_{0}$ on $W_0$ using an explicit gluing of the underlying complex analytic space.

Over $\reint(\sigma)$ for $\sigma \in \pdecomp^{[n]}$ or over $\mathscr{W}_{\rho}$ for $\rho \in \pdecomp^{[n-1]}$ with $\tsing_e \cap \reint(\rho) = \emptyset$, we have $\bva{k}^*_0 = \sfbva{k}^*_{\mathrm{sf}}$, which was just constructed in \S \ref{subsubsec:semi-flat}. So it remains to consider $\rho \in \pdecomp^{[n-1]}$ such that $\tsing_e \cap \reint(\rho) \neq \emptyset$.
The log structure of $V(\rho)^{\dagger}$ is prescribed by the slab functions $f_{v,\rho} \in \Gamma(\cu{O}_{V_{\rho}(v)})$'s, which restrict to functions $s^{-1}_{v,\rho}(f_{v,\rho})$'s on the torus $\spec(\comp[\tanpoly_{\rho}]) \cong (\comp^*)^{n-1}$. Each of these can be pulled back via the natural projection $\spec(\comp[\rho^{-1}\Sigma_{v}]) \rightarrow \spec(\comp[\tanpoly_{\rho}])$ to give a function on $\spec(\comp[\rho^{-1}\Sigma_{v}])$. In this case, we may fix the log chart $V(\rho)^{\dagger}|_{\modmap^{-1}(\mathscr{W}_{\rho,v})} \rightarrow \spec(\comp[\rho^{-1}P_{v}])^{\dagger}$ given by the equation 
$$
z^m \mapsto
\begin{dcases*} 
 z^{\bar{m}} & \text{if $\langle \check{d}_{\rho}, \bar{m} \rangle \geq 0$ }, \\
 z^{\bar{m}} \big( s^{-1}_{v\rho}(f_{v,\rho})\big)^{\langle \check{d}_{\rho} , \bar{m} \rangle } & \text{if $\langle \check{d}_{\rho}, \bar{m} \rangle \leq 0$ }.
\end{dcases*}
$$
Write $\localmod{k}(\rho)_{v}^{\dagger}$ for the corresponding $k^{\text{th}}$-order thickening in $\spec(\comp[\rho^{-1}P_{v}])$, which gives a local model for smoothing $V(\rho)|_{\modmap^{-1}(\mathscr{W}_{\rho,v})}$ (as in \S \ref{sec:deformation_via_dgBV}). We take 
$$\bva{k}_0^*|_{\mathscr{W}_{\rho,v}} :=\modmap_* \Big(\bigwedge\nolimits^{-*} \Theta_{\localmod{k}(\rho)_v^{\dagger}/\logsk{k}}\Big).$$

We have to specify the gluing on the overlap $\mathscr{W}_{\rho,v} \cap \reint(\sigma_\pm)$ with the adjacent maximal cells $\sigma_{\pm}$. This is given by first using parallel transport through $v$ to relate the monoids $\sigma_{\pm}^{-1}P_v$ and $\rho^{-1}P_v$ as in the semi-flat case, and then an embedding $\spec(\comp[\sigma_{\pm}^{-1}P_{v}/q^{k+1}]) \rightarrow \spec(\comp[\rho^{-1}P_{v}/q^{k+1}])$ via the formula 
\begin{equation}\label{eqn:correction_by_wall_crossing}
z^m \mapsto
\begin{dcases*} 
	s_{\rho\sigma_+}^{-1}(m) z^{m} & \text{for $\sigma_+$ }, \\
	 s_{\rho\sigma_-}^{-1}(m) z^{m} \big( s^{-1}_{v\sigma_-}(f_{v,\rho})\big)^{\langle \check{d}_{\rho} , \bar{m} \rangle } & \text{for $\sigma_-$ }, 
\end{dcases*}
\end{equation}
where $s_{v\sigma_\pm}$, $s_{\rho\sigma_\pm}$ are treated as maps $\tanpoly_{\sigma_{\pm}} \oplus \inte \rightarrow \comp^*$ as before. We observe that there is a commutative diagram of log morphisms
$$
\xymatrix@1{
V(\sigma_{\pm})^{\dagger}|_{\mathscr{D}} \ar[r] \ar[d] &  \localmod{k}(\sigma_{\pm})^{\dagger}|_{\mathscr{D}} \ar[d]\\
V(\rho)^{\dagger}|_{\mathscr{D}} \ar[r] & \localmod{k}(\rho)^{\dagger}|_{\mathscr{D}}
},
$$
where $\mathscr{D} = \modmap^{-1}(\mathscr{W}_{\rho,v} \cap \reint(\sigma_{\pm}))$. The induced isomorphism 
$$\modmap_* \Big(\bigwedge\nolimits^{-*} \Theta_{\localmod{k}(\rho)_v^{\dagger}/\logsk{k}}\Big) \cong \modmap_* \Big(\bigwedge\nolimits^{-*} \Theta_{\localmod{k}(\sigma_{\pm})_v^{\dagger}/\logsk{k}}\Big)$$
of sheaves on the overlap $\mathscr{D}$ then provides the gluing for defining the sheaf $\bva{k}^*_0$ on $W_0$. Hence, we obtain a sheaf $\bva{k}^*_0$ of BV algebras, where the BV structure is inherited from the local models $\spec(\comp[\sigma^{-1}P_v])$ and $\spec(\comp[\rho^{-1}P_v])$. Similarly, we can define the sheaf $\tbva{k}{}^*_0$ of log de Rham forms over $W_0$, together with a relative volume form $\volf{k}_0 \in \tbva{k}{\parallel}^n_0(W_0)$ by gluing the local $\mu_v$'s.

\subsubsection{Relation between the semi-flat dgBV algebra and the log structure}\label{subsubsec:relating_semi_flat_and_actual_construction}

The difference between $\bva{k}^*_0$ and $\sfbva{k}^*_{\mathrm{sf}}$ is that analytic continuation along a path $\gamma$ in $\reint(\sigma_{\pm}) \cup \reint(\rho)$, where $\rho = \sigma_+ \cap \sigma_-$, induces a non-trivial action on $\sfbva{k}^*_{\mathrm{sf}}$ (the semi-flat sheaf) but not on $\bva{k}^*_0$ (the corrected sheaf).
This is because, near a singular point $p \in \Gamma$ of the affine structure on $B$, there is another local model $\bva{k}^*_{\alpha}$ for $p \in W_{\alpha}$ constructed in \ref{subsubsec:local_deformation_data}, where restrictions of sections are invariant under analytic continuation (cf. Example \ref{eg:monodromy_action_on_semi_flat_sheaf}).
This is in line with the philosophy that monodromy is being cancelled by the slab functions $f_{v,\rho}$'s (which we also call \emph{initial wall-crossing factors}). In view of this, we should be able to relate the sheaves $\bva{k}^*_0$ and $\sfbva{k}^*_{\mathrm{sf}}$ by adding back the initial wall-crossing factors $f_{v,\rho}$'s. 

Recall that the slab function $f_{v,\rho}$ is a function on $V_\rho(v) \subset \centerfiber{0}_{\rho}$, whose zero locus is $Z^{\rho}_1 \cap V_{\rho}(v)$ for $\rho$ such that $\tsing_e \cap \reint(\rho) \neq \emptyset$. 
Also recall that, for $\rho$ containing $v$, $\rho_{v}$ is the unique contractible component in $\rho \cap \mathscr{C}^{-1}(B\setminus \tsing)$ such that $v \in \rho_{v}$, as defined in Assumption \ref{assum:existence_of_contraction}. Note that the inverse image $\mu^{-1}(\rho_v) \subset V_{\rho}(v)$ under the generalized moment map $\mu$ is also a contractible open subset. It contains the $0$-dimensional stratum $x_v$ in $V_\rho(v)$ that corresponds to $v$.
Since $f_{v,\rho} (x_v) =1$, we can define $\log(f_{v,\rho})$ in a small neighborhood of $x_v$, and it can further be extended to the whole of $\mu^{-1}(\rho_v) \subset V_{\rho}(v)$ because this subset is contractible. 
Restricting to the open dense torus orbit $\spec(\comp[\tanpoly_{\rho}]) \cap \mu^{-1}(\rho_v)$, we obtain $\log(s_{v\rho}^{-1}(f_{v,\rho}))$, which can in addition be lifted to a section in $\sfbva{k}^0_{\mathrm{sf}}(\mathscr{W}_{\rho,v}) = \Gamma(\mathscr{W}_{\rho,v}, \cu{O}_{\sflocmod{k}(\rho)_{v}})$ for a sufficiently small $\mathscr{W}_{\rho,v}$. 

Now we resolve the sheaves $\bva{k}^*_0$ and $\sfbva{k}^*_{\mathrm{sf}}$ by the complex $\tform^*$ introduced in \S \ref{sec:asymptotic_support}. We let 
$$\sfpolyv{k}^{*,*}_{\mathrm{sf}} := \tform^*|_{W_0} \otimes_{\real} \sfbva{k}^*_{\mathrm{sf}}$$ 
and equip it with $\pdb_{\circ}=d \otimes 1$, $\bvd{}$ and $\wedge$, making it a sheaf of dgBV algebras. Over the open subset $\mathscr{W}_{\rho,v}$, using the explicit description of $\sfbva{k}^*_{\mathrm{sf}}|_{\mathscr{W}_{\rho,v}}$, we consider the element 
\begin{equation}\label{eqn:defining_wall}
	\phi_{v,\rho}:= -\delta_{v,\rho} \otimes \log(s_{v\rho}^{-1}(f_{v,\rho}))\partial_{\check{d}_\rho} \in \sfpolyv{k}^{-1,1}_{\mathrm{sf}}(\mathscr{W}_{\rho,v}),
	\end{equation}
where $\delta_{v,\rho}$ is any $1$-form with asymptotic support in $\reint(\rho)_v$ and whose integral over any curve transversal to $\reint(\rho)_v$ going from $\sigma_-$ to $\sigma_+$ is asymptotically $1$;
such a 1-form can be constructed using a family of bump functions in the normal direction of $\reint(\rho)_v$ similar to Example \ref{example:bump_form} (see also \cite[\S 4]{kwchan-leung-ma}). We can further extend the section $\phi_{v,\rho}$ to the whole $W_0$ by setting it to be $0$ outside a small neighborhood of $\reint(\rho)_v$ in $\mathscr{W}_{\rho,v}$. 

\begin{definition}\label{def:sheaf_of_sf_polyvector}
	The \emph{sheaf of semi-flat polyvector fields} is defined as 
    $$\sfpolyv{k}^{*,*}_{\mathrm{sf}} := \tform^*|_{W_0} \otimes_{\real} \sfbva{k}^*_{\mathrm{sf}},$$ 
    which is equipped with a BV operator $\bvd{}$, a wedge product $\wedge$ (and hence a Lie bracket $[\cdot,\cdot]$) and the operator
	$$
	\pdb_{\mathrm{sf}} := \pdb_{\circ} + [\phi_{\mathrm{in}},\cdot] = \pdb_{\circ} + \sum_{v,\rho} [\phi_{v,\rho}, \cdot],
	$$ 
	where $\pdb_{\circ} = d\otimes 1$ and $\phi_{\mathrm{in}} := \sum_{v,\rho} \phi_{v,\rho}$.
	We also define the \emph{sheaf of semi-flat log de Rham forms} as 
    $$\sftotaldr{k}{}^{*,*}_{\mathrm{sf}}:= \tform^*|_{W_0} \otimes_{\real} \sftbva{k}{}^*_{\mathrm{sf}},$$
    equipped with $\dpartial{}$, $\wedge$, 
	$$
	\pdb_{\mathrm{sf}} := \pdb_{\circ} + \sum_{v,\rho} \cu{L}_{\phi_{v,\rho}},
	$$
	and a contraction action $\mathbin{\lrcorner}$ by elements in $\sfpolyv{k}^{*,*}_{\mathrm{sf}}$. 
\end{definition}

It can be easily checked that $\pdb_{\mathrm{sf}}^2 = [\pdb_{\mathrm{sf}},\bvd{}] = 0$, so we have a sheaf of dgBV algebras. 

On the other hand, we write 
$$\polyv{k}^{*,*}_0 := \tform^*|_{W_0} \otimes_{\real} \bva{k}^*_0,$$
which is equipped with the operators $\pdb_0 = d\otimes 1$, $\bvd{}$ and $\wedge$. The following important lemma is a comparison between the two sheaves of dgBV algebras.

\begin{lemma}\label{lem:comparing_sheaf_of_dgbv}
	There exists a set of compatible isomorphisms
	$$
	\varPhi \colon \polyv{k}^{*,*}_0 \rightarrow \sfpolyv{k}^{*,*}_{\mathrm{sf}},\ k \in \mathbb{N}
	$$
	of sheaves of dgBV algebras such that $\varPhi\circ \pdb_0 = \pdb_{\mathrm{sf}} \circ \varPhi$ for each $k \in \mathbb{N}$.
	
	There also exists a set of compatible isomorphisms 
	$$
	\varPhi \colon \totaldr{k}{}^{*,*}_0 \rightarrow \sftotaldr{k}{}^{*,*}_{\mathrm{sf}},\ k \in \mathbb{N}
	$$
	of sheaves of dgas preserving the contraction action $\mathbin{\lrcorner}$ and such that $\varPhi\circ \pdb_0 = \pdb_{\mathrm{sf}} \circ \varPhi$ for each $k \in \mathbb{N}$. Furthermore, the relative volume form $\volf{k}_0$ is identified via $\varPhi$.
\end{lemma}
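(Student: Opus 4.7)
The plan is to construct $\varPhi$ as a gauge transformation $e^{[\theta,\cdot]}$ whose generator $\theta$ encodes precisely the discrepancy between the two local smoothing models. Away from the codimension-one cells $\rho$ meeting $\tsing_e$, the sheaves $\polyv{k}^{*,*}_0$ and $\sfpolyv{k}^{*,*}_{\text{sf}}$ are constructed from identical local data with identical gluings, so $\varPhi$ will be the identity there. The only discrepancy occurs over the neighborhoods $\mathscr{W}_{\rho,v}$ of $\reint(\rho)_v$ for $\rho \cap \tsing_e \neq \emptyset$: the corrected model $\localmod{k}(\rho)_v^{\dagger}$ embeds $V(\rho)$ into $\spec(\comp[\rho^{-1}P_v])$ twisted by the slab function $(s_{v\rho}^{-1}(f_{v,\rho}))^{\langle \check{d}_\rho,\bar m\rangle}$ on the $\sigma_-$ side, while $\sflocmod{k}(\rho)_v^{\dagger}$ embeds without this twist. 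Equivalently, the transition from the $\reint(\sigma_-)$ chart to the $\mathscr{W}_{\rho,v}$ chart differs between the two sheaves by exactly this factor.

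On each such $\mathscr{W}_{\rho,v}$, I choose a smoothened step function $\tilde H_{v,\rho} \in \tform^0(\mathscr{W}_{\rho,v})$ equal to $1$ on the $\sigma_-$ side and $0$ on the $\sigma_+$ side of $\reint(\rho)_v$, with $d\tilde H_{v,\rho} = \delta_{v,\rho}$, and set
$$
\theta_{v,\rho} := -\tilde H_{v,\rho} \otimes \log(s_{v\rho}^{-1}(f_{v,\rho}))\,\partial_{\check{d}_\rho} \in \sfpolyv{k}^{-1,0}_{\text{sf}}(\mathscr{W}_{\rho,v}).
$$
Since the slab functions are normalized so that $f_{v,\rho} = 1 + O(\mathbf{m})$, the logarithm is a well-defined nilpotent element at each finite order $k$. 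By Condition \ref{cond:good_cover_of_B}, the neighborhoods $\mathscr{W}_{\rho,v}$ are mutually disjoint, so $\theta := \sum_{v,\rho} \theta_{v,\rho}$ is a global section and we set $\varPhi := e^{[\theta,\cdot]}$. The action of $[\theta,\cdot]$ on a local monomial $z^m$ is multiplication by $\langle\bar m,\check d_\rho\rangle \tilde H_{v,\rho} \log(s_{v\rho}^{-1}(f_{v,\rho}))$, which exponentiates to multiplication by $(s_{v\rho}^{-1}(f_{v,\rho}))^{\tilde H_{v,\rho}\langle\check d_\rho,\bar m\rangle}$; on the $\sigma_-$ side this is the exact factor \eqref{eqn:correction_by_wall_crossing} distinguishing the two gluings, and on the $\sigma_+$ side it is the identity. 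Hence $\varPhi$ literally identifies the two sheaves of Gerstenhaber algebras.

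The intertwining $\varPhi \circ \pdb_0 = \pdb \circ \varPhi$ is the main computation. Since $\partial_{\check d_\rho}$ is a translation-invariant vector field annihilating $\log(s_{v\rho}^{-1}(f_{v,\rho}))$ (as $f_{v,\rho}$ is a function on the toric divisor $V(\rho)$ which is $\partial_{\check d_\rho}$-invariant), one finds $\pdb_0\theta_{v,\rho} = -\delta_{v,\rho} \otimes \log(s_{v\rho}^{-1}(f_{v,\rho}))\partial_{\check d_\rho} = \phi_{v,\rho}$, and the bracket $[\theta_{v,\rho},\phi_{v,\rho}]$ vanishes since it involves only the self-commuting vector field $\partial_{\check d_\rho}$. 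The dgLa conjugation identity (\cite[Lem. 2.5]{chan2019geometry}) therefore collapses to
$$
e^{[\theta,\cdot]} \circ \pdb_0 \circ e^{-[\theta,\cdot]} = \pdb_0 + [\pdb_0\theta,\cdot] = \pdb_0 + [\phi_{\text{in}},\cdot] = \pdb.
$$
Preservation of $\bvd{}$ follows from $\bvd{}\theta_{v,\rho} = 0$, which holds because $\bvd{}(\log(f_{v,\rho})\partial_{\check d_\rho}) = [\log(f_{v,\rho}),\partial_{\check d_\rho}] = -\partial_{\check d_\rho}(\log(f_{v,\rho})) = 0$, combined with the BV identity that $\bvd{}$ is a graded derivation of $[\cdot,\cdot]$. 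The de Rham statement is proved identically with $\varPhi = e^{\cu{L}_\theta}$ and uses $\cu{L}_\theta\volf{k}_0 = d(\theta\lrcorner\volf{k}_0)$ together with $\cu{L}_{\partial_{\check d_\rho}}\mu_v = 0$ to conclude that $\volf{k}_0$ is preserved up to an exact form that is absorbed in the comparison (compatibly with Lemma \ref{lem:preserving_volume_element_by_vector_fields}). Compatibility in $k$ is automatic since $\theta$ is $\mathbf{m}$-adic with natural truncations.

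The main delicacy will be verifying that the local prescription meshes correctly with the transitions into $\reint(\sigma_{\pm})$: the asymptotic support profile of $\tilde H_{v,\rho}$ must decay fast enough outside $\mathscr{W}_{\rho,v}$ so that $\varPhi$ extends as the identity across the boundary, and the finite-order truncation $\log(s_{v\rho}^{-1}(f_{v,\rho})) \pmod{\mathbf{m}^{k+1}}$ must lie in the correct subsheaf of $\sfpolyv{k}^{-1,0}_{\text{sf}}$ so that the exponential $e^{[\theta,\cdot]}$ is a well-defined endomorphism of dgBV algebras. Both issues are handled by the framework of tropical differential forms $\tform^*$ built in \S \ref{sec:asymptotic_support}, combined with the explicit description of the two local models given in \S \ref{subsubsec:semi-flat}--\S \ref{subsubsec:explicit_gluing_away_from_codimension_2}.
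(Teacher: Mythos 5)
Your ansatz $\varPhi = e^{[\theta,\cdot]}$ with a single step-function-like generator does reproduce the intertwining $\varPhi\circ \pdb_0 = \pdb\circ\varPhi$ (your $\theta$ is the paper's $\theta - \theta_0$ with $\theta_0 = \log(s_{v\rho}^{-1}(f_{v,\rho}))\partial_{\check{d}_\rho}$, and $\pdb_0\theta_0 = 0$), but it does not give an isomorphism of sheaves of Gerstenhaber algebras, and that is where the real content of the lemma lies. Over a slab neighborhood $\mathscr{W}_{v,\rho}$ the two sheaves do not merely carry different transition maps into the adjacent maximal cells --- their stalks are different rings: with $u,v$ as in the paper's proof one has $uv = q^{l}\, s_{v\rho}^{-1}(f_{v,\rho})$ in $\bva{k}^{0}_{0,x}$ but $uv = q^{l}$ in $\sfbva{k}^{0}_{\text{sf},x}$, and the BV relation for $\bvd{}(v\partial_n)$ acquires the extra term $\partial_n(\log s_{v\rho}^{-1}(f_{v,\rho}))\,v$ in the corrected model (see \eqref{eqn:BV-relations}). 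A single inner automorphism cannot bridge two distinct ring structures: with your $\theta$, on the $\sigma_-$ side one gets $\varPhi(u) = s_{v\rho}^{-1}(f_{v,\rho})\,u$ and $\varPhi(v) = s_{v\rho}^{-1}(f_{v,\rho})^{-1}v$, hence $\varPhi(u)\varPhi(v) = q^{l}$ while $\varPhi(u\cdot v) = q^{l}\, s_{v\rho}^{-1}(f_{v,\rho})$; multiplicativity fails by exactly one factor of the slab function, and the same failure occurs on the $\sigma_+$ side where your map is the identity. This is precisely why the paper defines $\varPhi_x$ asymmetrically on generators --- $e^{[\theta-\theta_0,\cdot]}$ on $u$ and on the $\partial_n$'s but $e^{[\theta,\cdot]}$ on $v$ --- and then verifies by hand that the product relation, the BV relations, and the chart transitions \eqref{eqn:correction_by_wall_crossing}, \eqref{eqn:PV-gluing} are all matched. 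Supplying this generator-by-generator definition and verification is the missing step; it is not a formal consequence of the gauge-transformation formalism.

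Two smaller points. Your justification that $\log(s_{v\rho}^{-1}(f_{v,\rho}))$ is nilpotent at finite order because ``$f_{v,\rho} = 1 + O(\mathbf{m})$'' is incorrect: slab functions are $0^{\text{th}}$-order data with non-constant values (their zero locus is $Z$), so $[\theta,\cdot]$ is not nilpotent; the exponential is instead well defined because it acts on a monomial of $\check{d}_\rho$-weight $w$ by multiplication by $s_{v\rho}^{-1}(f_{v,\rho})^{\pm w}$, the slab functions being non-vanishing on $\modmap^{-1}(\mathscr{W}_{v,\rho})$. Also, compatibility of $\varPhi$ with the gluings into $\reint(\sigma_\pm)$ is a genuine check against \eqref{eqn:PV-gluing} rather than something absorbed by the tropical-forms formalism; the paper carries it out explicitly. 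The volume-form statement, by contrast, does follow as you indicate from $\bvd{}(\theta) = \bvd{}(\theta_0) = 0$ --- exactly, not merely up to an exact form.
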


\begin{proof}
	Outside those $\reint(\rho)$'s such that $\tsing_e \cap \reint(\rho) \neq \emptyset$, the two sheaves are identical. So we will take a component $\reint(\rho)_v$ of $\reint(\rho) \setminus \tsing$ and compare the sheaves on a neighborhood $\mathscr{W}_{\rho,v}$. 
	
	We fix a point $x \in \reint(\rho)_v$ and describe the map $\Phi$ at the stalks of the two sheaves. First, the preimage $K:=\modmap^{-1}(x) \cong \tanpoly_{\rho,\real}^*/\tanpoly_{\rho}^*$ can be identified as a real $(n-1)$-dimensional torus in $\spec(\comp[\tanpoly_{\rho}]) \cong (\comp^*)^{n-1}$. We have an identification $\rho^{-1}\Sigma_v \cong  \Sigma_{\rho} \times \tanpoly_{\rho}$, and we choose the unique primitive element $m_{\rho} \in \Sigma_{\rho}$ in the ray pointing into $\sigma_+$. 
    As analytic spaces, we write 
    $$\spec(\comp[\Sigma_\rho]) = \{ uv=0\} \subset \comp^2,$$
    where $u = z^{m_\rho}$ and $v = z^{-m_{\rho}}$, and 
    $$\spec(\comp[\rho^{-1}\Sigma_v]) = (\comp^*)^{n-1} \times \{uv = 0 \}.$$ 
    The germ $\cu{O}_{V(\rho),K}$ of analytic functions can be written as 
	$$
	\cu{O}_{V(\rho),K} = \left\{a_0 +  \sum_{i=1}^{\infty} a_i u^i + \sum_{i=-1}^{-\infty}a_i v^{-i} \ \Big| \ a_i \in \cu{O}_{(\comp^*)^{n-1}}(U) \ \text{for neigh. $U\supset K$}, \  \sup_{i\neq 0} \frac{\log|a_i|}{|i|} < \infty  \right\}. 
	$$
	Using the embedding $V(\rho)|_{\modmap^{-1}(\mathscr{W}_{\rho,v})} \rightarrow \localmod{k}(\rho)_v^{\dagger}$ in \S \ref{subsubsec:explicit_gluing_away_from_codimension_2}, we can write
	\begin{align*}
	&\bva{k}^0_{0,x}=\cu{O}_{\localmod{k}(\rho)_v,K}=\\
	& \left\{\sum_{j=0}^k (a_{0,j} +  \sum_{i=1}^{\infty} a_{i,j} u^i + \sum_{i=-1}^{-\infty}a_{i,j} v^{-i}) q^j 
	 \Big| \ a_{i,j} \in \cu{O}_{(\comp^*)^{n-1}}(U)  \ \text{for neigh. $U\supset K$}, \  \sup_{i\neq 0} \frac{\log|a_{i,j}|}{|i|} < \infty  \right\}, 
	 \end{align*}
	with the relation $uv = q^l s_{v\rho}^{-1}(f_{v,\rho})$ (here $l$ is the change of slopes for $\varphi_v$ across $\rho$). For the elements $(m_{\rho},\varphi_v(m_{\rho}))$ and $(-m_{\rho},\varphi_v(-m_{\rho}))$ in $\rho^{-1}P_v$, we have the identities (we omit the dependence on $k$ when we write elements in the stalks of sheaves):
	\begin{align*}
	z^{(m_{\rho},\varphi_v(m_{\rho}))} & = u, \\ z^{-(-m_{\rho},\varphi_v(-m_{\rho}))} & = s_{v\rho}^{-1}(f_{v,\rho})^{-1} v, 
	\end{align*}
	describing the embedding $\localmod{k}(\rho)_v^{\dagger} \hookrightarrow \spec(\comp[\rho^{-1}P_v])^{\dagger}$. 
	For polyvector fields, we can write 
    $$\bva{k}^*_{0,x} = \bva{k}^0_{0,x} \otimes_{\real} \bigwedge^{-*} T_{v,\real}^*.$$ 
    The BV operator is described by the relations $\bvd{}(\partial_n) = 0$, $[\partial_{n_1},\partial_{n_2}] =0$, and 
	\begin{equation}\label{eqn:BV-relations}
		\begin{cases}
	[z^m,\partial_n] = \bvd{}(z^{m}\partial_n)  = \langle m,n\rangle z^m & \text{for $m$ with $\bar{m} \in \tanpoly_{\rho}, \ n \in T^*_{v,\real}$;}\\
	[u,\partial_n] =\bvd{} (u\partial_n)  = \langle m_{\rho},n\rangle u & \text{for $n \in T^*_{v,\real}$};\\
	[v,\partial_n] =\bvd{}(v\partial_n)  =  \langle -m_{\rho},n \rangle v + \partial_n(\log s_{v\rho}^{-1}(f_{v,\rho}))  v & \text{for $n \in T^*_{v,\real}$}.	
	\end{cases}
	\end{equation}

Similarly, we can write down the stalk for $\sfbva{k}^*_{\mathrm{sf},x} = \sfbva{k}^*_{\mathrm{sf},x} \otimes_{\real} \bigwedge^{-*} T_{v,\real}^*$. As a module over $\cu{O}_{(\comp^*)^{n-1},K}\otimes_{\comp} \comp[q]/(q^{k+1})$, we have $\sfbva{k}^*_{\mathrm{sf},x} = \bva{k}^*_{0,x}$; the ring structure on $\sfbva{k}^0_{\mathrm{sf},x}$ differs from that on $\bva{k}^0_{0,x}$ and is determined by the relation $uv = q^l$. The embedding $\sflocmod{k}(\rho)_v^{\dagger} \hookrightarrow \spec(\comp[\rho^{-1}P_v])^{\dagger}$ is given by 
	\begin{align*}
	z^{(m_{\rho},\varphi_v(m_{\rho}))} & = u, \\
	z^{-(-m_{\rho},\varphi_v(-m_{\rho}))} & =  v.
\end{align*}
The formulae for the BV operator are the same as that for $\bva{k}^*_{0,x}$, except that for the last equation in \eqref{eqn:BV-relations}, we have $[v,\partial_n] = \bvd{}(v\partial_n) = \langle -m_\rho,n\rangle v$ instead. 

We apply the argument in \cite[\S 4]{kwchan-leung-ma}, where we considered a scattering diagram consisting of only one wall, to relate these two sheaves. We can find a set of compatible elements $\theta = (\prescript{k}{}{\theta})_{k\in \mathbb{N}}$, where $\prescript{k}{}{\theta} \in \sfpolyv{k}^{-1,0}_{\mathrm{sf}}(\mathscr{W}_{\rho,v})$ for $k\in \mathbb{N}$, such that $e^{\theta}*\pdb_{\circ}  = \pdb_{\mathrm{sf}}$ and $\bvd{}(\theta) = 0$. Explicitly, $\theta$ is a step-function-like section of the form
$$
\theta = \begin{dcases}
	\log(s_{v\rho}^{-1}(f_{v,\rho})) \partial_{\check{d}_{\rho}} & \text{on $\reint(\sigma_+) \cap \mathscr{W}_{\rho,v}$,}\\
	0 & \text{on $\reint(\sigma_-) \cap \mathscr{W}_{\rho,v}$.}
	\end{dcases}
$$
For each $k\in \mathbb{N}$, we also define $\theta_0:= \log(s_{v\rho}^{-1}(f_{v,\rho})) \partial_{\check{d}_{\rho}}$, as an element in $\sfbva{k}^{-1}_{\mathrm{sf}}(\mathscr{W}_{\rho,v})$. Now we define the map $\varPhi_{x} \colon \polyv{k}^{*,*}_{0,x} \rightarrow \sfpolyv{k}_{\mathrm{sf},x}^{*,*}$ at the stalks by writing 
$$\polyv{k}^{*,*}_{0,x} = \tform_x^* \otimes_{\real} \bva{k}^0_{0,x} \otimes_{\real} \bigwedge^{-*} T_{v,\real}^*,$$ 
(and similarly for $\sfpolyv{k}^{*,*}_{\mathrm{sf},x}$), and extending the formulae 
\begin{equation*}
	\begin{cases}
	\varPhi_x(\alpha) = \alpha & \text{for $\alpha \in \tform_x$},\\
	\varPhi_x(f) = e^{[\theta,\cdot]}f = f  & \text{for $f \in \cu{O}_{(\comp^*)^{n-1},K}$}, \\
	\varPhi_x(u) = e^{[\theta-\theta_0,\cdot]} u, & \\
	\varPhi_x(v) = e^{[\theta,\cdot]} v ,& \\
	\varPhi_x(\partial_n) = e^{[\theta-\theta_0,\cdot]} \partial_n & \text{for $n \in T_{v,\real}^*$}
	\end{cases}
\end{equation*}
through the tensor product $\otimes_{\real}$ and skew-symmetrically in $\partial_n$'s.

To see that $\varPhi$ is the desired isomorphism, we check all the relations by computations:
\begin{itemize}
	\item Since $e^{[\theta,\cdot]} \circ \pdb_{\circ} \circ e^{-[\theta,\cdot]} = \pdb_{\mathrm{sf}}$, we have
	$$
	\pdb_{\mathrm{sf}} \varPhi_x(u) = e^{[\theta,\cdot]} \pdb_{\circ} ( e^{-[\theta_0,\cdot]} u) = 0;
	$$
	similarly, we have $\pdb_{\mathrm{sf}}( \varPhi_x(v) ) = 0 = \pdb_{\mathrm{sf}}(\varPhi_x (\partial_n))$. Hence, we have $\varPhi_x \circ \pdb_0 = \pdb_{\mathrm{sf}} \circ \varPhi_x$.
	
	\item We have $e^{-[\theta_0,\cdot]} u = s^{-1}_{v\rho}(f_{v,\rho}) u$ and  
	$$\varPhi_x(u) \varPhi_x(v) = e^{[\theta,\cdot]} (s^{-1}_{v\rho}(f_{v,\rho}) u) e^{[\theta,\cdot]}v =s^{-1}_{v\rho}(f_{v,\rho}) e^{[\theta,\cdot]} (uv) = q^l s^{-1}_{v\rho}(f_{v,\rho}) = \varPhi_x(uv ),
	$$ 
	i.e. the map $\varPhi_x$ preserves the product structure.
	
	\item From the fact that $\bvd{}(\theta) = 0 = \bvd{}(\theta_0)$, we see that $e^{[\theta-\theta_0,\cdot]}$ commutes with $\bvd{}$, and hence $\bvd{}(\varPhi_x(\partial_n)) = e^{[\theta-\theta_0,\cdot]} \bvd{}(\partial_n)=0$. We also have $[\varPhi_x(\partial_{n_1}),\varPhi_x(\partial_{n_2})] = e^{[\theta-\theta_0,\cdot]} [\partial_{n_1},\partial_{n_2}] = 0$.
	
	\item Again from $\bvd{}(\theta) = 0 = \bvd{}(\theta_0)$, we have
	\begin{align*}
		\bvd{}(\varPhi_x(u) \varPhi_x(\partial_n)) & = \bvd{}(e^{[\theta-\theta_0,\cdot]} (u\partial_n)) =e^{[\theta-\theta_0,\cdot]} \left( \bvd{}(u\partial_n) \right) \\
		& = \langle m_{\rho} , n \rangle e^{[\theta-\theta_0,\cdot]}(u) = \langle m_{\rho} , n \rangle \varPhi_x(u) = \varPhi_x(\bvd{}(u\partial_n)).
	\end{align*}
	
	\item Finally, we have 
	\begin{align*}
		\bvd{}(\varPhi_x(v) \varPhi_x(\partial_n)) & = \bvd{}\big(e^{[\theta-\theta_0,\cdot]} ( (e^{[\theta_0,\cdot]}v) \partial_n)\big) 
		=e^{[\theta-\theta_0,\cdot]} \big( \bvd{}(s_{v\rho}^{-1}(f_{v,\rho}) v \partial_n) \big)\\
		& = e^{[\theta-\theta_0,\cdot]} \big(\langle -m_{\rho}, n \rangle s^{-1}_{v\rho}(f_{v,\rho}) v + \partial_n(s^{-1}_{v\rho}(f_{v,\rho})) v \big) \\
		& = \langle -m_{\rho}, n \rangle (e^{[\theta,\cdot]}v) + \partial_n \big(\log s^{-1}_{v\rho}(f_{v,\rho}) \big) (e^{[\theta,\cdot]}v)\\ 
		& = \langle -m_{\rho}, n \rangle \varPhi_x(v) + \partial_n \big(\log s^{-1}_{v\rho}(f_{v,\rho}) \big) \varPhi_x(v)\\
		& = \varPhi_x(\bvd{}(v\partial_n)).
	\end{align*}
\end{itemize}
We conclude that  $\varPhi_x \colon \polyv{k}^{*,*}_{0,x} \rightarrow \sfpolyv{k}_{\mathrm{sf},x}^{*,*}$ is an isomorphism of dgBV algebras. 
We need to check that the map $\varPhi_x$ agrees with the isomorphism $\polyv{k}^{*,*}_{0}|_{\mathscr{C}} \rightarrow \sfpolyv{k}^{*,*}_{\mathrm{sf}}|_{\mathscr{C}}$ induced simply by the identity $\bva{k}^*_0|_{\mathscr{C}} \cong \sfbva{k}^*_{\mathrm{sf}}|_{\mathscr{C}}$, where $\mathscr{C} = W_0 \setminus \bigcup_{\tsing_e \cap \reint(\rho) \neq \emptyset} \reint(\rho)$. For this purpose, we consider two nearby maximal cells $\sigma_{\pm}$ such that $\sigma_+ \cap \sigma_- = \rho$. We have $\localmod{k}(\sigma_{\pm}) = \spec(\comp[\sigma^{-1}_{\pm} P_v]/q^{k+1})$, and the gluing of $\bva{k}^*_0$ over $\mathscr{W}_{\rho,v} \cap \sigma_+$ is given by parallel transporting through $v$, and then by the formulae
\begin{equation}\label{eqn:PV-gluing}
	\begin{cases}
	z^{m} \mapsto s^{-1}_{\rho\sigma_{+}}(m) z^m  & \text{for $m \in \tanpoly_{\rho}$},\\
	u \mapsto s^{-1}_{\rho\sigma_+}(m_{\rho}) z^{m_{\rho}}, & \\
	v \mapsto q^{l} s^{-1}_{v\sigma_+}(f_{v,\rho}) s^{-1}_{\rho\sigma_+}(-m_{\rho}) z^{-m_{\rho}}.
	\end{cases} 
\end{equation}
The only difference for gluing of $\sfbva{k}^*_{\mathrm{sf}}$ is the last equation in \eqref{eqn:PV-gluing}, which is now replaced by the formula $v \mapsto  q^{l} s^{-1}_{\rho\sigma_+}(-m_{\rho}) z^{-m_{\rho}}$. Since we have 
$$
\varPhi_x(v) = \begin{dcases}
	s_{v\rho}^{-1}(f_{v,\rho}) v & \text{on $U_x \cap \reint(\sigma_+)$}, \\
	v  & \text{on $U_x \cap \reint(\sigma_-)$}
\end{dcases}
$$
on a sufficiently small neighborhood $U_x$ of $x$, we see that $\varPhi_x(v) \mapsto  q^{l} s^{-1}_{v\sigma_+}(f_{v,\rho}) s^{-1}_{\rho\sigma_+}(-m_{\rho}) z^{-m_{\rho}}$ under the gluing map of $\sfbva{k}^*_{\mathrm{sf}}$ on $U_x \cap \reint(\sigma_+)$. This shows the compatibility of $\varPhi_x$ with the gluing of $\bva{k}^*_0$ and $\sfbva{k}^*_{\mathrm{sf}}$ over $U_x \cap \reint(\sigma_+)$. A similar argument applies for $U_x \cap \reint(\sigma_-)$.  

The proof for $\varPhi\colon \totaldr{k}{}^{*,*}_0 \rightarrow \sftotaldr{k}{}^{*,*}_{\mathrm{sf}}$ is similar and will be omitted. The volume form is preserved under $\varPhi$ because we have $\bvd{}(\theta) = 0 = \bvd{}(\theta_0)$. This completes the proof of the lemma.
\end{proof}

\subsubsection{A global sheaf of dgLas from gluing of the semi-flat sheaves}\label{subsubsec:gluing_using_semi_flat}

We shall apply the procedure described in \S \ref{subsubsec:gluing_construction} to the semi-flat sheaves to glue a global sheaf of dgLas. First of all, we choose an open cover $\{W_{\alpha}\}_{\alpha \in \mathscr{I}}$ satisfying the Condition \ref{cond:good_cover_of_B}, together with a decomposition $\mathscr{I} = \mathscr{I}_1 \sqcup \mathscr{I}_2$ such that $\mathcal{W}_1 = \{W_{\alpha}\}_{\alpha \in \mathscr{I}_1}$ is a cover of the semi-flat part $W_0$, and $\mathcal{W}_2 = \{W_{\alpha}\}_{\alpha \in \mathscr{I}_2}$ is a cover of a neighborhood of $\big( \bigcup_{\tau \in \pdecomp^{[n-2]}}\tau \big) \cup  \big( \bigcup_{\rho \cap \tsing_e \neq \emptyset} \tsing \cap \reint(\rho) \big)$. 

For each $W_{\alpha}$, we have a compatible set of local sheaves $\bva{k}^*_{\alpha}$ of BV algebras, local sheaves $\tbva{k}{}^*_{\alpha}$ of dgas, and relative volume elements $\volf{k}_{\alpha}$, $k \in \mathbb{N}$ (as in \S \ref{subsubsec:local_deformation_data}). We can further demand that, over the semi-flat part $W_0$, we have $\bva{k}^*_{\alpha} = \bva{k}^*_0|_{W_{\alpha}}$, $\tbva{k}{}^*_{\alpha} = \tbva{k}{}^*_0|_{W_{\alpha}}$ and $\volf{k}_{\alpha} = \volf{k}_0 |_{W_{\alpha}}$, and hence $\polyv{k}^{*,*}_{\alpha} = \polyv{k}^{*,*}_0|_{W_{\alpha}}$ and $\totaldr{k}{}^{*,*}_{\alpha} = \totaldr{k}{}^{*,*}_0|_{W_{\alpha}}$ for $\alpha \in \mathscr{I}_1$.

Using the construction in \S \ref{subsubsec:gluing_construction}, we obtain a Gerstenhaber deformation $\glue{k}_{\alpha\beta} = e^{[\theta_{\alpha\beta},\cdot]} \circ \patch{k}_{\alpha\beta}$ specified by $\theta_{\alpha\beta} \in \polyv{k}^{-1,0}_\beta(W_{\alpha\beta})$, which give rise to sets of compatible global sheaves $\polyv{k}^{*,*}$ and $\totaldr{k}{}^{*,*}$, $k \in \mathbb{N}$. Restricting to the semi-flat part, we get two Gerstenhaber deformations $\polyv{k}^{*,*}_0$ and $\polyv{k}^{*,*}|_{W_{0}}$, which must be equivalent as $\check{H}^{>0}(\mathcal{W}_1, \polyv{0}^{-1,0}|_{W_0}) = 0$. So we have a set of compatible isomorphisms locally given by $h_{\alpha} = e^{[\mathbf{b}_\alpha,\cdot]}\colon \polyv{k}^{*,*}_0|_{W_{\alpha}} \rightarrow \polyv{k}^{*,*}|_{W_{\alpha}} \cong \polyv{k}^{*,*}_{\alpha} $ for some $\mathbf{b}_{\alpha} \in \polyv{k}^{-1,0}_0(W_{\alpha})$, for each $k \in \mathbb{N}$, and they fit into the following commutative diagram
$$
\xymatrix@1{ \polyv{k}^{*,*}_0|_{W_{\alpha\beta}} \ar[r]^{\mathrm{id}} \ar[d]^{h_{\alpha}} &\polyv{k}^{*,*}_0|_{W_{\alpha\beta}} \ar[d]^{h_{\beta}} \\
	\polyv{k}^{*,*}_{\alpha}|_{W_{\alpha\beta}} \ar[r]^{\glue{k}_{\alpha\beta}} & \polyv{k}^{*,*}_{\beta}|_{W_{\alpha\beta}}.
}
$$
Since the pre-differential on $\polyv{k}^{*,*}|_{W_{0}}$ obtained from the construction in \S \ref{subsubsec:gluing_construction} is of the form $\pdb_\alpha + [\eta_{\alpha},\cdot]$ for some $\eta_{\alpha} \in \polyv{k}^{-1,1}_{0}(W_{\alpha})$, pulling back via $h_{\alpha}$ gives a global element $\eta \in \polyv{k}^{-1,1}_{0}(W_0)$ such that
$$
h_{\alpha}^{-1}\circ (\pdb_\alpha + [\eta_{\alpha},\cdot]) \circ h_{\alpha} = \pdb_0 + [\eta,\cdot]. 
$$
Theorem \ref{prop:Maurer_cartan_equation_unobstructed} gives a Maurer--Cartan solution $\phi \in \polyv{k}^{-1,1}(B)$ such that $(\pdb+[\phi,\cdot])^2=0$, together with a holomorphic volume form $e^{f} \volf{}$, compatible for each $k$. We denote the pullback of $\phi$ under $h_{\alpha}$'s to $\polyv{k}^{-1,1}_0(W_0)$ as $\phi_0$, and that of volume form to $\totaldr{k}{\parallel}^{n,0}_0(W_0)$ as $e^{g} \volf{}_0$. We see that the equation
$$(\pdb_0 + \cu{L}_{\eta+\phi_0}) e^{g} \volf{}_0=0$$
is satisfied, or equivalently, that $\eta+\phi_0 + tg$ is a solution to the extended Maurer--Cartan equation \ref{eqn:extended_maurer_cartan_equation}.

\begin{lemma}\label{lem:vector field}
	If the holomorphic volume form $e^{f} \volf{}$ is normalized in the sense of Definition \ref{def:normalized_volume_form}, then we can find a set of compatible $\cu{V} \in \polyv{k}^{-1,0}_0(W_0)$, $k \in \mathbb{N}$ such that 
	$$
	e^{-\cu{L}_{\cu{V}}} \volf{}_0 = e^{g} \volf{}_0. 
	$$
	As a consequence, the Maurer--Cartan solution $\eta+\phi_0 + tg$ is gauge equivalent to a solution of the form $\zeta_0 + t\cdot 0$ for some $\zeta_0 \in \polyv{k}^{-1,1}_0(W_0)$, via the gauge transformation $e^{[\cu{V},\cdot]} \colon \polyv{k}^{*,*}_0 \rightarrow \polyv{k}^{*,*}_0$.  
\end{lemma}

\begin{proof}
	We should construct $\cu{V}$ by induction on $k$ as in the proof of Lemma \ref{lem:preserving_volume_element_by_vector_fields}. Namely, suppose $\cu{V}$ is constructed for the $(k-1)^{\text{st}}$-order, then we shall lift it to the $k^{\text{th}}$-order. 
	We prove the existence of a lifting $\cu{V}_x \in \polyv{k}^{-1,0}_{0,x}$ at every stalk $x \in W_0$ and use partition of unity to glue a global lifting $\cu{V}$. 
 
    First of all, we can always find a gauge transformation $\theta \in \polyv{k}^{-1,0}_{0,x}$ such that 
	$$
	e^{-[\theta,\cdot]} \circ \pdb_0 \circ e^{[\theta,\cdot]} = \pdb_0 + [\eta+\phi_0,\cdot]. 
	$$
	So we have $\pdb_0 (e^{\cu{L}_\theta} e^{g} \volf{}_0)=0$, which implies that $e^{\cu{L}_\theta} e^{g} \volf{}_0 \in \tbva{k}{\parallel}^n_{0,x}$. We can write $e^{\cu{L}_\theta} e^{g} \volf{}_0 = e^{h} \volf{}_0$ in the stalk at $x$ for some germ $h\in \bva{k}^0_{0,x}$ of holomorphic functions. Applying Lemma \ref{lem:preserving_volume_element_by_vector_fields}, we can further choose $\theta$ so that $h=h(q) \in (q)\subset \comp[q]/q^{k+1}$. In a sufficiently small neighborhood $U_x$, we find an element $\varrho_x \in \tform^n(U_x)$ as in Definition \ref{def:normalized_volume_form}. The fact that the volume form is normalized forces $e^{h(q)} [\volf{}_0\wedge \varrho_x]$ to be constant with respect to the Gauss--Manin connection $\gmc{k}$. Tracing through the exact sequence \eqref{eqn:Gauss_Manin_connection_definition} on $U_x$, we can lift $\volf{}_0$ to $\tbva{k}{}^n_{0}(U_x)$ which is closed under $\dpartial{}$. As a consequence, we have $\gmc{k}_{\dd{\log q}}[\volf{}_0\wedge \varrho_x] = 0$, and hence we conclude that $h(q) =0$.  
	
	Now we have to solve for a lifting $\cu{V}_x$ such that $e^{\cu{L}_{\theta}}e^{-\cu{L}_{\cu{V}_x}} \volf{}_0 = \volf{}_0$ up to the $k^{\text{th}}$-order. This is equivalent to solving for a lifting $u$ satisfying $e^{\cu{L}_{u}} \volf{}_0 = \volf{}_0$ for the $k^{\text{th}}$-order once the $(k-1)^{\text{st}}$-order is given. Take an arbitrary lifting $\tilde{u}$ to the $k^{\text{th}}$-order, and making use of the formula in \cite[Lem. 2.8]{chan2019geometry}, we have
	$$
	e^{\cu{L}_{\tilde{u}}} \volf{}_0 = \exp\left( \sum_{s=0}^{\infty} \frac{\delta_{\tilde{u}}^{s}}{(s+1)!} \bvd{}(\tilde{u}) \right) \volf{}_0,
	$$  
	where $\delta_{\tilde{u}} = -[\tilde{u},\cdot]$. From $e^{\cu{L}_{\tilde{u}}} \volf{}_0 = \volf{}_0 \ (\text{mod $\mathbf{m}^k$})$, we use induction on the order $j$ to prove that $\bvd{}(\tilde{u})=0$ up to order $(k-1)$. Therefore we can write 
    $$\bvd{}(\tilde{u}) =q^{k} \bvd{}(\breve{u}) \ (\text{mod $\mathbf{m}^k$})$$ 
    for some $\breve{u} \in \polyv{0}^{-1,0}_{0,x}$, by the fact that the cohomology sheaf under $\bvd{}$ is free over $\cfrk{k}= \comp[q]/(q^{k+1})$ (see the discussion right after Condition \ref{cond:holomorphic_poincare_lemma}). Setting $u = \tilde{u} - q^{k} \breve{u}$ will then solve the equation. 
\end{proof}

The element $\cu{V}$ obtained in Lemma \ref{lem:vector field} can be used to conjugate the operator $\pdb_0+[\eta+\phi_0,\cdot]$ to get $\pdb_0 + [\zeta_0,\cdot]$, i.e.
$$
e^{-[\cu{V},\cdot]}\circ (\pdb_0 + [\zeta_0,\cdot]) \circ e^{[\cu{V},\cdot]} = \pdb_0+[\eta+\phi_0,\cdot].
$$
The volume form $\volf{}_0$ will be holomorphic under the operator $\pdb_0 + [\zeta_0,\cdot]$. From the equation \eqref{eqn:volume_form_equation}, we observe that $\bvd{}(\zeta_0) = 0$. 
Furthermore, the image of $\zeta_0$ under the isomorphism $\varPhi \colon \polyv{k}^{*,*}_0 \rightarrow \sfpolyv{k}^{*,*}_{\mathrm{sf}}$ in Lemma \ref{lem:comparing_sheaf_of_dgbv} gives $\phi_{\mathrm{s}} \in \sfpolyv{k}^{-1,1}_{\mathrm{sf}}(W_0)$, and an operator of the form 
\begin{equation}\label{eqn:dbar_operator_on_semi-flat}
	\pdb_{\circ} + [\phi_{\mathrm{in}} + \phi_{\mathrm{s}},\cdot] = \pdb_{\circ}+ \sum_{v,\rho} [\phi_{v,\rho},\cdot] + [\phi_{\mathrm{s}},\cdot],
\end{equation}
where $\phi_{\mathrm{in}} = \sum_{v,\rho} \phi_{v,\rho}$, that acts on $\sfpolyv{k}^{*,*}_{\mathrm{sf}}$.

Equipping with this operator, the semi-flat sheaf $\sfpolyv{k}^{*,*}_{\mathrm{sf}}$ can be glued to the sheaves $\polyv{k}^{*,*}_{\alpha}$'s for $\alpha \in \mathscr{I}_2$, preserving all the operators. More explicitly, on each overlap $W_{0\alpha}:=W_0 \cap W_{\alpha}$, we have 
\begin{equation}\label{eqn:gluing_of_semi-flat_sheaf_to_others}
\glue{k}_{0\alpha} \colon \sfpolyv{k}^{*,*}_{\mathrm{sf}}|_{W_{0\alpha}}\rightarrow \polyv{k}^{*,*}|_{W_{0\alpha}}
\end{equation}
defined by 
$$\glue{k}_{\alpha\beta}\circ \glue{k}_{0\alpha}|_{W_{\alpha\beta}} := h_{\beta}\circ e^{-[\cu{V},\cdot]} \circ  \varPhi^{-1}|_{W_{\alpha\beta}}$$ 
for $\beta \in \mathscr{I}_1$, which sends the operator $\pdb_{\circ} + [\phi_{\mathrm{in}} + \phi_{\mathrm{s}},\cdot]$ to $\pdb_\alpha + [\eta_{\alpha}+\phi,\cdot]$. 

\begin{definition}\label{def:semi_flat_tropical_vertex_lie_algebra}
	We call $\sftropv{k}^*_{\mathrm{sf}}:= \mathrm{Ker}(\bvd{})[-1] \subset \sfpolyv{k}^{-1,*}_{\mathrm{sf}}[-1]$, equipped with the structure of a dgLa using $\pdb_{\circ}$ and $[\cdot,\cdot]$ inherited from $\sfpolyv{k}^{-1,*}_{\mathrm{sf}}$, the \emph{sheaf of semi-flat tropical vertex differential graded Lie algebras (abbrev.\ as sf-TVdgLa)}. 
\end{definition}
Note that $\sftropv{k}^*_{\mathrm{sf}} \cong \tform^*|_{W_0} \otimes_{\real} \sftvbva{k}$.
Also, we have $\bvd{}(\phi_{\mathrm{s}})=0$ since $\bvd{}(\zeta_0)=0$, and a direct computation shows that $\bvd{}(\phi_{\mathrm{in}})=0$. Thus $\phi_{\mathrm{in}}, \phi_{\mathrm{s}} \in \sftropv{k}^{1}_{\mathrm{sf}}(W_0)$, and the operator $\pdb_{\circ} + [\phi_{\mathrm{in}}+\phi_{\mathrm{s}},\cdot]$ preserves the sub-dgLa $\sftropv{k}^*_{\mathrm{sf}}$.

From the description of the sheaf $\tform^*$, we can see that locally on $U \subset W_0$, $\phi_{\mathrm{s}}$ is supported on finitely many codimension one polyhedral subsets, called \emph{walls} or \emph{slabs}, which are constituents of a scattering diagram. This is why we use the subscript `s' in $\phi_{\mathrm{s}}$, which stands for `scattering'.

\subsection{Consistent scattering diagrams and Maurer--Cartan solutions}\label{subsec:scattering_diagram_from_MC_solution}

\subsubsection{Scattering diagrams}\label{subsubsec:scattering_diagram}

In this subsection, we recall the notion of scattering diagrams introduced by Kontsevich--Soibelman \cite{kontsevich-soibelman04} and Gross--Siebert \cite{gross2011real}, and make modifications to suit our needs. We begin with the notion of walls from \cite[\S 2]{gross2011real}. Let 
$$\nsf = \left( \bigcup_{\tau \in \pdecomp^{[n-2]}}\tau \right) \cup  \left( \bigcup_{\substack{\rho \in \pdecomp^{[n-1]}\\ \rho \cap \tsing_e \neq \emptyset}} \tsing \cap \reint(\rho) \right)$$
be equipped with a polyhedral decomposition induced from $\pdecomp$ and $\tsing$. For the exposition below, we will always fix $k>0$ and consider all these structures modulo $\mathbf{m}^{k+1} = (q^{k+1})$.   

\begin{definition}\label{def:walls}
	A \emph{wall} $(\mathbf{w},\sigma_{\mathbf{w}}, \check{d}_{\mathbf{w}}, \Theta_{\mathbf{w}})$ consists of
    \begin{itemize}
    \item a maximal cell $\sigma_\mathbf{w}\in \pdecomp^{[n]}$, 
    \item a closed $(n-1)$-dimensional tropical polyhedral subset $\mathbf{w}$ of $\sigma_{\mathbf{w}}$ such that 
	$$\reint(\mathbf{w}) \cap \left( \bigcup_{\substack{\rho \in \pdecomp^{[n-1]}\\ \rho \cap \tsing_e \neq \emptyset}} \reint(\rho) \right) = \emptyset,$$ 
    \item a choice of a primitive normal $\check{d}_{\mathbf{w}}$, and 
    \item a section $\Theta_{\mathbf{w}}$ of the {\em tropical vertex group} $\exp(q \cdot \sftvbva{k})$ over a sufficiently small neighborhood of $\mathbf{w}$. 
    \end{itemize}
    We call $\Theta_{\mathbf{w}}$ the \emph{wall-crossing factor associated to the wall $\mathbf{w}$}.
    We may write a wall as $(\mathbf{w},\Theta_{\mathbf{w}})$ for simplicity. 
\end{definition}

A wall cannot be contained in $\rho$ with $\rho \cap \tsing_e \neq \emptyset$. We define a notion of slabs for these subsets of codimension one strata $\rho$ intersecting $\tsing_e$. The difference is that we have an extra term $\varTheta_{v,\rho}$ coming from the slab function $f_{v,\rho}$. 

\begin{definition}\label{def:slabs}
	A \emph{slab} $(\mathbf{b}, \rho_{\mathbf{b}}, \check{d}_{\rho}, \varXi_{\mathbf{b}})$ consists of 
    \begin{itemize}
    \item an $(n-1)$-cell $\rho_{\mathbf{b}} \in \pdecomp^{[n-1]}$ such that $\rho_{\mathbf{b}} \cap \tsing_e \neq \emptyset$, 
    \item a closed $(n-1)$-dimensional tropical polyhedral subset $\mathbf{b}$ of $\rho_{\mathbf{b}} \setminus (\rho_{\mathbf{b}} \cap \tsing)$,
    \item a choice of a primitive normal $\check{d}_{\rho}$, and 
    \item a section $\varXi_{\mathbf{b}}$ of $\exp(q\cdot \sftvbva{k})$ over a sufficiently small neighborhood of $\mathbf{b}$. 
    \end{itemize}
    The \emph{wall-crossing factor associated to the slab $\mathbf{b}$} is given by
	$$
	\Theta_{\mathbf{b}}:=\varTheta_{v,\rho} \circ \varXi_{\mathbf{b}},
	$$
	where $v$ is the unique vertex such that $\reint(\rho)_v$ contains $\mathbf{b}$ and
	$$
	\varTheta_{v,\rho}  =   \exp([\log(s_{v\rho}^{-1} (f_{v,\rho})) \partial_{\check{d}_{\rho}},\cdot])
	$$
	(cf. equation \eqref{eqn:defining_wall}). 
    We may write a slab as $(\mathbf{b},\Theta_{\mathbf{b}})$ for simplicity. 
\end{definition}

\begin{remark}
	In the above definition, a slab is \emph{not} allowed to intersect the singular locus $\tsing$. This is different from the situation in \cite[\S 2]{gross2011real}. However, in our definition of consistent scattering diagrams, we will require consistency around each stratum of $\tsing$.
\end{remark}

\begin{example}\label{eg:wall_and_slab}
    We consider the 3-dimensional example shown in Figure \ref{fig:wall_and_slab}, from which we can see possible supports of the walls and slabs.
    There are two adjacent maximal cells intersecting at $\rho  \in \pdecomp^{[n-1]}$ with $\tsing_e \cap \rho= \tsing \cap \rho$ colored in red. 
    The $2$-dimensional polyhedral subsets colored in blue can support walls and the polyhedral subset colored in green can support a slab because it is lying inside $\rho$ with $\tsing_e \cap \rho \neq \emptyset$. 
	\begin{figure}[h!]
		\includegraphics[scale=0.2]{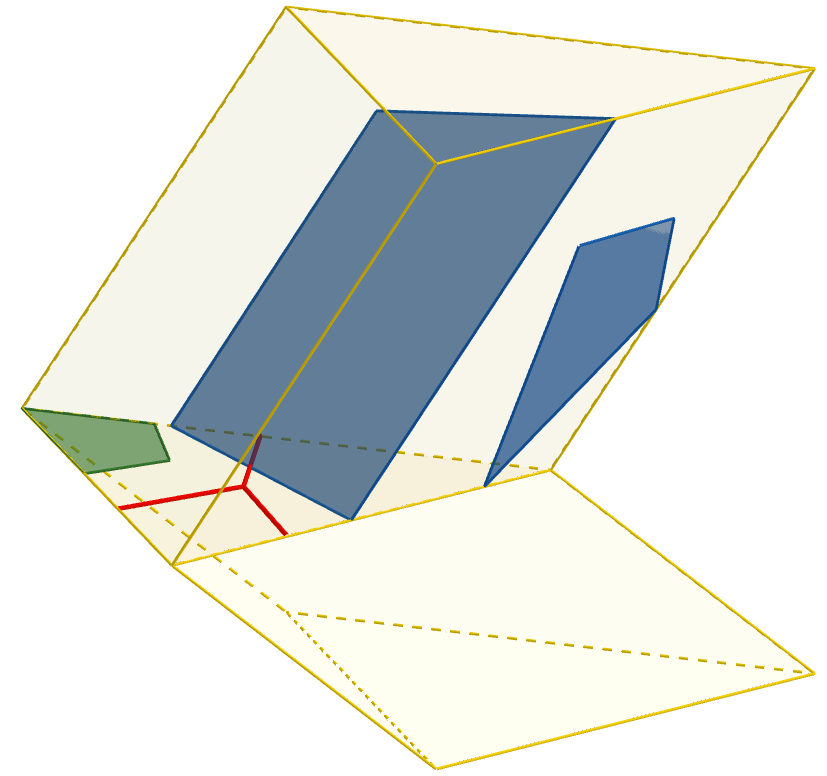}
		\caption{Supports of walls/slabs}\label{fig:wall_and_slab}
	\end{figure}
\end{example}

\begin{definition}\label{def:scattering_diagram}
	A \emph{($k^{\text{th}}$-order) scattering diagram} is a countable collection 
    $$\mathscr{D} = \{(\mathbf{w}_i,\Theta_i)\}_{i\in \mathbb{N}} \cup \{(\mathbf{b}_j,\Theta_{j})\}_{j\in \mathbb{N}}$$ 
    of walls or slabs such that the intersections of any two walls/slabs is at most an $(n-2)$-dimensional tropical polyhedral subset, and $\{\mathbf{w}_i \cap W_0 \}_{i\in \mathbb{N}} \cup \{ \mathbf{b}_j\cap W_0\}_{j\in \mathbb{N}}$ is locally finite in $W_0$.
\end{definition}

Our notion of scattering diagrams is more flexible than the one defined in \cite{kontsevich-soibelman04, gross2011real} in two ways: 
First, there is no relation between the affine direction orthogonal to a wall $\mathbf{w}$ or a slab $\mathbf{b}$ and its wall crossing factor. As a result, we cannot allow overlapping of walls/slabs in their relative interior because in that case their associated wall crossing factors are not necessarily commuting.
Second, we only require that the intersection of $\mathscr{D}$ with $W_0$ is a locally finite collection of $W_0$, which implies that we allow a possibly infinite number of walls/slabs approaching strata of $\hat{\tsing}$. In the construction of the scattering diagram $\mathscr{D}(\varphi)$ associated to a Maurer--Cartan solution $\varphi$ below, all the walls/slabs will be compact subsets of $W_0$. These walls will not intersect $\hat{\tsing}$, as illustrated in Figure \ref{fig:wall_and_slab}. However, there could be a union of infinitely many walls limiting to some strata of $\hat{\tsing}$.
See also Remark \ref{rmk:relaxed_scattering_diagram}.

\begin{example}
    For the 2-dimensional example shown in Figure \ref{fig:infinite_many_walls}, we see a vertex $v$ and its adjacent cells, and the singular locus $\tsing_e$ consists of the red crosses. In our version of scattering diagrams, we allow infinitely many intervals limiting to $\{v\}$ or $\tsing_e$. 
	\begin{figure}[h!]
		\includegraphics[scale=0.4]{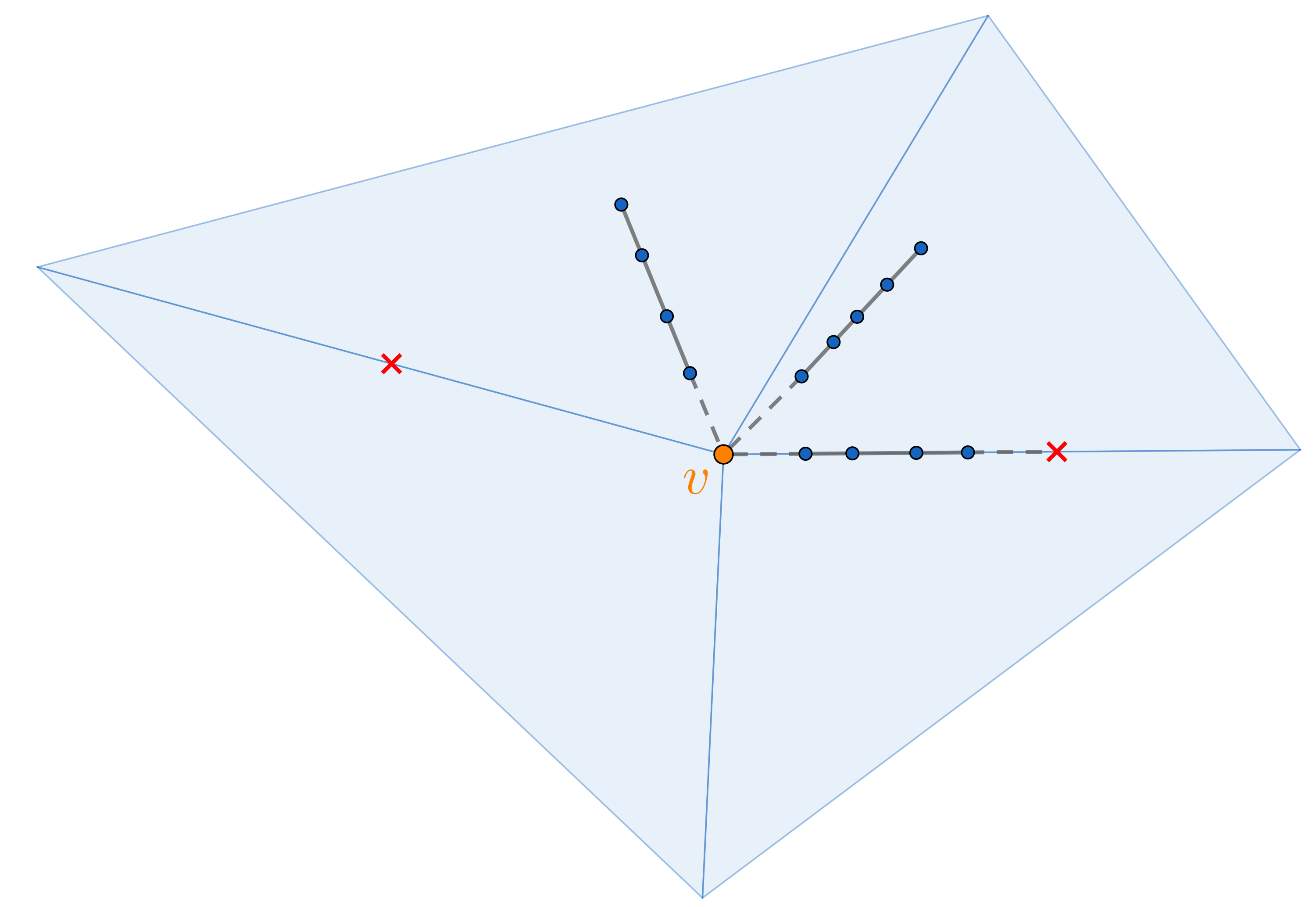}
		\caption{Walls/slabs around $\hat{\tsing}$}\label{fig:infinite_many_walls}
	\end{figure}
\end{example}

Given a scattering diagram $\mathscr{D}$, we can define its \emph{support} as $|\mathscr{D}|:= \bigcup_{i \in \bb{N}} \mathbf{w}_i \cup \bigcup_{j \in \bb{N}} \mathbf{b}_j$. There is an induced polyhedral decomposition on $|\mathscr{D}|$ such that its $(n-1)$-cells are closed subsets of some walls or slabs, and all intersections of walls or slabs are lying in the union of the $(n-2)$-cells. We write $|\mathscr{D}|^{[i]}$ for the collection of all the $i$-cells in this polyhedral decomposition. We may assume, after further subdividing the walls or slabs in $\mathscr{D}$ if necessary, that every wall or slab is an $(n-1)$-cell in $|\mathscr{D}|$. We call an $(n-2)$-cell $\mathfrak{j} \in |\mathscr{D}|^{[n-2]}$ a \emph{joint}, and a connected component of $W_0 \setminus |\mathscr{D}|$ a \emph{chamber}. 

Given a wall or slab, we shall make sense of \emph{wall crossing} in terms of jumping of holomorphic functions across it. Instead of formulating the definition in terms of path-ordered products of elements in the tropical vertex group as in \cite{gross2011real}, we will express it in terms of the action by the tropical vertex group on the local sections of $\sfbva{k}^0_{\mathrm{sf}}$. There is no harm in doing so since we have the inclusion $\sfbva{k}^{-1}_{\mathrm{sf}} \hookrightarrow \mathrm{Der}(\sfbva{k}^0_{\mathrm{sf}},\sfbva{k}^0_{\mathrm{sf}})$, i.e. a relative vector field is determined by its action on functions. 

In this regard, we would like to define the \emph{($k^{\text{th}}$-order) wall-crossing sheaf} $\wcs{k}_{\mathscr{D}}$ on the open set
$$W_{0}(\mathscr{D}):= W_0 \setminus \bigcup_{\mathfrak{j} \in |\mathscr{D}|^{[n-2]}} \mathfrak{j},$$
which captures the jumping of holomorphic functions described by the wall-crossing factor when crossing a wall/slab. We first consider the sheaf $\sfbva{k}^{0}_{\mathrm{sf}}$ of holomorphic functions over the subset $W_0 \setminus |\mathscr{D}|$, and let 
$$\wcs{k}_{\mathscr{D}}|_{W_0 \setminus |\mathscr{D}|} := \sfbva{k}^{0}_{\mathrm{sf}}|_{W_0 \setminus |\mathscr{D}|}.$$
To extend it through the walls/slabs, we will specify the analyic continuation through $\reint(\mathbf{w})$ for each $\mathbf{w} \in |\mathscr{D}|^{[n-1]}$. 
Given a wall/slab $\mathbf{w}$ with two adjacent chambers $\cu{C}_+$, $\cu{C}_-$ and $\check{d}_{\mathbf{w}}$ pointing into $\cu{C}_+$, and a point $x \in \reint(\mathbf{w})$ with the germ $\Theta_{\mathbf{w},x}$ of wall-crossing factors near $x$, we let 
$$\wcs{k}_{\mathscr{D},x} := \sfbva{k}^0_{\mathrm{sf},x},$$
but with a different gluing to nearby chambers $\cu{C}_{\pm}$: in a sufficiently small neighborhood $U_x$ of $x$, the gluing of a local section $f \in \wcs{k}_{\mathscr{D},x}$ is given by
\begin{equation}\label{eqn:wall_crossing_gluing}
	f|_{U_x \cap \cu{C}_{\pm}} := \begin{dcases}
		\Theta_{\mathbf{w},x}(f)|_{U_x \cap \cu{C}_+} & \text{on $U_x \cap \cu{C}_+$,}\\
		f|_{U_x \cap \cu{C}_-} & \text{on $U_x \cap \cu{C}_-$.}
	\end{dcases}
\end{equation}
In this way, the sheaf $\wcs{k}_{\mathscr{D}}|_{W_0 \setminus |\mathscr{D}|}$ extends to $W_0(\mathscr{D})$. 

Now we can formulate consistency of a scattering diagram $\mathscr{D}$ in terms of the behaviour of the sheaf $\wcs{k}_{\mathscr{D}}$ over the joints $\mathfrak{j}$'s and $(n-2)$-dimensional strata of $\nsf$. More precisely, we consider the push-forward $\mathfrak{i}_*(\wcs{k}_{\mathscr{D}})$ along the embedding $\mathfrak{i} \colon W_0(\mathscr{D}) \rightarrow B$, and its stalk at $x \in \reint(\mathfrak{j})$ and $x \in \reint(\tau)$ for strata $\tau \subset \nsf$. Similar to above, we can define the ($l^{\text{th}}$-order) sheaf $\wcs{l}_{\mathscr{D}}$ by using $\sfbva{l}^0_{\mathrm{sf}}$ and considering equation \eqref{eqn:wall_crossing_gluing} modulo $(q)^{l+1}$. There is a natural restriction map $\rest{k,l} \colon \mathfrak{i}_*(\wcs{k}_{\mathscr{D}}) \rightarrow \mathfrak{i}_*(\wcs{l}_{\mathscr{D}})$. Taking tensor product, we have $\rest{k,l}\colon \mathfrak{i}_*(\wcs{k}_{\mathscr{D}}) \otimes_{\cfrk{k}} \cfrk{l} \rightarrow \mathfrak{i}_*(\wcs{l}_{\mathscr{D}})$, where $\cfrk{k} = \comp[q]/(q^{k+1})$. 

The proof of the following lemma will be given in Appendix \S \ref{sec:hartogs}.
\begin{lemma}[Hartogs extension property]\label{lem:hartogs_extension_2}
	We have 
    $$\iota_* (\bva{0}^0|_{W_0}) = \bva{0}^0,$$
    where $\iota \colon W_0 \rightarrow B$ is the inclusion. Moreover, for any scattering diagram $\mathscr{D}$, we have 
    $$\mathfrak{i}_*(\bva{0}^0|_{W_0(\mathscr{D})}) = \bva{0}^0,$$
    where $\mathfrak{i} \colon W_0(\mathscr{D}) \rightarrow B$ is the inclusion.
\end{lemma}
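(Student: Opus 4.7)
The strategy is to interpret both claims as Hartogs-type extension statements for the sheaf $\bva{0}^0 = \modmap_*(j_*\cu{O}_{\centerfiber{0}})$. Concretely, for any open $V \subset B$, I need to show that the restriction maps $\cu{O}_{\centerfiber{0}}(\modmap^{-1}(V)\setminus Z) \to \cu{O}_{\centerfiber{0}}(\modmap^{-1}(V \cap W_0)\setminus Z)$ and $\cu{O}_{\centerfiber{0}}(\modmap^{-1}(V)\setminus Z) \to \cu{O}_{\centerfiber{0}}(\modmap^{-1}(V \cap W_0(\mathscr{D}))\setminus Z)$ are bijections. Injectivity is immediate from the density of the smaller open subset (combined with the reducedness of $\centerfiber{0}$), so only surjectivity requires work.

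The plan is to decompose $\centerfiber{0} = \bigcup_{\sigma \in \pdecomp^{[n]}} \centerfiber{0}_{\sigma}$ into its maximal toric (hence normal) strata. Given $f$ holomorphic on the smaller open subset, I will restrict it to each $\centerfiber{0}_\sigma$, extend the restriction $f_\sigma$ across the closed subset $\centerfiber{0}_\sigma \cap \modmap^{-1}(V \setminus W_0)$ (or $V \setminus W_0(\mathscr{D})$), and then verify that the extensions agree on pairwise intersections $\centerfiber{0}_\rho = \centerfiber{0}_\sigma \cap \centerfiber{0}_{\sigma'}$ by unique analytic continuation on $\centerfiber{0}_\rho$, proceeding by downward induction on $\dim \sigma$ to handle lower strata. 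The subsets to be filled in fall into three types: (a) preimages $\modmap^{-1}(\reint(\tau))$ for $\tau \subset \sigma$ with $\dim \tau \leq n-2$, which lie inside the complex analytic subset $\centerfiber{0}_\tau$ of complex codim $\geq 2$ in $\centerfiber{0}_\sigma$; (b) preimages $\modmap^{-1}(\tsing \cap \reint(\rho))$ for codim $1$ cells $\rho \subset \sigma$, contained in the codim $1$ stratum $\centerfiber{0}_\rho$ with real codim $\geq 1$ inside $\centerfiber{0}_\rho$ (since $\tsing \cap \reint(\rho)$ has real codim $\geq 1$ in $\reint(\rho)$), hence total real codim $\geq 3$ in $\centerfiber{0}_\sigma$; (c) for the second claim, additionally the preimages $\modmap^{-1}(\mathfrak{j})$ of joints $\mathfrak{j} \in |\mathscr{D}|^{[n-2]}$, which are of real codim $2$ in $\centerfiber{0}_\sigma$.

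For case (a), classical Hartogs extension across complex codim $\geq 2$ analytic subvarieties applies directly. For (b) and (c), the approach is to work in logarithmic toric coordinates $w_j = \log z^{m_j}$ on the open torus $\centerfiber{0}_\sigma^{\circ} \cong (\comp^*)^n$, in which the preimage under $\modmap = \mathscr{C} \circ \moment$ of a tropical polyhedral subset $P \subset \reint(\sigma)$ is locally cut out by real-linear equations of the form $\{u_{j_1} = c_1, \ldots, u_{j_k} = c_k\}$ with $u_j = \text{Re}(w_j)$, and $k \geq 2$ in our situation. Since the complement of such a set in a polydisc contains a Hartogs figure (e.g. $\{u_{j_1} > c_1 + \varepsilon\} \cup \{u_{j_1} < c_1 - \varepsilon\}$), the classical Hartogs extension theorem provides the desired extension of $f_\sigma$.

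The main technical obstacle will be carefully verifying that the preimage under $\modmap$ of a tropical polyhedral subset in $\reint(\sigma)$ really has the asserted local form in log coordinates: this reduces to checking that $\mathscr{C}^{-1}$ preserves the piecewise integral affine structure on $\reint(\sigma)$ (which can be arranged by choosing $\mathscr{C}$ piecewise linear as suggested by Figures \ref{fig:contraction map} and \ref{fig:contraction_at_rho}), and that the moment map $\moment|_{\centerfiber{0}_\sigma^{\circ}}$ in the chart $\Upsilon_\tau$ of \eqref{eqn:local_chart_for_stein_subsets} is (up to a tame homeomorphism of $\norpoly_{\tau,\real}$) linear in the $u_j$ variables. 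A secondary care point is the behavior near $Z$ and along the lower-dimensional strata $\centerfiber{0}_\rho$ not already handled in case (a), which is why I run the extension argument inductively on $\dim \sigma$ rather than all at once.
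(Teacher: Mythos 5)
Your overall strategy — extend stratum by stratum on the normal toric pieces $\centerfiber{0}_\sigma$, classify the bad sets by codimension, and patch by unique continuation — is the same skeleton as the paper's proof (Lemma \ref{lem:hartog_extension_1} handles your case (a), and the rest is done by Hartogs-type arguments). However, the key extension step for your cases (b) and (c) has a genuine gap. For a joint $\mathfrak{j}$ inside a maximal cell, the set $\modmap^{-1}(\mathfrak{j})\cap\centerfiber{0}_\sigma^{\circ}$ has real codimension exactly $2$, which is the borderline case: real-codimension-$2$ sets (e.g.\ complex hypersurfaces) are not removable in general, so removability must come from specific geometry, not from codimension counting. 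Your proposed witness, $\{u_{j_1}>c_1+\varepsilon\}\cup\{u_{j_1}<c_1-\varepsilon\}$, is not a Hartogs figure: it is a disjoint union of two tube domains over convex bases, hence a domain of holomorphy by Bochner's tube theorem, and it forces no extension whatsoever (take the function that is $0$ on one component and $1$ on the other). More importantly, the set $\{\Realpart w_{j_1}=c_1,\ \Realpart w_{j_2}=c_2\}$ is totally real in the $(w_{j_1},w_{j_2})$-directions, so no complex-linear change of coordinates turns its complement into a standard product Hartogs figure. The paper's Theorem \ref{thm:hartogs_extension_C_n_version} supplies the missing geometric input: one projects $(\comp^*)^n\to(\comp^*)^{n-1}$ along a toric direction transverse to a rational hyperplane containing $\mathfrak{j}$, so that each $\comp^*$-fiber meets $\modmap^{-1}(\mathfrak{j})$ in a \emph{compact} circle $S^1=\log^{-1}(\mathrm{pt})$. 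Passing to the simply connected logarithmic coordinates $w_j$, as you propose, destroys exactly this compactness (the fibers then meet the bad set in non-compact lines $\{u=c\}$), which is why your argument cannot close.

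Case (b) has a related problem: the set $\modmap^{-1}(\tsing\cap\reint(\rho))$ lies inside the boundary divisor $\centerfiber{0}_\rho\subset\centerfiber{0}_\sigma$, so it is invisible in logarithmic coordinates on the open torus $\centerfiber{0}_\sigma^{\circ}$, and your description ``cut out by $k\geq 2$ real-linear equations in the $u_j$'' does not apply to it. Nor can it be removed intrinsically on $\centerfiber{0}_\rho$ in a downward induction: there it is a real hypersurface (the $\log$-preimage of the codimension-one skeleton of $\mathscr{N}_\rho$) with \emph{disconnected} complement, so the restriction $f_\rho$ alone does not determine an extension. The correct move, which is what the paper does, is to work in the mixed chart $(\comp^*)^{n-1}\times\comp$ on the maximal stratum near the divisor: there the bad set is $S\times\{0\}$ with $S$ a real hypersurface, and the complement does contain honest product Hartogs figures because the annulus in the normal $\comp$-coordinate avoids $\{0\}$ entirely. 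Your final ``care point'' about $\mathscr{C}$ and $\moment$ being affine-compatible is legitimate but secondary — it is essentially built into the charts $\Upsilon_\tau$ of \eqref{eqn:local_chart_for_stein_subsets} — whereas the two issues above are where the proof actually lives.
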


\begin{lemma}\label{lem:0_order_wall_crossing_sheaf_vs_structure_sheaf}
The $0^{\text{th}}$-order sheaf $\mathfrak{i}_*(\wcs{0}_{\mathscr{D}})$ is isomorphic to the sheaf $\bva{0}^0$. 
\end{lemma}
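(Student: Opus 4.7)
The plan is to first establish a canonical isomorphism $\wcs{0}_{\mathscr{D}} \cong \bva{0}^0|_{W_0(\mathscr{D})}$ of sheaves on $W_0(\mathscr{D})$, and then appeal to the Hartogs-type extension in Lemma \ref{lem:hartogs_extension_2} to push the identification forward along $\mathfrak{i}$.

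For the first step, I would examine the wall-crossing factors modulo $(q)$. For an ordinary wall $\mathbf{w}$, by Definition \ref{def:walls} the factor $\Theta_{\mathbf{w}}$ lies in $\exp(q \cdot \sftvbva{k})$ and hence reduces to the identity modulo $(q)$; consequently $\wcs{0}_{\mathscr{D}}$ glues trivially across $\reint(\mathbf{w})$, and in a neighborhood of $\reint(\mathbf{w})$ it agrees with $\sfbva{0}^0_{\text{sf}} = \bva{0}^0|_{W_0 \setminus |\mathscr{D}|}$ (since the two sheaves agree on any subset of $W_0$ not meeting a slab). For a slab $\mathbf{b} \subset \reint(\rho)_v$, the factor $\Theta_{\mathbf{b}} = \varTheta_{v,\rho} \circ \varTheta_{\mathbf{b}}$ of Definition \ref{def:slabs} reduces modulo $(q)$ to $\varTheta_{v,\rho} = \exp([\log(s_{v\rho}^{-1}(f_{v,\rho})) \partial_{\check{d}_{\rho}},\cdot])$, whose action on a monomial $z^m \in \sfbva{0}^0_{\text{sf}}$ is multiplication by $s_{v\rho}^{-1}(f_{v,\rho})^{\langle \check{d}_{\rho}, \bar{m}\rangle}$.

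Next I would match this with the explicit gluing of $\bva{0}^0$ constructed in \S\ref{subsubsec:explicit_gluing_away_from_codimension_2}. The defining formula \eqref{eqn:correction_by_wall_crossing} shows that passing from a neighborhood of $\reint(\sigma_-)$ to a neighborhood of $\reint(\rho)_v$ in $\bva{0}^0$ introduces precisely the factor $s_{v\rho}^{-1}(f_{v,\rho})^{\langle \check{d}_{\rho}, \bar{m}\rangle}$ on $z^m$ (relative to the semi-flat gluing used by $\sfbva{0}^0_{\text{sf}}$). This is the same as the jump prescribed by $\varTheta_{v,\rho}$ in the definition \eqref{eqn:wall_crossing_gluing} of $\wcs{0}_{\mathscr{D}}$. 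Invoking the isomorphism $\varPhi$ of Lemma \ref{lem:comparing_sheaf_of_dgbv} on the complement of the slabs and checking compatibility with the two gluings at the stalks of interior slab points (using the description of $\varPhi_x(u)$ and $\varPhi_x(v)$ given in the proof of that lemma, restricted to order $0$) yields a sheaf isomorphism $\wcs{0}_{\mathscr{D}} \cong \bva{0}^0|_{W_0(\mathscr{D})}$.

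Finally I would apply Lemma \ref{lem:hartogs_extension_2}, which gives $\mathfrak{i}_*(\bva{0}^0|_{W_0(\mathscr{D})}) = \bva{0}^0$, and combine it with the isomorphism constructed above to conclude $\mathfrak{i}_*(\wcs{0}_{\mathscr{D}}) \cong \bva{0}^0$. The main obstacle I anticipate is the verification of compatibility of the isomorphism across slabs: the sheaf $\wcs{0}_{\mathscr{D}}$ is defined by the stalkwise gluing rule \eqref{eqn:wall_crossing_gluing} asymmetrically with respect to $\cu{C}_\pm$, whereas $\varPhi_x$ in Lemma \ref{lem:comparing_sheaf_of_dgbv} is built from a step function $\theta$; tracking signs, the direction of $\check{d}_\rho$, and the identification of $s_{v\sigma_\pm}^{-1}(f_{v,\rho})$ with $s_{v\rho}^{-1}(f_{v,\rho})$ under parallel transport through $v$ requires some bookkeeping but is essentially a consequence of the computations already carried out in the proof of Lemma \ref{lem:comparing_sheaf_of_dgbv}.
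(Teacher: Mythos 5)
Your proposal follows the same route as the paper's proof: reduce to $W_0(\mathscr{D})$ via Lemma \ref{lem:hartogs_extension_2}, dispose of walls because their factors lie in $\exp(q\cdot\sftvbva{k})$ and hence vanish modulo $(q)$, and handle slabs by comparing the residual factor $\varTheta_{v,\rho}$ with the explicit gluings of \S\ref{subsubsec:explicit_gluing_away_from_codimension_2}. One intermediate step of your slab argument is stated inaccurately, though it does not break the proof: modulo $(q)$ the corrected gluing \eqref{eqn:correction_by_wall_crossing} does \emph{not} differ from the semi-flat one by the factor $s_{v\rho}^{-1}(f_{v,\rho})^{\langle\check{d}_\rho,\bar{m}\rangle}$ --- the two gluings literally coincide at order $0$, because the slab-function factor only multiplies the images of $u$ and $v$, which are divisible by $q$ (the relation degenerates to $uv=0$, with $u\mapsto 0$ on $\sigma_-$ and $v\mapsto 0$ on $\sigma_+$). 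The actual content of the lemma, and what the paper checks, is that the twist $\varTheta_{v,\rho}$ in the gluing rule \eqref{eqn:wall_crossing_gluing} is likewise invisible modulo $(q)$: it acts trivially on $z^m$ for $m\in\tanpoly_\rho$ since $\langle \bar{m},\check{d}_\rho\rangle=0$, and by unit multiples on $u$ and $v$, which is irrelevant precisely because the gluing already sends the relevant coordinate to $0$ on the side where the twist would matter. Your fallback --- restricting $\varPhi$ of Lemma \ref{lem:comparing_sheaf_of_dgbv} to order $0$ and checking compatibility at stalks of slab points --- would surface exactly this and still close the argument, so I would call this essentially the paper's proof with one mis-stated justification rather than a gap.
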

\begin{proof}
 	In view of Lemma \ref{lem:hartogs_extension_2}, we only have to show that the two sheaves are isomorphic on the open subset $W_0(\mathscr{D})$. Since we work modulo $(q)$, only the wall-crossing factor $\varTheta_{v,\rho}$ associated to a slab matters. So we take a point $x \in \reint(\mathbf{b}) \subset \reint(\rho)_v$ for some vertex $v$, and compare $\wcs{0}_{\mathscr{D},x}$ with $\bva{0}^0_{x} = \sfbva{0}^0_{\mathrm{sf},x}$. From the proof of Lemma \ref{lem:comparing_sheaf_of_dgbv}, we have
 	\begin{align*}
 	\bva{0}^0_{x} & = \sfbva{0}^0_{\mathrm{sf},x}=\cu{O}_{\localmod{k}(\rho)_v,K}\\
        & = \left\{a_{0,j} +  \sum_{i=1}^{\infty} a_{i} u^i + \sum_{i=-1}^{-\infty}a_{i} v^{-i} \ \Big| \ a_{i} \in \cu{O}_{(\comp^*)^{n-1}}(U) \ \text{for some neigh. $U\supset K$}, \  \sup_{i\neq 0} \frac{\log|a_i|}{|i|} < \infty  \right\}, 
 	\end{align*}
 	with the relation $uv= 0$. The gluings with nearby maximal cells $\sigma_{\pm}$ of both $\bva{0}^0$ and $\sfbva{0}^0_{\mathrm{sf}}$ are simply given by the parallel transport through $v$ and the formulae
 	\begin{equation*}
 		\sigma_+\colon\begin{cases}
 		z^{m} \mapsto  s^{-1}_{\rho\sigma_{+}}(m) z^m  & \text{for $m \in \tanpoly_{\rho}$},\\
 		u \mapsto s^{-1}_{\rho\sigma_+}(m_{\rho}) z^{m_{\rho}}, & \\
 		v \mapsto 0, &
 		\end{cases}
 		\qquad\quad
 		\sigma_-\colon\begin{cases}
 			z^{m} \mapsto  s^{-1}_{\rho\sigma_{-}}(m) z^m  & \text{for $m \in \tanpoly_{\rho}$},\\
 			u \mapsto 0, & \\
 			v \mapsto s^{-1}_{\rho\sigma_-}(-m_{\rho})z^{-m_{\rho}} &
 		\end{cases}
 	\end{equation*}
in the proof of Lemma \ref{lem:comparing_sheaf_of_dgbv}.
 
Now for the wall-crossing sheaf $\wcs{0}_{\mathscr{D},x} \cong \sfbva{0}^0_{\mathrm{sf},x}$, the wall-crossing factor $\varTheta_{v,\rho}$ acts trivially except on the two coordinate functions $u,v$ because $\langle m ,\check{d}_{\rho} \rangle = 0$ for $m \in \tanpoly_{\rho}$. The gluing of $u$ to the nearby maximal cells which obeys wall crossing is given by
$$
 	u|_{U_x \cap \sigma_{\pm}} := 
 	\begin{dcases}
 	u|_{U_x \cap \sigma_+} & \text{on $U_x \cap \sigma_+$,}\\
 	\varTheta^{-1}_{v,\rho,x}(u)|_{U_x \cap \sigma_-} = 0 & \text{on $U_x \cap \sigma_-$,}
 	\end{dcases}
$$
in a sufficiently small neighborhood $U_x$ of $x$. Here, the reason that we have $\varTheta^{-1}_{v,\rho,x}(u)|_{U_x \cap \sigma_-} = 0$ on $U_x \cap \sigma_-$ is simply because we have $u \mapsto 0$ in the gluing of $\sfbva{0}^0_{\mathrm{sf}}$. For the same reason, we see that the gluing of $v$ agrees with that of $\bva{0}^0$ and $\sfbva{0}^0_{\mathrm{sf}}$. 
\end{proof}

\begin{definition}\label{def:consistency_of_scattering_diagram}
	A ($k^{\text{th}}$-order) scattering diagram $\mathscr{D}$ is said to be \emph{consistent} if there is an isomorphism $\mathfrak{i}_*(\wcs{k}_{\mathscr{D}})|_{W_{\alpha}} \cong \bva{k}^0_{\alpha}$ as sheaves of $\comp[q]/(q^{k+1})$-algebras on each open subset $W_{\alpha}$.
\end{definition}

The above consistency condition would imply that $\rest{k,l}\colon \mathfrak{i}_*(\wcs{k}_{\mathscr{D}}) \rightarrow \mathfrak{i}_*(\wcs{l}_{\mathscr{D}})$ is surjective for any $l < k$ and hence $\mathfrak{i}_*(\wcs{k}_{\mathscr{D}})$ is a sheaf of free $\comp[q]/(q^{k+1})$-modules on $B$. We are going to see that $\mathfrak{i}_*(\wcs{k}_{\mathscr{D}})$ agrees with the push-forward of the sheaf of holomorphic functions on a ($k^{\text{th}}$-order) thickening $\centerfiber{k}$ of the central fiber $\centerfiber{0}$ under the modified moment map $\modmap$.  

Let us elaborate a bit on the relation between this definition of consistency and that in \cite{gross2011real}. Assuming we have a consistent scattering diagram in the sense of \cite{gross2011real}, then we obtain a $k^{\text{th}}$-order thickening $\centerfiber{k}$ of $\centerfiber{0}$ which is locally modeled on the thickenings $\localmod{k}_{\alpha}$'s by \cite[Cor. 2.18]{Gross-Siebert-logII}. Pushing forward via the modified moment map $\modmap$, we obtain a sheaf of algebras over $\comp[q]/(q^{k+1})$ lifting $\bva{0}^0$, which is locally isomorphic to the $\bva{k}^0_{\alpha}$'s. This consequence is exactly what we use to formulate our definition of consistency.  

\begin{lemma}\label{lem:ring_hom_induce_space_hom}
	Suppose we have $W \subset W_{\alpha} \cap W_{\beta}$ such that $V = \modmap^{-1}(W)$ is Stein, and an isomorphism $ h \colon \bva{k}^0_{\beta}|_W \rightarrow \bva{k}^0_{\alpha}|_W$ of sheaves of $\comp[q]/(q^{k+1})$-algebras which is the identity modulo $(q)$. Then there is a unique isomorphism $\psi\colon \localmod{k}_{\alpha}|_V \rightarrow \localmod{k}_{\beta}|_V$ of analytic spaces inducing $h$. 
\end{lemma}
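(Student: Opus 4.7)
The plan is to exploit that both local models $\localmod{k}_{\alpha}|_V$ and $\localmod{k}_{\beta}|_V$ are built explicitly as $k^{\text{th}}$-order thickenings of $V$ inside affine toric analytic schemes (see \S\ref{subsec:log_structure_and_slab_function} and \S\ref{subsubsec:local_deformation_data}), so that morphisms between them can be reconstructed from coordinate data via the universal property of these toric affine models. The only serious technical input needed is the translation between sections of the pushforward sheaves on $W$ and sections of the structure sheaves on $V$, which is where the Stein hypothesis is used.

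First, since $V = \modmap^{-1}(W)$ is Stein, Assumption~\ref{assum:existence_of_contraction}(4) together with Cartan's Theorem~B gives $R\modmap_{*}\cu{O}_{\localmod{k}_\bullet} = \modmap_{*}\cu{O}_{\localmod{k}_\bullet}$, and hence the identification $\Gamma(W,\bva{k}^{0}_\bullet) = \Gamma(V,\cu{O}_{\localmod{k}_\bullet})$ for $\bullet \in \{\alpha,\beta\}$. Taking global sections, $h$ becomes a $\cfrk{k}$-algebra isomorphism $h_V : \Gamma(V,\cu{O}_{\localmod{k}_\beta}) \to \Gamma(V,\cu{O}_{\localmod{k}_\alpha})$ which reduces to the identity modulo $(q)$. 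Next, $\localmod{k}_\beta|_V$ is realised as the $k^{\text{th}}$-order thickening of $V$ inside $\spec(\comp[P_\beta/(q^{k+1})])$ for the appropriate toric monoid $P_\beta$, so the monomials $z^m$ define a $\cfrk{k}$-algebra homomorphism $\comp[P_\beta/(q^{k+1})] \to \Gamma(V,\cu{O}_{\localmod{k}_\beta})$. Composing with $h_V$ gives a $\cfrk{k}$-algebra homomorphism $\comp[P_\beta/(q^{k+1})] \to \Gamma(\localmod{k}_\alpha|_V,\cu{O})$. Since $P_\beta$ is a finitely generated toric monoid and $\localmod{k}_\alpha|_V$ is a Stein analytic space over $\cfrk{k}$, the universal property of the analytification of an affine scheme of finite type yields a unique morphism of analytic spaces $\widetilde{\psi} : \localmod{k}_\alpha|_V \to \spec(\comp[P_\beta/(q^{k+1})])$ over $\logsk{k}$ sending $z^m \mapsto h_V(z^m)$.

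To see that $\widetilde{\psi}$ factors through $\localmod{k}_\beta|_V$, observe that because $h_V$ is the identity modulo $(q)$, the restriction of $\widetilde{\psi}$ to the reduced subspace $V$ coincides with the original embedding $V \hookrightarrow \spec(\comp[\Sigma_\beta]) \hookrightarrow \spec(\comp[P_\beta/(q^{k+1})])$; hence the image of $\widetilde{\psi}$ lies in the $k^{\text{th}}$-order analytic thickening $\localmod{k}_\beta|_V$ of $V$. This produces the desired morphism $\psi : \localmod{k}_\alpha|_V \to \localmod{k}_\beta|_V$; applying the same construction to $h^{-1}$ yields an inverse, so $\psi$ is an isomorphism, and by construction $\modmap_{*}(\psi^{\sharp})$ agrees with $h$ on the monomial generators $z^m$ and hence on all of $\bva{k}^{0}_\beta|_W$. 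Uniqueness is automatic: any morphism $\psi'$ inducing $h$ must send $z^m \mapsto h_V(z^m)$ on global sections, which uniquely determines $\psi'$ by the same universal property. The hardest conceptual step is the first one, namely the passage from sheaf data on $W$ to sheaf data on $V$; once the Stein hypothesis is in place, everything else reduces to the formal fact that affine toric analytic schemes are determined by their rings of functions.
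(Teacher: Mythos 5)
Your proposal is correct and follows essentially the same route as the paper: both arguments reduce the construction of $\psi$ to the fact that a morphism into the affine toric model is determined by the images of finitely many coordinate functions (the monomial generators $z^m$), with uniqueness coming for free from the same fact. The only cosmetic difference is that the paper embeds $\localmod{k}_{\alpha}|_V$ and $\localmod{k}_{\beta}|_V$ into ambient affine spaces $\comp^{N+1}$, $\comp^{L+1}$ and uses Steinness of the ambient open sets to lift $h(z_i)$ to a holomorphic map of the ambient spaces before restricting, whereas you invoke the universal property of the analytification of $\spec(\comp[P_\beta/(q^{k+1})])$ directly and then check that the image factors through the $k^{\text{th}}$-order thickening using $h \equiv \mathrm{id} \pmod{q}$ -- both are valid implementations of the same idea.
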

\begin{proof}
	From the description in \S \ref{subsec:log_structure_and_slab_function}, we can embed both families $\localmod{k}_{\alpha}$, $\localmod{k}_{\beta}$ over $\spec(\comp[q]/(q^{k+1}))$ as closed analytic subschemes of $\comp^{N+1} = \comp^{N}\times \comp_{q}$ and $\comp^{L+1} = \comp^{L} \times \comp_q$ respectively, where projection to the second factor defines the family over $\comp[q]/(q^{k+1})$. Let $\cu{J}_{\alpha}$ and $\cu{J}_{\beta}$ be the corresponding ideal sheaves, which can be generated by finitely many elements. We can take Stein open subsets $U_{\alpha} \subseteq \comp^{N+1}$ and $U_{\beta} \subseteq \comp^{L+1}$ such that their intersections with the subschemes give $\localmod{k}_{\alpha}|_{V}$ and $\localmod{k}_{\beta}|_{V}$ respectively. 
	By taking global sections of the sheaves over $W$, we obtain the isomorphism $h\colon \cu{O}_{\localmod{k}_{\beta}}(V) \rightarrow \cu{O}_{\localmod{k}_{\alpha}}(V)$. 
	Using the fact that $U_{\alpha}$ is Stein, we can lift $h(z_i)$'s, where $z_i$'s are restrictions of coordinate functions to $\localmod{k}_{\beta}|_V \subset U_{\beta}$, to holomorphic functions on $U_{\alpha}$. In this way, $h$ can be lifted as a holomorphic map $\psi\colon U_{\alpha} \rightarrow U_{\beta}$. Restricting to $\localmod{k}_{\alpha}|_{V}$, we see that the image lies in $\localmod{k}_{\beta}|_{V}$, and hence we obtain the isomorphism $\psi$. The uniqueness follows from the fact the $\psi$ is determined by $\psi^*(z_i) = h(z_i)$. 
\end{proof}

Given a consistent scattering diagram $\mathscr{D}$ (in the sense of Definition \ref{def:consistency_of_scattering_diagram}), the sheaf $\mathfrak{i}_*(\wcs{k}_{\mathscr{D}})$ can be treated as a gluing of the local sheaves $\bva{k}^0_{\alpha}$'s. Then from Lemma \ref{lem:ring_hom_induce_space_hom}, we obtain a gluing of the local models $\localmod{k}_{\alpha}$'s yielding a thickening $\centerfiber{k}$ of $\centerfiber{0}$. This justifies Definition \ref{def:consistency_of_scattering_diagram}.

\subsubsection{Constructing consistent scattering diagrams from Maurer--Cartan solutions}\label{subsubsec:consistent_diagram_from_solution}

We are finally ready to demonstrate how to construct a consistent scattering diagram $\mathscr{D}(\varphi)$ in the sense of Definition \ref{def:consistency_of_scattering_diagram} from a Maurer--Cartan solution $\varphi = \phi + t f$ obtained in Theorem \ref{prop:Maurer_cartan_equation_unobstructed}. As in \S \ref{subsubsec:gluing_using_semi_flat}, we obtain a $k^{\text{th}}$-order Maurer--Cartan solution $\zeta_0$ and define its scattered part as $\phi_{\mathrm{s}} \in \sftropv{k}^{1}_{\mathrm{sf}}(W_0)$. From this, we want to construct a $k^{\text{th}}$-order scattering diagram $\mathscr{D}(\varphi)$. 

We take an open cover $\{U_i\}_{i}$ by pre-compact convex open subsets of $W_0$ such that, locally on $U_i$, $\phi_{\mathrm{in}}+\phi_{\mathrm{s}}$ can be written as a finite sum
$$
(\phi_{\mathrm{in}}+\phi_{\mathrm{s}})|_{U_i} = \sum_{j} \alpha_{ij} \otimes v_{ij},
$$
where $\alpha_{ij} \in \tform^1(U_i)$ has asymptotic support on a codimension one polyhedral subset $P_{ij} \subset U_i$, and $v_{ij} \in \sftvbva{k}(U_i)$.  We take a partition of unity $\{\varrho_i\}_{i}$ subordinate to the cover $\{U_i\}_{i}$ such that $\mathrm{supp}(\varrho_i)$ has asymptotic support on a compact subset $C_i$ of $U_i$. As a result, we can write 
\begin{equation}
	\phi_{\mathrm{in}}+\phi_{\mathrm{s}} = \sum_i \sum_j (\varrho_i \alpha_{ij}) \otimes v_{ij},
\end{equation}  
where each $(\varrho_i \alpha_{ij})$ has asymptotic support on the compact codimension one subset $C_i \cap P_{ij} \subset U_i$. The subset $\bigcup_{ij}C_i \cap P_{ij}$ will be the support $|\mathscr{D}|$ of our scattering diagram $\mathscr{D} = \mathscr{D}(\varphi)$. 

We may equip $|\mathscr{D}| := \bigcup_{ij}C_i \cap P_{ij}$ with a polyhedral decomposition such that all the boundaries and mutual intersections of $C_i \cap P_{ij}$'s are contained in $(n-2)$-dimensional strata of $|\mathscr{D}|$. So, for each $(n-1)$-dimensional cell $\tau$ of $|\mathscr{D}|$, if $\reint(\tau) \cap (C_i \cap P_{ij} ) \neq \emptyset$ for some $i,j$, then we must have $\tau \subset C_i \cap P_{ij}$. Let $\mathtt{I}(\tau):= \{ (i,j) \ | \ \tau \subset C_i \cap P_{ij} \}$, which is a finite set of indices. We will equip the $(n-1)$-cells $\tau$'s of $|\mathscr{D}|$ with the structure of walls or slabs. 

We first consider the case of a wall. Take $\tau \in |\mathscr{D}|^{[n-1]}$ such that $\reint(\tau) \cap \reint(\rho) = \emptyset$ for all $\rho$ with $\rho \cap \tsing_e \neq \emptyset$. We let $\mathbf{w} = \tau$, choose a primitive normal $\check{d}_{\mathbf{w}}$ of $\tau$, and give the labels $\cu{C}_{\pm}$ to the two adjacent chambers $\cu{C}_{\pm}$ so that $\check{d}_{\mathbf{w}}$ is pointing into $\cu{C}_{+}$. In a sufficiently small neighborhood $U_\tau$ of $\reint(\tau)$, we have $\phi_{\mathrm{in}}|_{U_{\tau}} = 0$ and we may write 
$$
\phi_{\mathrm{s}}|_{U_{\tau}} = \sum_{(i,j)\in \mathtt{I}(\tau)}(\varrho_i \alpha_{ij}) \otimes v_{ij},
$$
where each $(\varrho_i \alpha_{ij})$ has asymptotic support on $\reint(\tau)$. Since locally on $U_{\tau}$ any Maurer--Cartan solution is gauge equivalent to $0$, there exists an element $\theta_{\tau} \in \tform^0(U_{\tau})\otimes q\cdot \sftvbva{k}(U_{\tau})$ such that 
$$e^{[\theta_{\tau},\cdot]} \circ \pdb_{\circ} \circ e^{-[\theta_{\tau},\cdot]} = \pdb_{\circ} + [\phi_{\mathrm{s}},\cdot].$$
Such an element can be constructed inductively using the procedure in \cite[\S 3.4.3]{matt-leung-ma}, and can be chosen to be of the form
\begin{equation}\label{eqn:construction_of_wall_crossing_factor_from_solution_wall_case}
	\theta_{\tau}|_{U_{\tau} \cap \cu{C}_{\pm}}	= \begin{dcases}
		\theta_{\tau,0}|_{U_{\tau} \cap \cu{C}_+}  & \text{on $U_{\tau} \cap \cu{C}_+$,}\\
		 0  & \text{on $U_{\tau} \cap \cu{C}_-$,}
	\end{dcases}
\end{equation}
for some $\theta_{\tau,0} \in q \cdot  \sftvbva{k}(U_{\tau})$.
From this we obtain the wall-crossing factor associated to the wall $\mathbf{w}$
\begin{equation}\label{eqn:construction_wall_crossing_factor_wall}
\Theta_{\mathbf{w}} := e^{[\theta_{\tau,0},\cdot]}.
\end{equation}

\begin{remark}
	Here we need to apply the procedure in \cite[\S 3.4.3]{matt-leung-ma}, which is a generalization of that in \cite{kwchan-leung-ma}, because of the potential non-commutativity: $[v_{ij},v_{ij'}]\neq 0$ for $j \neq j'$. 
\end{remark}

For the case where $\tau \subset \reint(\rho)_v$ for some $\rho$ with $\rho \cap \tsing_e \neq \emptyset$, we will define a slab. We take $U_{\tau}$ and $\mathtt{I}(\tau)$ as above, and let the slab $\mathbf{b} = \tau$. The primitive normal $\check{d}_{\rho}$ is the one we chose earlier for each $\rho$. Again we work in a small neighborhood $U_{\tau}$ of $\reint(\tau)$ with two adjacent chambers $\cu{C}_{\pm}$. As in the proof of Lemma \ref{lem:comparing_sheaf_of_dgbv}, we can find a step-function-like element $\theta_{v,\rho}$ of the form
$$
\theta_{v,\rho} = 
\begin{dcases}
	\log(s_{v\rho}^{-1}(f_{v,\rho})) \partial_{\check{d}_{\rho}} & \text{on $ U_{\tau} \cap \cu{C}_+$},\\
	0 & \text{on $ U_{\tau} \cap \cu{C}_-$}
\end{dcases}
$$
to solve the equation $e^{[\theta_{v,\rho},\cdot]}\circ \pdb_{\circ} \circ e^{-[\theta_{v,\rho},\cdot]} = \pdb_{\circ} + [\phi_{\mathrm{in}},\cdot]$ on $U_\tau$. In other words, 
$$\varPsi:=e^{-[\theta_{v,\rho},\cdot]} \colon (\sftropv{k}^*_{\mathrm{sf}}|_{U_{\tau}},\pdb_{\mathrm{sf}}) \rightarrow (\sftropv{k}^*_{\mathrm{sf}}|_{U_{\tau}},\pdb_{\circ})$$ 
is an isomorphism of sheaves of dgLas. Computations using the formula in \cite[Lem. 2.5]{chan2019geometry} then gives the identity
$$
\varPsi^{-1} (\pdb_{\circ} + [\varPsi(\phi_{\mathrm{s}}),\cdot] ) \circ \varPsi = \pdb_{\circ} + [\phi_{\mathrm{in}} + \phi_{\mathrm{s}},\cdot]. 
$$
Once again, we can find an element $\theta_{\tau}$ such that
$$
e^{[\theta_{\tau},\cdot]} \circ \pdb_{\circ} \circ e^{-[\theta_{\tau},\cdot]} = \pdb_{\circ} + [\varPsi(\phi_{\mathrm{s}}),\cdot],
$$
and hence a corresponding element $\theta_{\tau,0} \in q\cdot \sftvbva{k}(U_\tau)$ of the form \eqref{eqn:construction_of_wall_crossing_factor_from_solution_wall_case}. From this we get 
\begin{equation}\label{eqn:construction_wall_crossing_factor_slab}
\varXi_{\mathbf{b}}:= e^{[\theta_{\tau,0},\cdot]}
\end{equation}
and hence the wall-crossing factor $\Theta_{\mathbf{b}} :=\varTheta_{v,\rho} \circ \varXi_{\mathbf{b}} $ associated to the slab $\mathbf{b}$. 

Next we would like to argue that consistency of the scattering diagram $\mathscr{D}$ follows from the fact that $\phi$ is a Maurer--Cartan solution.
First of all, on the global sheaf $\polyv{k}^{*,*}$ over $B$, we have the operator $\pdb_{\phi}:=\pdb+[\phi,\cdot]$ which satisfies $[\bvd{},\pdb_{\phi}] = 0$ and $\pdb_{\phi}^2 = 0$. This allows us to define the \emph{sheaf of $k^{\text{th}}$-order holomorphic functions} as
$$\prescript{k}{}{\cu{O}}_{\phi}:=\mathrm{Ker}(\pdb_{\phi}) \subset \polyv{k}^{0,0},$$
for each $k\in \mathbb{N}$. It is a sequence of sheaves of commutative $\comp[q]/(q^{k+1})$-algebras over $B$, equipped with a natural map $\rest{k,l}\colon \prescript{k}{}{\cu{O}}_{\phi} \rightarrow \prescript{l}{}{\cu{O}}_{\phi}$ for $l<k$ that is induced from the maps for $\polyv{k}^{*,*}$. By construction, we see that $\prescript{0}{}{\cu{O}}_{\phi} \cong \bva{0}^0 \cong \modmap_*(\cu{O}_{\centerfiber{0}})$. 

We claim that the maps $\rest{k,l}$'s are surjective.
To prove this, we fix a point $x\in B$ and take an open chart $W_{\alpha}$ containing $x$ in the cover of $B$ we chose at the beginning of \S \ref{subsubsec:gluing_using_semi_flat}. There is an isomorphism $\varPhi_{\alpha}\colon \polyv{k}^{*,*}|_{W_{\alpha}} \cong \polyv{k}^{*,*}_{\alpha}$ identifying the differential $\pdb$ with $\pdb_{\alpha}+ [\eta_{\alpha},\cdot]$ by our construction. Write $\phi_{\alpha} = \varPhi_{\alpha}(\phi)$ and notice that $\pdb_{\alpha} + [\eta_{\alpha}+\phi_{\alpha},\cdot]$ squares to zero, which means that $\eta_{\alpha} + \phi_{\alpha}$ is a solution to the Maurer--Cartan equation for $\polyv{k}^{*,*}_{\alpha}(W_{\alpha})$. We apply the same trick as above to the local open subset $W_{\alpha}$, namely, any Maurer--Cartan solution lying in $\polyv{k}^{-1,1}_{\alpha}(W_{\alpha})$ is gauge equivalent to the trivial one, so there exists $\theta_{\alpha} \in \polyv{k}^{-1,0}_{\alpha}(W_{\alpha})$ such that 
$$
e^{[\theta_{\alpha},\cdot]} \circ \pdb_{\alpha} \circ e^{-[\theta_{\alpha},\cdot]} = \pdb_{\alpha} + [\eta_{\alpha}+\phi_{\alpha},\cdot]. 
$$
As a result, the map $e^{-[\theta_{\alpha},\cdot]} \circ \varPhi_{\alpha} \colon (\polyv{k}^{*,*}|_{W_{\alpha}},\pdb+[\phi,\cdot]) \cong (\polyv{k}^{*,*}_{\alpha},\pdb_{\alpha})$ is an isomorphism of dgLas, sending $\prescript{k}{}{\cu{O}}_{\phi}$ isomorphically onto $\bva{k}_{\alpha}^0$. 

We shall now prove the consistency of the scattering diagram $\mathscr{D} = \mathscr{D}(\varphi)$ by identifying the associated wall-crossing sheaf $\wcs{k}_{\mathscr{D}}$ with the sheaf $\prescript{k}{}{\cu{O}}_{\phi}|_{W_0(\mathscr{D})}$ of $k^{\text{th}}$-order holomorphic functions.   

\begin{theorem}\label{thm:consistency_of_diagram_from_mc}
	There is an isomorphism $\Phi \colon \prescript{k}{}{\cu{O}}_{\phi}|_{W_0(\mathscr{D})} \rightarrow \wcs{k}_{\mathscr{D}}$ of sheaves of $\comp[q]/(q^{k+1})$-algebras on $W_0(\mathscr{D})$. Furthermore, the scattering diagram $\mathscr{D} = \mathscr{D}(\varphi)$ associated to the Maurer--Cartan solution $\phi$ is consistent in the sense of Definition \ref{def:consistency_of_scattering_diagram}.
\end{theorem}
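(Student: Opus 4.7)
The plan is to construct $\Phi$ by a stalk-wise local procedure on $W_0(\mathscr{D})$, using the dgLa identification $(\polyv{k}^*|_{W_0},\pdb+[\phi,\cdot]) \cong (\sfpolyv{k}^*_{\text{sf}},\pdb_0+[\phi_{\text{in}}+\phi_{\text{s}},\cdot])$ from Lemma \ref{lem:comparing_sheaf_of_dgbv} together with \S\ref{subsubsec:gluing_using_semi_flat}, and then to deduce consistency by comparing with the already-constructed local sheaves $\bva{k}^0_\alpha$. First I would transport $\prescript{k}{}{\cu{O}}_{\phi}|_{W_0(\mathscr{D})}$ to the subsheaf $\cu{K} := \text{Ker}\bigl(\pdb_0 + [\phi_{\text{in}}+\phi_{\text{s}},\cdot]\bigr) \subset \sfpolyv{k}^{0,0}_{\text{sf}}|_{W_0(\mathscr{D})}$, and prove $\cu{K} \cong \wcs{k}_{\mathscr{D}}$ locally. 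For a point $x$ in a chamber $\cu{C}$, pick a small neighborhood $U \subset \cu{C} \setminus |\mathscr{D}|$; the finite decomposition through asymptotic supports on walls/slabs forces $(\phi_{\text{in}}+\phi_{\text{s}})|_U$ to be $\mathcal{W}_{-\infty}$, hence zero in $\tform^*(U)$, so the operator reduces to $\pdb_0 = d\otimes 1$. Because $\underline{\comp} \to \tform^*$ is a resolution, $\cu{K}|_U = \sfbva{k}^0_{\text{sf}}|_U = \wcs{k}_{\mathscr{D}}|_U$, defining $\Phi$ on chambers.

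Next, at a point $x$ on the relative interior of a wall $\mathbf{w}$, I would invoke the gauge $\theta_\tau$ of \eqref{eqn:construction_of_wall_crossing_factor_from_solution_wall_case}, which conjugates $\pdb_0$ to $\pdb_0+[\phi_{\text{s}},\cdot]$ on $U_\tau$ and takes the value $\theta_{\tau,0}$ on $\cu{C}_+$ and $0$ on $\cu{C}_-$. Conjugation produces an isomorphism $\sfbva{k}^0_{\text{sf}}|_{U_\tau} = \text{Ker}(\pdb_0) \xrightarrow{e^{[\theta_\tau,\cdot]}} \cu{K}|_{U_\tau}$, and the step-function shape of $\theta_\tau$ translates precisely into the wall-crossing gluing rule \eqref{eqn:wall_crossing_gluing} with factor $\Theta_{\mathbf{w}} = e^{[\theta_{\tau,0},\cdot]}$. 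For $x$ on the relative interior of a slab $\mathbf{b} \subset \reint(\rho)_v$, the same argument applies after first composing with $\varPsi = e^{-[\theta_{v,\rho},\cdot]}$, yielding a composite wall-crossing $\varTheta_{v,\rho} \circ \varTheta_{\mathbf{b}}$ that matches Definition \ref{def:slabs}. Compatibility on overlaps is automatic: two such trivializations of $\cu{K}$ differ by an element of $\text{Ker}(\pdb_0) = \sfbva{k}^0_{\text{sf}}$ on any chamber piece, which then propagates uniquely across all walls.

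For the consistency statement, I would combine $\Phi$ with the local gauge-trivialization of $\pdb_\phi$ on each Stein chart $W_\alpha$ of the good cover. The argument from the last paragraph of \S\ref{subsubsec:consistent_diagram_from_solution} (using $\varPhi_\alpha$ and a local gauge $\theta_\alpha$ killing the Maurer-Cartan element $\eta_\alpha + \phi_\alpha$) produces an isomorphism $\prescript{k}{}{\cu{O}}_\phi|_{W_\alpha} \cong \bva{k}^0_\alpha$ of sheaves of $\cfrk{k}$-algebras. Combined with $\Phi$, this gives $\wcs{k}_{\mathscr{D}}|_{W_\alpha \cap W_0(\mathscr{D})} \cong \bva{k}^0_\alpha|_{W_\alpha \cap W_0(\mathscr{D})}$. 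Pushing forward along $\mathfrak{i}$ and using the Hartogs-type extension Lemma \ref{lem:hartogs_extension_2} to extend across the joints of $\mathscr{D}$ and the strata of $\nsf \cap W_\alpha$ (inductively in $k$, since $\bva{k}^0_\alpha$ is a free $\cfrk{k}$-extension of $\bva{0}^0$ which enjoys Hartogs extension) yields the isomorphism $\mathfrak{i}_*(\wcs{k}_{\mathscr{D}})|_{W_\alpha} \cong \bva{k}^0_\alpha$ required in Definition \ref{def:consistency_of_scattering_diagram}.

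The main obstacle will be the bookkeeping required to match the step-function gauges $\theta_\tau$ produced by the inductive construction of \cite[\S 3.4.3]{matt-leung-ma} with the wall-crossing convention. Specifically, one must verify that each $\theta_{\tau,0}$ in fact lies in $q \cdot \sftvbva{k}(U_\tau)$, so that the associated $\Theta_{\mathbf{w}}$ (resp.\ $\Theta_{\mathbf{b}}$) genuinely belongs to the tropical vertex group, and that in the slab case the composition order $\varTheta_{v,\rho} \circ \varTheta_{\mathbf{b}}$ is consistent with the conjugation order of $\theta_{v,\rho}$ and $\theta_\tau$. Both are consequences of $\bvd{}\phi_{\text{in}} = \bvd{}\phi_{\text{s}} = 0$ (established in \S\ref{subsubsec:gluing_using_semi_flat}) combined with the BV identity, which propagate to $\bvd{}\theta_{\tau,0} = 0$ via the recursive scheme. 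Once this is in place, the remaining checks --- chamber-wise reduction, step-function match, and Hartogs extension --- are essentially routine.
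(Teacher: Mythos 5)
Your proposal is correct and follows essentially the same route as the paper: transport $\prescript{k}{}{\cu{O}}_{\phi}$ to the semi-flat model via Lemma \ref{lem:comparing_sheaf_of_dgbv}, use the step-function gauges $\theta_\tau$ (resp.\ $e^{[\theta_{v,\rho},\cdot]}\circ e^{[\theta_\tau,\cdot]}$ for slabs) from the construction of $\mathscr{D}(\phi)$ to identify the stalks with $\wcs{k}_{\mathscr{D}}$ and match the wall-crossing gluing, then obtain consistency from the local gauge-trivializations $\prescript{k}{}{\cu{O}}_\phi|_{W_\alpha}\cong\bva{k}^0_\alpha$ together with the Hartogs extension of Lemma \ref{lem:hartogs_extension_2}. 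The only differences are expository (your explicit reduction on chambers and the bookkeeping remarks, which the paper handles in \S\ref{subsubsec:consistent_diagram_from_solution} rather than inside the proof).
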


\begin{proof}
	To prove the first statement, we first notice that there is a natural isomorphism 
    $$\prescript{k}{}{\cu{O}}_{\phi}|_{W_0\setminus | \mathscr{D}|} \cong \wcs{k}_{\mathscr{D}}|_{W_0\setminus | \mathscr{D}|},$$
    so we only need to consider those points $x \in \reint(\tau)$ where $\tau$ is either a wall or a slab. Since $W_0(\mathscr{D}) \subset W_0$, we will work on the semi-flat locus $W_0$ and use the model $\sfpolyv{k}^{*,*}_{\mathrm{sf}}$, which is equipped with the operator $ \pdb_{\circ}+ [\phi_{\mathrm{in}} + \phi_{\mathrm{s}},\cdot]$. Via the isomorphism 
    $$\varPhi \colon (\polyv{k}^{*,*}_0,\pdb_{\phi}) \rightarrow (\sfpolyv{k}^{*,*}_{\mathrm{sf}}, \pdb_{\circ}+ [\phi_{\mathrm{in}} + \phi_{\mathrm{s}},\cdot])$$ 
    from Lemma \ref{lem:comparing_sheaf_of_dgbv}, we may write 
    $$\prescript{k}{}{\cu{O}}_{\phi}|_{W_0} = \mathrm{Ker}(\pdb_\phi) \subset \sfpolyv{k}^{0,0}_{\mathrm{sf}}.$$
    We fix a point $x \in W_0(\mathscr{D}) \cap |\mathscr{D}|$ and consider the stalk at $x$ for both sheaves. In the above construction of walls and slabs from the Maurer--Cartan solution $\phi$, we first take a sufficiently small open subset $U_x$ and then find a gauge transformation of the form $\varPsi = e^{[\theta_{\tau},\cdot]}$ in the case of a wall, and of the form $\varPsi =e^{[\theta_{v,\rho},\cdot]} \circ e^{[\theta_{\tau},\cdot]} $ in the case of a slab. We have 
    $$\varPsi^{} \circ \pdb_{\circ} \circ \varPsi^{-1} = \pdb_{\circ}+ [\phi_{\mathrm{in}} + \phi_{\mathrm{s}},\cdot]$$
    by construction, so this further induces an isomorphism 
    $$\varPsi \colon \sfbva{k}^0_{\mathrm{sf}}|_{U_x} \rightarrow \prescript{k}{}{\cu{O}}_{\phi}|_{U_x}$$
    of $\comp[q]/(q^{k+1})$-algebras. 
	
	It remains to see how the stalk $\varPsi\colon \sfbva{k}^0_{\mathrm{sf},x} \rightarrow  \prescript{k}{}{\cu{O}}_{\phi,x}$ is glued to nearby chambers $\cu{C}_{\pm}$. 
    For this purpose, we let 
    $$\Psi_0 := e^{[\theta_{\tau,0},\cdot]}$$ 
    as in equation \eqref{eqn:construction_wall_crossing_factor_wall} in the case of a wall, 
    and 
    $$\Psi_0 := \varTheta_{v,\rho} \circ e^{[\theta_{\tau,0},\cdot]}$$ 
    as in \eqref{eqn:construction_wall_crossing_factor_slab} in the case of a slab. 
    Then, the restriction of an element $f \in \sfbva{k}^0_{\mathrm{sf},x}$ to a nearby chamber is given by 
	$$
	\varPsi (f) = \begin{dcases}
		\Psi_0(f) & \text{on $ U_{x} \cap \cu{C}_+$},\\
		f & \text{on $ U_{x} \cap \cu{C}_-$}
	\end{dcases}
	$$
	in a sufficiently small neighborhood $U_x$. This agrees with the description of the wall-crossing sheaf $\wcs{k}_{\mathscr{D},x}$ in equation \eqref{eqn:wall_crossing_gluing}. Hence we obtain an isomorphism $\prescript{k}{}{\cu{O}}_{\phi}|_{W_0( \mathscr{D})} \cong \wcs{k}_{\mathscr{D}}$.
	
	To prove the second statement, we first apply pushing forward via $\mathfrak{i}\colon W_0(\mathscr{D}) \rightarrow B$ to the first statement to get the isomorphism 
    $$\mathfrak{i}_* (\prescript{k}{}{\cu{O}}_{\phi}|_{W_0( \mathscr{D})}) \cong \mathfrak{i}_*(\wcs{k}_{\mathscr{D}}).$$ 
	Now, by the discussion right before this proof, we may identify $\prescript{k}{}{\cu{O}}_{\phi}$ with $\bva{k}^0_{\alpha}$ locally. But the sheaf $\bva{k}^0_{\alpha}$, which is isomorphic to the restriction of $\bva{0}^0 \otimes_{\comp} \comp[q]/(q^{k+1})$ to $W_\alpha$ as sheaves of $\comp[q]/(q^{k+1})$-modules, satisfies the Hartogs extension property from $W_0(\mathscr{D})\cap W_{\alpha}$ to $W_{\alpha}$ by Lemma \ref{lem:hartogs_extension_2}. So we have $\mathfrak{i}_* (\prescript{k}{}{\cu{O}}_{\phi}|_{W_0( \mathscr{D})}) \cong \prescript{k}{}{\cu{O}}_{\phi}$. Hence, we obtain 
    $$\mathfrak{i}_*(\wcs{k}_{\mathscr{D}})|_{W_{\alpha}} \cong  (\prescript{k}{}{\cu{O}}_{\phi})|_{W_{\alpha}} \cong \bva{k}^0_{\alpha},$$ 
    from which follows the consistency of the diagram $\mathscr{D} = \mathscr{D}(\varphi)$.
\end{proof}

\begin{remark}
	From the proof of Theorem \ref{thm:consistency_of_diagram_from_mc}, we actually have a correspondence between step-function-like elements in the gauge group and elements in the tropical vertex group as follows. We fix a generic point $x$ in a joint $\mathfrak{j}$, and consider a neighborhood of $x$ of the form $U_x \times D_x$, where $U_x$ is a neighborhood of $x$ in $\reint(\mathfrak{j})$ and $D_x$ is a disk in the normal direction of $\mathfrak{j}$. We pick a compact annulus $A_x \subset D_x$ surrounding $x$, intersecting finitely many walls/slabs. We let $\tau_1,\dots,\tau_s$ be the walls/slabs in anti-clockwise direction. For each $\tau_i$, we take an open subset $\mathscr{W}_{i} $ just containing the wall $\tau_i$ such that $\mathscr{W}_i \setminus \tau_i =\mathscr{W}_{i,+}\cup \mathscr{W}_{i,-}$. The following Figure \ref{fig:annulus} below illustrates the situation.  
	
	As in the proof of Theorem \ref{thm:consistency_of_diagram_from_mc}, there is a gauge transformation on each $\mathscr{W}_i$ of the form
	$$
	\varPsi_i \colon (\sfpolyv{k}^{*,*}_{\mathrm{sf}}|_{\mathscr{W}_i}, \pdb_{\circ}) \rightarrow (\sfpolyv{k}^{*,*}_{\mathrm{sf}}|_{\mathscr{W}_i}, \pdb_{\circ} + [\phi_{\mathrm{in}} + \phi_{\mathrm{s}},\cdot]),
	$$
	where $\varPsi_i = e^{[\theta_{v,\rho},\cdot]} \circ e^{[\theta_{\tau},\cdot]}$ for a slab and $\varPsi_i = e^{[\theta_{\tau},\cdot]}$ for a wall. These are step-function-like elements in the gauge group satisfying
	$$
	\varPsi_i = \begin{dcases}
		\Theta_{i} & \text{on $\mathscr{W}_{i,+}$,}\\
		\mathrm{id} & \text{on $\mathscr{W}_{i,-}$,}
	\end{dcases}
	$$
	where $\Theta_{i}$ is the wall crossing factor associated to $\tau_i$. 
 
	On the overlap $\mathscr{W}_{i,+} = \mathscr{W}_i \cap \mathscr{W}_{i+1}$ (where we set $i+1=1$ if $i=s$), there is a commutative diagram
	$$
	\xymatrix@1{
	(\sfpolyv{k}^{*,*}_{\mathrm{sf}}|_{\mathscr{W}_{i,+}},\pdb_{\circ}) \ar[rr]^{\Theta_i} \ar[d]^{\varPsi_i} & & (\sfpolyv{k}^{*,*}_{\mathrm{sf}}|_{\mathscr{W}_{i,+}},\pdb_{\circ}) \ar[d]^{\varPsi_{i+1}} \\
	(\sfpolyv{k}^{*,*}_{\mathrm{sf}}|_{\mathscr{W}_{i,+}},\pdb_{\circ}+ [\phi_{\mathrm{in}} + \phi_{\mathrm{s}},\cdot]) \ar[rr]^{\mathrm{id}} & & (\sfpolyv{k}^{*,*}_{\mathrm{sf}}|_{\mathscr{W}_{i,+}},\pdb_{\circ}+ [\phi_{\mathrm{in}} + \phi_{\mathrm{s}},\cdot])
	}
	$$
	allowing us to interpret the wall crossing factor $\Theta_i$ as the gluing between the two sheaves $\sfpolyv{k}^{*,*}_{\mathrm{sf}}|_{\mathscr{W}_{i}}$ and $\sfpolyv{k}^{*,*}_{\mathrm{sf}}|_{\mathscr{W}_{i+1}}$ over $\mathscr{W}_{i,+}$. 
 
 	Notice that the Maurer--Cartan element $\phi$ is global. On a small neighborhood $W_{\alpha}$ containing $U_x\times D_x$, we have the sheaf $(\polyv{k}_{\alpha}^{*,*},\pdb_{\phi})$ on $W_{\alpha}$, and there is an isomorphism 
	$$
	e^{[\theta_{\alpha},\cdot]} \colon (\polyv{k}^{*,*}_{\alpha},\pdb_{\alpha}) \cong (\polyv{k}^{*,*}_{\alpha},\pdb_\phi).
	$$
	Composing with the isomorphism 
    $$(\polyv{k}^{*,*}_{\alpha}|_{ \mathscr{W}_i},\pdb_\phi) \cong (\sfpolyv{k}^{*,*}_{\mathrm{sf}}|_{\mathscr{W}_{i}},\pdb_{\circ}+ [\phi_{\mathrm{in}} + \phi_{\mathrm{s}},\cdot]),$$ 
    we have a commutative diagram of isomorphisms
	$$
	\xymatrix@1{
		(\sfpolyv{k}^{*,*}_{\mathrm{sf}}|_{\mathscr{W}_{i,+}},\pdb_{\circ}) \ar[rr]^{\varPsi_{i,0}} \ar[rd] & & (\sfpolyv{k}^{*,*}_{\mathrm{sf}}|_{\mathscr{W}_{i,+}},\pdb_{\circ}) \ar[ld]\\
		&(\polyv{k}^{*,*}_{\alpha}|_{\mathscr{W}_{i,+}},\pdb_{\alpha}) & 
	}.
	$$
	This is a \v{C}ech-type cocycle condition between the sheaves $\sfpolyv{k}^{*,*}_{\mathrm{sf}}|_{\mathscr{W}_i}$'s and $\polyv{k}^{*,*}_{\alpha}$, which can be understood as the original consistency condition defined using path-ordered products in \cite{kontsevich-soibelman04, gross2011real}. In particular, taking a local holomorphic function in $\bva{k}^{0}_{\alpha}(W_{\alpha})$ and restricting it to $U_x \times A_x$, we obtain elements in $\sfbva{k}^0_{\mathrm{sf}}(\mathscr{W}_i)$ that jump across the walls according to the wall crossing factors $\Theta_i$'s.
    \begin{center}
		\begin{figure}[h]
			\includegraphics[scale=0.8]{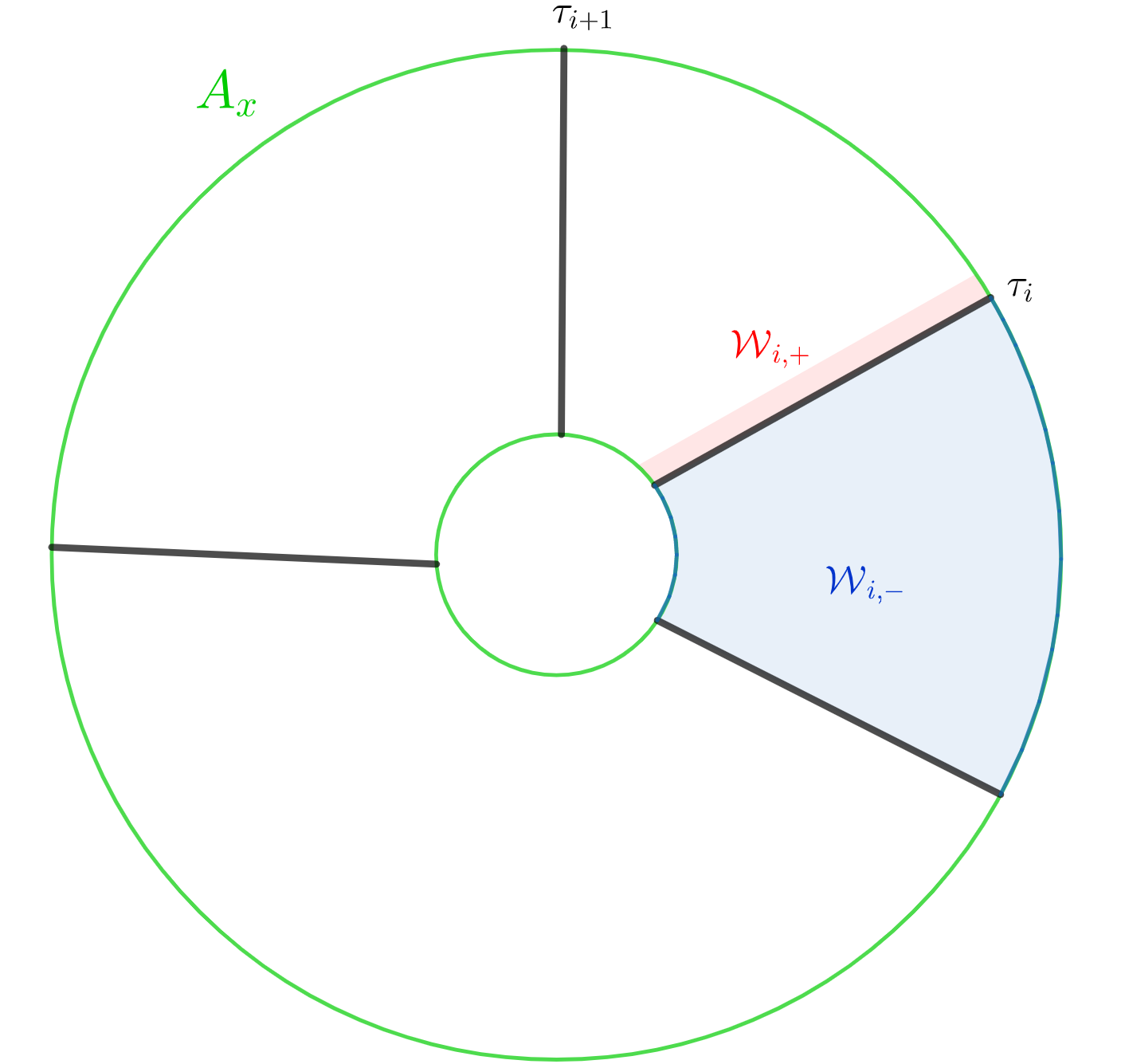}
			\caption{Wall crossing around a joint $\mathfrak{j}$}\label{fig:annulus}
		\end{figure}
	\end{center}
\end{remark}

\appendix

\section{The Hartogs extension property}\label{sec:hartogs}

The following lemma is an application of the Hartogs extension theorem \cite{rossi1963vector}. 

\begin{lemma}\label{lem:hartog_extension_1}
	Consider the analytic space $(\comp^*)^k \times \spec(\comp[\Sigma_{\tau}])$ for some $\tau$ and an open subset of the form $U \times V$, where $U \subset (\comp^*)^k$ and $V$ is a neighborhood of the origin $o \in \spec(\comp[\Sigma_{\tau}])$. Let $W := V \setminus \big( \bigcup_{\omega} V_{\omega} \big)$, where $\dim_{\real}(\omega)+2 \leq \dim_{\real}(\Sigma_{\tau})$ (i.e. $W$ is the complement of complex codimension $2$ orbits in $V$). Then the restriction $\cu{O}(U\times V) \rightarrow \cu{O}(U\times W)$ is a ring isomorphism. 
\end{lemma}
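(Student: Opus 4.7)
The statement is a Hartogs-type extension across an analytic subset of complex codimension at least $2$ in the (possibly singular and reducible) analytic space $U \times V$. My plan is to combine the classical Hartogs extension theorem for normal complex analytic spaces with the toric stratification of $\spec(\comp[\Sigma_\tau])$.

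The first step is to decompose $\spec(\comp[\Sigma_\tau])$ into irreducible components $X_\sigma$ indexed by maximal cones $\sigma \in \Sigma_\tau^{[n]}$ (where $n = \dim_\real \Sigma_\tau$), each a normal affine toric variety, meeting along lower-dimensional toric strata corresponding to common faces. Thus $U \times \spec(\comp[\Sigma_\tau])$ is a union of normal analytic spaces $U \times X_\sigma$. Under the hypothesis $\dim_\real \omega + 2 \leq n$, the removed set $U \times (V \setminus W)$ intersects each component in an analytic subset of complex codimension at least $2$ (or not at all). The classical Hartogs extension theorem for normal complex analytic spaces then supplies a unique holomorphic extension $\tilde{f}_\sigma$ of $f|_{(U \times W) \cap (U \times X_\sigma)}$ onto the corresponding piece of $U \times V$.

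To glue the collection $\{\tilde{f}_\sigma\}$ into a single holomorphic function on $U \times V$, I would verify compatibility on pairwise intersections $U \times (X_\sigma \cap X_{\sigma'})$, which are themselves unions of normal toric subvarieties of lower dimension. On each, both restrictions of the $\tilde{f}_\sigma$ are holomorphic extensions of the same function across a codim $\geq 2$ complement, so uniqueness of Hartogs extension (applied recursively on stratum dimension) forces agreement. Injectivity of the restriction map follows from the identity principle on each normal component together with the density of $U \times W$ in $U \times X_\sigma$.

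The main technical obstacle I foresee is this recursive gluing: one must verify that the codimension condition on the removed set persists upon restriction to an intersection of components, i.e.\ that $\bigcup_\omega V_\omega \cap X_{\sigma \cap \sigma'}$ still has complex codimension at least $2$ inside the relevant stratum (or is entirely contained in $W$). This reduces to a combinatorial check using the bound $\dim_\real \omega + 2 \leq n$ together with the face structure of $\Sigma_\tau$; once organized by stratum dimension, the verification is routine, and the proof then follows by descending induction.
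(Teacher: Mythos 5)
Your route is genuinely different from the paper's. The paper reduces to a single maximal cone, expands $f$ as a Laurent series $\sum_m a_m z^m$ with coefficients $a_m \in \cu{O}(U)$, applies Hartogs extension through the compact set $\{o\}$ \emph{fibrewise in $u \in U$} to kill the coefficients outside the monoid, and then treats general strata by induction on their codimension using the local models $(\comp^*)^l \times \spec(\comp[\Sigma_\omega])$. You instead decompose $\spec(\comp[\Sigma_\tau])$ into its irreducible components $X_\sigma$ (one per maximal cone), use normality of each $U \times X_\sigma$ together with the second Riemann extension theorem, and glue. The componentwise extension and the injectivity argument are correct as stated: each $U\times X_\sigma$ is normal, the removed set meets it in an analytic subset of complex codimension at least $2$, and $U\times(W\cap X_\sigma)$ is dense in $U\times(V\cap X_\sigma)$.

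The gap is in the gluing step, and the compatibility check you propose is the wrong one. Write $d = \dim_{\real}(\Sigma_\tau)$. If two maximal cones $\sigma,\sigma'$ meet only along their largest common face $\omega = \sigma\cap\sigma'$ with $\dim_{\real}(\omega)\leq d-2$, then $X_\sigma\cap X_{\sigma'} = V_\omega$ is \emph{entirely contained in the removed set}: the restriction of $f$ to $W\cap V_\omega$ is the restriction to the empty set, so there is no common function on that stratum whose Hartogs extension could be invoked for uniqueness, and your dichotomy (``codimension at least $2$ inside the stratum, or entirely contained in $W$'') fails — the removed set is \emph{all} of $V_\omega$. Nor is $V_\omega$ contained in any common facet-stratum of $\sigma$ and $\sigma'$ on which agreement is already known, since $\omega$ is by definition their largest common face; so the descending induction on stratum dimension, as you describe it, never reaches these intersections. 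The fix is a different combinatorial input: because $\Sigma_\tau$ is a \emph{complete} fan, the maximal cones containing a fixed cone $\omega$ are connected by chains $\sigma=\sigma_0,\sigma_1,\dots,\sigma_N=\sigma'$ in which consecutive cones share a facet containing $\omega$. For a facet $\eta=\sigma_i\cap\sigma_{i+1}$ one has $\dim_{\real}(\eta)=d-1$, so its open orbit survives in $W$ and density forces $\tilde f_{\sigma_i}=\tilde f_{\sigma_{i+1}}$ on all of $U\times(V\cap V_\eta)\supseteq U\times(V\cap V_\omega)$; transitivity along the chain then gives agreement of $\tilde f_\sigma$ and $\tilde f_{\sigma'}$ on $U\times(V\cap V_\omega)$. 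With this replacement your proof closes. (Alternatively, the decomposition can be avoided entirely: $\spec(\comp[\Sigma_\tau])$ is a Cartier divisor in the Cohen--Macaulay toric variety $\spec(\comp[\bar{P}_\tau])$, hence is $S_2$, and extension of holomorphic functions across closed analytic subsets of codimension at least two holds directly on $S_2$ reduced complex spaces.)
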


\begin{proof}
	We first consider the case where $\dim_{\real}(\Sigma_{\tau}) \geq 2$ and $W = V \setminus \{0\}$. We can further assume that $\Sigma_{\tau}$ consists of just one cone $\sigma$, because the holomorphic functions on $V$ are those on $V \cap \sigma$ that agree on the overlaps. So we can write 
	$$
	\cu{O}(U \times W) = \left\{ \sum_{m \in \tanpoly_{\sigma}} a_m z^m \ \Big| \ a_m \in \cu{O}_{(\comp^*)^k}(U) \right\},
	$$
	i.e. as Laurent series converging in $W$. We may further assume that $W$ is a sufficiently small Stein open subset. Take $f = \sum_{m \in \tanpoly_{\sigma}} a_m z^m \in \cu{O}(U\times W)$. We have the corresponding holomorphic function $\sum_{m \in \tanpoly_{\sigma}} a_m(u) z^m$ on $W$ for each point $u \in U$, which can be extended to $V$ using the Hartogs extension theorem \cite{rossi1963vector} because $\{0\}$ is a compact subset of $V$ such that $W = V \setminus \{0\}$ is connected. Therefore, we have $a_m(u) = 0$ for $m \notin \sigma \cap \tanpoly_{\sigma}$ for each $u$, and hence $f = \sum_{\sigma \cap \tanpoly_{\sigma}} a_m z^m$ is an element in $\cu{O}(U\times V)$. 
	
	For the general case, we use induction on the codimension of $\omega$ to show that any holomorphic function can be extended through $V_\omega \setminus \bigcup_{\tau} V_{\tau}$ with $\dim_{\real}(\tau) < \dim_{\real}(\omega)$. Taking a point $x \in V_{\omega} \setminus \bigcup_{\tau} V_\tau$, a neighborhood of $x$ can be written as $(\comp^*)^l \times \spec(\comp[\Sigma_{\omega}])$. By the induction hypothesis, we know that holomorphic functions can already be extended through $(\comp^*)^l \times \{0\}$. We conclude that any holomorphic function can be extended through $V_\omega \setminus \bigcup_{\tau} V_{\tau}$. 
\end{proof}

We will make use of the following version of the Hartogs extension theorem, which can be found in e.g. \cite[p. 58]{jarnicki2008first}, to handle extension within codimension one cells $\rho$'s and maximal cells $\sigma$'s.  

\begin{theorem}[Hartogs extension theorem, see e.g. \cite{jarnicki2008first}]\label{thm:hartogs_extension_C_n_version}
	Let $U \subset \comp^n$ be a domain with $n\geq 2$, and $A \subset U$ such that $U \setminus A$ is still a domain. Suppose $\pi(U) \setminus \pi(A)$ is a non-empty open subset, and $\pi^{-1}(\pi(x)) \cap A$ is compact for every $x\in A$, where $\pi \colon \comp^{n} \rightarrow \comp^{n-1}$ is projection along one of the coordinate direction. Then the natural restriction $\cu{O}(U) \rightarrow \cu{O}(U\setminus A)$ is an isomorphism.  
\end{theorem}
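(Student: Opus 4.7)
The plan is to establish the two directions separately. Injectivity of $\cu{O}(U) \to \cu{O}(U\setminus A)$ is immediate: $U\setminus A$ is non-empty (since $\pi(U)\setminus\pi(A)\neq\emptyset$ implies $U\setminus A \supset \pi^{-1}(\pi(U)\setminus\pi(A))\cap U$) and open, so the identity theorem on the domain $U$ does the job. For surjectivity, I will construct a holomorphic extension locally via a Cauchy-type integral in the last variable, and then patch using the identity theorem.

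More precisely, write coordinates $z=(z',z_n)$ with $\pi(z)=z'$, and observe that $A$ is closed in $U$ (as $U\setminus A$ is open). Fix $p=(p',p_n)\in A$. The fiber $K_{p'}:=\pi^{-1}(p')\cap A$ is a compact subset of the open set $U_{p'}:=\pi^{-1}(p')\cap U\subset\comp$, so I can pick a piecewise-smooth simple closed curve $\gamma\subset U_{p'}$ enclosing $K_{p'}$ and disjoint from $A$, bounding an open disc $D\subset U_{p'}$ with $K_{p'}\subset D$. Using that $A$ is closed in $U$ and $\gamma$ is compact, there is a polydisc neighborhood $V'\ni p'$ in $\comp^{n-1}$ such that (i) $V'\times\gamma\subset U\setminus A$ and (ii) $K_{z'}:=\pi^{-1}(z')\cap A\subset D$ for all $z'\in V'$. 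Property (ii) follows from upper semi-continuity of $z'\mapsto K_{z'}$: any accumulation point of a sequence $(z'_k,z_{n,k})$ with $z'_k\to p'$ and $z_{n,k}\in K_{z'_k}$ lies in $A$ by closedness of $A$ and thus in $K_{p'}$, so for $V'$ small enough, $K_{z'}$ stays inside $D$. Given $f\in\cu{O}(U\setminus A)$, define
\[
F(z',z_n):=\frac{1}{2\pi i}\int_\gamma \frac{f(z',\zeta)}{\zeta-z_n}\,d\zeta,\qquad (z',z_n)\in V'\times D.
\]
Differentiating under the integral, $F$ is holomorphic on $V'\times D$. For each $z'\in V'$ the one-variable Cauchy formula applied to the holomorphic function $\zeta\mapsto f(z',\zeta)$ on $U_{z'}\setminus K_{z'}$ (which contains $\gamma$ and the region between $\gamma$ and $K_{z'}$) gives $F(z',z_n)=f(z',z_n)$ for every $z_n\in D\setminus K_{z'}$. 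Hence $F$ agrees with $f$ on $(V'\times D)\setminus A$.

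To glue, I first note that $A$ has empty interior in $U$: an open polydisc inside $A$ would force some fiber of $\pi$ over $A$ to contain an open disc, contradicting compactness of the fibers of $\pi|_A$. Thus for any open $W\subset U$, the set $W\setminus A$ is dense in $W$ and meets every connected component of $W$. Now cover $A$ by local extension neighborhoods $V'_\alpha\times D_\alpha$ as constructed above, with local extensions $F_\alpha$. On the overlap of any two such neighborhoods, $F_\alpha$ and $F_\beta$ both equal $f$ on the dense open subset where $A$ is removed, so by applying the identity theorem on each connected component of the overlap (each meeting $U\setminus A$ by density), $F_\alpha=F_\beta$ throughout the overlap. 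Gluing all $F_\alpha$ with $f$ on $U\setminus A$ gives the desired $\tilde f\in\cu{O}(U)$.

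The main technical obstacle is the continuity claim (ii) above: that a single contour $\gamma$ chosen to enclose $K_{p'}$ continues to enclose all nearby fibers $K_{z'}$ for $z'$ in some neighborhood of $p'$. This is exactly where the hypothesis that $A$ is closed in $U$ (equivalently, that $U\setminus A$ is a \emph{domain}) enters, together with compactness of the fibers of $\pi|_A$. The other mildly delicate point is the gluing step, which would fail if $A$ could have non-empty interior; the argument ruling this out is a direct consequence of fiber-compactness, and once we have it, the identity theorem disposes of the coherence of local extensions. The role of the hypothesis that $\pi(U)\setminus\pi(A)$ is non-empty open is twofold: it ensures $U\setminus A\neq\emptyset$ for the injectivity argument, and (implicitly) guarantees that there are enough fibers disjoint from $A$ so that the extension procedure is not vacuous.
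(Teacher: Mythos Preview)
The paper does not supply its own proof of this statement; it is quoted from the textbook reference and then applied as a black box in the proof of Lemma~\ref{lem:hartogs_extension_2}. Your overall strategy---a Cauchy integral in the fibre variable followed by gluing---is the classical one.

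There is, however, a genuine gap in the key local step. You claim that for each $z'\in V'$ the one-variable Cauchy formula gives $F(z',z_n)=f(z',z_n)$ for $z_n\in D\setminus K_{z'}$. This is false: Cauchy's formula recovers $g(z_n)$ from $\tfrac{1}{2\pi i}\int_\gamma \frac{g(\zeta)}{\zeta-z_n}\,d\zeta$ only when $g$ is holomorphic on the \emph{entire} closed disc bounded by $\gamma$, not merely on $D\setminus K_{z'}$. For a one-variable counterexample take $g(\zeta)=1/\zeta$ and $\gamma$ the unit circle: the Cauchy integral is identically $0$ inside, not $1/z_n$. What the integral produces in your situation is only the regular part of a Laurent-type decomposition of $f(z',\cdot)$ relative to $K_{z'}$; nothing in a purely local argument forces the principal part to vanish. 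Consequently you have not shown $F_\alpha=f$ on $(V'_\alpha\times D_\alpha)\setminus A$, and the gluing of the $F_\alpha$'s with $f$ breaks down.

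The missing idea is precisely the substantive use of the hypothesis that $\pi(U)\setminus\pi(A)$ is a non-empty open set, which you invoke only for injectivity. In the standard proof one observes that whenever $z'$ lies in this set the slice $f(z',\cdot)$ is holomorphic on the whole fibre, so Cauchy's formula \emph{does} give $F=f$ there; one then organises the construction so that the region where $F$ is defined connects to $\pi^{-1}(\pi(U)\setminus\pi(A))$, and propagates the identity $F=f$ via the identity theorem along a connected set. Your localisation at an arbitrary $p\in A$ never links back to that anchor region, which is why the argument cannot close as written.
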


\begin{proof}[Proof of Lemma \ref{lem:hartogs_extension_2}]
	To prove the first statement, we apply Lemma \ref{lem:hartog_extension_1}. So we only need to show that, for $\rho \in \pdecomp^{[n-1]}$, a holomorphic function $f$ in $U_x \setminus \tsing \subset V(\rho)$ can be extended uniquely to $U_x$, where $U_x$ is some neighborhood of $x \in \reint(\rho) \cap \tsing$. Writing $V(\rho) = (\comp^*)^{n-1} \times \spec(\comp[\Sigma_{\rho}])$, we may simply prove that this is the case with $\Sigma_{\rho}$ consisting of a single ray $\sigma$ as in the proof of Lemma \ref{lem:hartog_extension_1}. Thus we can assume that $V(\rho) = (\comp^*)^{n-1} \times \comp$ and the open subset $U_x = U \times V$ for some connected $U$. We observe that extensions of holomorphic functions from $(U\setminus \tsing) \times V$ to $U \times V$ can be done by covering the former open subset with Hartogs' figures. 
	
	To prove the second statement, we need to further consider extensions through $\reint(\mathfrak{j})$ for a joint $\mathfrak{j}$. For those joints lying in some codimension one stratum $\rho$, the argument is similar to the above. So we assume that $\sigma_{\mathfrak{j}} = \sigma$ is a maximal cell. We take a point $x \in \reint(\mathfrak{j})$ and work in a sufficiently small neighborhood $U$ of $x$. In this case, we may find a codimension one rational hyperplane $\omega$ containing $\mathfrak{j}$, together with a lattice embedding $\tanpoly_{\omega} \hookrightarrow \tanpoly_{\sigma}$ which induces the projection $\pi \colon (\comp^*)^{n} \rightarrow (\comp^*)^{n-1}$ along one of the coordinate directions. Letting $A = \modmap^{-1}(A \cap U)$ and applying Theorem \ref{thm:hartogs_extension_C_n_version}, we obtain extensions of holomorphic functions in $U$. 
\end{proof}

\bibliographystyle{amsplain}
\bibliography{geometry}

\end{document}